\documentclass[12pt, reqno]{amsart}
\usepackage{}
\usepackage{cancel}
\usepackage{stmaryrd}
\usepackage{amsmath}
\usepackage{amsfonts}
\usepackage{amssymb}
\usepackage[all,cmtip]{xy}           
\usepackage{xspace}
\usepackage{bbding}
\usepackage{makecell}
\usepackage{txfonts}
\usepackage{enumerate}
\usepackage[shortlabels]{enumitem}
\usepackage{cite}

\usepackage{tikz}
\usepackage{titletoc}

\usepackage{ifpdf}
\ifpdf
 \usepackage[colorlinks,final,backref=page,hyperindex]{hyperref}
\else
 \usepackage[colorlinks,final,backref=page,hyperindex,hypertex]{hyperref}
\fi
\usepackage[active]{srcltx}

\makeatletter

\begin{document}
\newcommand {\emptycomment}[1]{} 

\baselineskip=15pt
\newcommand{\nc}{\newcommand}
\newcommand{\delete}[1]{}
\nc{\mfootnote}[1]{\footnote{#1}} 
\nc{\todo}[1]{\tred{To do:} #1}

\nc{\mlabel}[1]{\label{#1}}  
\nc{\mcite}[1]{\cite{#1}}  
\nc{\mref}[1]{\ref{#1}}  
\nc{\meqref}[1]{\eqref{#1}} 
\nc{\mbibitem}[1]{\bibitem{#1}} 

\delete{
\nc{\mlabel}[1]{\label{#1}  
{\hfill \hspace{1cm}{\bf{{\ }\hfill(#1)}}}}
\nc{\mcite}[1]{\cite{#1}{{\bf{{\ }(#1)}}}}  
\nc{\mref}[1]{\ref{#1}{{\bf{{\ }(#1)}}}}  
\nc{\meqref}[1]{\eqref{#1}{{\bf{{\ }(#1)}}}} 
\nc{\mbibitem}[1]{\bibitem[\bf #1]{#1}} 
}

\newcommand {\comment}[1]{{\marginpar{*}\scriptsize\textbf{Comments:} #1}}
\nc{\mrm}[1]{{\rm #1}}
\nc{\id}{\mrm{id}}  \nc{\Id}{\mrm{Id}}
\nc{\admset}{\{\pm x\}\cup (-x+K^{\times}) \cup K^{\times} x^{-1}}

\def\a{\alpha}
\def\ad{associative D-}
\def\padm{$P$-admissible~}
\def\asi{ASI~}
\def\aybe{aYBe~}
\def\b{\beta}
\def\bd{\boxdot}
\def\bbf{\overline{f}}
\def\bF{\overline{F}}
\def\bbF{\overline{\overline{F}}}
\def\bbbf{\overline{\overline{f}}}
\def\bg{\overline{g}}
\def\bG{\overline{G}}
\def\bbG{\overline{\overline{G}}}
\def\bbg{\overline{\overline{g}}}
\def\bT{\overline{T}}
\def\bt{\overline{t}}
\def\bbT{\overline{\overline{T}}}
\def\bbt{\overline{\overline{t}}}
\def\bR{\overline{R}}
\def\br{\overline{r}}
\def\bbR{\overline{\overline{R}}}
\def\bbr{\overline{\overline{r}}}
\def\bu{\overline{u}}
\def\bU{\overline{U}}
\def\bbU{\overline{\overline{U}}}
\def\bbu{\overline{\overline{u}}}
\def\bw{\overline{w}}
\def\bW{\overline{W}}
\def\bbW{\overline{\overline{W}}}
\def\bbw{\overline{\overline{w}}}
\def\btl{\blacktriangleright}
\def\btr{\blacktriangleleft}
\def\calo{\mathcal{O}}
\def\ci{\circ}
\def\d{\delta}
\def\dd{\diamondsuit}
\def\D{\Delta}
\def\frakB{\mathfrak{B}}
\def\G{\Gamma}
\def\g{\gamma}
\def\gg{\mathfrak{g}}
\def\hh{\mathfrak{h}}
\def\k{\kappa}
\def\l{\lambda}
\def\ll{\mathfrak{L}}
\def\lh{\leftharpoonup}
\def\lr{\longrightarrow}
\def\N{Nijenhuis~}
\def\o{\otimes}
\def\om{\omega}
\def\opa{\cdot_{A}}
\def\opb{\cdot_{B}}
\def\p{\psi}
\def\r{\rho}
\def\ra{\rightarrow}
\def\rbs{Rota-Baxter system}
\def\rep{representation~}
\def\rh{\rightharpoonup}
\def\rr{{\Phi_r}}
\def\s{\sigma}
\def\srbs{symmetric Rota-Baxter system}
\def\st{\star}
\def\ss{\mathfrak{sl}_2}
\def\ti{\times}
\def\tl{\triangleright}
\def\tr{\triangleleft}
\def\v{\varepsilon}
\def\vp{\varphi}
\def\vth{\vartheta}
\def\wn{\widetilde{N}}
\def\wb{\widetilde{\beta}}

\newtheorem{thm}{Theorem}[section]
\newtheorem{lem}[thm]{Lemma}
\newtheorem{cor}[thm]{Corollary}
\newtheorem{pro}[thm]{Proposition}
\theoremstyle{definition}
\newtheorem{defi}[thm]{Definition}
\newtheorem{ex}[thm]{Example}
\newtheorem{rmk}[thm]{Remark}
\newtheorem{pdef}[thm]{Proposition-Definition}
\newtheorem{condition}[thm]{Condition}
\newtheorem{question}[thm]{Question}
\renewcommand{\labelenumi}{{\rm(\alph{enumi})}}
\renewcommand{\theenumi}{\alph{enumi}}
\newcommand{\End}{\mathrm{End}} 
\font\cyr=wncyr10


 \title{Symmetric Rota-Baxter systems and applications}

 \author[Zhao]{Chan Zhao}
 \address{School of Mathematics and Statistics, Henan Normal University, Xinxiang 453007, China}
         \email{zhaochan2024@stu.htu.edu.cn}

 \author[Li]{Haiying Li}
 \address{School of Mathematics and Statistics, Henan Normal University, Xinxiang 453007, China}
        \email{lihaiying@htu.edu.cn}

 \author[Ma]{Tianshui Ma \textsuperscript{*}}
 \address{School of Mathematics and Statistics, Henan Normal University, Xinxiang 453007, China}
         \email{matianshui@htu.edu.cn}

  \thanks{\textsuperscript{*}Corresponding author}

\date{\today}

 \begin{abstract}
 Rota-Baxter operators and bialgebras are closely connected in several applications, such as the Connes-Kreimer renormalization framework and the operator approach to the classical Yang-Baxter equation. The concept of a Rota-Baxter system was introduced in 2016 as a generalization of a Rota-Baxter operator. In this work, we introduce a bialgebra structure compatible with a symmetric Rota-Baxter system, called a symmetric Rota-Baxter antisymmetric infinitesimal (ASI) bisystem. This bialgebra is characterized by generalizations of matched pairs of algebras and double constructions of Frobenius algebras to the setting of symmetric Rota-Baxter systems. Investigating the coboundary case leads to an enriched version of the associative Yang-Baxter equation (aYBe) adapted to symmetric Rota-Baxter systems. Antisymmetric solutions of this equation are used to construct symmetric Rota-Baxter ASI bisystems. We also introduce the notion of an $\mathcal{O}$-operator on a symmetric Rota-Baxter system, which produces solutions of the admissible aYBe in such systems and thereby gives rise to symmetric Rota-Baxter ASI bisystems. A symmetric Rota-Baxter bisystem generalizes several known structures, including Rota-Baxter Lie bisystems, Rota-Baxter ASI bialgebras, Rota-Baxter Lie bialgebras, averaging ASI bialgebras, averaging Lie bialgebras, and special apre-perm bialgebras.
 \end{abstract}
\subjclass[2020]{
17B38,  
17A30,  
16T25,   
16T10,   
17B62.   
}

\keywords{Rota-Baxter system; Rota-Baxter algebra; Rota-Baxter ASI bialgebra; Rota-Baxter Lie bialgebra; averaging ASI bialgebra; averaging Lie bialgebra; special apre-perm bialgebra}

 \maketitle

 \vspace{-0.82cm}

  \tableofcontents


 \allowdisplaybreaks

 \section{Introduction} In this paper, we investigate the bialgebraic structures on symmetric Rota-Baxter systems and examine their properties. Based on these findings, we establish connections to various related structures, such as Rota-Baxter Lie systems, Rota-Baxter ASI bialgebras \cite{BGM}, Rota-Baxter Lie bialgebras \cite{BGLM,LS,Shi}, averaging ASI bialgebras \cite{BGLZ,HC}, averaging Lie bialgebras \cite{BGLZ1,HSZ}, and special apre-perm bialgebras \cite{BGLZ,ZL}.

 \subsection{Rota-Baxter systems} A Rota-Baxter algebra of weight $\lambda$ is a pair $(A, R)$, where $A$ is an algebra and $R: A\rightarrow A$ is a linear map such that the following equation holds:
 \begin{eqnarray}
 R(a) R(b)=R(R(a) b+a R(b)+\lambda a b),\label{eq:cee}
 \end{eqnarray}
 for all $a, b\in A$. Rota-Baxter algebra originated from the work of Baxter and Rota in fluctuation theory and combinatorics \cite{B,Ro}. The theory now plays a pivotal role in modern mathematics, particularly in the study of integrable systems, operads, and the renormalization of quantum field theories \cite{CK,GL,ZD,Guo1,GPZ,BLST,MLLC,MS22,MS25,SW,WBLS}.

 As a natural generalization of Rota-Baxter algebras, the notion of a Rota-Baxter system was introduced by T. Brzezi\'{n}ski in \cite{Br1}. It consists of a triple $(A, R, S)$, where $A$ is an algebra and $R, S: A \to A$ are linear operators satisfying  \vskip-6mm
 \begin{eqnarray*}
 &R(a)R(b)= R(R(a)b + a S(b)),\quad S(a)S(b)= S(R(a)b + a S(b)),&\label{eq:rbs1}\label{eq:rbs2}
 \end{eqnarray*}
 for all $a, b \in A$. Much like in the case of a Rota-Baxter algebra, such a system gives rise to a dendriform algebra structure. Furthermore, examples of Rota-Baxter systems can be constructed from quasitriangular covariant bialgebras, which can be seen as a natural generalization of infinitesimal bialgebras studied in \cite{Ag1,Ag2,EF}. Other studies on the Rota-Baxter system can be found in the literature \cite{BGM,Br2,MMS,MMS1,BGGZ,LW,DG,OPV}.

 By \cite[Lemma 2.2]{Br1}, if $R$ is a Rota-Baxter operator of weight $\lambda$ on $A$, then both $(A, R, R+\lambda \id)$ and $(A, R + \lambda \id, R)$ form Rota-Baxter systems. Inspired by this construction, in this paper, we study a class of Rota-Baxter systems in which both $(A, R, S)$ and $(A, S, R)$ are Rota-Baxter systems. This leads to the notion below: a \textbf{symmetric Rota-Baxter system} is a triple $(A, R, S)$ such that both $(A, R, S)$ and $(A, S, R)$ are Rota-Baxter systems. Just as a Rota-Baxter algebra induces a Rota-Baxter Lie algebra, a symmetric Rota-Baxter system induces a Rota-Baxter Lie system. While this holds for the symmetric case, a general Rota-Baxter system does not, in general, induce such a Lie system structure. This provides another motivation for studying symmetric Rota-Baxter systems.

 \subsection{Bialgebras}
 A bialgebra is a mathematical structure that unifies the notions of an algebra and a coalgebra in a compatible manner, with prominent examples including Lie bialgebras \cite{Dr}, Hopf algebras \cite{Sw}, and (antisymmetric) infinitesimal bialgebras \cite{Ag1,Ag2,Bai1,Zhe}. In recent years, bialgebraic structures have been extensively explored in the context of Rota-Baxter-type algebras-that is, algebras endowed with a linear operator-leading to advances in Rota-Baxter ASI bialgebras \cite{BGM}, Rota-Baxter bialgebras \cite{MLiu,MMS,MLCW}, Rota-Baxter Lie bialgebras \cite{BGLM,LS,Shi}, as well as their averaging, differential, and Nijenhuis counterparts \cite{BGLZ,HC,BGLZ1,HSZ,LLB,LM,Lar,MLo}.

 In the associative setting, the notion of a Rota-Baxter bialgebra was introduced in \cite{MLiu}, where examples were constructed via the Radford biproduct from Hopf algebra theory. This framework was later extended to Rota-Baxter bisystems and Rota-Baxter Hopf $\pi$-algebras in \cite{MMS} and \cite{MLCW}, respectively. However, a coherent compatibility condition between Rota-Baxter operators on algebras and Rota-Baxter co-operators on coalgebras remained unclear. To address this gap, Rota-Baxter ASI bialgebras were introduced in \cite{BGM}, which established such compatibility conditions and can be characterized equivalently by matched pairs and double constructions. To our knowledge, the study of bialgebraic structures involving multiple operators remains an open problem. This paper presents a solution by developing the theory for algebras equipped with two operators.

 \subsection{Main results and layout of the paper} The main notions and constructions in this paper are summarized in the following diagram. 
$$ \xymatrix{
 &\text{matched pairs of symmetric}\atop {\text{ Rota-Baxter systems}}
 &\text{Rota-Baxter}\atop {\text{\asi bialgebras\cite{BGM}}}\ar@2{->}^{Prop~\ref{pro:hb}}[r]
 &\text{Rota-Baxter}\atop {\text{Lie bialgebras\cite{BGLM}}}\\
\mathcal{O}\text{-operators on symmetric }\atop{\text{Rota-Baxter systems}} \ar@2{<->}^{Thm~\ref{thm:dm}}[r]
&
\text{solutions of}\atop {\text{$(Q,T)$-admissible aYBe}}
\ar@2{->}^{Thm~\ref{thm:do}}[r]
& \text{symmetric Rota-Baxter}\atop {\text{\asi bisystems}} \ar@2{->}^{\quad Prop~\ref{pro:en}}[r] \ar@2{<->}_{~~~Thm~\ref{thm:da}\quad}[dl] \ar@2{<->}_{~~Thm~\ref{thm:cz}}[ul]\ar@2{<->}_{Thm ~\ref{thm:es}}[u]
\ar@2{<->}^{Thm~\ref{thm:eu}}[d]
&\text{Rota-Baxter}\atop {\text{Lie bisystems}} \ar@2{<->}^{Thm~\ref{thm:bc}}[u]\ar@2{<->}_{Thm~\ref{thm:eu-1}
}[d]
& \\
&\text{double construction of}\atop {\text{Frobenius Rota-Baxter
systems}}
&\text{averaging}\atop {\text{\asi bialgebras \cite{BGLZ,HC}}}\ar@2{->}^{Prop~\ref{pro:ex}}[r] \ar@2{->}^{\tiny\hbox{\cite[Prop 4.13]{BGLZ}}}[d]
&\text{averaging}\atop {\text{Lie bialgebras \cite{BGLZ1,HSZ}}} \\
&
&\text{special}\atop {\text{apre-perm bialgebra \cite{BGLZ}}}
}
$$

 In Section \ref{se:srbsrep}, we introduce the notion of a symmetric Rota-Baxter system $(A, R, S)$, defined by the condition that both $(A, R, S)$ and $(A, S, R)$ are Rota-Baxter systems in the sense of Brzezi\'{n}ski \cite{Br1}. We then provide several constructions of such systems. Furthermore, we introduce the concept of a dual representation of a representation of a symmetric Rota-Baxter system, which plays a key role in understanding the bialgebraic structures associated with these systems.

 Section \ref{se:srbbialg} is devoted to symmetric Rota-Baxter ASI bisystems. We define this notion and give equivalent characterizations in terms of matched pairs and double constructions of symmetric Rota-Baxter systems.

 In Section \ref{se:aybe}, we focus on the coboundary case of symmetric Rota-Baxter ASI bisystems. Explicit constructions are provided via solutions to an admissible aYBe and by means of $\mathcal{O}$-operators in a symmetric Rota-Baxter system.

 Finally, in Section \ref{se:appl}, we show how our results recover and unify several existing bialgebraic structures, including Rota-Baxter Lie systems, Rota-Baxter ASI bialgebras \cite{BGM}, Rota-Baxter Lie bialgebras \cite{BGLM,LS,Shi}, averaging ASI bialgebras \cite{BGLZ,HC}, averaging Lie bialgebras \cite{BGLZ1,HSZ}, and special apre-perm bialgebras \cite{BGLZ,ZL}.

 \smallskip	
 \noindent{\bf Notations:} Throughout this paper, all vector spaces, tensor products and linear homomorphisms are over a field $K$ of characteristic zero. We denote by $\id_M$ the identity map from $M$ to $M$, $\sigma: M\otimes N\to N\otimes M$ by the flip map. All (co)algebras in this paper are assumed to be (co)associative. And we freely use the Sweedler notation for a coalgebra in \cite{Sw}. For brevity, let $(C, \D)$ be a coalebra, we write a comultiplication $\D(c)$ as $c_{(1)}\otimes c_{(2)}, ~\forall~c\in C$ without the summation sign.
	
 \section{Symmetric Rota-Baxter systems and their representations}\label{se:srbsrep}

 \subsection{Symmetric Rota-Baxter systems} Recall that an \textbf{(associative) algebra} is a pair $(A,\cdot)$ consisting of a vector space $A$ and a binary operation $\cdot: A\otimes A\rightarrow A$ satisfying $(a b) c=a (b c)$, for all $a, b, c\in A$.

 \begin{defi}\label{de:ea} A \textbf{symmetric Rota-Baxter system} is a triple $((A, \cdot), R, S)$ ($(A, R, S)$ for short), where $A$ is an algebra and $R, S: A\rightarrow A$ are linear maps satisfying the identities:
 \begin{eqnarray}
 R(a) R(b)=R(R(a) b+a S(b))=R(S(a) b+a R(b)),\label{eq:ea0}\\
 S(a) S(b)=S(R(a) b+a S(b))=S(S(a) b+a R(b)),\label{eq:ea1}
 \end{eqnarray}
 where $a, b\in A$. A morphism between two symmetric Rota-Baxter systems $(A, R_{A}, S_{A})$ and $(B, R_{B}, S_{B})$ is an algebra map $f : A \rightarrow B$ such that $f \circ R_{A} = R_{B} \circ f$ and $f \circ S_{A} = S_{B} \circ f$.
 \end{defi}

 \begin{rmk}\label{rmk:eb}
  \begin{enumerate}[(1)]
    \item The definition of a symmetric Rota-Baxter system is symmetric in the operators $R$ and $S$, that is, if $(A, R, S)$ is a symmetric Rota-Baxter system, then so is $(A, S, R)$.

    \item If $A$ is a commutative algebra, then a Rota-Baxter system $(A, R, S)$ is symmetric.

    \item If $R$ or $S$ is bijective, then $R(a) b+a S(b)=S(a) b+a R(b)$.

    \item If $(A, R, S)$ is a symmetric Rota-Baxter system, then both $(A, R, 0)$ and $(A, 0, S)$ are averaging algebras.
  \end{enumerate}
 \end{rmk}

 \begin{ex}\label{ex:cee} Let $(A, \cdot)$ be a $2$-dimensional algebra with basis $e, f$, where $\cdot$ is given by
 \begin{center}
 \begin{tabular}{r|rr}
 $\cdot$ & $e$  & $f$ \\
 \hline
 $e$ & $0$  & $0$  \\
 $f$ & $ke$  & $kf$  \\
 \end{tabular},~~
 where $0\neq k\in \mathfrak{R}$.
 \end{center}\vskip1mm
 Then $(A, R, S)$ are symmetric Rota-Baxter systems, where $R, S$ are given as follows.
 \begin{enumerate}[(1)]
 \item $R(e)=p_{1}e+p_{2}f,~~ R(f)=p_{3}e+\frac{p_{2}p_{3}}{p_{1}}f,\\ S(e)=-\frac{p_{2}p_{3}}{p_{1}}e+p_{2}f,~~ S(f)=p_{3}e-p_{1}f~(p_{1}\neq 0)$,
 \item $R(e)=0,~~ R(f)=p_{1}e+p_{2}f,\quad S(e)=-p_{2}e,~~ S(f)=p_{1}e$,
 \item $R(e)=p_{1}f,~~ R(f)=p_{2}f,\quad S(e)=-p_{2}e+p_{1}f,~~ S(f)=0 ~(p_{1}\neq 0)$,
 \item $R(e)=0,~~ R(f)=p_{1}e,\quad S(e)=0,~~ S(f)=p_{2}e~(p_{2}\neq p_{1})$,
 \item $R(e)=0,~~ R(f)=p_{1}e+p_{2}f,\quad S=0~(p_{2}\neq 0)$,
 \item $R=0,\quad S(e)=0,~~ S(f)=p_{1}e+p_{2}f~(p_{2}\neq 0)$,
 \item $R=0,\quad S(e)=p_{1}e,~~ S(f)=p_{1}f~(p_{1}\neq 0)$,
 \item $R(e)=p_{1}e,~~ R(f)=p_{1}f,\quad S=0~(p_{1}\neq 0)$,
 \end{enumerate}
  where $p_{i}, i=1, 2, 3$ are parameters.
 \end{ex}

 \begin{ex}\label{ex:cggg} Let $A$ be an algebra. Suppose that $\mathfrak{r}, \mathfrak{s}$ are central in $A$ such that $\mathfrak{r} \mathfrak{s}=0$. Define $R, S: A\rightarrow A$ by
 \begin{eqnarray*}
 R:a\mapsto a~ \mathfrak{r},\quad S:a\mapsto \mathfrak{s}~a,
 \end{eqnarray*}
 then $(A, R, S)$ is a symmetric Rota-Baxter system.
 \end{ex}

 \begin{proof} Since $\mathfrak{s}$ is central, $(A, R, S)$ is a Rota-Baxter system by \cite[Example 2.4]{Br1}. By $\mathfrak{r}$ and $\mathfrak{s}$ are central and $\mathfrak{r} \mathfrak{s}=0$, $(A, S, R)$ is a Rota-Baxter system. Hence, $(A, R, S)$ is a symmetric Rota-Baxter system.
 \end{proof}

 \begin{lem}\label{lem:ec} Let $A$ be an algebra and $\lambda\in K$. Then $R$ is a Rota-Baxter operator of weight $\lambda$ on $A$ if and only if $(A, R, R+\lambda \id_{A})$ or $(A, R+\lambda \id_{A}, R)$ is a symmetric Rota-Baxter system.
 \end{lem}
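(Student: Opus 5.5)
By Remark~\ref{rmk:eb}(1), $(A, R, R+\lambda\id_A)$ is a symmetric Rota-Baxter system if and only if $(A, R+\lambda\id_A, R)$ is one. So the disjunction ``or'' is harmless, and I will work with $S:=R+\lambda\id_A$ and the triple $(A,R,S)$. The plan is to expand the four expressions appearing in \eqref{eq:ea0} and \eqref{eq:ea1} and compare them with the Rota-Baxter identity \eqref{eq:cee}.

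First I rewrite the two arguments. One has
\[
R(a)b+aS(b)=R(a)b+aR(b)+\lambda ab=S(a)b+aR(b),
\]
so the two inner expressions in \eqref{eq:ea0} coincide, and likewise in \eqref{eq:ea1}. Call this common element $u:=R(a)b+aR(b)+\lambda ab$. The system is then symmetric if and only if
\[
R(a)R(b)=R(u)\quad\text{and}\quad S(a)S(b)=S(u)\quad\text{for all } a,b\in A.
\]
The first identity is exactly \eqref{eq:cee}. This gives the direction ``symmetric system $\Rightarrow$ $R$ is Rota-Baxter of weight $\lambda$'' immediately.

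For the converse, assume \eqref{eq:cee} and check the second identity by direct expansion:
\[
S(a)S(b)=R(a)R(b)+\lambda R(a)b+\lambda aR(b)+\lambda^2 ab ,
\]
\[
S(u)=R(u)+\lambda\bigl(R(a)b+aR(b)+\lambda ab\bigr).
\]
Using $R(a)R(b)=R(u)$, these two expressions agree term by term. This is the same computation as Brzezi\'nski's \cite[Lemma 2.2]{Br1}, and one could alternatively just cite it together with the observation above that the two inner expressions coincide.

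There is no real obstacle. The only point needing care is that the four equalities in Definition~\ref{de:ea} collapse to two because the arguments agree. After that, the identity for $S$ follows from the one for $R$ by linearity.
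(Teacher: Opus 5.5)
Your proposal is correct and follows essentially the same route as the paper. The paper cites \cite[Lemma 2.2]{Br1} for the forward direction, namely that both $(A,R,R+\lambda\id_A)$ and $(A,R+\lambda\id_A,R)$ are Rota-Baxter systems, and for the converse observes that Eqs.~\eqref{eq:ea0}--\eqref{eq:ea1} reduce to Eq.~\eqref{eq:cee}; you carry out that cited computation explicitly, and your observation $R(a)b+aS(b)=S(a)b+aR(b)$ makes clear why the four conditions collapse to two.
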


 \begin{proof} $(\Rightarrow)$ By \cite[Lemma 2.2]{Br1}, we know that both $(A, R, R+\lambda \id_{A})$ and $(A, R+\lambda \id_{A}, R)$ are Rota-Baxter systems, i.e., $(A, R, R+\lambda \id_{A})$ or $(A, R+\lambda \id_{A}, R)$ is a symmetric Rota-Baxter system.

 $(\Leftarrow)$ For $R$ and $S=R+\lambda \id_{A}$, Eqs.(\ref{eq:ea0}) and (\ref{eq:ea1}) are exactly Eq.(\ref{eq:cee}), i.e., $R$ is a Rota-Baxter operator of weight $\lambda$ on $A$.
 \end{proof}

 A {\bf dendriform algebra} \cite{Lo} is a system consisting of a vector space $V$ and two bilinear operations $\prec, \succ$ on $V$ such that, for all $a, b, c \in V$,
 \begin{eqnarray*}
 &(a \prec b) \prec c= a \prec (b \prec c + b \succ c),& \label{eq:1.2a} \\
 &a \succ (b \prec c)= (a \succ b) \prec c, & \label{eq:1.2b} \\
 &a \succ (b \succ c)= (a \prec b + a \succ b) \succ c. & \label{eq:1.2c}
 \end{eqnarray*}

 \begin{lem}\label{lem:ed} Let $(A, R, S)$ be a symmetric Rota-Baxter system. Define the linear maps $\prec, $ $\succ, \prec', \succ': A\otimes A \rightarrow A$ by
 \begin{eqnarray*}
  a \prec b = a S(b), \ \ \  a \succ b = R(a) b,\ \ \  a \prec' b = a R(b), \ \ \  a \succ' b = S(a) b, \forall~ a, b\in A.
 \end{eqnarray*}
 Then both $(A, \prec, \succ)$ and $(A, \prec', \succ')$ are dendriform algebras.
 \end{lem}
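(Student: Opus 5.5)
Each of the three dendriform axioms will be checked directly from the definitions, using associativity of $A$ together with the first equality in Eqs.~(\ref{eq:ea0}) and (\ref{eq:ea1}), namely $R(a)R(b)=R(R(a)b+aS(b))$ and $S(a)S(b)=S(R(a)b+aS(b))$. These two identities say exactly that $(A,R,S)$ is a Rota-Baxter system. This is in line with \cite{Br1}, where a Rota-Baxter system is shown to give a dendriform algebra, so the first half of the statement is essentially Brzezi\'nski's result; we still record the computation for completeness.

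For $(A,\prec,\succ)$, the first axiom reads $(aS(b))S(c)=aS\big(bS(c)+R(b)c\big)$. By associativity the left side is $a\,(S(b)S(c))$, and Eq.~(\ref{eq:ea1}) gives $S(b)S(c)=S(R(b)c+bS(c))$, so the two sides agree. The middle axiom reads $R(a)(bS(c))=(R(a)b)S(c)$, which is pure associativity. The third axiom reads $R(a)(R(b)c)=R\big(aS(b)+R(a)b\big)c$. Here the left side equals $(R(a)R(b))c$, and Eq.~(\ref{eq:ea0}) rewrites $R(a)R(b)$ as $R(R(a)b+aS(b))$, which finishes the check.

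For $(A,\prec',\succ')$ the same computation applies with the roles of $R$ and $S$ interchanged. The first axiom becomes $a\,(R(b)R(c))=aR\big(bR(c)+S(b)c\big)$, and the third becomes $(S(a)S(b))c=S\big(aR(b)+S(a)b\big)c$. These follow from the \emph{second} equalities $R(b)R(c)=R(S(b)c+bR(c))$ and $S(a)S(b)=S(S(a)b+aR(b))$ in Eqs.~(\ref{eq:ea0}) and (\ref{eq:ea1}). In other words, this half is the Rota-Baxter system $(A,S,R)$ of Remark~\ref{rmk:eb}(1) fed into the first half of the argument.

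There is no real obstacle here. The only point needing care is to match each dendriform axiom with the correct one of the two equalities in each defining identity. That bookkeeping is precisely where the symmetry hypothesis is used: the second dendriform structure needs the equalities that a general Rota-Baxter system does not supply.
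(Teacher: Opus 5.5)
Your proof is correct and takes essentially the paper's route. The paper just cites \cite[Proposition 2.5]{Br1} (a Rota-Baxter system yields a dendriform algebra), implicitly applied to both $(A,R,S)$ and $(A,S,R)$. Your direct check of the three axioms is that computation written out, with each axiom correctly matched to the first or second equality in Eqs.~\eqref{eq:ea0} and \eqref{eq:ea1}.
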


 \begin{proof} It is direct by \cite[Proposition 2.5]{Br1}.
 \end{proof}

 Recall from \cite{Vin,Bai3} that a {\bf (left) pre-Lie algebra} is a pair $(A, \circ)$ consisting of a vector space $A$ and a bilinear map $\circ : A \otimes A \rightarrow A$ such that for all $x, y, z \in A$,
 \begin{equation*}
 (x \circ y) \circ z - x \circ (y \circ z) = (y \circ x) \circ z - y \circ (x \circ z).
 \end{equation*}

 \begin{rmk}\label{rmk:ef} Let $(A, R, S)$ be a symmetric Rota-Baxter system. Then by \cite[Corollary 2.7]{Br1} and Lemma \ref{lem:ed}, both $(A, \star)$ and $(A, \star')$ are associative algebras, both $(A, \bullet)$ and $(A, \bullet')$ are pre-Lie algebras, where the linear maps $\star, \star', \bullet, \bullet': A\otimes A\rightarrow A$ are defined by
 \begin{eqnarray*}\label{eq:ef0}
 a \star b = R(a) b+a S(b),  & a \star' b = a R(b)+S(a) b, \\
 a \bullet b = R(a) b-b S(a),& a \bullet' b = S(a) b-b R(a),
 \end{eqnarray*}
 for all $a, b\in A$.
 \end{rmk}

 A symmetric Rota-Baxter system can be obtained by a symmetric associative Yang-Baxter pair.

 \begin{defi}\label{de:eh} Let $A$ be an algebra. A \textbf{symmetric associative Yang-Baxter pair} is a pair $(\mathfrak{r}, \mathfrak{s})$, where $\mathfrak{r}, \mathfrak{s} \in A\otimes A$ such that the following equations hold:
 \begin{eqnarray*}
 &\mathfrak{r}_{12}\mathfrak{r}_{23}=\mathfrak{r}_{13}\mathfrak{r}_{12}
 +\mathfrak{s}_{23}\mathfrak{r}_{13}=\mathfrak{r}_{13}\mathfrak{s}_{12}
 +\mathfrak{r}_{23}\mathfrak{r}_{13},&\\
 &\mathfrak{s}_{12}\mathfrak{s}_{23}=\mathfrak{s}_{13}\mathfrak{r}_{12}
 +\mathfrak{s}_{23}\mathfrak{s}_{13}=\mathfrak{s}_{13}\mathfrak{s}_{12}
 +\mathfrak{r}_{23}\mathfrak{s}_{13},&
 \end{eqnarray*}
 where $\mathfrak{r}_{12}=\mathfrak{r}\otimes 1, \mathfrak{r}_{23}=1\otimes \mathfrak{r}, etc.$
 \end{defi}

 \begin{rmk}\label{rmk:ei} $(\mathfrak{r}, \mathfrak{s})$ is a symmetric associative Yang-Baxter pair if and only if both $(\mathfrak{r}, \mathfrak{s})$ and $(\mathfrak{s}, \mathfrak{r})$ are associative Yang-Baxter pairs (see \cite[Definition 3.1]{Br1}).
 \end{rmk}

 \begin{pro}\label{pro:ej} Let $(\mathfrak{r}, \mathfrak{s})$ be a symmetric associative Yang-Baxter pair on $A$. Define the linear maps $R, S: A \rightarrow A$ by
 \begin{eqnarray}\label{eq:ej1}
 R(a) =\sum \mathfrak{r}^{[1]} a \mathfrak{r}^{[2]}, \ \ \  S(a) = \mathfrak{s}^{[1]} a \mathfrak{s}^{[2]}, \forall~ a\in A,
 \end{eqnarray}
 here $\mathfrak{r}=\sum \mathfrak{r}^{[1]} \otimes \mathfrak{r}^{[2]},  \mathfrak{s}=\sum \mathfrak{s}^{[1]} \otimes \mathfrak{s}^{[2]}$. Then $(A, R, S)$ is a symmetric Rota-Baxter system.
 \end{pro}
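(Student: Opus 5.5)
The plan is to reduce the statement to Brzezi\'{n}ski's result relating associative Yang-Baxter pairs and Rota-Baxter systems, and to use the symmetry recorded in Remark \ref{rmk:ei}. By \cite[Proposition 3.2]{Br1}, if $(\mathfrak{r}, \mathfrak{s})$ is an associative Yang-Baxter pair, then the maps $R, S$ of Eq.~(\ref{eq:ej1}) make $(A, R, S)$ a Rota-Baxter system. By Remark \ref{rmk:ei}, a symmetric associative Yang-Baxter pair $(\mathfrak{r}, \mathfrak{s})$ has both $(\mathfrak{r}, \mathfrak{s})$ and $(\mathfrak{s}, \mathfrak{r})$ as associative Yang-Baxter pairs.

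Applying the cited proposition to $(\mathfrak{r}, \mathfrak{s})$ shows that $(A, R, S)$ is a Rota-Baxter system. Applying it to $(\mathfrak{s}, \mathfrak{r})$ shows that $(A, S, R)$ is a Rota-Baxter system, because the operators attached to the swapped pair are exactly $S$ and $R$ in that order. By Definition \ref{de:ea}, these two facts together say that $(A, R, S)$ is a symmetric Rota-Baxter system.

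To make the argument self-contained rather than relying only on the citation, I would also verify the identities directly. Expand
\[
R(a)R(b)=\mathfrak{r}^{[1]} a\, \mathfrak{r}^{[2]}\bar{\mathfrak{r}}^{[1]} b\, \bar{\mathfrak{r}}^{[2]},
\]
where $\bar{\mathfrak{r}}$ denotes a second copy of $\mathfrak{r}$. This is the image of $\mathfrak{r}_{12}\mathfrak{r}_{23}$ under the linear map
\[
x\otimes y\otimes z\mapsto x\,a\,y\,b\,z .
\]
Under the same map, $\mathfrak{r}_{13}\mathfrak{r}_{12}+\mathfrak{s}_{23}\mathfrak{r}_{13}$ goes to $R(R(a)b+aS(b))$. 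Likewise $\mathfrak{r}_{13}\mathfrak{s}_{12}+\mathfrak{r}_{23}\mathfrak{r}_{13}$ goes to $R(S(a)b+aR(b))$. The two defining equations of the symmetric pair then give Eqs.~(\ref{eq:ea0}) and (\ref{eq:ea1}).

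The only delicate point is the bookkeeping in this direct check, namely matching each tensor term to the correct placement of $a$ and $b$. For example, $\mathfrak{r}_{13}\mathfrak{r}_{12}=\mathfrak{r}^{[1]}\bar{\mathfrak{r}}^{[1]}\otimes\bar{\mathfrak{r}}^{[2]}\otimes\mathfrak{r}^{[2]}$ maps to $\mathfrak{r}^{[1]}\bar{\mathfrak{r}}^{[1]}a\bar{\mathfrak{r}}^{[2]}b\,\mathfrak{r}^{[2]}=R(R(a)b)$. The remaining terms are checked the same way, so citing Brzezi\'{n}ski's result suffices.
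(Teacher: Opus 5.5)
Your proposal is correct and follows the paper's approach: the paper's proof only says the result is obtained as in \cite[Proposition 3.4]{Br1}, and you make this explicit by using Remark \ref{rmk:ei} to apply that result to both $(\mathfrak{r},\mathfrak{s})$ and $(\mathfrak{s},\mathfrak{r})$, and by the direct leg-notation check via $x\otimes y\otimes z\mapsto xaybz$. One small correction: the paper cites this result as \cite[Proposition 3.4]{Br1}, not Proposition 3.2.
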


 \begin{proof} Similar to the proof of \cite[Proposition 3.4]{Br1}.
 \end{proof}

 \subsection{Dual representation}

 \begin{defi}\label{de:cb} Let $A$ be an algebra. An \textbf{$A$-bimodule} is a triple $(M, \ell, r)$, where $M$ is a vector space and $\ell, r: A\rightarrow \End(M)$ are linear maps such that the following equations hold:
 \begin{eqnarray}
 &\ell(a)(\ell(b)m)=\ell(a b)m,~~ m r(a b)= (m r(a))r(b),&\label{eq:cb}\\
 &\ell(a)(m r(b))=(\ell(a)m)r(b),&\label{eq:cb1}
 \end{eqnarray}
 for all $a, b\in A$.
 \end{defi}

 \begin{ex}\label{ex:cd} Let $(A,\cdot)$ be an algebra. Define linear maps $\mathcal{L}, \mathcal{R}: A\rightarrow \End(A)$ by $\mathcal{L}(a)b=a\cdot b= a\mathcal{R}(b),$ for all $a, b\in A$. Then, $(A, \mathcal{L}, \mathcal{R})$ is an $A$-bimodule, which is called an \textbf{adjoint $A$-bimodule}.
 \end{ex}

 \begin{lem}\label{lem:semidirectpro}{\em \cite{Bai1,BGM}} Let $A$ be an algebra. Then $(M, \ell, r)$ is an $A$-bimodule if and only if there is an algebra $(A\oplus M, \cdot_{A\oplus M})$ on the direct sum $A\oplus M$ of vector spaces by defining the multiplication $\cdot_{A\oplus M}$ by
 \begin{equation}\label{eq:cd1}
 (a+m)\cdot_{A\oplus M}(b+n)=a\cdot b+\ell(a)n+m r(b),
 \end{equation}
 for all $a, b, \in A$, $m, n\in M$. We denote it by $A\ltimes_{\ell, r}M$.
 \end{lem}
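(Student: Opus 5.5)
The plan is to prove both implications by expanding associativity of $\cdot_{A\oplus M}$ on elements of the form $a+m$ and comparing components. Since the product is bilinear, associativity holds if and only if
\[
\bigl((a+m)\cdot_{A\oplus M}(b+n)\bigr)\cdot_{A\oplus M}(c+p)=(a+m)\cdot_{A\oplus M}\bigl((b+n)\cdot_{A\oplus M}(c+p)\bigr)
\]
for all $a,b,c\in A$ and $m,n,p\in M$. By Eq.~(\ref{eq:cd1}), the left side is
\[
(ab)c+\ell(ab)p+(\ell(a)n)r(c)+(m r(b))r(c),
\]
and the right side is
\[
a(bc)+\ell(a)(\ell(b)p)+\ell(a)(n r(c))+m r(bc).
\]
Terms involving two elements of $M$ do not occur, because the product of $M$ with $M$ is zero.

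For ($\Rightarrow$), assume $(M,\ell,r)$ is an $A$-bimodule. The $A$-components agree by associativity of $A$. The $M$-components agree term by term:
\begin{itemize}
\item[] $\ell(ab)p=\ell(a)(\ell(b)p)$ and $(m r(b))r(c)=m r(bc)$, both by Eq.~(\ref{eq:cb});
\item[] $(\ell(a)n)r(c)=\ell(a)(n r(c))$, by Eq.~(\ref{eq:cb1}).
\end{itemize}
Hence $A\oplus M$ is an associative algebra.

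For ($\Leftarrow$), assume $\cdot_{A\oplus M}$ is associative. Specialize the identity above to isolate each axiom:
\begin{itemize}
\item[] taking $m=n=0$ leaves $\ell(ab)p=\ell(a)(\ell(b)p)$;
\item[] taking $n=p=0$ leaves $(m r(b))r(c)=m r(bc)$;
\item[] taking $m=p=0$ leaves $(\ell(a)n)r(c)=\ell(a)(n r(c))$.
\end{itemize}
These are exactly Eqs.~(\ref{eq:cb}) and (\ref{eq:cb1}), so $(M,\ell,r)$ is an $A$-bimodule.

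There is no real obstacle here. The only point needing care is to note explicitly that the product of two elements of $M$ vanishes, so no additional terms appear in either expansion.
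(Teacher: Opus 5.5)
Your proposal is correct: both expansions of the triple product are right, and setting $m=n=0$, $n=p=0$ and $m=p=0$ isolates exactly the identities in Eqs.~\eqref{eq:cb} and \eqref{eq:cb1}, while the converse is a term-by-term match. The paper gives no proof of its own and quotes this lemma from the literature, where this direct componentwise comparison is the standard argument.
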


 \begin{defi}\label{de:cf} Let $(A, R, S)$ be a symmetric Rota-Baxter system, $(M, \ell, r)$ an $A$-bimodule, $\alpha, \beta: M\rightarrow M$ two linear maps. If for all $a\in A$ and $m\in M$,
 \begin{eqnarray}
 &&\ell(R(a))\alpha(m)=\alpha(\ell(R(a))m+\ell(a)\beta(m))
 =\alpha(\ell(S(a))m+\ell(a)\alpha(m)),\label{eq:cf}\\
 &&\alpha(m)r(R(a))=\alpha(m r(R(a))+\beta(m)r(a))
 =\alpha(m r(S(a))+\alpha(m)r(a)),\label{eq:cf2}\\
 &&\ell(S(a))\beta(m)=\beta(\ell(R(a))m+\ell(a)\beta(m))
 =\beta(\ell(S(a))m+\ell(a)\alpha(m)), \label{eq:cf1}\\
 &&\beta(m)r(S(a))=\beta(m r(R(a))+\beta(m)r(a))=\beta(m r(S(a))+\alpha(m)r(a)),\label{eq:cf3}
 \end{eqnarray}
 then we call $(M, \ell, r, \alpha, \beta)$ a \textbf{representation} of $(A, R, S)$.

 Let $(M_{1}, \ell_{1}, r_{1}, \alpha_{1}, \beta_{1})$ and $(M_{2}, \ell_{2}, r_{2}, \alpha_{2}, \beta_{2})$ be representations of $(A, R, S)$. A linear map $f: M_{1}\rightarrow M_{2}$ is called a {\bf homomorphism from $(M_{1}, \ell_{1}, r_{1}, \alpha_{1}, \beta_{1})$ to $(M_{2}, \ell_{2}, r_{2}, \alpha_{2}, \beta_{2})$} if 
 for all $m\in M_{1}$,
 \begin{eqnarray*}
 &f(\ell_{1}(a)m)=\ell_{2}(a)f(m), ~~f(m r_{1}(a))=f(m)r_{2}(a),&\\ 
 &f(\alpha_{1}(m))=\alpha_{2}(f(m)),~~f(\beta_{1}(m))=\beta_{2}(f(m)).& 
 \end{eqnarray*}
 If furthermore $f$ is bijective, then we say that the representations $(M_{1}, \ell_{1}, r_{1}, \alpha_{1}, \beta_{1})$ and $(M_{2}, \ell_{2}, r_{2}, \alpha_{2}, \beta_{2})$ are {\bf equivalent}.
 \end{defi}

 \begin{rmk}\label{rmk:ga} $(M, \ell, r, \alpha, \beta)$ is a representation of a symmetric Rota-Baxter system $(A, R, S)$ means that $(M, \ell, r, \beta, \alpha)$ is a representation of a symmetric Rota-Baxter system $(A, S, R)$, and vice versa.
 \end{rmk}

 \begin{pro}\label{pro:cff} Let $(A, R, S)$ be a symmetric Rota-Baxter system, $(M,\ell, r)$ an $A$-bimodule, $\alpha, \beta: A\rightarrow A$ linear maps. For all $a\in A$, $m\in M$, define two linear maps
 \begin{eqnarray}
 R_{A\oplus M}: A\oplus M \rightarrow A\oplus M,~~~R_{A\oplus M}(a+m):=R(a)+\alpha(m),\label{eq:cff1}\\
 S_{A\oplus M}: A\oplus M \rightarrow A\oplus M,~~~S_{A\oplus M}(a+m):=S(a)+\beta(m).\label{eq:cff2}
 \end{eqnarray}
 Then together with the multiplication defined in Eq.\eqref{eq:cd1}, $(A\oplus M, R_{A\oplus M}, S_{A\oplus M})$ is a symmetric Rota-Baxter system if and only if $(M, \ell, r, \alpha, \beta)$ is a representation of $(A, R, S)$. The resulting symmetric Rota-Baxter system is denoted by $(A\ltimes_{\ell, r}M, R+\alpha, S+\beta)$ and is called a \textbf{semi-direct product} of $(A, R, S)$ by its representation $(M, \ell, r, \alpha, \beta)$.
 \end{pro}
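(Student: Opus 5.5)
We verify the four identities \eqref{eq:ea0}--\eqref{eq:ea1} for the operators $R_{A\oplus M}$, $S_{A\oplus M}$ on the semi-direct product algebra $A\ltimes_{\ell,r}M$ of Lemma \ref{lem:semidirectpro}, and compare components. (Here $\alpha,\beta$ are understood as maps $M\to M$, as in Definition \ref{de:cf}.) Since the multiplication \eqref{eq:cd1} and the operators are bilinear, resp. linear, and respect the decomposition $A\oplus M$, we test the identities on homogeneous pairs $(x,y)\in\{(a,b),(a,m),(m,a),(m,n)\}$ with $a,b\in A$, $m,n\in M$. In the semi-direct product $M\cdot M=0$, and $R_{A\oplus M},S_{A\oplus M}$ preserve $A$ and $M$.

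Case $(m,n)$: every term has the form $M\cdot M$, so each side vanishes and nothing is required. Case $(a,b)$: everything lies in $A$, and the identities reduce to \eqref{eq:ea0}--\eqref{eq:ea1} for $(A,R,S)$, which hold by hypothesis. Case $(a,m)$: we compute
\[
R_{A\oplus M}(a)R_{A\oplus M}(m)=\ell(R(a))\alpha(m).
\]
Next, $R_{A\oplus M}\big(R_{A\oplus M}(a)m+aS_{A\oplus M}(m)\big)=\alpha(\ell(R(a))m+\ell(a)\beta(m))$. Also $R_{A\oplus M}\big(S_{A\oplus M}(a)m+aR_{A\oplus M}(m)\big)=\alpha(\ell(S(a))m+\ell(a)\alpha(m))$. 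The three agree for all $a,m$ exactly when \eqref{eq:cf} holds. Replacing $R_{A\oplus M}$ by $S_{A\oplus M}$ on the outside gives precisely \eqref{eq:cf1}.

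Case $(m,a)$: similarly, $R_{A\oplus M}(m)R_{A\oplus M}(a)=\alpha(m)r(R(a))$. Next, $R_{A\oplus M}(\alpha(m)a+mS(a))=\alpha(\alpha(m)r(a)+mr(S(a)))$, and $R_{A\oplus M}(\beta(m)a+mR(a))=\alpha(\beta(m)r(a)+mr(R(a)))$. This is \eqref{eq:cf2}, and the $S$-version is \eqref{eq:cf3}.

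Thus the symmetric Rota-Baxter system identities on $A\oplus M$ are equivalent to the conjunction of the $A$-identities (already given) and \eqref{eq:cf}--\eqref{eq:cf3}, which proves both directions at once. For ``only if'', specialize the identities to the pairs $(a,m)$ and $(m,a)$. For ``if'', expand a general pair $(a+m,b+n)$ bilinearly; the $(m,n)$ contributions vanish. The only real obstacle is bookkeeping: one must match the middle and right expressions of \eqref{eq:cf2} and \eqref{eq:cf3} with the correct ordering of the two summands. No further ideas are needed.
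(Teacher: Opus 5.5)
Your proposal is correct and is essentially the paper's argument. The paper declares the result a special case of Theorem~\ref{thm:cp} (take $B=M$ with zero multiplication, $\ell_B=r_B=0$, $R_B=\alpha$, $S_B=\beta$), and that theorem's proof is exactly the componentwise comparison you carry out; you restrict it to the mixed pairs $(a,m)$ and $(m,a)$, which give Eqs.~\eqref{eq:cf}--\eqref{eq:cf3}, and you are right to read $\alpha,\beta$ as maps $M\to M$.
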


 \begin{proof} It is a special case of Theorem \ref{thm:cp}, so we omit the proof here.
 \end{proof}

 Let us recall from \cite{GL} that a representation of a Rota-Baxter algebra $(A, R)$ of weight $\lambda$ is a quadruple $(V, \ell, r, \alpha)$, where $(V, \ell, r)$ is an $A$-bimodule and $\alpha$ is a linear operator on $V$ such that, for all $a \in A, v \in V$,
 \begin{eqnarray*}
 &\ell(R(a))\alpha(v)= \alpha(\ell(R(a))v) + \alpha(\ell(a)\alpha(v)) + \lambda \alpha(\ell(a)v),&\\
 &\alpha(v)r(R(a)) = \alpha(\alpha(v)r(a)) + \alpha(v r(R(a))) + \lambda\alpha(v r(a)).&
 \end{eqnarray*}

 We now present the relationship between a representation of a symmetric Rota-Baxter system and that of a Rota-Baxter algebra, which extends the relationship between the algebras themselves.

 \begin{pro} \label{pro:cg} Let $(A, R)$ be a Rota-Baxter algebra of weight $\lambda$. Then
 \begin{enumerate}[(1)]
   \item \label{it:pro:cg1} $(M, \ell, r, \alpha, \alpha+ \lambda \id_{M})$ is a representation of the symmetric Rota-Baxter system $(A, R, R+\lambda \id_{A})$ if and only if $(M, \ell, r, \alpha)$ is a representation of $(A, R)$.

   \item \label{it:pro:cg2} $(M, \ell, r, \alpha+ \lambda \id_{M}, \alpha)$ is a representation of the symmetric Rota-Baxter system $(A, R+\lambda \id_{A}, R)$ if and only if $(M, \ell, r, \alpha)$ is a representation of $(A, R)$.
 \end{enumerate}
 \end{pro}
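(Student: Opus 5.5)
Both parts follow by substituting $\beta=\alpha+\lambda\id_M$ (for (1)) or $\alpha\mapsto\alpha+\lambda\id_M$, $\beta=\alpha$ (for (2)) into Eqs.~\eqref{eq:cf}--\eqref{eq:cf3}, with $S=R+\lambda\id_A$ (resp.\ $R\mapsto R+\lambda\id_A$, $S=R$). The plan is to check that every equality reduces either to one of the two Rota-Baxter representation identities or to an identity that holds trivially. This mirrors the argument of Lemma~\ref{lem:ec} at the module level.

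For (1), start with Eq.~\eqref{eq:cf}. With $\beta=\alpha+\lambda\id_M$ the middle expression becomes
\[
\alpha\big(\ell(R(a))m+\ell(a)\alpha(m)+\lambda\ell(a)m\big).
\]
With $S(a)=R(a)+\lambda a$ the right expression becomes the same element, since
\[
\ell(S(a))m+\ell(a)\alpha(m)=\ell(R(a))m+\lambda\ell(a)m+\ell(a)\alpha(m).
\]
So Eq.~\eqref{eq:cf} is exactly the left Rota-Baxter representation identity. Eq.~\eqref{eq:cf2} likewise collapses to the right identity.

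For Eq.~\eqref{eq:cf1}, the left side is $\ell(R(a)+\lambda a)(\alpha(m)+\lambda m)$. The right side is $(\alpha+\lambda\id_M)$ applied to the common bracket $X=\ell(R(a))m+\ell(a)\alpha(m)+\lambda\ell(a)m$. Expanding the left side gives
\[
\ell(R(a))\alpha(m)+\lambda\ell(R(a))m+\lambda\ell(a)\alpha(m)+\lambda^2\ell(a)m=\ell(R(a))\alpha(m)+\lambda X.
\]
The $\lambda$-terms cancel, so Eq.~\eqref{eq:cf1} is equivalent to $\ell(R(a))\alpha(m)=\alpha(X)$, which is again the left identity. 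Eq.~\eqref{eq:cf3} is handled symmetrically via the right actions. Hence all of \eqref{eq:cf}--\eqref{eq:cf3} hold if and only if the two identities defining a representation of $(A,R)$ hold, which gives both directions.

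For (2), I would not recompute. Remark~\ref{rmk:ga} says $(M,\ell,r,\alpha+\lambda\id_M,\alpha)$ is a representation of $(A,R+\lambda\id_A,R)$ exactly when $(M,\ell,r,\alpha,\alpha+\lambda\id_M)$ is a representation of $(A,R,R+\lambda\id_A)$, so (2) follows from (1).

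The only real point of care is the expansion in Eqs.~\eqref{eq:cf1} and \eqref{eq:cf3}, namely confirming that the $\lambda$ and $\lambda^2$ terms cancel exactly. Everything else is direct substitution.
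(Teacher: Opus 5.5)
Your proof is correct. Part (2) is exactly the paper's argument: Remark~\ref{rmk:ga} applied to part (1). For part (1), however, you take a different route from the paper.

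You substitute $S=R+\lambda\id_A$ and $\beta=\alpha+\lambda\id_M$ directly into Eqs.~\eqref{eq:cf}--\eqref{eq:cf3}. Two things happen: in every line the two bracketed expressions become identical, and in \eqref{eq:cf1} and \eqref{eq:cf3} the extra $\lambda$-multiple of that common bracket appears on both sides and cancels. So all eight equalities collapse to the two weight-$\lambda$ representation identities.

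The paper instead argues through semi-direct products, as a chain of equivalences:
\begin{itemize}
\item $(M,\ell,r,\alpha)$ is a representation of $(A,R)$ if and only if $(A\ltimes_{\ell,r}M,R+\alpha)$ is a Rota-Baxter algebra of weight $\lambda$, by \cite[Prop.~2.10]{BGM};
\item this holds if and only if $(A\ltimes_{\ell,r}M,R+\alpha,R+\alpha+\lambda\id)$ is a symmetric Rota-Baxter system, by Lemma~\ref{lem:ec} applied to the semi-direct product algebra;
\item since $R+\alpha+\lambda\id_{A\oplus M}=(R+\lambda\id_A)+(\alpha+\lambda\id_M)$, this holds if and only if $(M,\ell,r,\alpha,\alpha+\lambda\id_M)$ is a representation of $(A,R,R+\lambda\id_A)$, by Proposition~\ref{pro:cff}.
\end{itemize}

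The paper's route is shorter and explains conceptually why the statement holds: it is Lemma~\ref{lem:ec} transported along the correspondence between representations and semi-direct products. The cost is that it relies on an external result and on Proposition~\ref{pro:cff}, whose proof is itself deferred to Theorem~\ref{thm:cp}. Your computation is self-contained. It is precisely the module-level analogue of the cancellation in the proof of Lemma~\ref{lem:ec}, and it makes explicit which of the eight equalities in the definition are redundant in this special case.
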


 \begin{proof} \ref{it:pro:cg1} By Lemma \ref{lem:ec}, $(A, R)$ is a Rota-Baxter algebra of weight $\lambda$ if and only if $(A, R, R+\lambda \id_{A})$ is a symmetric Rota-Baxter system. Then $(M, \ell, r, \alpha)$ is a representation of $(A, R)$ $\stackrel{\small\hbox{\cite[Prop. 2.10]{BGM}}}{\Longleftrightarrow}$ $(A\ltimes_{\ell, r}M, R+\alpha)$ is a semi-direct product Rota-Baxter algebra of weight $\lambda$ $\stackrel{\small\hbox{Lem. \ref{lem:ec}}}{\Longleftrightarrow}$ $(A\ltimes_{\ell, r}M, R+\alpha, (R+\alpha)+\id_{A\ltimes_{\ell, r}M})$ is a semi-direct product symmetric Rota-Baxter system $\stackrel{\small\hbox{Prop. \ref{pro:cff}}}{\Longleftrightarrow}$ $(M, \ell, r, \alpha, \alpha+ \lambda \id_{M})$ is a representation of a symmetric Rota-Baxter system $(A, R, R+\lambda \id_{A})$.

  \ref{it:pro:cg2} It is direct by Remark \ref{rmk:ga} and Item \ref{it:pro:cg1}.
 \end{proof}

 \begin{pro} \label{pro:cggg} Let $(M, \ell, r, \alpha, \beta)$ be a representation of a symmetric Rota-Baxter system $(A, R, S)$. Consider the $A$-bimodule $\End(M)$ with the bimodule maps $\tilde{\ell}, \tilde{r}: A\rightarrow {\large \End}(\End(M))$ given by
 \begin{eqnarray*}
 (\tilde{\ell}(a)f)(m)=\ell(a)(f(m)),\quad (f\tilde{r}(a))(m)=(f(m))r(a).
 \end{eqnarray*}
 for all $a\in A, m\in M, f\in \End(M)$. Define
 \begin{eqnarray*}
 \tilde{\alpha}: \End(M)\rightarrow \End(M),~~\tilde{\alpha}(f)(m):=\alpha(f(m)).\\
 \tilde{\beta}: \End(M)\rightarrow \End(M),~~\tilde{\beta}(f)(m):=\beta(f(m)).
 \end{eqnarray*}
 Then $(\End(M), \tilde{\ell}, \tilde{r}, \tilde{\alpha}, \tilde{\beta})$ is a representation of $(A, R, S)$.
 \end{pro}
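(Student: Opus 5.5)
The plan is a direct pointwise verification: evaluate each identity in $\End(M)$ at an arbitrary $m\in M$ and reduce it to the corresponding identity for $(M,\ell,r,\alpha,\beta)$.

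First I check that $(\End(M),\tilde{\ell},\tilde{r})$ is an $A$-bimodule. Evaluating at $m$ gives
\[
(\tilde{\ell}(a)(\tilde{\ell}(b)f))(m)=\ell(a)\ell(b)f(m)=\ell(ab)f(m)=(\tilde{\ell}(ab)f)(m),
\]
using Eq.~\eqref{eq:cb} in $M$ applied to the vector $f(m)$. The right-action identity in Eq.~\eqref{eq:cb} and the compatibility Eq.~\eqref{eq:cb1} follow in exactly the same way, since both $\tilde{\ell}$ and $\tilde{r}$ act on the value $f(m)$.

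Next I verify Eqs.~\eqref{eq:cf}--\eqref{eq:cf3} for $(\tilde{\alpha},\tilde{\beta})$. The key observation is that every operator appearing ($\tilde{\ell}(x)$, $\tilde{r}(x)$, $\tilde{\alpha}$, $\tilde{\beta}$) acts on $f$ by post-composition with the corresponding operator on $M$. Hence every composite expression, evaluated at $m$, equals the same expression in $M$ evaluated at the vector $n:=f(m)$. For instance, for Eq.~\eqref{eq:cf}:
\[
(\tilde{\ell}(R(a))\tilde{\alpha}(f))(m)=\ell(R(a))\alpha(n),
\qquad
\big(\tilde{\alpha}(\tilde{\ell}(R(a))f+\tilde{\ell}(a)\tilde{\beta}(f))\big)(m)=\alpha\big(\ell(R(a))n+\ell(a)\beta(n)\big).
\]
These agree by Eq.~\eqref{eq:cf} applied to $n$. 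The third expression in Eq.~\eqref{eq:cf}, and Eqs.~\eqref{eq:cf2}, \eqref{eq:cf1}, \eqref{eq:cf3}, are handled identically; this uses linearity of $\alpha,\beta$ to move through sums.

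There is no real obstacle here. The only point needing care is the bookkeeping for the right action: $f\tilde{r}(a)$ means $m\mapsto f(m)r(a)$, so, for example,
\[
(\tilde{\alpha}(f)\tilde{r}(R(a)))(m)=\alpha(f(m))\,r(R(a)).
\]
This is the left-hand side of Eq.~\eqref{eq:cf2} at $n=f(m)$, with the order of operations matching the convention of Definition~\ref{de:cf}. Since all four maps on $\End(M)$ are post-compositions, no interaction with the argument $m$ ever arises, and the proof is complete.
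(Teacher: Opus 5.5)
Your proposal is correct and follows the same approach as the paper, which dismisses the proof as ``a direct computation.'' Your pointwise check is that computation written out: since $\tilde{\ell},\tilde{r},\tilde{\alpha},\tilde{\beta}$ all act by post-composition, each bimodule and representation identity evaluated at $m$ is just the corresponding identity in $M$ applied to the vector $f(m)$.
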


 \begin{proof} We can finish the proof by a direct computation.
 \end{proof}

 Let recall from \cite{Bai2} that a {\bf representation of a pre-Lie algebra $(A, \circ)$} is a triple $(V, \rho, \varphi)$, where $V$ is a vector space, $\rho, \varphi :A \rightarrow \End(V)$ are linear maps such that, for all $x, y \in A$,
 \begin{eqnarray}
 &\rho(x \circ y - y \circ x)v = \rho(x)(\rho(y)v) - \rho(y)(\rho(x)v),&\label{eq:reppreliealg1}\\
 &\varphi(x \circ y)v = \rho(x)(\varphi(y)v) - \varphi(y)(\rho(x)v) +\varphi(y)(\varphi(x)v).&\label{eq:reppreliealg2}
 \end{eqnarray}

 \begin{pro} \label{pro:ch} Let $(M, \ell, r, \alpha, \beta)$ be a representation of a symmetric Rota-Baxter system $(A, R, S)$. For all $a\in A$ and $m\in M$,
 \begin{enumerate}
   \item[(1)] \label{it:pro:ch} define $\hat{\ell}_i, \hat{r}_i: A\rightarrow \End(M), i=1,2$ by
   \begin{eqnarray*}
   &&\hat{\ell}_{1}(a)m=\ell(R(a))m+\ell(a)\beta(m),\\
   &&m\hat{r}_{1}(a)= mr(S(a))+\alpha(m)r(a),\\
   &&\hat{\ell}_{2}(a)m=\ell(S(a))m+\ell(a)\alpha(m),\\
   &&m\hat{r}_{2}(a)= mr(R(a))+\beta(m)r(a).
   \end{eqnarray*}
   Then

   \begin{enumerate}
   \item[(1a)] \label{it:pro:ch0} $(M, \hat{\ell}_{1}, \hat{r}_{1})$ is an $(A, \star)$-bimodule, where the associative product $\star$ is given by $a\star b= R(a)b+ a S(b)$.
   \item[(1b)] \label{it:pro:ch1} $(M, \hat{\ell}_{2}, \hat{r}_{2})$ is an $(A, \star')$-bimodule, where  the associative product $\star'$ is given by $a\star' b= S(a)b+ aR(b)$.
   \end{enumerate}

   \item [(2)]\label{it:pro:ch4} define $\hat{\rho}_i, \hat{\varphi}_i: A\rightarrow \End(M), i=1,2$ by
   \begin{eqnarray*}
   &&\hat{\rho}_{1}(a)m=\ell(R(a))m-m r(S(a)),\\
   &&\hat{\varphi}_{1}(a)m= \alpha(m)r(a)-\ell(a)\beta(m),\\
   &&\hat{\rho}_{2}(a)m=\ell(S(a))m-m r(R(a)),\\
   &&\hat{\varphi}_{2}(a)m= \beta(m)r(a)-\ell(a)\alpha(m).
   \end{eqnarray*}
   Then
   \begin{enumerate}
   \item[(2a)] \label{it:pro:ch5} $(M,\hat{\rho}_{1}, \hat{\varphi}_{1})$ is a representation of the pre-Lie algebra $(A, \bullet)$, where $a \bullet b= R(a)b- b S(a)$.

   \item[(2b)] \label{it:pro:ch6} $(M,\hat{\rho}_{2}, \hat{\varphi}_{2})$ is a representation of the pre-Lie algebra $(A, \bullet')$, where $a \bullet' b= S(a)b- b R(a)$.
   \end{enumerate}
  \end{enumerate}
 \end{pro}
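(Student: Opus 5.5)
The most economical route goes through the semi-direct product of Proposition \ref{pro:cff}: since $(M,\ell,r,\alpha,\beta)$ is a representation, $(A\ltimes_{\ell,r}M, R+\alpha, S+\beta)$ is a symmetric Rota-Baxter system. Applying Remark \ref{rmk:ef} to it, the products $\star$, $\star'$ on $A\oplus M$ are associative and $\bullet$, $\bullet'$ are pre-Lie. I then restrict these structures to the pieces $A$ and $M$.

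For (1a), I compute on $A\oplus M$, with $\tilde R=R+\alpha$, $\tilde S=S+\beta$ and the product \eqref{eq:cd1}:
\begin{eqnarray*}
(a+m)\star(b+n)&=&\tilde R(a+m)(b+n)+(a+m)\tilde S(b+n)\\
&=&a\star b+\ell(R(a))n+\alpha(m)r(b)+\ell(a)\beta(n)+m r(S(b))\\
&=&a\star b+\hat\ell_1(a)n+m\hat r_1(b).
\end{eqnarray*}
Here I use $MM=0$. So this product is exactly the semi-direct product multiplication $(A,\star)\ltimes_{\hat\ell_1,\hat r_1}M$. Its associativity then gives, by Lemma \ref{lem:semidirectpro} (the ``only if'' direction), that $(M,\hat\ell_1,\hat r_1)$ is an $(A,\star)$-bimodule.

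For (1b), the same computation with $\star'$, i.e. $\tilde R$ and $\tilde S$ swapped in roles, produces $\hat\ell_2,\hat r_2$. Alternatively, I can apply (1a) to $(A,S,R)$ with the representation $(M,\ell,r,\beta,\alpha)$, using Remark \ref{rmk:ga}.

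For (2a), I expand
\[
(a+m)\bullet(b+n)=\tilde R(a+m)(b+n)-(b+n)\tilde S(a+m).
\]
This equals
\[
a\bullet b+\ell(R(a))n-n r(S(a))+\alpha(m)r(b)-\ell(b)\beta(m)=a\bullet b+\hat\rho_1(a)n+\hat\varphi_1(b)m.
\]
I then need the pre-Lie analogue of Lemma \ref{lem:semidirectpro}: for a pre-Lie algebra $(A,\circ)$, the product $(a+m)\circ(b+n)=a\circ b+\rho(a)n+\varphi(b)m$ is pre-Lie if and only if $(M,\rho,\varphi)$ is a representation. This is standard \cite{Bai2}. If I do not want to rely on the citation, I check it directly by writing the pre-Lie identity on triples $(x,y,v)$ with $v\in M$, which yields \eqref{eq:reppreliealg1}, and on triples $(x,v,z)$, which yields \eqref{eq:reppreliealg2} after using the symmetry in the first two slots. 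Part (2b) follows from (2a) via Remark \ref{rmk:ga}.

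The main obstacle is this last verification, namely making sure the semi-direct pre-Lie characterization matches the paper's convention in \eqref{eq:reppreliealg2}, including the order $\varphi(y)(\varphi(x)v)$. A fallback is a direct check from \eqref{eq:cf}--\eqref{eq:cf3} together with the bimodule axioms; this is routine but lengthy.
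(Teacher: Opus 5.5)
Your proposal is correct and follows the paper's own argument. You pass to the semi-direct product $(A\ltimes_{\ell,r}M,R+\alpha,S+\beta)$ via Proposition \ref{pro:cff} and apply Remark \ref{rmk:ef} there. You then read off the bimodule from Lemma \ref{lem:semidirectpro}, and the pre-Lie representation from the semi-direct characterization in \cite{Bai2}. Your explicit expansions of $\star$ and $\bullet$ on $A\oplus M$ are right, and the convention check you flagged does give exactly \eqref{eq:reppreliealg1}--\eqref{eq:reppreliealg2}. One small slip: the direction of Lemma \ref{lem:semidirectpro} you use (associativity on $A\oplus M$ implies the bimodule axioms) is the ``if'' direction, not the ``only if'' one.
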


 \begin{proof} {\bf (1a):} By Remark \ref{rmk:ef}, $(A, \star)$ is an associative algebra.
 $(M, \ell, r, \alpha, \beta)$ is a representation of a symmetric Rota-Baxter system $(A, R, S)$ $\stackrel{\small\hbox{Prop. \ref{pro:cff}}}{\Longleftrightarrow}$ $(A\ltimes_{\ell, r}M, R+\alpha, S+\beta)$ is a semi-direct product symmetric Rota-Baxter system $\stackrel{\small\hbox{Rmk. \ref{rmk:ef}}}{\Longrightarrow}$ $(A\oplus M, \star_{A\oplus M})$ is an associative algebra $\stackrel{\small\hbox{Lem. \ref{lem:semidirectpro}}}{\Longleftrightarrow}$ $(M, \hat{\ell}_{1}, \hat{r}_{1})$ is an $(A, \star)$-bimodule.	

  {\bf (1b):}  Similar to the proof of Item  {\bf (1a)}.

 {\bf (2a):} By Remark \ref{rmk:ef}, $(A, \bullet)$ is an associative algebra.
 $(M, \ell, r, \alpha, \beta)$ is a representation of a symmetric Rota-Baxter system $(A, R, S)$ $\stackrel{\small\hbox{Prop. \ref{pro:cff}}}{\Longleftrightarrow}$ $(A\ltimes_{\ell, r}M, R+\alpha, S+\beta)$ is a semi-direct product symmetric Rota-Baxter system $\stackrel{\small\hbox{Rmk. \ref{rmk:ef}}}{\Longrightarrow}$ $(A\oplus M, \bullet_{A\oplus M})$ is an associative algebra $\stackrel{\small\hbox{\cite[Prop. 3.1]{Bai2}}}{\Longleftrightarrow}$ $(M,\hat{\rho}_{1}, \hat{\varphi}_{1})$ is a representation of the pre-Lie algebra $(A, \bullet)$.

  {\bf (2b):}  Similar to the proof of Item {\bf (2a)}.
 \end{proof}

 In what follows, we introduce the notion of a dual representation of a representation of a Rota-Baxter system.

 Let $(M, \ell, r)$ be an $(A,\cdot)$-bimodule, define linear maps $\ell^{*}, r^{*}: A\rightarrow \End(M^{*})$ by
 \begin{eqnarray*}
 &\langle \ell^{*}(a) m^{*}, n\rangle=\langle m^{*},\ell(a)n\rangle, \quad \langle m^{*}r^{*}(a), n\rangle=\langle m^{*}, nr(a)\rangle,~~~\forall~ a\in A, m^{*}\in M^{*}, n\in M.&
 \end{eqnarray*}
 Then $(M^{*}, r^{*}, \ell^{*})$ is also an $A$-bimodule, called the \textbf{dual bimodule} of $(M, \ell, r)$. In particular, $(A^{*}, \mathcal{R^{*}}, \mathcal{L^{*}})$ is an $A$-bimodule.

 \begin{lem}\label{lem:cj} Let $(A, R, S)$ be a symmetric Rota-Baxter system, $(M,\ell, r)$ an $A$-bimodule and $\xi, \zeta: M\rightarrow M$ linear maps. The quintuple $(M^{*}, r^{*}, \ell^{*}, \xi^{*}, \zeta^{*})$ is a representation of $(A, R, S)$ if and only if, for all $a\in A, m\in M,$ the following equations hold:
 \begin{eqnarray}
 &\xi(\ell(R(a))m)=\xi(\ell(a)\xi(m))+\ell(S(a))\xi(m)
 =\ell(R(a))\xi(m)+\zeta(\ell(a)\xi(m)),& \label{eq:cj}\\
 &\xi(mr(R(a)))=\xi(\xi(m)r(a))+\xi(m)r(S(a))
 =\xi(m)r(R(a))+\zeta(\xi(m)r(a)),& \label{eq:cj1}\\
 &\zeta(\ell(S(a))m)=\xi(\ell(a)\zeta(m))+\ell(S(a))\zeta(m)
 =\ell(R(a))\zeta(m)+\zeta(\ell(a)\zeta(m)),& \label{eq:cj2}\\
 &\zeta(mr(S(a)))=\xi(\zeta(m)r(a))+\zeta(m)r(S(a))
 =\zeta(m)r(R(a))+\zeta(\zeta(m)r(a)).& \label{eq:cj3}
 \end{eqnarray}
 \end{lem}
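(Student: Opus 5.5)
The plan is to check, one identity at a time, what each of the four defining conditions \eqref{eq:cf}--\eqref{eq:cf3} becomes for the quintuple $(M^{*}, r^{*}, \ell^{*}, \xi^{*}, \zeta^{*})$. The dual bimodule $(M^{*}, r^{*}, \ell^{*})$ has left action $r^{*}$ and right action $\ell^{*}$, and in Definition~\ref{de:cf} we take $\alpha=\xi^{*}$, $\beta=\zeta^{*}$. We pair both sides of each identity with an arbitrary $n\in M$ and move every operator across the pairing. The relevant adjunctions are
\[
\langle \xi^{*}(m^{*}),n\rangle=\langle m^{*},\xi(n)\rangle,\quad
\langle r^{*}(a)m^{*},n\rangle=\langle m^{*},nr(a)\rangle,\quad
\langle m^{*}\ell^{*}(a),n\rangle=\langle m^{*},\ell(a)n\rangle .
\]
Passing to adjoints reverses the order of composition. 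Since $m^{*}$ ranges over all of $M^{*}$, each identity in $\operatorname{End}(M^{*})$ is then equivalent to the corresponding identity in $\operatorname{End}(M)$. This gives both directions of the ``if and only if'' at once.

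We carry this out for \eqref{eq:cf}, written with $\ell$ replaced by $r^{*}$. The term $r^{*}(R(a))\xi^{*}(m^{*})$ pairs to $\langle m^{*},\xi(nr(R(a)))\rangle$. The term $\xi^{*}(r^{*}(R(a))m^{*})$ pairs to $\langle m^{*},\xi(n)r(R(a))\rangle$. The term $\xi^{*}(r^{*}(a)\zeta^{*}(m^{*}))$ pairs to $\langle m^{*},\zeta(\xi(n)r(a))\rangle$. Similarly, $\xi^{*}(r^{*}(S(a))m^{*})$ gives $\xi(n)r(S(a))$, and $\xi^{*}(r^{*}(a)\xi^{*}(m^{*}))$ gives $\xi(\xi(n)r(a))$. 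So \eqref{eq:cf} for the dual is exactly \eqref{eq:cj1}.

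The other three conditions go the same way.
\begin{itemize}
\item Condition \eqref{eq:cf2}, for the right action $\ell^{*}$ with operator $\xi^{*}$, becomes \eqref{eq:cj}.
\item Condition \eqref{eq:cf1}, for $r^{*}$ with $\zeta^{*}$, becomes \eqref{eq:cj3}.
\item Condition \eqref{eq:cf3}, for $\ell^{*}$ with $\zeta^{*}$, becomes \eqref{eq:cj2}.
\end{itemize}
In each case the chain of two equalities carries over term by term, so the equality signs line up as in the statement.

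There is no conceptual obstacle, only bookkeeping. The one point needing care is that duality swaps left and right actions and reverses the order of composition. As a result, the terms involving $\zeta$ end up on the opposite side from the corresponding $\beta$-terms, which explains the shape of \eqref{eq:cj}--\eqref{eq:cj3}. We would record one computation in full, as above, and state that the remaining three are analogous.
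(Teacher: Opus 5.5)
Your proposal is correct and is essentially the paper's proof. The paper likewise pairs each equality in \eqref{eq:cf}--\eqref{eq:cf3} for $(M^{*}, r^{*}, \ell^{*}, \xi^{*}, \zeta^{*})$ with an arbitrary $n\in M$ and moves the operators across the pairing, obtaining exactly your correspondence \eqref{eq:cf}$\leftrightarrow$\eqref{eq:cj1}, \eqref{eq:cf2}$\leftrightarrow$\eqref{eq:cj}, \eqref{eq:cf1}$\leftrightarrow$\eqref{eq:cj3}, \eqref{eq:cf3}$\leftrightarrow$\eqref{eq:cj2}, with all eight pairings written out rather than one.
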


 \begin{proof} Since $(M^{*}, r^{*}, \ell^{*})$ is an $A$-bimodule, the lemma can be proved by the following equations:
  {\small\begin{eqnarray*}
 &&\hspace{-6mm}\langle\xi^{*}(m^{*})\ell^{*}(R(a))-\xi^{*}(\xi^{*}(m^{*})\ell^{*}(a)
 +m^{*}\ell^{*}(S(a))), n\rangle=\langle m^{*}, \xi(\ell(R(a))n)-\xi(\ell(a)\xi(n))-\ell(S(a)) \xi(n)\rangle,\\
 &&\hspace{-6mm}\langle\xi^{*}(m^{*})\ell^{*}(R(a))-\xi^{*}(\zeta^{*}(m^{*})\ell^{*}(a)
 +m^{*}\ell^{*}(R(a))), n\rangle=\langle m^{*}, \xi(\ell(R(a))n)-\zeta(\ell(a)\xi(n))-\ell(R(a)) \xi(n)\rangle,\\
 &&\hspace{-6mm}\langle r^{*}(R(a))\xi^{*}(m^{*})-\xi^{*}(r^{*}(R(a))m^{*}+r^{*}(a)\zeta^{*}(m^{*})), n\rangle=\langle m^{*}, \xi(n r(R(a)))-\xi(n)r(R(a))-\zeta(\xi(n)r(a))\rangle,\\
 &&\hspace{-6mm}\langle r^{*}(R(a))\xi^{*}(m^{*})-\xi^{*}(r^{*}(S(a))m^{*}+r^{*}(a)\xi^{*}(m^{*})), n\rangle=\langle m^{*}, \xi(n r(R(a)))-\xi(n)r(S(a))-\xi(\xi(n)r(a))\rangle,\\
 &&\hspace{-6mm}\langle\zeta^{*}(m^{*})\ell^{*}(S(a))-\zeta^{*}(\xi^{*}(m^{*})\ell^{*}(a)
 +m^{*}\ell^{*}(S(a))), n\rangle=\langle m^{*}, \zeta(\ell(S(a))n)-\xi(\ell(a)\zeta(n))-\ell(S(a)) \zeta(n)\rangle,\\
 &&\hspace{-6mm}\langle\zeta^{*}(m^{*})\ell^{*}(S(a))-\zeta^{*}(\zeta^{*}(m^{*})\ell^{*}(a)
 +m^{*}\ell^{*}(R(a))), n\rangle=\langle m^{*}, \zeta(\ell(S(a))n)-\zeta(\ell(a)\zeta(n))-\ell(R(a)) \zeta(n)\rangle,\\
 &&\hspace{-6mm}\langle r^{*}(S(a))\zeta^{*}(m^{*})-\zeta^{*}(r^{*}(R(a))m^{*}+r^{*}(a)\zeta^{*}(m^{*})), n\rangle=\langle m^{*}, \zeta(n r(S(a)))-\zeta(\zeta(n)r(a))-\zeta(n)r(R(a))\rangle,\\
 &&\hspace{-6mm}\langle r^{*}(S(a))\zeta^{*}(m^{*})-\zeta^{*}(r^{*}(S(a))m^{*}+r^{*}(a)\xi^{*}(m^{*})), n\rangle=\langle m^{*}, \zeta(nr(S(a)))-\xi(\zeta(n)r(a))-\zeta(n)r(S(a))\rangle,
 \end{eqnarray*}
 }where $a\in A, n\in M$ and $m^*\in M^*$.
 \end{proof}

 Let $(M, \ell, r)=(A, \mathcal{L}, \mathcal{R})$, then we have
 \begin{cor}\label{cor:ck} Let $(A, R, S)$ be a symmetric Rota-Baxter system and $Q, T: A\rightarrow A$ linear maps. The quintuple $(A^{*}, \mathcal{R}^{*}, \mathcal{L}^{*}, Q^{*}, T^{*})$ is a representation of $(A, R, S)$ if and only if, for all $a, b\in A,$ the following equations hold:
 \begin{eqnarray}
 &&Q(R(a)b)=Q(a Q(b))+S(a)Q(b)=R(a)Q(b)+T(a Q(b)), \label{eq:ck}\\
 &&Q(a R(b))=Q(Q(a)b)+Q(a)S(b)=Q(a)R(b)+T(Q(a)b), \label{eq:ck1}\\
 &&T(S(a)b)=Q(a T(b))+S(a)T(b)=T(a T(b))+R(a)T(b), \label{eq:ck2}\\
 &&T(a S(b))=Q(T(a)b)+T(a)S(b)=T(T(a)b)+T(a)R(b). \label{eq:ck3}
 \end{eqnarray}
 \end{cor}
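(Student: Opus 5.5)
This is the specialization of Lemma \ref{lem:cj} to the adjoint bimodule, so the plan is to invoke that lemma with $(M,\ell,r)=(A,\mathcal{L},\mathcal{R})$, $\xi=Q$ and $\zeta=T$. Then $(M^{*},r^{*},\ell^{*})$ becomes $(A^{*},\mathcal{R}^{*},\mathcal{L}^{*})$, which is an $A$-bimodule, as noted before Lemma \ref{lem:cj}. The quintuple $(A^{*},\mathcal{R}^{*},\mathcal{L}^{*},Q^{*},T^{*})$ is therefore exactly the one in that lemma, and it is a representation of $(A,R,S)$ if and only if Eqs. \eqref{eq:cj}--\eqref{eq:cj3} hold with these substitutions. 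What remains is to translate each of those equations.

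Writing $m=b\in A$ and using $\mathcal{L}(a)b=ab$ and $b\,\mathcal{R}(a)=ba$, the translation goes equation by equation.

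\eqref{eq:cj} becomes $Q(R(a)b)=Q(aQ(b))+S(a)Q(b)=R(a)Q(b)+T(aQ(b))$, which is \eqref{eq:ck}.

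\eqref{eq:cj1} becomes $Q(bR(a))=Q(Q(b)a)+Q(b)S(a)=Q(b)R(a)+T(Q(b)a)$. After renaming $a\leftrightarrow b$ this is \eqref{eq:ck1}.

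\eqref{eq:cj2} becomes $T(S(a)b)=Q(aT(b))+S(a)T(b)=R(a)T(b)+T(aT(b))$, which is \eqref{eq:ck2}.

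\eqref{eq:cj3} becomes $T(bS(a))=Q(T(b)a)+T(b)S(a)=T(b)R(a)+T(T(b)a)$. After the same renaming this is \eqref{eq:ck3}.

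The renaming of variables is harmless because each identity is required for all $a,b\in A$.

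There is no real obstacle. The only care needed is in the right-action equations \eqref{eq:cj1} and \eqref{eq:cj3}: the module element $m$ sits on the left there, so the roles of $a$ and $b$ are swapped relative to the statement, and I must check that $R$ versus $S$ and $Q$ versus $T$ land in the correct positions after the swap.
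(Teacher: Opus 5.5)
Your proposal is correct and is the paper's own argument: the corollary is Lemma \ref{lem:cj} specialized to the adjoint bimodule $(A,\mathcal{L},\mathcal{R})$ with $\xi=Q$ and $\zeta=T$. Your equation-by-equation translation of \eqref{eq:cj}--\eqref{eq:cj3} checks out, including the $a\leftrightarrow b$ renaming in the two right-action identities, and it spells out what the paper leaves implicit.
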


 \begin{defi}\label{de:cl} With notations in Lemma \ref{lem:cj} and Corollary \ref{cor:ck}, if Eqs.\eqref{eq:cj}-\eqref{eq:cj3} hold, then we say that $(\xi, \zeta)$ is \textbf{admissible to $(A, R, S)$ with respect to $(M, \ell, r)$}. If Eqs.\eqref{eq:ck}-\eqref{eq:ck3} hold, then we say that $(Q, T)$ is \textbf{adjoint admissible to $(A, R, S)$} or \textbf{ $(A, R, S)$ is $(Q, T)$-adjoint admissible}.
 \end{defi}

 \section{Symmetric Rota-Baxter ASI bisystem}\label{se:srbbialg} This section is devoted to the bialgebraic structures on a symmetric Rota-Baxter system. Specifically, the notion of a symmetric Rota-Baxter ASI bisystem is introduced, and equivalent characterizations are established in terms of matched pairs and the double construction.

 \subsection{Matched pair of symmetric Rota-Baxter systems} To begin with, we introduce the notion of a matched pair of symmetric Rota-Baxter systems, which constitutes a generalization of the corresponding concept for ordinary algebras.

 \begin{defi} \cite{Bai1} \label{de:cm} A \textbf{matched pair of algebras $(A, \cdot_{A})$ and $(B, \cdot_{B})$} is a six-tuple $((A, \cdot_{A}), (B, \cdot_{B}),$ $\ell_{A}, r_{A}, \ell_{B}, r_{B})$, where $(A, \ell_{B}, r_{B})$ is a bimodule of $(B, \cdot_{B})$ and $(B, \ell_{A},r_{A})$ is a bimodule of $(A, \cdot_{A})$ such that six compatibility conditions hold.
 \end{defi}

 \begin{pro}\label{pro:cn} {\em \cite{Bai1}} For algebras $(A, \cdot_{A})$, $(B, \cdot_{B})$ and linear maps $\ell_{A}, r_{A} : A \rightarrow \End(B)$, $\ell_{B}, r_{B}:  B \rightarrow \End(A)$, define a multiplication on the direct sum $A \oplus B$ by
 \begin{equation}
 (a + b) \cdot_{A \oplus B} (a' + b'):= (a \cdot_{A} a' + ar_{B}(b') + \ell_{B}(b)a') + (b \cdot_{B} b' + \ell_{A}(a)b' + br_{A}(a')),\label{eq:cn}
 \end{equation}
 for $a, a' \in A$ and $b, b'\in B$. Then $(A\oplus B,\cdot_{A \oplus B})$ is an algebra if and only if $((A, \cdot_{A}), (B, \cdot_{B})$, $\ell_{A}, r_{A}, \ell_{B}, r_{B})$ is a matched pair of $(A, \cdot_{A})$ and $(B, \cdot_{B})$. We denote the resulting algebra $(A \oplus B, \cdot_{A \oplus B})$ by $A \bowtie^{\ell_{B}, r_{B}}_{\ell_{A}, r_{A}} B$ or simply $A \bowtie B$.
 \end{pro}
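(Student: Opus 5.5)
The plan is to expand the associativity of $\cdot_{A\oplus B}$ on homogeneous triples and identify the resulting conditions with the bimodule axioms and the six compatibility conditions of Definition \ref{de:cm} (as listed in \cite{Bai1}). Since both sides of $(x\cdot y)\cdot z = x\cdot(y\cdot z)$ are trilinear, and every element of $A\oplus B$ is a sum of an element of $A$ and one of $B$, associativity holds if and only if it holds for all triples $(x,y,z)$ with each entry in $A$ or in $B$. That gives eight cases. In each case, both sides split into an $A$-component and a $B$-component, which yields at most two identities per case.

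\textbf{Cases $AAA$ and $BBB$.} These reduce to the associativity of $\cdot_A$ and $\cdot_B$, so they impose nothing new.

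\textbf{Mixed cases $AAB$, $ABA$, $BAA$.} Consider, for example, $(a,a',b)$. We have
\[
(a\cdot a')\cdot b=\ell_B\text{-free part } a\cdot_A a' \text{ acting}: (a\cdot_A a')r_B(b)+\ell_A(a\cdot_A a')b.
\]
On the other side,
\[
a\cdot(a'\cdot b)=a\cdot\bigl(a'r_B(b)+\ell_A(a')b\bigr)=a\cdot_A(a'r_B(b))+ar_B(\ell_A(a')b)+\ell_A(a)\ell_A(a')b .
\]
The $B$-component gives $\ell_A(a\cdot_A a')=\ell_A(a)\ell_A(a')$, which is a bimodule axiom for $(B,\ell_A,r_A)$. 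The $A$-component gives one of the compatibility conditions, namely $(a\cdot_A a')r_B(b)=a\cdot_A(a'r_B(b))+ar_B(\ell_A(a')b)$. In the same way, $(a,b,a')$ produces the middle bimodule axiom $\ell_A(a)(br_A(a'))=(\ell_A(a)b)r_A(a')$ together with a compatibility condition mixing $r_B$ and $\ell_B$. The triple $(b,a,a')$ produces $r_A(a\cdot_A a')$-multiplicativity and a compatibility condition for $\ell_B$.

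\textbf{Cases $ABB$, $BAB$, $BBA$.} These follow from the previous ones by exchanging the roles of $A$ and $B$. They give the bimodule axioms for $(A,\ell_B,r_B)$ and the remaining three compatibility conditions.

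Collecting the identities, associativity is equivalent to the conjunction of the following:
\begin{itemize}
\item $(B,\ell_A,r_A)$ is an $A$-bimodule;
\item $(A,\ell_B,r_B)$ is a $B$-bimodule;
\item the six listed compatibility conditions hold.
\end{itemize}
This conjunction is exactly the definition of a matched pair, so both directions of the statement follow simultaneously.

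The only real obstacle is bookkeeping. One has to check that the six mixed triples produce precisely the six compatibility equations of \cite{Bai1}, with no extra or missing conditions. For that step I would write out all twelve component identities explicitly and match them one by one with the list in \cite{Bai1}.
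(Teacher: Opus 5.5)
Your plan is correct and matches the standard argument. The paper gives no proof of its own here and cites \cite{Bai1}, where the result is obtained by exactly this expansion of associativity on the eight homogeneous triples: the twelve component identities from the six mixed triples split into the six bimodule axioms of Definition \ref{de:cb} and the six compatibility conditions. Your sample computations for $(a,a',b)$, $(a,b,a')$ and $(b,a,a')$ are right; only delete the stray phrase inside the first display, since $(a\cdot_A a')\cdot b=(a\cdot_A a')r_B(b)+\ell_A(a\cdot_A a')b$ is all that belongs there.
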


 \begin{ex}\label{ex:fa} Let $((A, \cdot_{A}), (B, \cdot_{B}),$ $\ell_{A}, r_{A}, \ell_{B}, r_{B})$ be a matched pair of algebras $(A, \cdot_{A})$ and $(B, \cdot_{B})$. Define linear maps $\mathbb{R}, \mathbb{S}: A\oplus B \rightarrow A\oplus B$ by
 \begin{equation*}\label{eq:fa}
 \mathbb{R}(a+b):=a,\ \ \mathbb{S}(a+b):=-b,~~\forall~~a\in A, b\in B.
 \end{equation*}
 Then $(A \bowtie B, \mathbb{R}, \mathbb{S})$ is a symmetric Rota-Baxter system.
 \end{ex}

 We extend the matched pair of algebras to symmetric Rota-Baxter systems.
 \begin{defi}\label{de:co} Let $(A, R_{A}, S_{A})$ and $(B, R_{B},$ $S_{B})$ be two symmetric Rota-Baxter systems. A \textbf{matched pair of $(A, R_{A}, S_{A})$ and $(B, R_{B}, S_{B})$} is a six-tuple $((A, R_{A}, S_{A}), (B, R_{B},$ $S_{B}), \ell_{A}, r_{A},$ $\ell_{B}, r_{B})$, where
 $(A, \ell_{B}, r_{B}, R_{A}, S_{A})$ is a representation of $(B, R_{B}, S_{B})$, $(B, \ell_{A},r_{A}, R_{B}, S_{B})$ is a representation of $(A, R_{A}, S_{A})$ and $((A, \cdot_{A}), (B, \cdot_{B}), \ell_{A},$ $r_{A},$ $\ell_{B}, r_{B})$ is a matched pair of $(A, \cdot_{A})$ and $(B, \cdot_{B})$.
 \end{defi}

 \begin{thm}\label{thm:cp} Let $(A, R_{A}, S_{A})$ and $(B, R_{B}, S_{B})$ be   symmetric Rota-Baxter systems. Then $((A, R_{A}, S_{A})$, $(B, R_{B}, S_{B})$, $\ell_{A}, r_{A}, \ell_{B}, r_{B})$ is a matched pair of $(A, R_{A}, S_{A})$ and $(B, R_{B}, S_{B})$ if and only if $(A \bowtie B, R_{A} + R_{B},  S_{A} + S_{B})$ is a symmetric Rota-Baxter system by defining the multiplication on $A \oplus B$ by Eq.\eqref{eq:cn} and linear maps $R_{A} + R_{B},  S_{A} + S_{B} : A \oplus B \rightarrow A \oplus B$ by
 \begin{eqnarray*}
 &(R_{A} + R_{B})(a + b) := R_{A}(a) + R_{B}(b),~~~
 (S_{A} + S_{B})(a + b) := S_{A}(a) + S_{B}(b),~~ \forall~a\in A, b\in B.&
 \end{eqnarray*}
 \end{thm}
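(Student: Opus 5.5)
By Proposition \ref{pro:cn}, $A\bowtie B$ is an associative algebra if and only if $((A,\cdot_A),(B,\cdot_B),\ell_A,r_A,\ell_B,r_B)$ is a matched pair of algebras. So the theorem reduces to the following claim. Assuming the algebra matched pair, the operators $\mathbb{R}:=R_A+R_B$ and $\mathbb{S}:=S_A+S_B$ satisfy Eqs.~\eqref{eq:ea0} and \eqref{eq:ea1} on $A\bowtie B$ if and only if $(B,\ell_A,r_A,R_B,S_B)$ is a representation of $(A,R_A,S_A)$ and $(A,\ell_B,r_B,R_A,S_A)$ is a representation of $(B,R_B,S_B)$. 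The identities defining a symmetric Rota-Baxter system are bilinear in the pair of arguments. I will therefore check \eqref{eq:ea0} and \eqref{eq:ea1} on $x=a+b$, $y=a'+b'$ by splitting into the four homogeneous cases $(a,a')$, $(b,b')$, $(a,b')$ and $(b,a')$, and then projecting each identity onto its $A$- and $B$-components.

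First, the cases $(a,a')$ and $(b,b')$. For $x=a,y=a'$, Eq.~\eqref{eq:cn} gives $x\cdot y=a\cdot_A a'\in A$, and $\mathbb{R},\mathbb{S}$ restrict to $R_A,S_A$. So the identities are exactly those saying that $(A,R_A,S_A)$ is a symmetric Rota-Baxter system, which holds by hypothesis. The case $(b,b')$ is the same with $B$ in place of $A$.

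Next, the case $x=a$, $y=b'$. Here $\mathbb{R}(a)\mathbb{R}(b')=R_A(a)\cdot R_B(b')=R_A(a)r_B(R_B(b'))+\ell_A(R_A(a))R_B(b')$. On the other side,
\[
\mathbb{R}(\mathbb{R}(a)b'+a\mathbb{S}(b'))=R_A\bigl(R_A(a)r_B(b')+a\,r_B(S_B(b'))\bigr)+R_B\bigl(\ell_A(R_A(a))b'+\ell_A(a)S_B(b')\bigr).
\]
Comparing $B$-components gives the first equality of Eq.~\eqref{eq:cf} for the representation $(B,\ell_A,r_A,R_B,S_B)$, with $\alpha=R_B$, $\beta=S_B$ and $m=b'$. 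Comparing $A$-components gives the first equality of Eq.~\eqref{eq:cf2} for $(A,\ell_B,r_B,R_A,S_A)$, with the roles of $A$ and $B$ swapped. The second expression $\mathbb{R}(\mathbb{S}(a)b'+a\mathbb{R}(b'))$ yields the second equalities in the same two equations. The identity \eqref{eq:ea1} for $\mathbb{S}$ yields Eqs.~\eqref{eq:cf1} and \eqref{eq:cf3} in the same way. The case $x=b$, $y=a'$ is symmetric: its $A$-components give \eqref{eq:cf} and \eqref{eq:cf1} for the $B$-representation on $A$, and its $B$-components give \eqref{eq:cf2} and \eqref{eq:cf3} for the $A$-representation on $B$.

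Collecting these, the mixed cases produce exactly the sixteen equalities (eight for each side) in Definition \ref{de:cf}, each appearing once. Because each step is a component-wise identification, both directions of the equivalence follow at once. The main point to handle carefully is the bookkeeping in the mixed cases. One must match each term to its correct slot in \eqref{eq:cf}--\eqref{eq:cf3}: $\ell$-terms versus $r$-terms, and which of $\alpha,\beta$ accompanies $R$ or $S$. One must also confirm that no extra condition beyond the representation axioms is forced. I would write out one representative case in full, say $\mathbb{R}(a)\mathbb{R}(b')$, and then indicate that the remaining ones follow by the symmetries $R\leftrightarrow S$ (Remark \ref{rmk:eb} and Remark \ref{rmk:ga}) and $A\leftrightarrow B$. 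Finally, Proposition \ref{pro:cff} is recovered as the special case $B=M$ with zero multiplication and $\ell_B=r_B=0$.
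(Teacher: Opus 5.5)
Your proposal is correct and is essentially the paper's proof: the paper also uses Proposition \ref{pro:cn} for associativity, then expands the four defining identities of a symmetric Rota-Baxter system on $a+b$, $a'+b'$ and reads off exactly the homogeneous and mixed component terms you describe. The only slip is cosmetic. In the case $x=a$, $y=b'$, the $A$-component $R_A\bigl(R_A(a)r_B(b')+a\,r_B(S_B(b'))\bigr)$ matches the \emph{last} member of the chain \eqref{eq:cf2}, and the other expression gives the middle one, so your ``first''/``second'' labels for \eqref{eq:cf2} are swapped, which changes nothing.
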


 \begin{proof} By Proposition \ref{pro:cn}, the proof is finished by comparing the following equations:
 {\small\begin{eqnarray*}
 &&\hspace{-13mm}(R_{A} + R_{B})(a + b)\cdot_{A \oplus B} (R_{A} + R_{B})(a' + b')-(R_{A} + R_{B})((R_{A} + R_{B})(a + b)\cdot_{A \oplus B} (a' + b')\\
 &&+(a + b)\cdot_{A \oplus B} (S_{A} + S_{B})(a' + b'))\\
 &=& R_{A}(a) \cdot_{A} R_{A}(a')-R_{A}(R_{A}(a) \cdot_{A} a' +a \cdot_{A} S_{A}(a'))+ R_{B}(b) \cdot_{B} R_{B}(b')- R_{B}(R_{B}(b) \cdot_{B} b'\\
 &&+ b \cdot_{B} S_{B}(b')) + R_{A}(a)r_{B}(R_{B}(b'))- R_{A}(R_{A}(a)r_{B}(b')+a r_{B}(S_{B}(b')) )+ \ell_{B}(R_{B}(b))R_{A}(a')\\
 &&- R_{A}(\ell_{B}(R_{B}(b))a'+ \ell_{B}(b)S_{A}(a'))  + \ell_{A}(R_{A}(a))R_{B}(b')-R_{B}(\ell_{A}(R_{A}(a))b' + \ell_{A}(a)S_{B}(b'))\\
 &&+ R_{B}(b)r_{A}(R_{A}(a'))-R_{B}(R_{B}(b)r_{A}(a')+ b r_{A}(S_{A}(a'))).\\
 &&\hspace{-13mm}(R_{A} + R_{B})(a + b)\cdot_{A \oplus B} (R_{A} + R_{B})(a' + b')-(R_{A} + R_{B})((S_{A} + S_{B})(a + b)\cdot_{A \oplus B} (a' + b')\\
 &&+(a + b)\cdot_{A \oplus B} (R_{A} + R_{B})(a' + b'))\\
 &=& R_{A}(a) \cdot_{A} R_{A}(a')-R_{A}(S_{A}(a) \cdot_{A} a' +a \cdot_{A} R_{A}(a'))+ R_{B}(b) \cdot_{B} R_{B}(b')- R_{B}(S_{B}(b) \cdot_{B} b'\\
 &&+ b \cdot_{B} R_{B}(b')) + R_{A}(a)r_{B}(R_{B}(b'))- R_{A}(S_{A}(a)r_{B}(b')+a r_{B}(R_{B}(b')) )+ \ell_{B}(R_{B}(b))R_{A}(a')\\
 &&- R_{A}(\ell_{B}(S_{B}(b))a'+ \ell_{B}(b)R_{A}(a'))  + \ell_{A}(R_{A}(a))R_{B}(b')-R_{B}(\ell_{A}(S_{A}(a))b' + \ell_{A}(a)R_{B}(b'))\\
 &&+ R_{B}(b)r_{A}(R_{A}(a'))-R_{B}(S_{B}(b)r_{A}(a')+ b r_{A}(R_{A}(a'))).\\
 &&\hspace{-13mm}(S_{A} + S_{B})(a + b)\cdot_{A \oplus B} (S_{A} + S_{B})(a' + b')-(S_{A} + S_{B})((R_{A} + R_{B})(a + b)\cdot_{A \oplus B} (a' + b')\\
 &&+(a + b)\cdot_{A \oplus B} (S_{A} + S_{B})(a' + b'))\\
 &=& S_{A}(a) \cdot_{A} S_{A}(a')-S_{A}(R_{A}(a) \cdot_{A} a' +a \cdot_{A} S_{A}(a'))+ S_{B}(b) \cdot_{B} S_{B}(b')- S_{B}(R_{B}(b) \cdot_{B} b'\\
 &&+ b \cdot_{B} S_{B}(b')) + S_{A}(a)r_{B}(S_{B}(b'))- S_{A}(R_{A}(a)r_{B}(b')+a r_{B}(S_{B}(b')) )+ \ell_{B}(S_{B}(b))S_{A}(a')\\
 &&- S_{A}(\ell_{B}(R_{B}(b))a'+ \ell_{B}(b)S_{A}(a')) + \ell_{A}(S_{A}(a))S_{B}(b')-S_{B}(\ell_{A}(R_{A}(a))b' + \ell_{A}(a)S_{B}(b'))\\
 &&+ S_{B}(b)r_{A}(S_{A}(a'))-S_{B}(R_{B}(b)r_{A}(a')+ b r_{A}(S_{A}(a'))).\\
 &&\hspace{-13mm}(S_{A} + S_{B})(a + b)\cdot_{A \oplus B} (S_{A} + S_{B})(a' + b')-(S_{A} + S_{B})((S_{A} + S_{B})(a + b)\cdot_{A \oplus B} (a' + b')\\
 &&+(a + b)\cdot_{A \oplus B} (R_{A} + R_{B})(a' + b'))\\
 &=& S_{A}(a) \cdot_{A} S_{A}(a')-S_{A}(S_{A}(a) \cdot_{A} a' +a \cdot_{A} R_{A}(a'))+ S_{B}(b) \cdot_{B} S_{B}(b')- S_{B}(S_{B}(b) \cdot_{B} b'\\
 &&+ b \cdot_{B} R_{B}(b')) + S_{A}(a)r_{B}(S_{B}(b'))- S_{A}(S_{A}(a)r_{B}(b')+a r_{B}(R_{B}(b')) )+ \ell_{B}(S_{B}(b))S_{A}(a')\\
 &&- S_{A}(\ell_{B}(S_{B}(b))a'+ \ell_{B}(b)R_{A}(a'))  + \ell_{A}(S_{A}(a))S_{B}(b')-S_{B}(\ell_{A}(S_{A}(a))b' + \ell_{A}(a)R_{B}(b'))\\
 &&+ S_{B}(b)r_{A}(S_{A}(a'))-S_{B}(S_{B}(b)r_{A}(a')+ b r_{A}(R_{A}(a'))),
  \end{eqnarray*}
 }where $a, a'\in A$, $b, b'\in B$.
 \end{proof}

 \subsection{Double construction of symmetric Rota-Baxter Frobenius system}
 \begin{defi}\label{de:cq}\cite{Bai1} A \textbf{Frobenius algebra $(A, \mathcal{B})$} is an algebra $A$ with a nondegenerate invariant bilinear form $\mathcal{B}$, which means for all $a, b, c \in A$, $\mathcal{B}(a b, c) = \mathcal{B}(a, b c)$. A Frobenius algebra $(A, \mathcal{B})$ is called {\bf symmetric} if $\mathcal{B}$ is symmetric.

 Let $(A, \cdot_{A})$ be an algebra. Suppose that there is an algebra structure $\cdot_{A^{*}}$ on its dual space $A^{*}$, and an algebra structure on the direct sum $A \oplus A^{*}$ of the underlying vector spaces of $A$ and $A^{*}$ which contains both $(A, \cdot_{A})$ and $(A^{*}, \cdot_{A^{*}})$ as subalgebras. Define a bilinear form on $A \oplus A^{*}$ by
 \begin{eqnarray}
 \mathcal{B}_{d} (x + a^{*}, y + b^{*}) = \langle x, b^{*} \rangle + \langle a^{*}, y\rangle, \label{eq:cq}
 \end{eqnarray}
 for all $a^{*}, b^{*}\in A^{*}, x, y \in A$. If $\mathcal{B}_{d}$ is invariant, then $(A \oplus A^{*}, \mathcal{B}_{d})$ is a symmetric Frobenius algebra. This Frobenius algebra is called \textbf{a double construction of Frobenius algebra associated to $(A, \cdot_{A})$ and $(A^{*}, \cdot_{A^{*}})$} and denoted by $(A \bowtie A^{*}, \mathcal{B}_{d})$.
 \end{defi}

 \begin{defi} \label{de:cqq} A \textbf{symmetric Rota-Baxter Frobenius system} is a quadruple $(A, R, S, \mathcal{B})$ where $(A, R, S)$ is a symmetric Rota-Baxter system and $(A, \mathcal{B})$ is a Frobenius algebra.
 \end{defi}

 Assume that $\widehat{R}, \widehat{S}$ are the adjoint linear maps of $R, S$ with respect to $\mathcal{B}$ respectively, characterized
 by
 \begin{eqnarray}
 \mathcal{B}(R(a), b) = \mathcal{B}(a, \widehat{R}(b)),\quad \mathcal{B}(S(a), b) = \mathcal{B}(a, \widehat{S}(b)), \label{eq:cqq}
 \end{eqnarray}
 for all $a, b\in A$.

 The symmetric Frobenius property of a symmetric Rota-Baxter system $(A, R, S)$ ensures that it naturally induces a representation on the dual space $A^*$.

 \begin{pro}\label{pro:cr} Let $(A, R, S, \mathcal{B})$ be a symmetric Rota-Baxter Frobenius system with $\mathcal{B}$ symmetric. Then for the adjoint operators $\widehat{R}, \widehat{S}$ in Eq.\eqref{eq:cqq}, $(\widehat{R}, \widehat{S})$ is adjoint admissible to $(A, R, S)$, or equivalently, $(A^{*}, \mathcal{R}^{*}, \mathcal{L}^{*}, \widehat{R}^{*}, \widehat{S}^{*})$ is a representation of $(A, R, S)$. Moreover, $(A^{*}, \mathcal{R}^{*}, \mathcal{L}^{*}, \widehat{R}^{*}, \widehat{S}^{*})$ is equivalent to $(A, \mathcal{L}, \mathcal{R}, R, S)$ as representations of $(A, R, S)$.

 Conversely, let $(A, R, S)$ be a symmetric Rota-Baxter system and
 $Q, T : A \rightarrow A$ two linear maps such that $(Q, T)$ is adjoint admissible to $(A, R, S)$. If the resulting representation $(A^{*}, \mathcal{R}^{*}, \mathcal{L}^{*}$, $Q^{*}, T^{*})$ of $(A, R, S)$ is equivalent to $(A, \mathcal{L}, \mathcal{R}, R, S)$, then there exists a symmetric Rota-Baxter Frobenius system $(A, R, S, \mathcal{B})$  such that $Q=\widehat{R}, T=\widehat{S}$.
 \end{pro}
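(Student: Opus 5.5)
The plan is to go through the linear isomorphism $\varphi: A\rightarrow A^{*}$, $\langle \varphi(a), b\rangle=\mathcal{B}(a,b)$, which is bijective because $\mathcal{B}$ is nondegenerate. We show it intertwines the two structures. Since $(A^{*},\mathcal{R}^{*},\mathcal{L}^{*})$ is already an $A$-bimodule, by Corollary \ref{cor:ck} it suffices to show that $(A^{*}, \mathcal{R}^{*}, \mathcal{L}^{*}, \widehat{R}^{*}, \widehat{S}^{*})$ is equivalent to $(A,\mathcal{L},\mathcal{R},R,S)$ via $\varphi$. Transport of structure then gives that it is a representation, because $(A,\mathcal{L},\mathcal{R},R,S)$ is a representation: Eqs.\eqref{eq:cf}--\eqref{eq:cf3} for the adjoint bimodule with $\alpha=R,\beta=S$ are exactly Eqs.\eqref{eq:ea0}--\eqref{eq:ea1}, using associativity.

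Concretely, we check four identities.
\begin{itemize}
\item $\varphi(ab)=\mathcal{R}^{*}(a)\varphi(b)$: here $\langle\varphi(ab),c\rangle=\mathcal{B}(ab,c)$ and $\langle \mathcal{R}^{*}(a)\varphi(b),c\rangle=\langle\varphi(b),ca\rangle=\mathcal{B}(b,ca)$. Symmetry and invariance give $\mathcal{B}(ab,c)=\mathcal{B}(a,bc)=\mathcal{B}(bc,a)=\mathcal{B}(b,ca)$.
\item $\varphi(ba)=\varphi(b)\mathcal{L}^{*}(a)$, i.e. $\mathcal{B}(ba,c)=\mathcal{B}(b,ac)$, which is invariance directly.
\item $\varphi(R(a))=\widehat{R}^{*}(\varphi(a))$: $\langle\widehat{R}^{*}\varphi(a),c\rangle=\mathcal{B}(a,\widehat{R}(c))=\mathcal{B}(R(a),c)$ by Eq.\eqref{eq:cqq}.
\item $\varphi(S(a))=\widehat{S}^{*}(\varphi(a))$, by the same computation with $S$ in place of $R$.
\end{itemize}
We must check that the left/right conventions for the dual bimodule match the definitions given before Lemma \ref{lem:cj}. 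Then conjugating the relations \eqref{eq:cf}--\eqref{eq:cf3} by $\varphi$ yields them for the dual quintuple. Equivalently, $(\widehat{R},\widehat{S})$ is adjoint admissible by Corollary \ref{cor:ck}.

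For the converse, suppose $\varphi:A\rightarrow A^{*}$ is an equivalence from $(A,\mathcal{L},\mathcal{R},R,S)$ to $(A^{*},\mathcal{R}^{*},\mathcal{L}^{*},Q^{*},T^{*})$. Define $\mathcal{B}(a,b):=\langle\varphi(a),b\rangle$, which is nondegenerate since $\varphi$ is bijective. The right-module compatibility $\varphi(ba)=\varphi(b)\mathcal{L}^{*}(a)$ gives invariance $\mathcal{B}(ba,c)=\mathcal{B}(b,ac)$. The intertwinings $\varphi R=Q^{*}\varphi$ and $\varphi S=T^{*}\varphi$ give $\mathcal{B}(R(a),b)=\mathcal{B}(a,Q(b))$ and $\mathcal{B}(S(a),b)=\mathcal{B}(a,T(b))$, so $Q=\widehat{R}$ and $T=\widehat{S}$ by nondegeneracy. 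Hence $(A,R,S,\mathcal{B})$ is a symmetric Rota-Baxter Frobenius system in the sense of Definition \ref{de:cqq}.

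The main point needing care is the converse, because symmetry of $\mathcal{B}$ is not asked for there (Definition \ref{de:cqq} only needs a Frobenius algebra). The left compatibility $\varphi(ab)=\mathcal{R}^{*}(a)\varphi(b)$ is not needed for invariance; it would only be used if one wanted $\mathcal{B}$ symmetric. So I would rely only on the right-action compatibility to get invariance, and I would track the bimodule conventions carefully, since swapping $\ell^{*}$ and $r^{*}$ is the likely source of error.
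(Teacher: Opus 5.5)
Your proof is correct, but for the first part it runs in the opposite direction from the paper's.

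The paper verifies adjoint admissibility by hand. It pairs each Rota--Baxter identity with a third element, e.g.\ $0=\mathcal{B}(R(a)R(b)-R(R(a)b+aS(b)),c)$, moves the operators across using invariance, symmetry and Eq.~\eqref{eq:cqq}, and reads off the two equalities of \eqref{eq:ck}. The remaining six equalities in \eqref{eq:ck1}--\eqref{eq:ck3} are obtained ``similarly.'' The equivalence with $(A,\mathcal{L},\mathcal{R},R,S)$ and the converse are only referred to \cite[Proposition 3.9]{BGM}.

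You instead prove the equivalence first, via four one-line checks on $\varphi$. The representation property, hence adjoint admissibility by Corollary~\ref{cor:ck}, then comes for free by transporting the adjoint representation, whose conditions \eqref{eq:cf}--\eqref{eq:cf3} are just \eqref{eq:ea0}--\eqref{eq:ea1} with arguments relabelled. You also write out the converse, which the paper does not.

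Your route is shorter and makes the role of $\varphi$ transparent. The paper's computation has one mild advantage: it uses only nondegeneracy of $\mathcal{B}$, i.e.\ injectivity of $\varphi$, whereas transport needs $\varphi$ onto $A^{*}$, i.e.\ finite dimensionality. This is harmless here, since the equivalence claim requires $A\cong A^{*}$ anyway and finite dimensionality is tacit throughout the paper.

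One small correction to your closing aside. The left compatibility $\varphi(ab)=\mathcal{R}^{*}(a)\varphi(b)$ together with invariance only gives $\mathcal{B}(a,bc)=\mathcal{B}(bc,a)$. That yields full symmetry only when $A\cdot A=A$ (e.g.\ $A$ unital). Since Definition~\ref{de:cqq} does not require symmetry, this does not affect your argument.
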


 \begin{proof} For all $a, b, c\in A$, since $(A, \mathcal{B})$ a symmetric Frobenius algebra, we have $\mathcal{B}(a\cdot b, c)=\mathcal{B}(b, c\cdot a)$. Then
  \begin{eqnarray*}
  0&=&\mathcal{B}(R(a)R(b)-R(R(a)b+aS(b)), c)\\
  &=&\mathcal{B}(R(a), R(b)c)-\mathcal{B}(R(a)b, \widehat{R}(c)) -\mathcal{B}(aS(b), \widehat{R}(c))\\
  &=&\mathcal{B}(a, \widehat{R}(R(b)c)-\widehat{R}(b\widehat{R}(c))- S(b)\widehat{R}(c))
  \end{eqnarray*}
  and
  \begin{eqnarray*}
  0&=&\mathcal{B}(R(a)R(b)-R(S(a)b+aR(b)), c)=\mathcal{B}(a, \widehat{R}(R(b)c)-\widehat{S}(b\widehat{R}(c))- R(b)\widehat{R}(c)),
  \end{eqnarray*}
  produce $\widehat{R}(R(b)c)=\widehat{R}(b\widehat{R}(c))+S(b)\widehat{R}(c)=\widehat{S}(b\widehat{R}(c))+ R(b)\widehat{R}(c)$. One gets Eq.(\ref{eq:ck}). Similarly, we can obtain Eqs.(\ref{eq:ck1})-(\ref{eq:ck3}). Hence $(A^{*}, \mathcal{R}^{*}, \mathcal{L}^{*}, \widehat{R}^{*}, \widehat{S}^{*})$ is a representation of $(A, R, S)$.
 Then we can finish the proof by a similar discussion of \cite[Proposition 3.9]{BGM}.
 \end{proof}

 \begin{ex} \label{ex:fb} Let $(A \bowtie A^{*}, \mathbb{R}, \mathbb{S})$ be the symmetric Rota-Baxter system given in Example \ref{ex:fa}. Assume that $(A \bowtie A^{*}, \mathcal{B}_{d})$, where $\mathcal{B}_{d}$ is given by Eq.\eqref{eq:cq}, is a symmetric Frobenius algebra. Then $\widehat{\mathbb{R}}(x+a^{*})=a^{*}=-\mathbb{S}(x+a^{*}),~~ \widehat{\mathbb{S}}(x+a^{*})=-x=-\mathbb{R}(x+a^{*})$. By Proposition \ref{pro:cr}, $(-\mathbb{S}, -\mathbb{R})$ is adjoint admissible to $(A \bowtie A^{*}, \mathbb{R}, \mathbb{S})$, or equivalently, $(A\oplus  A^{*}, \mathcal{L}_{A \bowtie A^{*}}, \mathcal{R}_{A \bowtie A^{*}}, \mathbb{R}, \mathbb{S})$ is a representation of $(A \bowtie A^{*}, \mathbb{R}, \mathbb{S})$, which is exactly the adjoint representation.
 \end{ex}

 \begin{defi}\label{de:cs} Let $(A, R, S)$ and $(A^{*}, Q^{*}, T^{*})$ be two symmetric Rota-Baxter systems. A \textbf{double construction of symmetric Rota-Baxter Frobenius system associated to $(A, R, S)$ and $(A^{*}, Q^{*}, T^{*})$} is a double construction $(A \bowtie A^{*}, \mathcal{B}_{d})$ of Frobenius algebra associated to $(A, \cdot_{A})$ and $(A^{*}, \cdot_{A^{*}})$ such that $(A \bowtie A^{*}, R+Q^{*}, S+T^{*}, \mathcal{B}_{d})$ is a symmetric Rota-Baxter Frobenius system.
 \end{defi}

 \begin{lem} \label{lem:ct} Let $(A \bowtie A^{*}, R+Q^{*}, S+T^{*}, \mathcal{B}_{d})$ be a double construction of symmetric Rota-Baxter Frobenius system associated to $(A, R, S)$ and $(A^{*}, Q^{*}, T^{*})$.
 \begin{enumerate}[(1)]
   \item \label{it:lem:ct1} The adjoint $\widehat{R+Q^{*}}$ of $R+Q^{*}$ with respect to $\mathcal{B}_{d}$ is $Q+R^{*}$, the adjoint $\widehat{S+T^{*}}$ of $S+T^{*}$ with respect to $\mathcal{B}_{d}$ is $T+S^{*}$. Further $(Q+R^{*}, T+S^{*})$ is adjoint admissible to $(A \bowtie A^{*}, R+Q^{*}, S+T^{*})$.

   \item \label{it:lem:ct2} $(Q, T)$ is adjoint admissible to $(A, R, S)$.

   \item \label{it:lem:ct3} $(R^{*}, S^{*})$ is adjoint admissible to $(A^{*}, Q^{*}, T^{*})$.
 \end{enumerate}
 \end{lem}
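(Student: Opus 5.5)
The plan is to compute the adjoints directly from the definition of $\mathcal{B}_{d}$ in Eq.~\eqref{eq:cq}, then invoke Proposition~\ref{pro:cr}, and finally restrict the resulting identities to the subalgebras $A$ and $A^{*}$.

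For Item~\ref{it:lem:ct1}, take $x,y\in A$ and $a^{*},b^{*}\in A^{*}$. We have
$$\mathcal{B}_{d}\big((R+Q^{*})(x+a^{*}),\,y+b^{*}\big)=\langle R(x),b^{*}\rangle+\langle Q^{*}(a^{*}),y\rangle=\langle x,R^{*}(b^{*})\rangle+\langle a^{*},Q(y)\rangle,$$
and this equals $\mathcal{B}_{d}\big(x+a^{*},\,(Q+R^{*})(y+b^{*})\big)$. Since $\mathcal{B}_{d}$ is nondegenerate, the adjoint is unique, so $\widehat{R+Q^{*}}=Q+R^{*}$. The same computation with $S,T$ gives $\widehat{S+T^{*}}=T+S^{*}$. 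Because $\mathcal{B}_{d}$ is symmetric and $(A\bowtie A^{*},R+Q^{*},S+T^{*},\mathcal{B}_{d})$ is a symmetric Rota-Baxter Frobenius system, the first part of Proposition~\ref{pro:cr} applies and yields that $(Q+R^{*},T+S^{*})$ is adjoint admissible to $(A\bowtie A^{*},R+Q^{*},S+T^{*})$.

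For Items~\ref{it:lem:ct2} and~\ref{it:lem:ct3}, write out Eqs.~\eqref{eq:ck}--\eqref{eq:ck3} for the big system with $Q+R^{*}$ in place of $Q$ and $T+S^{*}$ in place of $T$. First take both arguments in $A$. Since $A$ is a subalgebra of $A\bowtie A^{*}$, all products stay in $A$, and $R+Q^{*}$, $S+T^{*}$, $Q+R^{*}$, $T+S^{*}$ restrict on $A$ to $R$, $S$, $Q$, $T$ respectively. The identities therefore become exactly Eqs.~\eqref{eq:ck}--\eqref{eq:ck3} for $(Q,T)$ and $(A,R,S)$, which proves Item~\ref{it:lem:ct2}. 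Next take both arguments in $A^{*}$. By the same reasoning, since $A^{*}$ is a subalgebra and the operators restrict to $Q^{*},T^{*},R^{*},S^{*}$, we obtain that $(R^{*},S^{*})$ is adjoint admissible to $(A^{*},Q^{*},T^{*})$, which is Item~\ref{it:lem:ct3}.

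The only point needing care is that the operators preserve the decomposition $A\oplus A^{*}$ and that $A$ and $A^{*}$ are subalgebras, so no mixed bimodule terms appear; both facts hold by construction. The main step is therefore the application of Proposition~\ref{pro:cr}, which requires the symmetry of $\mathcal{B}_{d}$, and that symmetry is clear from Eq.~\eqref{eq:cq}.
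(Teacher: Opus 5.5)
Your proposal is correct and follows the paper's proof essentially step for step. You compute the adjoints from Eq.~\eqref{eq:cq}, apply Proposition~\ref{pro:cr} (valid because $\mathcal{B}_{d}$ is symmetric) to get admissibility on $A\bowtie A^{*}$, and then restrict to $a^{*}=b^{*}=0$ and to $x=y=0$ to obtain Items (2) and (3).
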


 \begin{proof} \ref{it:lem:ct1} For all $x, y \in A$, $a^{*}, b^{*}\in A^{*}$, we have
 \begin{eqnarray*}
 \mathcal{B}_{d}((R+Q^{*})(x+a^{*}), y+b^{*})
 \hspace{-3mm}&=&\hspace{-3mm}\mathcal{B}_{d}(R(x)+Q^{*}(a^{*}), y+b^{*})=\langle Q^{*}(a^{*}), y\rangle+\langle R(x), b^{*}\rangle\\
 \hspace{-3mm}&=&\hspace{-3mm}\langle a^{*}, Q(y)\rangle+\langle x, R^{*}(b^{*})\rangle=\mathcal{B}_{d}(x+a^{*}, Q(y)+R^{*}(b^{*}))\\
 \hspace{-3mm}&=&\hspace{-3mm}\mathcal{B}_{d}(x+a^{*}, (Q+R^{*})(y+b^{*})).
 \end{eqnarray*}
 Hence, $\widehat{R+Q^{*}}=Q+R^{*}$ and similarly $\widehat{S+T^{*}}=T+S^{*}$. Furthermore, by Proposition $\ref{pro:cr}$, $(Q+R^{*}, T+S^{*})$ is adjoint admissible to $(A \bowtie A^{*},R+Q^{*}, S+T^{*})$.
	
 \ref{it:lem:ct2} By Item \ref{it:lem:ct1}, $(Q+R^{*}, T+S^{*})$ is adjoint admissible to $(A \bowtie A^{*}, R+Q^{*}, S+T^{*})$, that is to say, for all $x+a^{*}, y+b^{*}\in A \oplus A^{*}$, Eqs.(\ref{eq:ck})-(\ref{eq:ck3}) hold for $Q+R^{*}, T+S^{*}, R+Q^{*}, S+T^{*}$. Taking $a^{*}=b^{*}=0$ in the equations above give the admissibility of $Q, T$ to $(A, R, S)$.

 \ref{it:lem:ct3} Similar to Item \ref{it:lem:ct2} by taking $x=y=0$.
 \end{proof}

 \begin{lem}\label{lem:ctt}{\em \cite{Bai1}} Let $(A, \cdot_{A})$ and $(A^{*}, \cdot_{A^{*}})$ be algebras. Then there is a double construction of Frobenius algebra $(A \bowtie A^{*}, \mathcal{B}_{d})$ associated to $(A, \cdot_{A})$ and $(A^{*}, \cdot_{A^{*}})$ if and only if $((A, \cdot_{A}), (A^{*}, \cdot_{A^{*}}), \mathcal{R}_{A}^{*},  \mathcal{L}_{A}^{*},$ $\mathcal{R}_{A^{*}}^{*}, \mathcal{L}_{A^{*}}^{*})$ is a matched pair of algebras.
 \end{lem}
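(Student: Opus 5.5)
The plan is to reduce Lemma~\ref{lem:ctt} to Proposition~\ref{pro:cn}. The first step is to find the only multiplication on $A\oplus A^{*}$ that can make $\mathcal{B}_{d}$ invariant while keeping $A$ and $A^{*}$ as subalgebras. Suppose a double construction exists, and write the product of $x\in A$ and $a^{*}\in A^{*}$ as
\[
x\cdot a^{*}=x\cdot_{A} a^{*}+x\cdot_{A^{*}} a^{*},
\]
splitting it into its $A$- and $A^{*}$-components. Pairing with arbitrary $y\in A$ and $b^{*}\in A^{*}$ and using invariance $\mathcal{B}_{d}(uv,w)=\mathcal{B}_{d}(u,vw)$, I will compute, for instance,
\[
\langle x\cdot a^{*},y\rangle=\mathcal{B}_{d}(x\cdot a^{*},y)=\mathcal{B}_{d}(x,a^{*}\cdot y).
\]
Since $\mathcal{B}_{d}$ is symmetric, invariance also gives the cyclic form $\mathcal{B}_{d}(uv,w)=\mathcal{B}_{d}(vw,u)$, so the same quantity equals $\mathcal{B}_{d}(a^{*},y\cdot x)=\langle a^{*},yx\rangle$. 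This says the $A^{*}$-component of $x\cdot a^{*}$ is $\mathcal{R}_{A}^{*}(x)a^{*}$ (with the paper's convention $\langle \ell^{*}(a)m^{*},n\rangle=\langle m^{*},\ell(a)n\rangle$). Pairing with $b^{*}$ instead gives $\langle a^{*}\cdot_{A^{*}} b^{*},x\rangle$, so the $A$-component is $\mathcal{L}_{A^{*}}^{*}(a^{*})x$. The analogous computation for $a^{*}\cdot x$ gives $\mathcal{R}_{A^{*}}^{*}(a^{*})x+\mathcal{L}_{A}^{*}(x)a^{*}$.

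The second step is to compare this with Eq.~\eqref{eq:cn}. The product obtained above is exactly the multiplication of Eq.~\eqref{eq:cn} with $\ell_{A}=\mathcal{R}_{A}^{*}$, $r_{A}=\mathcal{L}_{A}^{*}$, $\ell_{B}=\mathcal{R}_{A^{*}}^{*}$ and $r_{B}=\mathcal{L}_{A^{*}}^{*}$. Its associativity is part of the hypothesis, so Proposition~\ref{pro:cn} shows the six-tuple is a matched pair.

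For the converse, start from the matched pair and define the product on $A\oplus A^{*}$ by Eq.~\eqref{eq:cn} with these maps. Proposition~\ref{pro:cn} makes it associative, and $A$ and $A^{*}$ are subalgebras by construction. It remains to verify invariance of $\mathcal{B}_{d}$. By trilinearity it suffices to check $\mathcal{B}_{d}(uv,w)=\mathcal{B}_{d}(u,vw)$ on homogeneous triples. Triples lying entirely in $A$ or entirely in $A^{*}$ are trivial, because $\mathcal{B}_{d}$ vanishes on $A\times A$ and on $A^{*}\times A^{*}$. That leaves six mixed cases, such as $(x,y,a^{*})$, $(x,a^{*},y)$ and $(a^{*},x,y)$, together with their duals. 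Each case unwinds by the definition of the dual maps to an identity of the form $\langle a^{*},xy\rangle=\langle a^{*},xy\rangle$, or the analogous one with $A$ and $A^{*}$ interchanged.

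The main obstacle is bookkeeping. One must track the left/right conventions of $\ell^{*},r^{*}$ carefully so that the components match Eq.~\eqref{eq:cn} exactly, and note that the symmetry of $\mathcal{B}_{d}$ is what allows the cyclic rewriting. Everything else is routine pairing computations.
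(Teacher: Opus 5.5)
Your proof is correct. The paper gives no argument of its own for this lemma and only cites \cite{Bai1}, and your route is essentially the standard proof there: invariance together with the symmetry of $\mathcal{B}_{d}$ forces the mixed products to be the coadjoint actions, so associativity is exactly the matched-pair condition of Proposition~\ref{pro:cn}, and conversely invariance is checked on the six mixed homogeneous triples.
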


 Extending Lemma \ref{lem:ctt} to symmetric Rota-Baxter systems, we obtain

 \begin{thm}\label{thm:cttt} Let $(A, R, S)$ and $(A^{*}, Q^{*}, T^{*})$ be symmetric Rota-Baxter systems. Then there is a double construction of symmetric Rota-Baxter Frobenius system $(A \bowtie A^{*}, R+Q^{*}, S+T^{*}, \mathcal{B}_{d})$ associated to $(A, R, S)$ and $(A^{*}, Q^{*}, T^{*})$ if and only if $((A, R, S), (A^{*}, Q^{*}, T^{*}), \mathcal{R}_{A}^{*},  \mathcal{L}_{A}^{*},$ $\mathcal{R}_{A^{*}}^{*}, \mathcal{L}_{A^{*}}^{*})$ is a matched pair of $(A, R, S)$ and $(A^{*}, Q^{*}, T^{*})$.
 \end{thm}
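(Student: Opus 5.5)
The plan is to reduce everything to Lemma \ref{lem:ctt} and Theorem \ref{thm:cp}, with Lemma \ref{lem:ct} and Corollary \ref{cor:ck} supplying the representation conditions. By Definition \ref{de:co}, a matched pair of $(A,R,S)$ and $(A^{*},Q^{*},T^{*})$ with the maps $\mathcal{R}_{A}^{*},\mathcal{L}_{A}^{*},\mathcal{R}_{A^{*}}^{*},\mathcal{L}_{A^{*}}^{*}$ consists of three pieces of data:
\begin{enumerate}[(i)]
\item $((A,\cdot_A),(A^{*},\cdot_{A^{*}}),\mathcal{R}_{A}^{*},\mathcal{L}_{A}^{*},\mathcal{R}_{A^{*}}^{*},\mathcal{L}_{A^{*}}^{*})$ is a matched pair of algebras;
\item $(A^{*},\mathcal{R}_A^{*},\mathcal{L}_A^{*},Q^{*},T^{*})$ is a representation of $(A,R,S)$;
\item $(A,\mathcal{R}_{A^{*}}^{*},\mathcal{L}_{A^{*}}^{*},R,S)$ is a representation of $(A^{*},Q^{*},T^{*})$, where $A$ is identified with $A^{**}$ so that $R=R^{**}$ and $S=S^{**}$.
\end{enumerate}
By Corollary \ref{cor:ck}, (ii) says exactly that $(Q,T)$ is adjoint admissible to $(A,R,S)$. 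Applying the same corollary to the symmetric Rota-Baxter system $(A^{*},Q^{*},T^{*})$, (iii) says exactly that $(R^{*},S^{*})$ is adjoint admissible to $(A^{*},Q^{*},T^{*})$.

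For $(\Rightarrow)$: a double construction of symmetric Rota-Baxter Frobenius system is in particular a double construction of Frobenius algebra, so Lemma \ref{lem:ctt} gives (i). Items \ref{it:lem:ct2} and \ref{it:lem:ct3} of Lemma \ref{lem:ct} give (ii) and (iii). Alternatively, $(A\bowtie A^{*},R+Q^{*},S+T^{*})$ is a symmetric Rota-Baxter system whose multiplication is Eq.\eqref{eq:cn} for these maps, so Theorem \ref{thm:cp} yields the matched pair directly.

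For $(\Leftarrow)$: the matched pair of symmetric Rota-Baxter systems gives (i), so Lemma \ref{lem:ctt} produces the double construction of Frobenius algebra $(A\bowtie A^{*},\mathcal{B}_d)$, with $\mathcal{B}_d$ invariant, symmetric and nondegenerate. Its multiplication is Eq.\eqref{eq:cn} with the given coadjoint maps. Theorem \ref{thm:cp}, applied to these six maps, then shows that $(A\bowtie A^{*},R+Q^{*},S+T^{*})$ is a symmetric Rota-Baxter system. Hence $(A\bowtie A^{*},R+Q^{*},S+T^{*},\mathcal{B}_d)$ is a symmetric Rota-Baxter Frobenius system, which is what Definition \ref{de:cs} requires.

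The main point to check is bookkeeping. The algebra structure on $A\oplus A^{*}$ in the double construction of Lemma \ref{lem:ctt} must coincide with the matched-pair product Eq.\eqref{eq:cn} built from $\mathcal{R}_{A}^{*},\mathcal{L}_{A}^{*},\mathcal{R}_{A^{*}}^{*},\mathcal{L}_{A^{*}}^{*}$. This holds because invariance of $\mathcal{B}_d$ forces the mixed products to be the coadjoint actions, which is part of the content of Lemma \ref{lem:ctt}. Given that, Theorem \ref{thm:cp} applies verbatim in both directions. Finally, the identification $A^{**}\cong A$ is harmless because all spaces are implicitly finite-dimensional, as is needed for $\mathcal{B}_d$ to be nondegenerate.
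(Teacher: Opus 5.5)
Your proposal is correct and follows essentially the same route as the paper. For ($\Rightarrow$) you use Lemma \ref{lem:ctt} for the matched pair of algebras and Lemma \ref{lem:ct}, items (2) and (3), for the two representations. For ($\Leftarrow$) you use Lemma \ref{lem:ctt} for the Frobenius double construction together with Theorem \ref{thm:cp} for the symmetric Rota-Baxter structure on $A\bowtie A^{*}$.

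Your explicit observation that invariance and nondegeneracy of $\mathcal{B}_d$ force the mixed products to be the coadjoint actions in Eq.~\eqref{eq:cn} is left implicit in the paper. That observation also justifies your alternative shortcut of getting ($\Rightarrow$) directly from Theorem \ref{thm:cp}.
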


 \begin{proof} ($\Rightarrow$) If $(A \bowtie A^{*}, R+Q^{*}, S+T^{*}, \mathcal{B}_{d})$ is a double construction of symmetric Rota-Baxter Frobenius system associated to $(A, R, S)$ and $(A^{*}, Q^{*}, T^{*})$, then by Definition $\ref{de:cs}$, we know that there is a double construction of Frobenius algebra $(A \bowtie A^{*}, \mathcal{B}_{d})$ associated to $(A, \cdot_{A})$ and $(A^{*}, \cdot_{A^{*}})$. By Lemma \ref{lem:ctt}, $((A, \cdot_{A}), (A^{*}, \cdot_{A^{*}}),$ $\mathcal{R}_{A}^{*},  \mathcal{L}_{A}^{*}, \mathcal{R}_{A^{*}}^{*}, \mathcal{L}_{A^{*}}^{*})$ is a matched pair of algebras $(A, \cdot_{A})$ and $(A^{*}, \cdot_{A^{*}})$. Furthermore, by Lemma $\ref{lem:ct}$, $(A^{*}, \mathcal{R}_{A}^{*}, \mathcal{L}_{A}^{*}, Q^{*}, T^{*})$ is a representation of $(A, R, S)$ and $(A, \mathcal{R}_{A^{*}}^{*}, \mathcal{L}_{A^{*}}^{*}, R, S)$ is a representation of $(A^{*}, Q{^{*}}, T^{*})$.  Therefore, $((A, R, S)$, $(A^{*}$, $Q^{*}, T^{*})$, $\mathcal{R}_{A}^{*},  \mathcal{L}_{A}^{*}, \mathcal{R}_{A^{*}}^{*}, \mathcal{L}_{A^{*}}^{*})$ is a matched pair of $(A, R, S)$ and $(A^{*}, Q^{*}, T^{*})$.

 ($\Leftarrow$) If $((A, R, S), (A^{*}, Q^{*}, T^{*}), \mathcal{R}_{A}^{*},  \mathcal{L}_{A}^{*}, \mathcal{R}_{A^{*}}^{*}, \mathcal{L}_{A^{*}}^{*})$ is a matched pair of $(A, R, S)$ and $(A^{*}$, $Q^{*}, T^{*})$, then by Definition $\ref{de:co}$, $((A, \cdot_{A}), (A^{*}, \cdot_{A^{*}}),$ $\mathcal{R}_{A}^{*},  \mathcal{L}_{A}^{*}, \mathcal{R}_{A^{*}}^{*}, \mathcal{L}_{A^{*}}^{*})$ is a matched pair of $(A, \cdot_{A})$ and $(A^{*}, \cdot_{A^{*}})$. By Lemma \ref{lem:ctt}, there is a double construction of Frobenius algebra $(A \bowtie A^{*}, \mathcal{B}_{d})$ associated to $(A, \cdot_{A})$ and $(A^{*}, \cdot_{A^{*}})$. Furthermore, by Theorem $\ref{thm:cp}$, $(A\bowtie A^{*}, R+Q^{*}, S+T^{*})$ is a symmetric Rota-Baxter system.  Hence, there is a double construction of symmetric Rota-Baxter Frobenius system $(A \bowtie A^{*}, R+Q^{*}, S+T^{*}, \mathcal{B}_{d})$ associated to $(A, R, S)$ and $(A^{*}, Q^{*}, T^{*})$.
 \end{proof}

\subsection{Symmetric Rota-Baxter ASI bisystems}
 Dually, we present the notion of a symmetric Rota-Baxter cosystem based on a Rota-Baxter cosystem given in \cite{MMS}.

 \begin{defi}\label{de:cu} A triple $(C, Q, T)$ consisting of a coalgebra $C$ and two linear maps $Q, T : C \rightarrow C$ is called a \textbf{symmetric Rota-Baxter cosystem} if, for all $c \in C$,
 \begin{eqnarray}
 Q(c_{(1)}) \otimes Q(c_{(2)}) &=& Q(Q(c)_{(1)}) \otimes Q(c)_{(2)} + Q(c)_{(1)} \otimes T(Q(c)_{(2)})\nonumber\\
 &=& T(Q(c)_{(1)}) \otimes Q(c)_{(2)} + Q(c)_{(1)} \otimes Q(Q(c)_{(2)}), \label{eq:cu}\\
 T(c_{(1)}) \otimes T(c_{(2)}) &=& Q(T(c)_{(1)}) \otimes T(c)_{(2)} + T(c)_{(1)} \otimes T(T(c)_{(2)})\nonumber\\
 &=& T(T(c)_{(1)}) \otimes T(c)_{(2)} + T(c)_{(1)} \otimes Q(T(c)_{(2)}). \label{eq:cu1}
 \end{eqnarray}
 Let $(C, Q_{C}, T_{C})$ and $(D, Q_{D}, T_{D})$ be symmetric Rota-Baxter cosystems. A morphism between $(C, Q_{C}, T_{C})$ and $(D, Q_{D}, T_{D})$ is a coalgebra map $f: C \rightarrow D$ such that $f \circ Q_{C} = Q_{D} \circ f$ and $f \circ T_{C} = T_{D} \circ f$.
 \end{defi}

 \begin{rmk} \label{rmk:gb}
 \begin{enumerate}[(1)]
 \item The symmetry of the maps $Q$ and $T$ in Definition \ref{de:cu} means that the conditions are unchanged upon interchanging them.

 \item $(C, Q, Q+\lambda \id_{C})$ is a symmetric Rota-Baxter cosystem if and only if $(C, Q)$ is a Rota-Baxter coalgebra of weight $\lambda$.
  \end{enumerate}
 \end{rmk}

 \begin{ex}\label{ex:cuu} Let $(C, \Delta)$ be a $2$-dimensional coalgebra with basis $e, f$ and the coproduct is given by $\Delta(e)=-e\otimes e, \Delta(f)=-e\otimes f$. Then
 $(C, Q, T)$ are symmetric Rota-Baxter cosystems, where $Q, T$ are given below:
 \begin{enumerate}[(1)]
 \item $Q(e)=q_{1}e+q_{2}f,~~~Q(f)=q_{3}e+\frac{q_{2}q_{3}}{q_{1}}f,\\ T(e)=-\frac{q_{2}q_{3}}{q_{1}}e+q_{2}f,~~~T(f)=q_{3}e-q_{1}f~(q_{1}\neq 0).$
 \item $Q(e)=0,~~~ Q(f)=q_{1}e+q_{2}f,\quad T(e)=-q_{2}e,~~~ T(f)=q_{1}e.$
 \item $Q(e)=q_{1}f,~~~Q(f)=q_{2}f,\quad T(e)=-q_{2}e+q_{1}f,~~~ T(f)=0 ~(q_{1}\neq 0).$
 \item $Q(e)=0,~~~ Q(f)=q_{1}e,\quad T(e)=0,~~~ T(f)=q_{2}e~(q_{2}\neq q_{1}).$
 \item $Q=0,\quad T(e)=q_{1}e,~~~ T(f)=q_{2}e~(q_{1}\neq 0).$
 \item $Q(e)=q_{1}e,~~~ Q(f)=q_{2}e,\quad T=0~(q_{1}\neq 0).$
 \item $Q=0,\quad T(e)=q_{1}e,~~~ T(f)=q_{1}f~(q_{1}\neq 0).$
 \item $Q(e)=q_{1}e,~~~ Q(f)=q_{1}f,\quad T=0~(q_{1}\neq 0),$
 \end{enumerate}
 where $q_{i}, i=1, 2, 3$ are parameters.
 \end{ex}

 \begin{rmk}
 \begin{enumerate}[(1)]
   \item If $C$ is a finite dimensional vector space, then $((C, \D), Q, T )$ is a symmetric Rota-Baxter cosystem if and only if $((C^{*}, \D^*), Q^{*}, T^{*})$ is a symmetric Rota-Baxter system.

   \item Moreover, for linear maps $R, S : A \rightarrow A$, the condition that $(R^{*}, S^{*})$ is adjoint admissible to the symmetric Rota-Baxter system $(A^{*}, Q^{*}, T^{*})$ can be rewritten in terms of $\Delta$ as
 \begin{eqnarray}
 &&Q(R(x)_{(1)})\otimes R(x)_{(2)}=R(x)_{(1)}\otimes R(R(x)_{(2)})+T(x_{(1)})\otimes R(x_{(2)})\nonumber\\
 &&\hspace{34mm}=S(x)_{(1)}\otimes R(S(x)_{(2)})+ Q(x_{(1)})\otimes R(x_{(2)}),\label{eq:ck5}\\
 &&R(x)_{(1)}\otimes Q(R(x)_{(2)})=R(S(x)_{(1)})\otimes S(x)_{(2)}+R(x_{(1)})\otimes Q(x_{(2)})\nonumber\\
 &&\hspace{34mm}=R(R(x)_{(1)})\otimes R(x)_{(2)}+ R(x_{(1)})\otimes T(x_{(2)}),\label{eq:ck6}\\
 &&T(S(x)_{(1)})\otimes S(x)_{(2)}=R(x)_{(1)}\otimes S(R(x)_{(2)})+T(x_{(1)})\otimes S(x_{(2)})\nonumber\\
 &&\hspace{34mm}=S(x)_{(1)}\otimes S(S(x)_{(2)})+ Q(x_{(1)})\otimes S(x_{(2)}),\label{eq:ck7}\\
 &&S(x)_{(1)}\otimes T(S(x)_{(2)})=S(S(x)_{(1)})\otimes S(x)_{(2)}+S(x_{(1)})\otimes Q(x_{(2)})\nonumber\\
 &&\hspace{34mm}=S(R(x)_{(1)})\otimes R(x)_{(2)}+ S(x_{(1)})\otimes T(x_{(2)}).\label{eq:ck8}
 \end{eqnarray}
 \end{enumerate}
 \end{rmk}

 \begin{defi} \label{de:cv}\cite{Zhe,Bai1,Ag1} An \textbf{ASI bialgebra} is a triple $(A, \cdot, \Delta)$, where the pair $(A, \cdot)$ is an algebra, and the pair $(A, \Delta)$ is a coalgebra, such that, for all $a, b\in A$,
 \begin{eqnarray*}
 &\D(a b)=a_{(1)} b\otimes a_{(2)}+b_{(1)}\otimes a b_{(2)},&\\
 &a_{(1)}\otimes a_{(2)} b + b_{(2)} a\otimes b_{(1)}=b a_{(1)}\otimes a_{(2)}+b_{(2)}\otimes a b_{(1)}.&
 \end{eqnarray*}
 \end{defi}

 \begin{defi}\label{de:cx} A \textbf{symmetric Rota-Baxter ASI bisystem} is a quintuple $((A, \cdot, \Delta), R, S, Q, T)$ ($(A, R, S, Q, T)$ for short), where $(A, \cdot, \Delta)$ is an ASI bialgebra, $(A, R, S)$ is a symmetric Rota-Baxter system and $(A, Q, T)$ is a symmetric Rota-Baxter cosystem such that Eqs.\eqref{eq:ck}-\eqref{eq:ck3} and Eqs.\eqref{eq:ck5}-\eqref{eq:ck8} hold.
 \end{defi}

 \begin{rmk} \label{rmk:fc} $(A, R, S,$ $Q, T)$ is a symmetric Rota-Baxter ASI bisystem if and only if $(A, S, R,$ $T, Q)$ is a symmetric Rota-Baxter ASI bisystem.
 \end{rmk}

 \begin{ex}\label{ex:gc} Let $(A, \cdot, \Delta)$ be a $2$-dimensional ASI bialgebra given in \cite[Example 3.14]{LM} whose product and coproduct are presented in Example \ref{ex:cee} and Example \ref{ex:cuu}, respectively. Define linear maps $R, S:A\rightarrow A$ by
 \begin{eqnarray*}
 &R(e)=q_{1}e-q_{1}f,~~~ R(f)=q_{2}e-q_{2}f,&\\
 &S(e)=q_{2}e-q_{1}f,~~~ S(f)=q_{2}e-q_{1}f,~~~ (q_{1}\neq 0, q_{2}\neq 0).&
 \end{eqnarray*}
 Then $(A, R, S, -S, -R)$ is a symmetric Rota-Baxter ASI bisystem.
 \end{ex}

 \begin{lem}\label{lem:cy}{\em \cite{Bai1}} Let $(A, \cdot_{A})$ and $(A^{*}, \D^{*})$ be algebras, where $\Delta : A \rightarrow A \otimes A$ is a linear map. Then the following conditions are equivalent.
 \begin{enumerate}[(1)]
 \item \label{it:lem:cy1} The sex-tuple $((A, \cdot_{A}), (A^{*}, \cdot_{A^{*}}), \mathcal{R}_{A}^{*},  \mathcal{L}_{A}^{*}, \mathcal{R}_{A^{*}}^{*}, \mathcal{L}_{A^{*}}^{*})$ is a matched pair of $(A, \cdot_{A})$ and $(A^{*},$ $\cdot_{A^{*}})$.

 \item \label{it:lem:cy2} There is a double construction of Frobenius algebra associated to $(A, \cdot_{A})$ and $(A^{*}, \cdot_{A^{*}})$.

 \item \label{it:lem:cy3} The triple $(A, \cdot_{A}, \Delta)$ is an ASI bialgebra.
 \end{enumerate}
 \end{lem}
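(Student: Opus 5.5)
The equivalence (1)$\Leftrightarrow$(2) has already been recorded as Lemma \ref{lem:ctt}, so the real content is (1)$\Leftrightarrow$(3). I would prove it by writing out the six compatibility conditions of a matched pair of algebras for the specific choice $\ell_A=\mathcal{R}_A^*$, $r_A=\mathcal{L}_A^*$, $\ell_{A^*}=\mathcal{R}_{A^*}^*$, $r_{A^*}=\mathcal{L}_{A^*}^*$. Each condition is then dualized by pairing against test elements, which turns it into an identity on $A\otimes A$ involving $\cdot_A$ and $\Delta$. The bimodule requirements in Definition \ref{de:cm} hold automatically, since dual bimodules of the regular bimodules are bimodules; only the six compatibility conditions need attention.

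Concretely, I will fix $x,y\in A$ and $a^*,b^*\in A^*$, with the product on $A^*$ being $\Delta^*$, so that $\langle a^*b^*,x\rangle=\langle a^*\otimes b^*,\Delta(x)\rangle$. For each compatibility condition I will evaluate both sides on a test vector, moving all operators onto $A$ by means of the defining pairings $\langle \mathcal{R}^*(x)a^*,y\rangle=\langle a^*, yx\rangle$ and similar ones. The expected outcome is the following.
\begin{enumerate}[(i)]
\item Two of the conditions, those of the form $\ell_A(x)(a^*b^*)=\dots$ and $(a^*b^*)r_A(x)=\dots$, become after dualization the derivation-type identity $\Delta(xy)=x_{(1)}y\otimes x_{(2)}+y_{(1)}\otimes xy_{(2)}$, or a flipped version of it.
\item Two further conditions, the mirror ones with the roles of $A$ and $A^*$ exchanged, dualize to the same identity again.
\item The remaining two mixed conditions, of the form $\ell_{A^*}(\ell_A(x)a^*)y+\dots=\dots$, both become the second ASI identity
\[
x_{(1)}\otimes x_{(2)}y+y_{(2)}x\otimes y_{(1)}=yx_{(1)}\otimes x_{(2)}+y_{(2)}\otimes xy_{(1)}.
\]
\end{enumerate}
Collecting these, the matched pair conditions hold if and only if the two axioms of Definition \ref{de:cv} hold, which is exactly (3).

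The main obstacle is bookkeeping. One has to track which tensor leg each pairing lands on, in particular where the flip $\sigma$ appears, and check that all six conditions reduce to only two independent identities rather than producing something new. I will handle this by always pairing with $b^*\otimes a^*$ or $a^*\otimes b^*$ in a fixed order, and by using the symmetry of $\mathcal{B}_d$ as a consistency check. As an alternative route for (2)$\Leftrightarrow$(3), invariance of $\mathcal{B}_d$ under the multiplication of Eq.~\eqref{eq:cn} can be tested directly on mixed triples such as $(x,a^*,y)$ and $(a^*,x,b^*)$. This gives the same two identities quickly and can serve as a cross-check, since this is the classical argument of \cite{Bai1}.
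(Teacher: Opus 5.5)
Your plan is correct and is essentially the argument of \cite{Bai1}, which the paper cites without giving a proof of its own. Dualizing the six matched-pair conditions for the coadjoint actions works exactly as you predict: the two conditions for the $A$-action on products in $A^{*}$, and their two mirror images, all reduce to $\Delta(xy)=x_{(1)}y\otimes x_{(2)}+y_{(1)}\otimes xy_{(2)}$, while the two mixed conditions reduce to the second ASI identity (one of them only after applying $\sigma$). The one thing to correct is your proposed cross-check: with the coadjoint cross-multiplication, invariance of $\mathcal{B}_d$ holds automatically, so the two identities come from associativity of $A\oplus A^{*}$, and invariance only explains why the six conditions collapse in pairs.
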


 \begin{thm}\label{thm:cz} Let $(A, R, S)$ and $(A^{*}, Q^{*}, T^{*})$ be symmetric Rota-Baxter systems. Then $(A, R, S, Q, T)$ is a symmetric Rota-Baxter ASI bisystem if and only if the sex-tuple $((A, R, S)$, $(A^{*}, Q^{*}, T^{*}), \mathcal{R}_{A}^{*},  \mathcal{L}_{A}^{*}, \mathcal{R}_{A^{*}}^{*}, \mathcal{L}_{A^{*}}^{*})$ is a matched pair of $(A, R, S)$ and $(A^{*}, Q^{*}, T^{*})$.
 \end{thm}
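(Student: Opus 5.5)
We unpack both sides of the equivalence into their component conditions and match them piece by piece. By Definition \ref{de:co}, the sex-tuple $((A, R, S), (A^{*}, Q^{*}, T^{*}), \mathcal{R}_{A}^{*}, \mathcal{L}_{A}^{*}, \mathcal{R}_{A^{*}}^{*}, \mathcal{L}_{A^{*}}^{*})$ is a matched pair of symmetric Rota-Baxter systems exactly when three things hold:
\begin{enumerate}[(i)]
\item $((A,\cdot_A),(A^*,\cdot_{A^*}),\mathcal{R}_{A}^{*}, \mathcal{L}_{A}^{*}, \mathcal{R}_{A^{*}}^{*}, \mathcal{L}_{A^{*}}^{*})$ is a matched pair of algebras;
\item $(A^{*}, \mathcal{R}_{A}^{*}, \mathcal{L}_{A}^{*}, Q^{*}, T^{*})$ is a representation of $(A,R,S)$;
\item $(A, \mathcal{R}_{A^{*}}^{*}, \mathcal{L}_{A^{*}}^{*}, R, S)$ is a representation of $(A^{*},Q^{*},T^{*})$.
\end{enumerate}
On the other side, Definition \ref{de:cx} requires three things as well. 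The first is that $(A,\cdot,\Delta)$ is an ASI bialgebra. The second and third concern the operators, since $(A,R,S)$ being a symmetric Rota-Baxter system and $(A,Q,T)$ being a symmetric Rota-Baxter cosystem are already built into the hypotheses (the latter because $(A^*,Q^*,T^*)$ is a symmetric Rota-Baxter system and $A$ is finite dimensional). So the second is that Eqs.\eqref{eq:ck}--\eqref{eq:ck3} hold, and the third is that Eqs.\eqref{eq:ck5}--\eqref{eq:ck8} hold. The plan is to match these three conditions with (i), (ii), (iii) in order.

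\textbf{Condition (i).} Lemma \ref{lem:cy}, the equivalence of its items \ref{it:lem:cy1} and \ref{it:lem:cy3}, says directly that (i) holds if and only if $(A,\cdot,\Delta)$ is an ASI bialgebra.

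\textbf{Condition (ii).} Corollary \ref{cor:ck} with Definition \ref{de:cl} says that (ii) holds exactly when $(Q,T)$ is adjoint admissible to $(A,R,S)$, that is, when Eqs.\eqref{eq:ck}--\eqref{eq:ck3} hold.

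\textbf{Condition (iii).} Apply Corollary \ref{cor:ck} to the symmetric Rota-Baxter system $(A^*,Q^*,T^*)$, with $A\cong A^{**}$ playing the role of the dual representation and $R,S$ playing the role of the admissible pair. This shows that (iii) holds if and only if $(R^*,S^*)$ is adjoint admissible to $(A^*,Q^*,T^*)$. By the preceding remark, that is equivalent to Eqs.\eqref{eq:ck5}--\eqref{eq:ck8}.

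Combining the three equivalences proves the theorem in both directions.

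The main obstacle is the last step. The remark is stated without proof, so I would verify it by hand. Take the eight identities of Corollary \ref{cor:ck} for the algebra $(A^*,\Delta^*)$ with operators $Q^*,T^*$ and admissible pair $(R^*,S^*)$. Pair each one with $x\in A$, move all operators across using $\langle R^*(a^*),x\rangle=\langle a^*,R(x)\rangle$ and $\langle a^*b^*,x\rangle=\langle a^*\otimes b^*,\Delta(x)\rangle$, and use nondegeneracy of the pairing to read off tensor identities in $A\otimes A$. The care needed is in tracking which tensor leg each operator lands on, and in the reversal of composition order under transposition, for example $(Q^*R^*)^*=RQ$. For instance, the identity $R^*(Q^*(a^*)b^*)=R^*(a^*R^*(b^*))+T^*(a^*)R^*(b^*)$ dualizes to a relation of the form $Q(R(x)_{(1)})\otimes R(x)_{(2)}=\dots$, and this should reproduce Eq.\eqref{eq:ck5}. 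I expect only bookkeeping here, no new ideas.
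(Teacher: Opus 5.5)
Your proposal is correct and is essentially the paper's own argument: unpack Definitions \ref{de:co} and \ref{de:cx}, use Lemma \ref{lem:cy} to match the algebra matched pair with the ASI bialgebra condition, and identify the two representation conditions with adjoint admissibility of $(Q,T)$ to $(A,R,S)$ and of $(R^{*},S^{*})$ to $(A^{*},Q^{*},T^{*})$, the latter being Eqs.~\eqref{eq:ck5}--\eqref{eq:ck8}. The paper takes that last dualization for granted, while you propose to verify it by pairing with $x\in A$; that check does go through (for instance, it yields exactly both equalities of Eq.~\eqref{eq:ck5}), so nothing is missing.
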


 \begin{proof} ($\Rightarrow$) If $(A, R, S, (Q, T))$ is a symmetric Rota-Baxter ASI bisystem, then by Definition $\ref{de:cx}$, we know that $(A, \cdot, \Delta)$ is an ASI bialgebra and $(Q, T)$, $(R^{*}, S^{*})$ are adjoint admissible $(A, R, S)$ and $(A^{*}, Q^{*}, T^{*})$ respectively. It means that $(A^{*}, \mathcal{R}_{A}^{*}, \mathcal{L}_{A}^{*}, Q^{*}, T^{*})$ is a representation of $(A, R, S)$ and $(A, \mathcal{R}_{A^{*}}^{*}, \mathcal{L}_{A^{*}}^{*}, R, S)$ is a representation of $(A^{*}, Q^{*}, T^{*})$. By Lemma \ref{lem:cy}, $((A, \cdot_{A}), (A^{*}, \cdot_{A^{*}}), \mathcal{R}_{A}^{*}, \mathcal{L}_{A}^{*}, \mathcal{R}_{A^{*}}^{*}, \mathcal{L}_{A^{*}}^{*})$ is a matched pair of $(A, \cdot_{A})$ and $(A^{*}, \cdot_{A^{*}})$ since $(A, \cdot, \Delta)$ is an ASI bialgebra. Therefore, $((A, R, S), (A^{*}, Q^{*}, T^{*}), \mathcal{R}_{A}^{*},  \mathcal{L}_{A}^{*}, \mathcal{R}_{A^{*}}^{*}, \mathcal{L}_{A^{*}}^{*})$ is a matched pair of $(A, R, S)$ and $(A^{*}, Q^{*}, T^{*})$.
	
 ($\Leftarrow$) If $((A, R, S), (A^{*}, Q^{*}, T^{*}), \mathcal{R}_{A}^{*},  \mathcal{L}_{A}^{*}, \mathcal{R}_{A^{*}}^{*}, \mathcal{L}_{A^{*}}^{*})$ is a matched pair of $(A, R, S)$ and $(A^{*}$, $Q^{*}, T^{*})$, then by Definition $\ref{de:co}$, $((A, \cdot_{A}), (A^{*}, \cdot_{A^{*}}), \mathcal{R}_{A}^{*}, \mathcal{L}_{A}^{*}, \mathcal{R}_{A^{*}}^{*}, \mathcal{L}_{A^{*}}^{*})$ is a matched pair of $(A, \cdot_{A})$ and $(A^{*}, \cdot_{A^{*}})$. By Lemma \ref{lem:cy}, $(A, \cdot, \Delta)$ is an ASI bialgebra. By Definition $\ref{de:co}$, $(Q,T)$, $(R^{*}, S^{*})$ are adjoint admissible $(A, R, S)$ and $(A^{*}, Q^{*}, T^{*})$, respectively. Hence, $(A, R, S, Q, T)$ is a symmetric Rota-Baxter ASI bisystem.
 \end{proof}

 Combining Theorems \ref{thm:cttt} and \ref{thm:cz}, we have

 \begin{thm}\label{thm:da} Let $(A, R, S)$ and $(A^{*}, Q^{*}, T^{*})$ be symmetric Rota-Baxter systems. Then the following conditions are equivalent.
 \begin{enumerate}[(1)]

   \item \label{it:thm:da1} The sex-tuple $((A, R, S), (A^{*}, Q^{*}, T^{*}), \mathcal{R}_{A}^{*}, \mathcal{L}_{A}^{*}, \mathcal{R}_{A^{*}}^{*}, \mathcal{L}_{A^{*}}^{*})$ is a matched pair of $(A, R, S)$ and $(A^{*}, Q^{*}, T^{*})$.

   \item \label{it:thm:da2} There is double construction of symmetric Rota-Baxter Frobenius system associated to $(A, R, S)$ and $(A^{*}, Q^{*}, T^{*})$.

   \item \label{it:thm:da3}  $(A, R, S, Q, T)$ is a symmetric Rota-Baxter ASI bisystem.
 \end{enumerate}
 \end{thm}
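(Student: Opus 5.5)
The plan is to assemble the equivalences from the two theorems just proved rather than recompute anything. Throughout, fix the symmetric Rota-Baxter systems $(A, R, S)$ and $(A^{*}, Q^{*}, T^{*})$. The multiplication on $A^{*}$ is $\Delta^{*}$, where $\Delta$ is the comultiplication on $A$. The structure maps of the candidate matched pair are the coregular ones $\mathcal{R}_{A}^{*}, \mathcal{L}_{A}^{*}, \mathcal{R}_{A^{*}}^{*}, \mathcal{L}_{A^{*}}^{*}$. These are exactly the hypotheses under which both Theorem \ref{thm:cttt} and Theorem \ref{thm:cz} are stated, so both theorems apply verbatim.

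First, I will get \ref{it:thm:da1} $\Leftrightarrow$ \ref{it:thm:da2} directly from Theorem \ref{thm:cttt}. It says that a double construction of symmetric Rota-Baxter Frobenius system $(A \bowtie A^{*}, R+Q^{*}, S+T^{*}, \mathcal{B}_{d})$ exists if and only if the sex-tuple in \ref{it:thm:da1} is a matched pair of $(A, R, S)$ and $(A^{*}, Q^{*}, T^{*})$. Second, I will get \ref{it:thm:da1} $\Leftrightarrow$ \ref{it:thm:da3} from Theorem \ref{thm:cz}. It says that $(A, R, S, Q, T)$ is a symmetric Rota-Baxter ASI bisystem if and only if the same sex-tuple is a matched pair. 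Chaining the two equivalences through \ref{it:thm:da1} gives \ref{it:thm:da2} $\Leftrightarrow$ \ref{it:thm:da3}, so all three conditions are equivalent.

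The only thing to be careful about is that both theorems use the same matched pair data. The coproduct $\Delta$ appearing in the ASI bialgebra of \ref{it:thm:da3} must be the one whose dual gives $\cdot_{A^{*}}$ in \ref{it:thm:da1} and \ref{it:thm:da2}. The cosystem $(A, Q, T)$ in Definition \ref{de:cx} must be the dual of the system $(A^{*}, Q^{*}, T^{*})$. In finite dimensions both points follow from the remark after Example \ref{ex:cuu}: $(A, \Delta, Q, T)$ is a symmetric Rota-Baxter cosystem if and only if $(A^{*}, \Delta^{*}, Q^{*}, T^{*})$ is a symmetric Rota-Baxter system. Thus the hypothesis that $(A^{*}, Q^{*}, T^{*})$ is a symmetric Rota-Baxter system already supplies the cosystem part of Definition \ref{de:cx}. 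With this identification checked, the proof is a formal combination and I expect no real obstacle.
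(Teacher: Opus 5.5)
Your proposal is correct and follows the paper's route: Theorem~\ref{thm:da} is obtained by chaining Theorem~\ref{thm:cttt} (giving (1)$\Leftrightarrow$(2)) and Theorem~\ref{thm:cz} (giving (1)$\Leftrightarrow$(3)) through the common matched-pair condition (1). Your extra check, that in finite dimensions $\Delta^{*}$ and the cosystem $(A,Q,T)$ are dual to the given data, is a harmless clarification that the paper leaves implicit.
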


 \section{Admissible associative Yang-Baxter equations and $\mathcal{O}$-operators} \label{se:aybe}
 In this section, we introduce the notions of the admissible aYBe and $\mathcal{O}$-operators. Building upon these, we present several constructions of symmetric Rota-Baxter ASI bisystems.

 \subsection{Admissible associative Yang-Baxter equations} In this subsection, we discuss the conditions under which $((A, \cdot, \D_r), R, S, Q, T)$ is a symmetric Rota-Baxter ASI bisystem, where $r=r^{1}\otimes r^{2}\in A \otimes A$ and $\Delta_{r} : A \rightarrow A \otimes A$ is given by
 \begin{eqnarray}
 \Delta_{r}(a) := r^{1}\otimes a r^{2}-r^{1} a\otimes  r^{2},\label{eq:db}
 \end{eqnarray}
 for all $a \in A$. In this case, we call $((A, \cdot, \D_r), R, S, Q, T)$ {\bf coboundary}.

 Let us recall from \cite{Bai1} some basic facts about ASI bialgebra. Let $(A, \cdot)$ be an algebra. Then $(A, \cdot, \Delta_{r})$ is an ASI bialgebra if and only if $r$ satisfies
 \begin{eqnarray}
 (\mathcal{L}(a) \otimes \id - \id \otimes \mathcal{R}(a))(\id \otimes \mathcal{L}(b) - \mathcal{R}(b) \otimes \id)(r + \sigma(r)) = 0,\label{eq:db1}\\
 (\id \otimes \id \otimes \mathcal{L}(a) - \mathcal{R}(a) \otimes \id \otimes \id)(r_{12}r_{13} + r_{13}r_{23} - r_{23}r_{12}) = 0. \label{eq:db2}
 \end{eqnarray}
 for all $a \in A$.
 $r \in A \otimes A$ is a solution of the following \textbf{associative Yang-Baxter equation} (aYBe for short):
 \begin{eqnarray}
 r_{12}r_{13} + r_{13}r_{23} - r_{23}r_{12} = 0, \label{eq:db4}
 \end{eqnarray} where $\bar{r} = r$ and
 \begin{eqnarray*}
 r_{12}r_{13} = r^{1}\bar{r}^{1}\otimes r^{2}\otimes\bar{r}^{2},~~r_{13}r_{23} = r^{1}\otimes\bar{r}^{1} \otimes r^{2}\bar{r}^{2},~~r_{23}r_{12} = r^{1}\otimes\bar{r}^{1}r^{2} \otimes\bar{r}^{2}.
 \end{eqnarray*}

 If $r$ is an antisymmetric solution of aYBe, then $(A, \cdot, \Delta_{r})$ is an ASI bialgebra, where the linear map $\Delta_{r}$ is defined by Eq.\eqref{eq:db}. In this case, we call this bialgebra \textbf{triangular}.

 \begin{thm}\label{thm:de} Let $(A, R, S)$ be a $(Q, T)$-adjoint admissible symmetric Rota-Baxter system. Define a linear map $\Delta : A \rightarrow A \otimes A$ by Eq.\eqref{eq:db} such that $(A^{*}, \D^{*})$ is an algebra. Then we have
 \begin{enumerate}[(1)]
 \item \label{it:thm:de1} Eq.\eqref{eq:cu} holds if and only if, for all $a \in A$,
 \begin{eqnarray}
 &&(\id\otimes \mathcal{L}(Q(a))-\id\otimes Q\circ \mathcal{L}(a))(Q\otimes \id -\id\otimes R)r+(Q\circ \mathcal{R}(a)\otimes \id)(\id\otimes Q-R\otimes \id)r\nonumber\\
 &&\hspace{70mm}-(\mathcal{R}(Q(a))\otimes \id)(\id\otimes T-S\otimes \id )r=0. \label{eq:de}\\
 &&(\id\otimes \mathcal{L}(Q(a)))(T\otimes \id-\id\otimes S )r-(\id\otimes Q\circ \mathcal{L}(a))(Q\otimes \id -\id\otimes R)r\nonumber\\
 &&\hspace{43mm}+(Q\circ \mathcal{R}(a)\otimes \id-\mathcal{R}(Q(a))\otimes \id)(\id\otimes Q-R\otimes \id)r=0.\label{eq:dede}
 \end{eqnarray}
 \item \label{it:thm:de2} Eq.\eqref{eq:cu1} holds if and only if, for all $a \in A$,
 \begin{eqnarray}
 &&(\id\otimes \mathcal{L}(T(a)))(Q\otimes \id -\id\otimes R)r+(T\circ \mathcal{R}(a)\otimes \id-(\mathcal{R}(T(a))\otimes \id))(\id\otimes T-S\otimes \id)r \nonumber\\
 &&\hspace{70mm}-(\id\otimes T\circ \mathcal{L}(a))(T\otimes \id-\id\otimes S)r=0. \label{eq:de1}\\
 &&(\id\otimes \mathcal{L}(T(a))-\id\otimes T\circ \mathcal{L}(a))(T\otimes \id -\id\otimes S)r+(T\circ \mathcal{R}(a)\otimes \id)(\id\otimes T-S\otimes \id)r\nonumber\\
 &&\hspace{70mm}-(\mathcal{R}(T(a))\otimes \id)(\id\otimes Q-R\otimes \id)r=0.\label{eq:de1de1}
 \end{eqnarray}
 \item \label{it:thm:de3} Eq.\eqref{eq:ck5} holds if and only if, for all $a \in A$,
 \begin{eqnarray}
 &&(\id\otimes \mathcal{L}(R(a))-\mathcal{R}(R(a))\otimes \id-T\circ \mathcal{R}(a)\otimes \id)(Q\otimes \id -\id\otimes R)r\nonumber\\
 &&\hspace{65mm}-(\id\otimes R\circ \mathcal{L}(a))(T\otimes \id-\id\otimes S)r=0.\label{eq:de2}\\
 &&(\id\otimes \mathcal{L}(R(a))-\mathcal{R}(S(a))\otimes \id-Q\circ \mathcal{R}(a)\otimes \id-\id\otimes R\circ \mathcal{L}(a))\nonumber\\
 &&\hspace{93mm}(Q\otimes \id -\id\otimes R)r=0.\label{eq:de2de2}
 \end{eqnarray}
 \item \label{it:thm:de4} Eq.\eqref{eq:ck6} holds if and only if, for all $a \in A$,
 \begin{eqnarray}
 &&(\id\otimes Q\circ \mathcal{L}(a)+\id\otimes \mathcal{L}(S(a))+R\circ \mathcal{R}(a)\otimes \id-\mathcal{R}(R(a))\otimes \id)\nonumber\\
 &&\hspace{93mm}(\id\otimes Q -R\otimes \id)r=0.\label{eq:de3}\\
 &&(\id\otimes T\circ \mathcal{L}(a)+\id\otimes \mathcal{L}(R(a))-\mathcal{R}(R(a))\otimes \id)(\id\otimes Q -R\otimes \id)r\nonumber\\
 &&\hspace{65mm}+(R\circ \mathcal{R}(a)\otimes \id)(\id\otimes T -S\otimes \id)r=0.\label{eq:de3de3}
 \end{eqnarray}
 \item \label{it:thm:de5} Eq.\eqref{eq:ck7} holds if and only if, for all $a \in A$,
 \begin{eqnarray}
 &&(\id\otimes \mathcal{L}(S(a))-\mathcal{R}(R(a))\otimes \id-T\circ \mathcal{R}(a)\otimes \id-\id\otimes S\circ \mathcal{L}(a))\nonumber\\
 &&\hspace{92mm}(T\otimes \id -\id\otimes S)r=0.\label{eq:de4}\\
 &&(\id\otimes \mathcal{L}(S(a))-\mathcal{R}(S(a))\otimes \id-Q\circ \mathcal{R}(a)\otimes \id)(T\otimes \id -\id\otimes S)r\nonumber\\
 &&\hspace{65mm}-(\id\otimes S\circ \mathcal{L}(a))(Q\otimes \id -\id\otimes R)r=0.\label{eq:de4de4}
 \end{eqnarray}
 \item \label{it:thm:de6} Eq.\eqref{eq:ck8} holds if and only if, for all $a \in A$,
 \begin{eqnarray}
 &&(\id\otimes Q\circ \mathcal{L}(a)+\id\otimes \mathcal{L}(S(a))-\mathcal{R}(S(a))\otimes \id)(\id\otimes T -S\otimes \id)r\nonumber\\
 &&\hspace{65mm}+(S\circ \mathcal{R}(a)\otimes \id)(\id\otimes Q -R\otimes \id)r=0.\label{eq:de5}\\
 &&(\id\otimes T\circ \mathcal{L}(a)+\id\otimes \mathcal{L}(R(a))-\mathcal{R}(S(a))\otimes \id+S\circ \mathcal{R}(a)\otimes \id)\nonumber\\
 &&\hspace{93mm}(\id\otimes T -S\otimes \id)r=0.\label{eq:de5de5}
 \end{eqnarray}
 \end{enumerate}
 \end{thm}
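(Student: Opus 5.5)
The plan is a direct substitution: plug $\Delta=\Delta_r$ from Eq.\eqref{eq:db} into each of Eqs.\eqref{eq:cu}, \eqref{eq:cu1}, \eqref{eq:ck5}--\eqref{eq:ck8}, expand every term in terms of $r=r^1\otimes r^2$, and then use the $(Q,T)$-adjoint admissibility identities Eqs.\eqref{eq:ck}--\eqref{eq:ck3} to rewrite the expressions. After these rewritings, each of the two equalities in a given equation should become one of the displayed operator identities. Each of Eqs.\eqref{eq:cu}, \eqref{eq:cu1}, \eqref{eq:ck5}--\eqref{eq:ck8} is a pair of equalities ($X=Y$ and $X=Z$), which is why each item yields two conditions. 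I would treat the first equality and the second equality of each separately, and I expect them to match the two displayed conditions respectively.

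For item (1), take the first equality of \eqref{eq:cu}, namely
\[
Q(a_{(1)})\otimes Q(a_{(2)})=Q(Q(a)_{(1)})\otimes Q(a)_{(2)}+Q(a)_{(1)}\otimes T(Q(a)_{(2)}).
\]
By \eqref{eq:db}, the left side is $Q(r^1)\otimes Q(ar^2)-Q(r^1a)\otimes Q(r^2)$. The right side is
\[
Q(r^1)\otimes Q(a)r^2-Q(r^1Q(a))\otimes r^2+r^1\otimes T(Q(a)r^2)-r^1Q(a)\otimes T(r^2).
\]
Next I use \eqref{eq:ck1} and \eqref{eq:ck3} (the ``right'' identities) to trade $Q(r^1Q(a))$ and similar terms for expressions involving $R,S$ inside. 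I use \eqref{eq:ck} and \eqref{eq:ck2} to handle $T(Q(a)r^2)$-type terms. The aim is to group the terms into the shapes $(Q\otimes\id-\id\otimes R)r$, $(\id\otimes Q-R\otimes\id)r$, $(T\otimes\id-\id\otimes S)r$ and $(\id\otimes T-S\otimes\id)r$, each acted on by operators such as $\id\otimes\mathcal{L}(Q(a))$, $\id\otimes Q\circ\mathcal{L}(a)$, $Q\circ\mathcal{R}(a)\otimes\id$ and $\mathcal{R}(Q(a))\otimes\id$. This grouping should yield \eqref{eq:de}. The second equality is handled in the same way and should yield \eqref{eq:dede}.

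Item (2) follows from item (1) by the $R\leftrightarrow S$, $Q\leftrightarrow T$ symmetry of Remark \ref{rmk:fc} together with the symmetric form of Definition \ref{de:cu}. Invoking this symmetry should let me avoid redoing the computation. Similarly, the pair \eqref{eq:ck7}, \eqref{eq:ck8} is the symmetric image of the pair \eqref{eq:ck5}, \eqref{eq:ck6}. So items (5) and (6) follow from items (3) and (4), and only items (1), (3) and (4) need genuine computation.

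For items (3) and (4), expand \eqref{eq:ck5} and \eqref{eq:ck6} in the same way. For instance, the left side of \eqref{eq:ck5} gives
\[
Q(R(x)_{(1)})\otimes R(x)_{(2)}=Q(r^1)\otimes R(x)r^2-Q(r^1R(x))\otimes r^2,
\]
and $Q(r^1R(x))$ is then rewritten via \eqref{eq:ck1}.

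The main obstacle is bookkeeping. For each equality I must choose which branch of the two-sided admissibility identities to apply, so that the leftover terms factor exactly as in the stated formulas. I would first try applying the admissibility branch whose inner operator matches the one appearing in the target expression. I would then verify the result by expanding the stated operator expressions back out and comparing them term by term. Because every rewriting used is an identity, each step is reversible, which gives the ``if and only if''.
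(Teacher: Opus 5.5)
Your route is the paper's: substitute $\Delta_r$, rewrite with the structural identities, and regroup. The ``if and only if'' comes from the fact that, under the hypotheses, the difference of the two sides equals the displayed expression identically.

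The one genuine difference is your symmetry shortcut, and it is valid. Its justification is finer than Remark~\ref{rmk:fc}, which concerns the bisystem as a whole, so you should spell it out equation by equation.
\begin{itemize}
\item Under $R\leftrightarrow S$, $Q\leftrightarrow T$ you get \eqref{eq:ck}$\leftrightarrow$\eqref{eq:ck2}, \eqref{eq:ck1}$\leftrightarrow$\eqref{eq:ck3} and \eqref{eq:ea0}$\leftrightarrow$\eqref{eq:ea1}. So the hypotheses are invariant.
\item The targets map as \eqref{eq:cu}$\leftrightarrow$\eqref{eq:cu1}, \eqref{eq:ck5}$\leftrightarrow$\eqref{eq:ck7} and \eqref{eq:ck6}$\leftrightarrow$\eqref{eq:ck8}, in each case with the two equalities interchanged.
\item Correspondingly, \eqref{eq:de}$\leftrightarrow$\eqref{eq:de1de1}, \eqref{eq:dede}$\leftrightarrow$\eqref{eq:de1}, \eqref{eq:de2}$\leftrightarrow$\eqref{eq:de4de4}, \eqref{eq:de2de2}$\leftrightarrow$\eqref{eq:de4}, \eqref{eq:de3}$\leftrightarrow$\eqref{eq:de5de5} and \eqref{eq:de3de3}$\leftrightarrow$\eqref{eq:de5}.
\end{itemize}
Hence items (2), (5), (6) do follow from (1), (3), (4). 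This halves the six explicit computations the paper writes out.

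Two points in your bookkeeping need fixing.

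\textbf{Item (1): the identities are assigned backwards.} $Q(r^1Q(a))$ is a $Q(xQ(y))$-term, so it is rewritten by \eqref{eq:ck}: $Q(r^1Q(a))=Q(R(r^1)a)-S(r^1)Q(a)$. On the other hand, $T(Q(a)r^2)=Q(aR(r^2))-Q(a)R(r^2)$ comes from \eqref{eq:ck1}. For \eqref{eq:dede} the other branches of the same two identities are used: $T(r^1Q(a))=Q(R(r^1)a)-R(r^1)Q(a)$ and $Q(Q(a)r^2)=Q(aR(r^2))-Q(a)S(r^2)$. Neither \eqref{eq:ck2} nor \eqref{eq:ck3} plays any role in item (1).

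\textbf{Items (3)--(6): admissibility alone does not suffice.} Some terms have $R$ (or $S$) on the outside, such as $r^1\otimes R(R(a)r^2)$, $r^1\otimes R(S(a)r^2)$, $R(r^1S(a))\otimes r^2$ and $R(r^1R(a))\otimes r^2$. No admissibility identity touches these. They must be rewritten with the Rota--Baxter system axioms \eqref{eq:ea0} (respectively \eqref{eq:ea1}), for example $R(R(a)r^2)=R(a)R(r^2)-R(aS(r^2))$. This rewriting is exactly what produces the $r^1\otimes(\cdot)$ parts of $\id\otimes\mathcal{L}(R(a))$ and $\id\otimes R\circ\mathcal{L}(a)$ in \eqref{eq:de2}.

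With these two adjustments the computation closes exactly as in the paper.
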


 \begin{proof} For all $a\in A$, the results can be proved by the following computation.
  {\small\begin{eqnarray*}
 \ref{it:thm:de1}~~~0\hspace{-6mm}&=&\hspace{-6mm}-Q(a_{(1)}) \otimes Q(a_{(2)})+Q(Q(a)_{(1)}) \otimes Q(a)_{(2)} + Q(a)_{(1)} \otimes T(Q(a)_{(2)}) \\
 \hspace{-6mm}&=&\hspace{-6mm} Q(r^{1})\otimes Q(a)r^{2}-Q(r^{1}Q(a))\otimes r^{2}+r^{1}\otimes T(Q(a)r^{2})-r^{1}Q(a)\otimes T(r^{2})-Q(r^{1})\otimes Q(a r^{2})\\
 &&+Q(r^{1} a) \otimes Q(r^{2})\\
 \hspace{-3mm}&\stackrel{\eqref{eq:ck}\eqref{eq:ck1}}{=}&\hspace{-3mm} Q(r^{1})\otimes Q(a)r^{2}-Q(R(r^{1})a)\otimes r^{2}+S(r^{1})Q(a)\otimes r^{2}+r^{1}\otimes Q(a R(r^{2}))-r^{1}\otimes Q(a) R(r^{2})\\
 &&-r^{1}Q(a) \otimes T(r^{2})-Q(r^{1})\otimes Q(a r^{2})+Q(r^{1} a) \otimes Q(r^{2})\\
 \hspace{-6mm}&=&\hspace{-6mm}(\id\otimes \mathcal{L}(Q(a))-\id\otimes Q\circ \mathcal{L}(a))(Q\otimes \id -\id\otimes R)r+(Q\circ \mathcal{R}(a)\otimes \id)(\id\otimes Q-R\otimes \id)r\\
 &&-(\mathcal{R}(Q(a))\otimes \id)(\id\otimes T-S\otimes \id )r,\\
 0\hspace{-6mm}&=&\hspace{-6mm}-Q(a_{(1)}) \otimes Q(a_{(2)})+T(Q(a)_{(1)}) \otimes Q(a)_{(2)} + Q(a)_{(1)} \otimes Q(Q(a)_{(2)}) \\
 \hspace{-6mm}&=&\hspace{-6mm} T(r^{1})\otimes Q(a)r^{2}-T(r^{1}Q(a))\otimes r^{2}+r^{1}\otimes Q(Q(a)r^{2})-r^{1}Q(a)\otimes Q(r^{2})-Q(r^{1})\otimes Q(a r^{2})\\
 &&+Q(r^{1} a) \otimes Q(r^{2})\\
 \hspace{-6mm}&=&\hspace{-6mm} T(r^{1})\otimes Q(a)r^{2}-Q(R(r^{1})a)\otimes r^{2}+R(r^{1})Q(a)\otimes r^{2}+r^{1}\otimes Q(a R(r^{2}))-r^{1}\otimes Q(a) S(r^{2})\\
 &&-r^{1}Q(a) \otimes Q(r^{2})-Q(r^{1})\otimes Q(a r^{2})+Q(r^{1} a) \otimes Q(r^{2})\\
 \hspace{-6mm}&=&\hspace{-6mm}(\id\otimes \mathcal{L}(Q(a)))(T\otimes \id-\id\otimes S )r-(\id\otimes Q\circ \mathcal{L}(a))(Q\otimes \id -\id\otimes R)r+(Q\circ \mathcal{R}(a)\otimes \id\\
 &&-\mathcal{R}(Q(a))\otimes \id)(\id\otimes Q-R\otimes \id)r,\\
 \ref{it:thm:de2}~~~0\hspace{-6mm}&=&\hspace{-6mm}-T(a_{(1)}) \otimes T(a_{(2)})+Q(T(a)_{(1)}) \otimes T(a)_{(2)} + T(a)_{(1)} \otimes T(T(a)_{(2)}) \\
 \hspace{-6mm}&=&\hspace{-6mm}Q(r^{1})\otimes T(a)r^{2}-Q(r^{1}T(a))\otimes r^{2}+r^{1}\otimes T(T(a)r^{2})-r^{1}T(a)\otimes T(r^{2})-T(r^{1})\otimes T(a r^{2})\\
 &&+T(r^{1} a) \otimes T(r^{2})\\
 \hspace{-6mm}&=&\hspace{-6mm} Q(r^{1})\otimes T(a)r^{2}-T(S(r^{1})a)\otimes r^{2}+S(r^{1})T(a)\otimes r^{2}+r^{1}\otimes T(a S(r^{2}))-r^{1}\otimes T(a) R(r^{2})\\
 &&-r^{1}T(a) \otimes T(r^{2})-T(r^{1})\otimes T(a r^{2})+T(r^{1} a) \otimes T(r^{2})\\
 \hspace{-6mm}&=&\hspace{-6mm} (\id\otimes \mathcal{L}(T(a)))(Q\otimes \id -\id\otimes R)r+(T\circ \mathcal{R}(a)\otimes \id-(\mathcal{R}(T(a))\otimes \id))(\id\otimes T-S\otimes \id)r\\
 &&-(\id\otimes T\circ \mathcal{L}(a))(T\otimes \id-\id\otimes S)r,\\
 0\hspace{-6mm}&=&\hspace{-6mm}-T(a_{(1)}) \otimes T(a_{(2)})+T(T(a)_{(1)}) \otimes T(a)_{(2)} + T(a)_{(1)} \otimes Q(T(a)_{(2)}) \\
 \hspace{-6mm}&=&\hspace{-6mm}T(r^{1})\otimes T(a)r^{2}-T(r^{1}T(a))\otimes r^{2}+r^{1}\otimes Q(T(a)r^{2})-r^{1}T(a)\otimes Q(r^{2})-T(r^{1})\otimes T(a r^{2})\\
 &&+T(r^{1} a) \otimes T(r^{2})\\
 \hspace{-6mm}&=&\hspace{-6mm} T(r^{1})\otimes T(a)r^{2}-T(S(r^{1})a)\otimes r^{2}+R(r^{1})T(a)\otimes r^{2}+r^{1}\otimes T(a S(r^{2}))-r^{1}\otimes T(a) S(r^{2})\\
 &&-r^{1}T(a) \otimes Q(r^{2})-T(r^{1})\otimes T(a r^{2})+T(r^{1} a) \otimes T(r^{2})\\
 \hspace{-6mm}&=&\hspace{-6mm} (\id\otimes \mathcal{L}(T(a))-\id\otimes T\circ \mathcal{L}(a))(T\otimes \id -\id\otimes S)r+(T\circ \mathcal{R}(a)\otimes \id)(\id\otimes T-S\otimes \id)r\\
 &&-(\mathcal{R}(T(a))\otimes \id)(\id\otimes Q-R\otimes \id)r,\\
 \ref{it:thm:de3}~~~0\hspace{-6mm}&=&\hspace{-6mm}Q(R(a)_{(1)})\otimes R(a)_{(2)}-R(a)_{(1)}\otimes R(R(a)_{(2)})-T(a_{(1)})\otimes R(a_{(2)})\\
 \hspace{-6mm}&=&\hspace{-6mm}Q(r^{1})\otimes R(a) r^{2}-Q(r^{1}R(a))\otimes r^{2}-r^{1}\otimes R(R(a)r^{2})+r^{1}R(a)\otimes R(r^{2})-T(r^{1})\otimes R(a r^{2})\\
 &&+T(r^{1}a)\otimes R(r^{2})\\
 \hspace{-6mm}&=&\hspace{-6mm}Q(r^{1})\otimes R(a) r^{2}-Q(r^{1})R(a)\otimes r^{2}-T(Q(r^{1})a)\otimes r^{2}-r^{1}\otimes R(a)R(r^{2})+r^{1}\otimes R(a S(r^{2}))\\
 &&+r^{1}R(a)\otimes R(r^{2})-T(r^{1})\otimes R(a r^{2})+T(r^{1}a)\otimes R(r^{2})\\
 \hspace{-6mm}&=&\hspace{-6mm} (\id\otimes \mathcal{L}(R(a))-\mathcal{R}(R(a))\otimes \id-T\circ \mathcal{R}(a)\otimes \id)(Q\otimes \id -\id\otimes R)r-(\id\otimes R\circ \mathcal{L}(a))\\
 &&(T\otimes \id-\id\otimes S)r,\\
 0\hspace{-6mm}&=&\hspace{-6mm}Q(R(a)_{(1)})\otimes R(a)_{(2)}-S(a)_{(1)}\otimes R(S(a)_{(2)})-Q(a_{(1)})\otimes R(a_{(2)})\\
 \hspace{-6mm}&=&\hspace{-6mm}Q(r^{1})\otimes R(a) r^{2}-Q(r^{1}R(a))\otimes r^{2}-r^{1}\otimes R(R(a)r^{2})+r^{1}R(a)\otimes R(r^{2})-T(r^{1})\otimes R(a r^{2})\\
 &&+T(r^{1}a)\otimes R(r^{2})\\
 \hspace{-6mm}&=&\hspace{-6mm}Q(r^{1})\otimes R(a) r^{2}-Q(r^{1})S(a)\otimes r^{2}-Q(Q(r^{1})a)\otimes r^{2}-r^{1}\otimes R(a)R(r^{2})+r^{1}\otimes R(a R(r^{2}))\\
 &&+r^{1}S(a)\otimes R(r^{2})-Q(r^{1})\otimes R(a r^{2})+Q(r^{1}a)\otimes R(r^{2})\\
 \hspace{-6mm}&=&\hspace{-6mm} (\id\otimes \mathcal{L}(R(a))-\mathcal{R}(S(a))\otimes \id-Q\circ \mathcal{R}(a)\otimes \id-\id\otimes R\circ \mathcal{L}(a))(Q\otimes \id -\id\otimes R)r,\\
 \ref{it:thm:de4}~~~0\hspace{-6mm}&=&\hspace{-6mm}R(a)_{(1)}\otimes Q(R(a)_{(2)})-R(S(a)_{(1)})\otimes S(a)_{(2)}-R(a_{(1)})\otimes Q(a_{(2)}) \\
 \hspace{-6mm}&=&\hspace{-6mm}r^{1}\otimes Q(R(a) r^{2})-r^{1}R(a)\otimes Q(r^{2})-R(r^{1})\otimes Q(a r^{2})+R(r^{1}a)\otimes Q(r^{2})-R(r^{1})\otimes S(a) r^{2}\\
 &&+R(r^{1}S(a))\otimes r^{2}\\
 \hspace{-6mm}&=&\hspace{-6mm}r^{1}\otimes Q(a Q(r^{2}))+r^{1}\otimes S(a)Q(r^{2})-r^{1}R(a)\otimes Q(r^{2})-R(r^{1})\otimes Q(a r^{2})+R(r^{1}a)\otimes Q(r^{2})\\
 &&+R(r^{1})R(a)\otimes r^{2}-R(R(r^{1})a)\otimes r^{2}-R(r^{1})\otimes S(a)r^{2}\\
 \hspace{-6mm}&=&\hspace{-6mm} (\id\otimes Q\circ \mathcal{L}(a)+\id\otimes \mathcal{L}(S(a))+R\circ \mathcal{R}(a)\otimes \id-\mathcal{R}(R(a))\otimes \id)(\id\otimes Q -R\otimes \id)r,\\
 0\hspace{-6mm}&=&\hspace{-6mm}R(a)_{(1)}\otimes Q(R(a)_{(2)})-R(R(a)_{(1)})\otimes R(a)_{(2)}-R(a_{(1)})\otimes T(a_{(2)}) \\
 \hspace{-6mm}&=&\hspace{-6mm}r^{1}\otimes Q(R(a) r^{2})-r^{1}R(a)\otimes Q(r^{2})-R(r^{1})\otimes T(ar^{2})+R(r^{1}a)\otimes T(r^{2})-R(r^{1})\otimes R(a) r^{2}\\
 &&+R(r^{1}R(a))\otimes r^{2}\\
 \hspace{-6mm}&=&\hspace{-6mm}r^{1}\otimes T(a Q(r^{2}))+r^{1}\otimes R(a)Q(r^{2})-r^{1}R(a)\otimes Q(r^{2})-R(r^{1})\otimes T(a r^{2})+R(r^{1}a)\otimes T(r^{2})\\
 &&+R(r^{1})R(a)\otimes r^{2}-R(S(r^{1})a)\otimes r^{2}-R(r^{1})\otimes R(a)r^{2}\\
 \hspace{-6mm}&=&\hspace{-6mm} (\id\otimes T\circ \mathcal{L}(a)+\id\otimes \mathcal{L}(R(a))-\mathcal{R}(R(a))\otimes \id)(\id\otimes Q -R\otimes \id)r+(R\circ \mathcal{R}(a)\otimes \id)\\
 &&(\id\otimes T -S\otimes \id)r,\\
 \ref{it:thm:de5}~~~0\hspace{-6mm}&=&\hspace{-6mm}T(S(a)_{(1)})\otimes S(a)_{(2)}-R(a)_{(1)}\otimes S(R(a)_{(2)})-T(a_{(1)})\otimes S(a_{(2)}) \\
 \hspace{-6mm}&=&\hspace{-6mm}T(r^{1})\otimes S(a) r^{2}-T(r^{1}S(a))\otimes r^{2}-r^{1}\otimes S(R(a)r^{2})+r^{1}R(a)\otimes S(r^{2})-T(r^{1})\otimes S(a r^{2})\\
 &&+T(r^{1}a)\otimes S(r^{2})\\
 \hspace{-6mm}&=&\hspace{-6mm}T(r^{1})\otimes S(a) r^{2}-T(r^{1})R(a)\otimes r^{2}-T(T(r^{1})a)\otimes r^{2}-r^{1}\otimes S(a)S(r^{2})+r^{1}\otimes S(a S(r^{2}))\\
 &&+r^{1}R(a)\otimes S(r^{2})-T(r^{1})\otimes S(a r^{2})+T(r^{1}a)\otimes S(r^{2})\\
 \hspace{-6mm}&=&\hspace{-6mm} (\id\otimes \mathcal{L}(S(a))-\mathcal{R}(R(a))\otimes \id-T\circ \mathcal{R}(a)\otimes \id-\id\otimes S\circ \mathcal{L}(a))(T\otimes \id -\id\otimes S)r,\\
 0\hspace{-6mm}&=&\hspace{-6mm}T(S(a)_{(1)})\otimes S(a)_{(2)}-S(a)_{(1)}\otimes S(S(a)_{(2)})-Q(a_{(1)})\otimes S(a_{(2)}) \\
 \hspace{-6mm}&=&\hspace{-6mm}T(r^{1})\otimes S(a) r^{2}-T(r^{1}S(a))\otimes r^{2}-r^{1}\otimes S(S(a)r^{2})+r^{1}S(a)\otimes S(r^{2})-Q(r^{1})\otimes S(a r^{2})\\
 &&+Q(r^{1}a)\otimes S(r^{2})\\
 \hspace{-6mm}&=&\hspace{-6mm}T(r^{1})\otimes S(a) r^{2}-T(r^{1})S(a)\otimes r^{2}-Q(T(r^{1})a)\otimes r^{2}-r^{1}\otimes S(a)S(r^{2})+r^{1}\otimes S(a R(r^{2}))\\
 &&+r^{1}S(a)\otimes S(r^{2})-Q(r^{1})\otimes S(a r^{2})+Q(r^{1}a)\otimes S(r^{2})\\
 \hspace{-6mm}&=&\hspace{-6mm} (\id\otimes \mathcal{L}(S(a))-\mathcal{R}(S(a))\otimes \id-Q\circ \mathcal{R}(a)\otimes \id)(T\otimes \id -\id\otimes S)r-(\id\otimes S\circ \mathcal{L}(a))\\
 &&(Q\otimes \id -\id\otimes R)r,\\
 \ref{it:thm:de6}~~~0\hspace{-6mm}&=&\hspace{-6mm}S(a)_{(1)}\otimes T(S(a)_{(2)})-S(S(a)_{(1)})\otimes S(a)_{(2)}-S(a_{(1)})\otimes Q(a_{(2)})\\
 \hspace{-6mm}&=&\hspace{-6mm}r^{1}\otimes T(S(a) r^{2})-r^{1}S(a)\otimes T(r^{2})-S(r^{1})\otimes Q(a r^{2})+S(r^{1}a)\otimes Q(r^{2})-S(r^{1})\otimes S(a) r^{2}\\
 &&+S(r^{1}S(a))\otimes r^{2}\\
 \hspace{-6mm}&=&\hspace{-6mm}r^{1}\otimes Q(a T(r^{2}))+r^{1}\otimes S(a)T(r^{2})-r^{1}S(a)\otimes T(r^{2})-S(r^{1})\otimes Q(xr^{2})+S(r^{1}a)\otimes Q(r^{2})\\
 &&-S(r^{1})\otimes S(a)r^{2}+S(r^{1})S(a)\otimes r^{2}-S(R(r^{1})a)\otimes r^{2}\\
 \hspace{-6mm}&=&\hspace{-6mm} (\id\otimes Q\circ \mathcal{L}(a)+\id\otimes \mathcal{L}(S(a))-\mathcal{R}(S(a))\otimes \id)(\id\otimes T -S\otimes \id)r+(S\circ \mathcal{R}(a)\otimes \id)\\
 &&(\id\otimes Q -R\otimes \id)r,\\
 0\hspace{-6mm}&=&\hspace{-6mm}S(a)_{(1)}\otimes T(S(a)_{(2)})-S(R(a)_{(1)})\otimes R(a)_{(2)}-S(a_{(1)})\otimes T(a_{(2)})\\
 \hspace{-6mm}&=&\hspace{-6mm}r^{1}\otimes T(S(a) r^{2})-r^{1}S(a)\otimes T(r^{2})-S(r^{1})\otimes T(a r^{2})+S(r^{1}a)\otimes T(r^{2})-S(r^{1})\otimes R(a) r^{2}\\
 &&+S(r^{1}R(a))\otimes r^{2}\\
 \hspace{-6mm}&=&\hspace{-6mm}r^{1}\otimes T(a T(r^{2}))+r^{1}\otimes R(a)T(r^{2})-r^{1}S(a)\otimes T(r^{2})-S(r^{1})\otimes T(a r^{2})+S(r^{1}a)\otimes T(r^{2})\\
 &&-S(r^{1})\otimes R(a)r^{2}+S(r^{1})S(a)\otimes r^{2}-S(S(r^{1})a)\otimes r^{2}\\
 \hspace{-6mm}&=&\hspace{-6mm} (\id\otimes T\circ \mathcal{L}(a)+\id\otimes \mathcal{L}(R(a))-\mathcal{R}(S(a))\otimes \id+S\circ \mathcal{R}(a)\otimes \id)(\id\otimes T -S\otimes \id)r,
 \end{eqnarray*}}
 finishing the proof.
 \end{proof}

 Based on the discussion above, we can obtain
 \begin{thm} \label{thm:df} Let $(A, R, S)$ be a $(Q, T)$-adjoint admissible symmetric Rota-Baxter system. Then $((A, \cdot, \D_r), R, S, Q, T)$, where $\Delta_r$ is defined by Eq.\eqref{eq:db}, is a symmetric Rota-Baxter ASI bisystem if and only if Eqs.\eqref{eq:db1}-\eqref{eq:db2} and \eqref{eq:de}-\eqref{eq:de5de5} are satisfied.
 \end{thm}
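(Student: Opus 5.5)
We simply unpack Definition \ref{de:cx} and match each of its ingredients with one of the listed equations. A quintuple $((A,\cdot,\Delta_r),R,S,Q,T)$ is a symmetric Rota-Baxter ASI bisystem precisely when five requirements hold:
\begin{enumerate}[(i)]
\item $(A,\cdot,\Delta_r)$ is an ASI bialgebra;
\item $(A,R,S)$ is a symmetric Rota-Baxter system;
\item $(A,\Delta_r,Q,T)$ is a symmetric Rota-Baxter cosystem, i.e. Eqs.\eqref{eq:cu} and \eqref{eq:cu1} hold;
\item Eqs.\eqref{eq:ck}-\eqref{eq:ck3} hold;
\item Eqs.\eqref{eq:ck5}-\eqref{eq:ck8} hold.
\end{enumerate}

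The hypotheses already supply (ii) and (iv), since $(A,R,S)$ is assumed to be a $(Q,T)$-adjoint admissible symmetric Rota-Baxter system. It therefore remains to translate (i), (iii) and (v) into conditions on $r$.

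For (i), we invoke the recalled result of \cite{Bai1}: $(A,\cdot,\Delta_r)$ is an ASI bialgebra if and only if $r$ satisfies Eqs.\eqref{eq:db1} and \eqref{eq:db2}. This characterization contains coassociativity of $\Delta_r$, i.e. that $(A^*,\Delta_r^*)$ is an algebra. Consequently the standing assumption of Theorem \ref{thm:de}, that $(A^*,\Delta^*)$ is an algebra, is in force whenever (i) holds.

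For (iii) and (v), we apply Theorem \ref{thm:de} item by item, which is legitimate because (iv) holds. Items \ref{it:thm:de1} and \ref{it:thm:de2} turn Eqs.\eqref{eq:cu} and \eqref{eq:cu1} into Eqs.\eqref{eq:de}-\eqref{eq:de1de1}. Items \ref{it:thm:de3}-\ref{it:thm:de6} turn Eqs.\eqref{eq:ck5}-\eqref{eq:ck8} into Eqs.\eqref{eq:de2}-\eqref{eq:de5de5}.

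Assembling these translations gives the equivalence in both directions. For "only if", the bisystem gives (i), hence Eqs.\eqref{eq:db1}-\eqref{eq:db2} and coassociativity; Theorem \ref{thm:de} then converts (iii) and (v) into Eqs.\eqref{eq:de}-\eqref{eq:de5de5}. For "if", Eqs.\eqref{eq:db1}-\eqref{eq:db2} give the ASI bialgebra and coassociativity, so Theorem \ref{thm:de} applies and yields (iii) and (v).

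The only step needing care is this ordering: coassociativity of $\Delta_r$ must be obtained from Eqs.\eqref{eq:db1}-\eqref{eq:db2} before Theorem \ref{thm:de} is invoked, since that theorem assumes it. No genuine obstacle is expected; the computations are all carried out in Theorem \ref{thm:de}.
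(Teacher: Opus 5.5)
Your proposal is correct and is essentially the paper's own argument: the theorem follows by combining the recalled characterization from \cite{Bai1} of when $(A,\cdot,\Delta_r)$ is an ASI bialgebra (Eqs.~\eqref{eq:db1}--\eqref{eq:db2}) with the item-by-item translations of Theorem~\ref{thm:de}, while the remaining requirements of Definition~\ref{de:cx} are supplied by the hypotheses. Your ordering remark is sound, since coassociativity of $\Delta_r$ is exactly Eq.~\eqref{eq:db2}, but it is not actually needed: the computations in Theorem~\ref{thm:de} use only Eqs.~\eqref{eq:ea0}--\eqref{eq:ea1} and \eqref{eq:ck}--\eqref{eq:ck3}, not coassociativity.
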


 \begin{rmk}\label{rmk:thm:df} If $r$ is antisymmetric, then $(R\otimes \id -\id \otimes Q)r=0$ if and only if $(Q\otimes \id-\id\otimes R)r=0$. And $(S\otimes \id -\id \otimes T)r=0$ if and only if $(T\otimes \id-\id\otimes S)r=0$.
 \end{rmk}

 By Theorem  \ref{thm:df} and Remark \ref{rmk:thm:df}, we have

 \begin{cor}\label{cor:thm:df} If $r$ is antisymmetric, and
 \begin{eqnarray}
 &(Q\otimes \id - \id\otimes R)r=0,&\label{eq:dh}\\
 &(T\otimes \id - \id\otimes S)r=0,& \label{eq:dh1}
 \end{eqnarray}
 or
 \begin{eqnarray*}
 &(S\otimes \id - \id\otimes T)r=0,&\\
 &(R\otimes \id - \id\otimes Q)r=0,&
 \end{eqnarray*}
 hold, then we have Eqs.\eqref{eq:db1} and \eqref{eq:de}-\eqref{eq:de5de5}. Moreover, if $r$ is a solution of the aYBe, then $(A, R, S, Q, T)$ is a symmetric Rota-Baxter ASI bisystem.
 \end{cor}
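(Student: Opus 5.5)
The plan is to read off each required identity directly from the hypotheses, using Theorem \ref{thm:df} as the organizing criterion. I treat the first pair of conditions, Eqs.\eqref{eq:dh} and \eqref{eq:dh1}. For the second pair $(S\otimes \id-\id\otimes T)r=0$ and $(R\otimes\id-\id\otimes Q)r=0$, Remark \ref{rmk:thm:df} together with antisymmetry of $r$ converts them into Eqs.\eqref{eq:dh} and \eqref{eq:dh1}, so that case reduces to the first.

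\textbf{Step 1: the four basic expressions vanish.} Assume Eqs.\eqref{eq:dh}, \eqref{eq:dh1} and that $r$ is antisymmetric. By Remark \ref{rmk:thm:df} we get $(R\otimes \id-\id\otimes Q)r=0$ and $(S\otimes\id-\id\otimes T)r=0$. Hence
\[
(\id\otimes Q-R\otimes \id)r=0,\qquad (\id\otimes T-S\otimes\id)r=0 .
\]
So all four expressions $(Q\otimes \id-\id\otimes R)r$, $(T\otimes\id-\id\otimes S)r$, $(\id\otimes Q-R\otimes\id)r$, $(\id\otimes T-S\otimes \id)r$ vanish.

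\textbf{Step 2: Eqs.\eqref{eq:de}--\eqref{eq:de5de5}.} Inspecting each of these twelve equations, every term is some linear operator on $A\otimes A$ applied to one of the four expressions from Step 1. Therefore every term is zero and all twelve equations hold.

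\textbf{Step 3: Eq.\eqref{eq:db1}.} Since $r$ is antisymmetric, $r+\sigma(r)=0$, so the left-hand side of Eq.\eqref{eq:db1} vanishes.

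\textbf{Step 4: the aYBe case.} If in addition $r$ satisfies the aYBe \eqref{eq:db4}, then Eq.\eqref{eq:db2} holds trivially. In that case $(A,\cdot,\Delta_r)$ is the triangular ASI bialgebra recalled before Theorem \ref{thm:de}, so in particular $(A^*,\Delta_r^*)$ is an algebra, as Theorem \ref{thm:de} requires. Theorem \ref{thm:df} then yields that $(A,R,S,Q,T)$ is a symmetric Rota-Baxter ASI bisystem.

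\textbf{Main obstacle.} The only genuine check is Step 2: one must confirm term by term that each of Eqs.\eqref{eq:de}--\eqref{eq:de5de5} factors through one of the four vanishing expressions, with no stray term. This is direct from the displayed forms.
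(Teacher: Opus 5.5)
Your proposal is correct and follows the paper's route: the paper obtains this corollary directly from Theorem \ref{thm:df} and Remark \ref{rmk:thm:df}. As in your argument, Eqs.\eqref{eq:de}--\eqref{eq:de5de5} hold because every term factors through one of the four expressions made to vanish by antisymmetry, and Eq.\eqref{eq:db1} holds since $r+\sigma(r)=0$. One caveat: the final conclusion tacitly requires $(A,R,S)$ to be $(Q,T)$-adjoint admissible, the standing hypothesis of Theorem \ref{thm:df}, and you should state this explicitly rather than only use it implicitly when invoking that theorem.
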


 \begin{defi}\label{de:dh} Let $(A, R, S)$ be a symmetric Rota-Baxter system, $r \in A \otimes A$ and $Q,T : A\rightarrow A$ linear maps. Then Eq.\eqref{eq:db4} together with Eqs.(\ref{eq:dh}) and (\ref{eq:dh1})
 is called a \textbf{$(Q, T)$-admissible aYBe in $(A, R, S)$}.
 \end{defi}

 By Corollary \ref{cor:thm:df} and Definition \ref{de:dh}, one has

 \begin{pro}\label{pro:cii} Let $(A, R, S)$ be a $(Q, T)$-adjoint admissible symmetric Rota-Baxter system and $r \in A \otimes A$. If $r$ is an antisymmetric solution of the $(Q, T)$-admissible aYBe, then $(A, R, S, Q, T)$ is a symmetric Rota-Baxter ASI bisystem. We call this bialgebra $(A, R, S, Q, T)$ \textbf{triangular}.
 \end{pro}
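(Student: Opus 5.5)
The plan is to reduce the claim to Theorem \ref{thm:df}: we need Eqs.\eqref{eq:db1}, \eqref{eq:db2} and \eqref{eq:de}--\eqref{eq:de5de5} for $\Delta_r$. Before that, the standing hypothesis of Theorem \ref{thm:de} and Theorem \ref{thm:df}, that $(A^{*},\Delta_r^{*})$ is an algebra (equivalently $\Delta_r$ is coassociative), has to be checked. I will get it from the classical fact recalled above from \cite{Bai1}. Since $r$ is an antisymmetric solution of the aYBe \eqref{eq:db4}, $(A,\cdot,\Delta_r)$ is a triangular ASI bialgebra; in particular $\Delta_r$ is coassociative, and Eqs.\eqref{eq:db1} and \eqref{eq:db2} hold. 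Indeed, $r+\sigma(r)=0$ kills the left side of \eqref{eq:db1}, and the aYBe kills the left side of \eqref{eq:db2}.

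Next, since $r$ satisfies the $(Q,T)$-admissible aYBe, Eqs.\eqref{eq:dh} and \eqref{eq:dh1} hold, so $(Q\otimes\id-\id\otimes R)r=0$ and $(T\otimes\id-\id\otimes S)r=0$. By antisymmetry and Remark \ref{rmk:thm:df}, this also gives $(\id\otimes Q-R\otimes\id)r=0$ and $(\id\otimes T-S\otimes\id)r=0$. To see this, apply $\sigma$: $\sigma(Q\otimes\id-\id\otimes R)r=(\id\otimes Q-R\otimes\id)\sigma(r)=-(\id\otimes Q-R\otimes\id)r$, and likewise for $T,S$. Now I inspect each of \eqref{eq:de}--\eqref{eq:de5de5}. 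Every term has the form (some linear operator) applied to one of the four vectors
$$(Q\otimes\id-\id\otimes R)r,\quad (T\otimes\id-\id\otimes S)r,\quad (\id\otimes Q-R\otimes\id)r,\quad (\id\otimes T-S\otimes\id)r,$$
all of which vanish. Hence all twelve equations hold, which is exactly Corollary \ref{cor:thm:df}.

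Finally, $(A,R,S)$ is $(Q,T)$-adjoint admissible by hypothesis, so Theorem \ref{thm:df} applies and yields that $((A,\cdot,\Delta_r),R,S,Q,T)$ is a symmetric Rota-Baxter ASI bisystem. The only point needing care is this bookkeeping. We must confirm that each of the twelve displayed equations is linear in those four vectors with no leftover term, which is visible from their factored form in Theorem \ref{thm:de}. We must also confirm that coassociativity of $\Delta_r$ is available before invoking Theorem \ref{thm:de}; it is, by the triangular ASI bialgebra fact.
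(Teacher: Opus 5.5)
Your proposal is correct and follows essentially the same route as the paper, which obtains the statement directly from Corollary~\ref{cor:thm:df} (Theorem~\ref{thm:df} combined with Remark~\ref{rmk:thm:df}) and Definition~\ref{de:dh}: antisymmetry and the aYBe give Eqs.~\eqref{eq:db1}--\eqref{eq:db2}, while \eqref{eq:dh}--\eqref{eq:dh1} and their flipped versions make every term of \eqref{eq:de}--\eqref{eq:de5de5} vanish. Your explicit check that $\Delta_r$ is coassociative before invoking Theorem~\ref{thm:de} is a small piece of extra care that the paper leaves implicit.
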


 \subsection{$\mathcal{O}$-operator}
 \begin{defi}\label{de:dk} Let $(A, R, S)$ be a symmetric Rota-Baxter system, $(M, \ell, r)$ be an $A$-bimodule and $\alpha, \beta: M\rightarrow M$ linear maps. A linear map $\mathfrak{T}:  M \rightarrow A$ is called a \textbf{weak $\mathcal{O}$-operator associated to $(M, \ell, r)$ and $\alpha, \beta$} if the equations below hold:
 \begin{eqnarray*}
 &\mathfrak{T}(m) \mathfrak{T}(n) = \mathfrak{T}(\ell(\mathfrak{T}(m)n)) + \mathfrak{T}(m r(\mathfrak{T}(n))),&  \label{eq:dk}\\
 &\mathfrak{T}\ci \alpha= R\ci \mathfrak{T},& \label{eq:dk1}\\
 &\mathfrak{T}\ci \beta= S\ci \mathfrak{T},&\label{eq:dk2}
 \end{eqnarray*}
 for all $m, n \in M$, if in addition, $(M, \ell, r, \alpha, \beta)$ is a representation of $(A, R, S)$, then $\mathfrak{T}$ is called an \textbf{$\mathcal{O}$-operator associated to $(M, \ell, r, \alpha, \beta)$.}
 \end{defi}

 \begin{lem} \label{lem:dl}{\em \cite{BGN}} Let $A$ be an algebra and $r \in A \otimes A$. Then $r$ is an antisymmetric solution of the aYBe in $A$ if and only if $r^{+}$ is an $\mathcal{O}$-operator, i.e.,
 \begin{eqnarray}
 &r^{+}(a^{*}) \cdot  r^{+}(b^{*})=r^{+}(\mathcal{R}^{*}(r^{+}(a^{*}))b^{*} + a^{*}\mathcal{L}^{*}(r^{+}(b^{*}))).&\label{eq:dl4}
 \end{eqnarray}
 where $r^{+} : A^{*} \rightarrow A$ by
 \begin{eqnarray*}
 \langle r^{+}(\xi), \eta\rangle = \langle r, \xi\otimes\eta\rangle, \forall \xi, \eta \in A^{*}.\label{eq:db6}
 \end{eqnarray*}
 \end{lem}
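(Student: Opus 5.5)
Lemma \ref{lem:dl} is a purely associative statement: the operators $R,S,Q,T$ play no role. The plan is therefore a direct computation of both sides of Eq.~\eqref{eq:dl4} paired against an arbitrary $c^{*}\in A^{*}$, and a comparison with the aYBe \eqref{eq:db4} paired against $a^{*}\otimes b^{*}\otimes c^{*}$. Write $r=r^{1}\otimes r^{2}=\bar r^{1}\otimes\bar r^{2}$, so $r^{+}(\xi)=\langle\xi,r^{1}\rangle r^{2}$. Antisymmetry $\sigma(r)=-r$ gives the alternative expression $r^{+}(\xi)=-\langle \xi, r^{2}\rangle r^{1}$. I will use whichever form is convenient in each term.

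First, the left-hand side:
\[\langle r^{+}(a^{*})r^{+}(b^{*}),c^{*}\rangle=\langle a^{*},r^{1}\rangle\langle b^{*},\bar r^{1}\rangle\langle c^{*},r^{2}\bar r^{2}\rangle=\langle a^{*}\otimes b^{*}\otimes c^{*},\,r_{13}r_{23}\rangle .\]

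Second, the term $r^{+}(\mathcal{R}^{*}(r^{+}(a^{*}))b^{*})$. By definition $\langle \mathcal{R}^{*}(x)b^{*},y\rangle=\langle b^{*},yx\rangle$. Writing $r^{+}(\cdot)$ in the second form and pairing with $c^{*}$ gives
\[-\langle b^{*},r^{1}\bar r^{2}\rangle\langle a^{*},\bar r^{1}\rangle\langle c^{*},r^{2}\rangle .\]
Using antisymmetry of $\bar r$ to swap the legs, this equals $\langle a^{*}\otimes b^{*}\otimes c^{*},\,r_{12}r_{13}\rangle$ up to sign. Here the letters must be relabeled carefully, since the product sits in the second slot.

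Third, the term $r^{+}(a^{*}\mathcal{L}^{*}(r^{+}(b^{*})))$ is handled in the same way, with $\langle a^{*}\mathcal{L}^{*}(x),y\rangle=\langle a^{*},xy\rangle$. It yields $\pm\langle a^{*}\otimes b^{*}\otimes c^{*},\,r_{23}r_{12}\rangle$, again after one use of antisymmetry.

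Collecting the three terms, the expected identity is
\[\langle r^{+}(a^{*})r^{+}(b^{*})-r^{+}(\mathcal{R}^{*}(r^{+}(a^{*}))b^{*}+a^{*}\mathcal{L}^{*}(r^{+}(b^{*}))),c^{*}\rangle=\pm\langle a^{*}\otimes b^{*}\otimes c^{*},\,r_{12}r_{13}+r_{13}r_{23}-r_{23}r_{12}\rangle .\]
Since $a^{*},b^{*},c^{*}$ are arbitrary and the pairing $A^{*}\otimes A^{*}\otimes A^{*}\times A\otimes A\otimes A$ is nondegenerate (take $A$ finite dimensional, as is implicit when identifying $A\otimes A$ with maps $A^{*}\to A$), both directions of the equivalence follow at once.

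The main obstacle is the sign and leg bookkeeping in the second and third terms. Each term must be brought to exactly the leg order of $r_{12}r_{13}$ and $r_{23}r_{12}$, and the signs must combine so that the middle term $r_{13}r_{23}$ enters with coefficient $+1$ relative to the other two, matching \eqref{eq:db4}. If the direct choice of representation for $r^{+}$ produces the wrong leg order, I would switch to the other form of $r^{+}$, which antisymmetry permits, before expanding. Alternatively, this is the known argument of \cite{BGN}, and the lemma can simply be cited.
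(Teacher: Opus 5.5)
The paper gives no proof of this lemma; it only cites \cite{BGN}. Your strategy is the right one: pair both sides of \eqref{eq:dl4} with $c^{*}$ and compare with the aYBe paired against $a^{*}\otimes b^{*}\otimes c^{*}$. The final identity you aim for is in fact true, with sign $+$. However, the bookkeeping you do commit to is wrong, and that bookkeeping is the entire proof.

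In the term $r^{+}(\mathcal{R}^{*}(r^{+}(a^{*}))b^{*})$, it is $b^{*}$ that gets evaluated on a product, namely $\langle b^{*},r^{1}\,r^{+}(a^{*})\rangle$. So this term always has its product in the second tensor slot and is of type $r_{23}r_{12}$. It can never be brought to $r_{12}r_{13}$. Antisymmetry only exchanges the two legs of a single copy of $r$ and never moves a product to another slot. Switching between the two expressions for $r^{+}$, which is your fallback, is the same thing, so it cannot help. Your displayed formula for this term also has the wrong sign. The correct computation uses no antisymmetry at all:
\[
\langle r^{+}(\mathcal{R}^{*}(r^{+}(a^{*}))b^{*}),c^{*}\rangle=\langle a^{*},\bar r^{1}\rangle\langle b^{*},r^{1}\bar r^{2}\rangle\langle c^{*},r^{2}\rangle=\langle a^{*}\otimes b^{*}\otimes c^{*},\,r_{23}r_{12}\rangle .
\]
With your sign, the $r_{23}r_{12}$ contribution would enter the final combination with the wrong sign.

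The roles are reversed in the other term. In $r^{+}(a^{*}\mathcal{L}^{*}(r^{+}(b^{*})))$ it is $a^{*}$ that is evaluated on the product, and one application of antisymmetry of $\bar r$ gives
\[
\langle r^{+}(a^{*}\mathcal{L}^{*}(r^{+}(b^{*}))),c^{*}\rangle=\langle a^{*},\bar r^{2}r^{1}\rangle\langle b^{*},\bar r^{1}\rangle\langle c^{*},r^{2}\rangle=-\langle a^{*}\otimes b^{*}\otimes c^{*},\,r_{12}r_{13}\rangle .
\]
Your computation of the left-hand side is correct: $\langle r^{+}(a^{*})r^{+}(b^{*}),c^{*}\rangle=\langle a^{*}\otimes b^{*}\otimes c^{*},r_{13}r_{23}\rangle$. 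Combining the three terms, the difference of the two sides of \eqref{eq:dl4} paired with $c^{*}$ is exactly $\langle a^{*}\otimes b^{*}\otimes c^{*},\,r_{12}r_{13}+r_{13}r_{23}-r_{23}r_{12}\rangle$. Both implications then follow.

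Finally, antisymmetry must be read as a standing hypothesis, not as part of the conclusion. For the zero product, \eqref{eq:dl4} holds for every $r$, symmetric or not. This is also how the lemma is used in the proof of Theorem \ref{thm:dm}.
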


 \begin{thm}\label{thm:dm} Let $(A, R, S)$ be a symmetric Rota-Baxter system, $Q, T: A \rightarrow A$ linear maps and $r \in A \otimes A$. If $r$ is antisymmetric, then $r$ is a solution of the $(Q, T)$-admissible aYBe in $(A, R, S)$ if and only if $r^{+}$ is a weak $\mathcal{O}$-operator and
 \begin{eqnarray}
 &R \circ r^{+} = r^{+} \circ Q^{*},~~~~S \circ r^{+} = r^{+} \circ T^{*},&\label{eq:dm1}
 \end{eqnarray} hold.
 \end{thm}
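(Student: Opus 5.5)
The plan is to split the $(Q,T)$-admissible aYBe into its two ingredients and match each with one condition in the statement. Definition \ref{de:dh} says that $r$ solves the $(Q,T)$-admissible aYBe exactly when $r$ satisfies the aYBe \eqref{eq:db4} together with the two tensor identities \eqref{eq:dh} and \eqref{eq:dh1}. By Definition \ref{de:dk}, $r^{+}$ is a weak $\mathcal{O}$-operator (with bimodule $(A^{*},\mathcal{R}^{*},\mathcal{L}^{*})$ and maps $\alpha=Q^{*}$, $\beta=T^{*}$) exactly when Eq.\eqref{eq:dl4} holds, $r^{+}\circ Q^{*}=R\circ r^{+}$ and $r^{+}\circ T^{*}=S\circ r^{+}$. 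The latter two conditions are precisely \eqref{eq:dm1}. Note that the representation axioms are not needed here, since only the weak notion is claimed. So the theorem reduces to two separate equivalences, each holding for antisymmetric $r$:
\begin{enumerate}[(i)]
\item \eqref{eq:db4} $\Longleftrightarrow$ \eqref{eq:dl4};
\item \eqref{eq:dh} $\Longleftrightarrow$ $R\circ r^{+}=r^{+}\circ Q^{*}$, and \eqref{eq:dh1} $\Longleftrightarrow$ $S\circ r^{+}=r^{+}\circ T^{*}$.
\end{enumerate}

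Step (i) is exactly Lemma \ref{lem:dl}, which applies because $r$ is antisymmetric, so nothing needs to be done there.

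For step (ii), I will pair both sides against $\xi\otimes\eta$ for $\xi,\eta\in A^{*}$. For the first identity this gives
\begin{eqnarray*}
\langle (Q\otimes\id-\id\otimes R)r,\ \xi\otimes\eta\rangle=\langle r,\ Q^{*}\xi\otimes\eta\rangle-\langle r,\ \xi\otimes R^{*}\eta\rangle .
\end{eqnarray*}
The first term equals $\langle r^{+}(Q^{*}\xi),\eta\rangle$. For the second term, write $\langle r,\xi\otimes R^{*}\eta\rangle=\langle r^{+}(\xi),R^{*}\eta\rangle=\langle R(r^{+}(\xi)),\eta\rangle$. So \eqref{eq:dh} holds if and only if $\langle r^{+}Q^{*}\xi-Rr^{+}\xi,\eta\rangle=0$ for all $\xi,\eta$, i.e. if and only if $r^{+}\circ Q^{*}=R\circ r^{+}$. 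The same computation with $(T,S)$ in place of $(Q,R)$ handles \eqref{eq:dh1}.

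This step uses only the definition of $r^{+}$, not antisymmetry. Antisymmetry would matter only if one wished to use the alternative form $(R\otimes\id-\id\otimes Q)r=0$, which is equivalent by Remark \ref{rmk:thm:df}.

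The only point needing care is the identification convention for $r^{+}$: $\langle r^{+}(\xi),\eta\rangle=\langle r,\xi\otimes\eta\rangle$ places $\xi$ on the first tensor leg. I must make sure the operator acting on the first leg ($Q$, respectively $T$) dualizes to precomposition with $Q^{*}$, respectively $T^{*}$. I also must make sure the operator on the second leg ($R$, respectively $S$) becomes postcomposition, and that the implicit finite-dimensionality lets a vanishing pairing against all $\xi\otimes\eta$ be read as an identity of maps. Combining (i) and (ii) gives both directions of Theorem \ref{thm:dm}.
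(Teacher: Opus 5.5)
Your proposal is correct and follows essentially the same route as the paper. The paper likewise delegates the aYBe part to Lemma \ref{lem:dl} for antisymmetric $r$, and then pairs against $a^{*}\otimes b^{*}$ to show that \eqref{eq:dh}--\eqref{eq:dh1} are equivalent to \eqref{eq:dm1}, with the first leg of $r$ dualized to precomposition by $Q^{*}$ (resp. $T^{*}$) and the second leg to postcomposition by $R$ (resp. $S$), exactly as in your computation.
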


 \begin{proof} By Lemma \ref{lem:dl}, if $r$ is antisymmetric, then $r$ is a solution of aYBe in $(A, \cdot)$ if and only if Eq.\eqref{eq:dl4} holds. Moreover, for all $a^{*}, b^{*}\in A^{*}$, we have
 \begin{eqnarray*}
 \langle b^{*}, R(r^{+}(a^{*}))-r^{+}(Q^{*}(a^{*}) \rangle
 &=&\langle  R^{*}(b^{*}), r^{+}(a^{*}) \rangle -\langle Q^{*}(a^{*})\otimes b^{*}, r\rangle\\
 &=&\langle a^{*}\otimes R^{*}(b^{*}), r\rangle-\langle a^{*}\otimes b^{*}, (Q\otimes \id)(r) \rangle\\
 &=&\langle  a^{*}\otimes b^{*}, (\id\otimes R)r -(Q\otimes \id)(r) \rangle,\\
 \langle b^{*}, S(r^{+}(a^{*}))-r^{+}(T^{*}(a^{*}))\rangle
 &=&\langle  S^{*}(b^{*}), r^{+}(a^{*}) \rangle -\langle T^{*}(a^{*})\otimes b^{*}, r\rangle\\
 &=&\langle a^{*}\otimes S^{*}(b^{*}), r\rangle-\langle a^{*}\otimes b^{*}, (T\otimes \id)(r) \rangle\\
 &=&\langle  a^{*}\otimes b^{*}, (\id\otimes S)r-(T\otimes \id)(r) \rangle.
 \end{eqnarray*}
 Hence, Eqs.\eqref{eq:dh}-\eqref{eq:dh1} hold if and only if Eq.\eqref{eq:dm1} holds.
 \end{proof}

 Based on Theorem \ref{thm:dm} and Lemma \ref{lem:dl}, we have

 \begin{cor}\label{cor:dkk} Let $(A, R, S)$ be a symmetric Rota-Baxter system, $ Q, T: A \rightarrow A$  linear maps and $r \in A \otimes A$. If $r$ is antisymmetric, then $r$ is a solution of the $(Q, T)$-admissible aYBe if and only if $r^{+}$ is a weak $\mathcal{O}$-operator associated to $(A^{*}, \mathcal{R^{*}}, \mathcal{L^{*}})$ and $(Q^{*}, T^{*})$.
 \end{cor}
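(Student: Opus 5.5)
The plan is to show that Corollary \ref{cor:dkk} is simply Theorem \ref{thm:dm} with the definition of a weak $\mathcal{O}$-operator written out. No new computation should be needed.

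First, specialise Definition \ref{de:dk} to the bimodule $(M,\ell,r)=(A^{*},\mathcal{R}^{*},\mathcal{L}^{*})$, the dual of the adjoint bimodule, with $\alpha=Q^{*}$ and $\beta=T^{*}$. Saying that $r^{+}:A^{*}\to A$ is a weak $\mathcal{O}$-operator associated to these data means three things. The first is the identity
\begin{equation*}
r^{+}(a^{*})\, r^{+}(b^{*})=r^{+}\big(\mathcal{R}^{*}(r^{+}(a^{*}))b^{*}+a^{*}\mathcal{L}^{*}(r^{+}(b^{*}))\big)
\end{equation*}
for all $a^{*},b^{*}\in A^{*}$, which is exactly Eq.\eqref{eq:dl4}. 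The other two are $r^{+}\circ Q^{*}=R\circ r^{+}$ and $r^{+}\circ T^{*}=S\circ r^{+}$, which are exactly Eq.\eqref{eq:dm1}.

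Second, invoke Theorem \ref{thm:dm}. For antisymmetric $r$, $r$ solves the $(Q,T)$-admissible aYBe if and only if Eq.\eqref{eq:dl4} holds together with Eq.\eqref{eq:dm1}. Inside that proof, Lemma \ref{lem:dl} identifies the aYBe \eqref{eq:db4} with Eq.\eqref{eq:dl4}. The pairing computation identifies Eqs.\eqref{eq:dh}--\eqref{eq:dh1} with Eq.\eqref{eq:dm1}. Since these conditions coincide with the three defining conditions from the first step, both directions of the corollary follow at once.

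The only point needing care, and it is bookkeeping rather than a real obstacle, is to check that the left and right actions are correctly matched. In Definition \ref{de:dk} the first summand is $\ell(\mathfrak{T}(m))n$ and the second is $m\,r(\mathfrak{T}(n))$. With $\ell=\mathcal{R}^{*}$ and $r=\mathcal{L}^{*}$ these become $\mathcal{R}^{*}(r^{+}(a^{*}))b^{*}$ and $a^{*}\mathcal{L}^{*}(r^{+}(b^{*}))$, which agrees with Eq.\eqref{eq:dl4}. No representation condition on $(A^{*},\mathcal{R}^{*},\mathcal{L}^{*},Q^{*},T^{*})$ is required, because only the \emph{weak} notion appears in the statement.
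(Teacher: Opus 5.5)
Your proposal is correct and is essentially the paper's argument. The paper states this corollary as an immediate consequence of Theorem \ref{thm:dm} and Lemma \ref{lem:dl}, obtained exactly by specialising Definition \ref{de:dk} to $(A^{*},\mathcal{R}^{*},\mathcal{L}^{*})$ with $\alpha=Q^{*}$, $\beta=T^{*}$, so that the three defining conditions become Eq.~\eqref{eq:dl4} and Eq.~\eqref{eq:dm1}; your check that the left and right actions match up is done correctly.
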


 \begin{thm}\label{thm:dn} Let $(A, R, S)$ be a symmetric Rota-Baxter system, $(M, \ell, r)$ an $A$-bimodule, $Q, T : A \rightarrow A$ and $\alpha, \beta, \xi, \zeta : M \rightarrow M$ linear maps. Then the following conditions are equivalent.
 \begin{enumerate}[(1)]
 \item \label{it:thm:dn1} There is a symmetric Rota-Baxter system $(A \ltimes_{\ell, r} M, R+\alpha, S+\beta)$ such that $(Q + \xi, T+\zeta)$ is adjoint admissible to  $(A \ltimes_{\ell, r} M, R+\alpha, S+\beta)$.

 \item \label{it:thm:dn2} There is a symmetric Rota-Baxter system $(A \ltimes_{r^{*}, \ell^{*}} M^{*}, R+\xi^{*}, S+\zeta^{*})$ such that
     $(Q + \alpha^{*}, T+\beta^{*})$ is adjoint admissible to  $(A \ltimes_{r^{*}, \ell^{*}} M^{*}, R+\xi^{*}, S+\zeta^{*})$.

 \item \label{it:thm:dn3} The following conditions are satisfied:
 \begin{enumerate}[(3a)]
 \item \label{it:thm:dn4} $(M, \ell, r, \alpha, \beta)$ is a representation of $(A, R, S)$, that is, Eqs.\eqref{eq:cf}-\eqref{eq:cf3} hold;

 \item \label{it:thm:dn5} $(Q, T)$ is adjoint admissible to $(A, R, S)$, that is, Eqs. \eqref{eq:ck}-\eqref{eq:ck3} hold;

 \item \label{it:thm:dn6} $(\xi, \zeta)$ is admissible to $(A, R, S)$ with respect to $(M, \ell, r)$, that is, Eqs.\eqref{eq:cj}-\eqref{eq:cj3} hold;

 \item \label{it:thm:dn7} For $a \in A, m\in V$,\vskip-6mm
 \begin{eqnarray}
 &\xi(\alpha(m)r(a))=\xi(m r(Q(a)))+\beta(m)r(Q(a))
 =\zeta(m r(Q(a)))+\alpha(m)r(Q(a)),&\label{eq:dn}\\
 &\xi(\ell(a)\alpha(m))=\zeta(\ell(Q(a))m)+\ell(Q(a))\alpha(m)
 =\xi(\ell(Q(a))m)+\ell(Q(a))\beta(m),&\label{eq:dn1}\\
 &\zeta(\beta(m)r(a))=\xi(m r(T(a)))+\beta(m)r(T(a))
 =\zeta(m r(T(a)))+\alpha(m)r(T(a)),&\label{eq:dn2}\\
 &\zeta(\ell(a)\beta(m))=\zeta(\ell(T(a))m)+\ell(T(a))\alpha(m)
 =\xi(\ell(T(a))m)+\ell(T(a))\beta(m).&\label{eq:dn3}
 \end{eqnarray}
 \end{enumerate}
 \end{enumerate}
 \end{thm}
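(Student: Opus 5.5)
The plan is to prove (1)$\Leftrightarrow$(3) directly and then obtain (2)$\Leftrightarrow$(3) by a dualization symmetry, which reduces the work to one unpacking argument. This follows the analogous result for Rota-Baxter algebras in \cite{BGM}.

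\emph{Step 1: (1)$\Leftrightarrow$(3).} By Proposition \ref{pro:cff}, $(A\ltimes_{\ell,r}M, R+\alpha, S+\beta)$ is a symmetric Rota-Baxter system if and only if (3a) holds. So assume (3a). By Corollary \ref{cor:ck}, $(Q+\xi, T+\zeta)$ is adjoint admissible to the semi-direct product if and only if Eqs.\eqref{eq:ck}--\eqref{eq:ck3} hold for all inputs $a+m,\ b+n\in A\oplus M$, with the following replacements:
\begin{itemize}
\item $R, S$ replaced by $R+\alpha,\ S+\beta$;
\item $Q, T$ replaced by $Q+\xi,\ T+\zeta$;
\item the product replaced by \eqref{eq:cd1}.
\end{itemize}
By bilinearity, each equation splits according to the types of its two arguments.
\begin{itemize}
\item Both arguments in $A$: the equations are exactly Eqs.\eqref{eq:ck}--\eqref{eq:ck3}, i.e.\ (3b).
\item Both arguments in $M$: every product vanishes, since $M\cdot M=0$ in the semi-direct product.
\item Arguments $(a,n)$ with $n\in M$, and arguments $(m,b)$ with $m\in M$: these produce identities in which $\ell$ or $r$ acts on $M$.
\end{itemize}
In the mixed cases I sort the terms as follows. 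Terms where $\xi$ or $\zeta$ is applied to an element obtained by acting with $R(a)$, $S(a)$ or $a$ on $\xi(m)$, $\zeta(m)$ or $m$ give Eqs.\eqref{eq:cj}--\eqref{eq:cj3}, i.e.\ (3c). Terms of the form $Q+\xi$ applied to products of the shape $(Q+\xi)(m)\cdot b$, together with $(R+\alpha)(m)\cdot b$, give the new compatibility identities \eqref{eq:dn}--\eqref{eq:dn3}, i.e.\ (3d). For example, Eq.\eqref{eq:ck1} with arguments $(m, b)$ reads
$$\xi(m\,r(R(b)))=\xi(\xi(m)r(b))+\xi(m)r(S(b))=\xi(m)r(R(b))+\zeta(\xi(m)r(b)),$$
which is \eqref{eq:cj1}. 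The identities of type (3d) arise from Eq.\eqref{eq:ck} and Eq.\eqref{eq:ck2} with one argument in $M$. I will tabulate which of the 16 mixed identities (8 equations, each with two argument types) lands in (3c) and which in (3d), and check that every identity is accounted for.

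\emph{Step 2: (2)$\Leftrightarrow$(3).} Apply Step 1 to the bimodule $(M^*, r^*, \ell^*)$ with the maps $\alpha'=\xi^*$, $\beta'=\zeta^*$, $\xi'=\alpha^*$, $\zeta'=\beta^*$. Condition (2) is then equivalent to the following four conditions:
\begin{itemize}
\item[(3a$'$)] $(M^*, r^*, \ell^*, \xi^*, \zeta^*)$ is a representation; by Lemma \ref{lem:cj} this is exactly (3c).
\item[(3b$'$)] This is (3b), unchanged.
\item[(3c$'$)] $(\alpha^*, \beta^*)$ is admissible with respect to $(M^*, r^*, \ell^*)$. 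By Lemma \ref{lem:cj} applied with double duals, this is equivalent to $(M,\ell,r,\alpha,\beta)$ being a representation, i.e.\ (3a).
\item[(3d$'$)] The identities \eqref{eq:dn}--\eqref{eq:dn3} for the dual data. Pairing with $n\in M$ and transposing, I expect these to become \eqref{eq:dn}--\eqref{eq:dn3} again, with the left and right actions interchanged across the four equations, so the set (3d) is preserved.
\end{itemize}
In finite dimensions this completes the equivalence.

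\emph{Main obstacle.} The hardest part is the bookkeeping in the mixed-argument cases: making sure every identity obtained from Corollary \ref{cor:ck} on the semi-direct product matches one of \eqref{eq:cj}--\eqref{eq:cj3} or \eqref{eq:dn}--\eqref{eq:dn3}, with the correct pairing of $R$ with $\alpha$, $S$ with $\beta$, $Q$ with $\xi$, and $T$ with $\zeta$. A further point to check is that the dual of (3d) really is (3d) in Step 2, i.e.\ that the $\ell$- and $r$-versions swap consistently. I will first verify one representative identity in full, say \eqref{eq:dn}, then obtain the rest by the $R\leftrightarrow S$ symmetry (Remark \ref{rmk:ga}) and by left/right symmetry.
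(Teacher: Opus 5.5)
Your proposal is correct and is essentially the paper's proof: Proposition~\ref{pro:cff} plus a split of \eqref{eq:ck}--\eqref{eq:ck3} on $A\ltimes_{\ell,r}M$ by argument type gives (1)$\Leftrightarrow$(3), and substituting the dual data and pairing with $n\in M$ gives (2)$\Leftrightarrow$(3), where \eqref{eq:dn}$\leftrightarrow$\eqref{eq:dn1} and \eqref{eq:dn2}$\leftrightarrow$\eqref{eq:dn3} do swap as you expect. One bookkeeping correction: (3d) comes from \eqref{eq:ck} and \eqref{eq:ck2} with arguments $(m,b)$ and from \eqref{eq:ck1} and \eqref{eq:ck3} with arguments $(a,n)$, not only from \eqref{eq:ck} and \eqref{eq:ck2}; the other four mixed cases give \eqref{eq:cj}--\eqref{eq:cj3}.
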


 \begin{proof} (\ref{it:thm:dn1}$\Leftrightarrow$\ref{it:thm:dn3}) By Proposition \ref{pro:cff}, we have  $(A\ltimes_{\ell, r} M,  R+\alpha, S+\beta)$ is a symmetric Rota-Baxter system if and only if $(M, \ell, r, \alpha, \beta)$ a representation of $(A, R, S)$. Let $x, y \in A$ and $m, n\in M$, we have
 {\small\begin{eqnarray*}
 &&\hspace{-16mm}(Q+\xi)((R+\alpha)(a+m)\cdot_{A\ltimes_{\ell, r} M}(b+n))-(Q+\xi)((a+m) \cdot_{A\ltimes_{\ell, r} M} (Q+\xi)(b+n))\\
 &&-(S+\beta)(a+m) \cdot_{A\ltimes_{\ell, r} M} (Q+\xi)(b+n)\\
 &=&Q(R(a)b)-Q(aQ(b))-S(a)Q(b)+\xi(\ell(R(a))n)-\xi(\ell(a)\xi(n))-\ell(S(a))\xi(n)\\
 &&+\xi(\alpha(m)r(b))-\xi(m r(Q(b)))-\beta(m)r(Q(b)),\\
 &&\hspace{-16mm}(Q+\xi)((R+\alpha)(a+m)\cdot_{A\ltimes_{\ell, r} M}(b +n))-(T+\zeta)((a+m) \cdot_{A\ltimes_{\ell, r} M} (Q+\xi)(b+n))\\
 &&-(R+\alpha)(a+m) \cdot_{A\ltimes_{\ell, r} M} (Q+\xi)(b+n)\\
 &=&Q(R(a)b)-T(aQ(b))-R(a)Q(b)+\xi(\ell(R(a))n)-\zeta(\ell(a)\xi(n))-\ell(R(a))\xi(n)\\
 &&+\xi(\alpha(m)r(b))-\zeta(m r(Q(b)))-\alpha(m)r(Q(b)),\\
 &&\hspace{-16mm}(Q+\xi)((a+m) \cdot_{A\ltimes_{\ell, r} M} (R+\alpha)(b+n)) -(T+\zeta)((Q+\xi)(a+m) \cdot_{A\ltimes_{\ell, r} M} (b+n))\\
 &&-(Q+\xi)(a+m) \cdot_{A\ltimes_{\ell, r} M} (R+\alpha)(b+n)\\
 &=&Q(aR(b))-T(Q(a)b)-Q(a)R(b)+\xi(\ell(a)\alpha(n))-\zeta(\ell(Q(a))n)-\ell(Q(a))\alpha(n)\\
 &&+\xi(m r(R(b)))-\zeta(\xi(m)r(b))-\xi(m)r(R(b)),\\
 &&\hspace{-16mm}(Q+\xi)((a+m) \cdot_{A\ltimes_{\ell, r} M} (R+\alpha)(b+n)) -(Q+\xi)((Q+\xi)(a+m) \cdot_{A\ltimes_{\ell, r} M} (b+n))\\
 &&-(Q+\xi)(a+m) \cdot_{A\ltimes_{\ell, r} M} (S+\beta)(b+n)\\
 &=&Q(aR(b))-Q(Q(a)b)-Q(a)S(b)+\xi(\ell(a)\alpha(n))-\xi(\ell(Q(a))n)-\ell(Q(a))\beta(n)\\
 &&+\xi(m r(R(b)))-\xi(\xi(m)r(b))-\xi(m)r(S(b)),\\
 &&\hspace{-16mm}(T+\zeta)((S+\beta)(a+m) \cdot_{A\ltimes_{\ell, r} M} (b+n))-(Q+\xi)((a+m) \cdot_{A\ltimes_{\ell, r} M} (T+\zeta)(b+n))\\
 &&-(S+\beta)(a+m) \cdot_{A\ltimes_{\ell, r} M} (T+\zeta)(b+n)\\
 &=&T(S(a)b)-Q(aT(b))-S(a)T(b)+\zeta(\ell(S(a))n)-\xi(\ell(a)\zeta(n))-\ell(S(a))\zeta(n)\\
 &&+\zeta(\beta(m)r(b))-\xi(m r(T(b)))-\beta(m)r(T(b)),\\
 &&\hspace{-16mm}(T+\zeta)((S+\beta)(a+m) \cdot_{A\ltimes_{\ell, r} M} (b+n))-(T+\zeta)((a+m) \cdot_{A\ltimes_{\ell, r} M} (T+\zeta)(b+n))\\
 &&-(R+\alpha)(a+m) \cdot_{A\ltimes_{\ell, r} M} (T+\zeta)(b+n)\\
 &=&T(S(a)b)-T(aT(b))-R(a)T(b)+\zeta(\ell(S(a))n)-\zeta(\ell(a)\zeta(n))-\ell(R(a))\zeta(n)\\
 &&+\zeta(\beta(m)r(b))-\zeta(m r(T(b)))-\alpha(m)r(T(b)),\\
 &&\hspace{-16mm}(T+\zeta)((a+m) \cdot_{A\ltimes_{\ell, r} M} (S+\beta)(b+n))-(T+\zeta)((T+\zeta)(a+m) \cdot_{A\ltimes_{\ell, r} M} (b+n))\\
 &&-(T+\zeta)(a+m) \cdot_{A\ltimes_{\ell, r} M} (R+\alpha)(b+n)\\
 &=&T(a S(b))-T(T(a)b)-T(a)R(b)+\zeta(\ell(a)\beta(n))-\zeta(\ell(T(a))n)-\ell(T(a))\alpha(n)\\
 &&+\zeta(m r(S(b)))-\zeta(\zeta(m)r(b))-\zeta(m)r(R(b)),\\
 &&\hspace{-16mm}(T+\zeta)((a+m) \cdot_{A\ltimes_{\ell, r} M} (S+\beta)(b+n))-(Q+\xi)((T+\zeta)(a+m) \cdot_{A\ltimes_{\ell, r} M} (b+n))\\
 &&-(T+\zeta)(a+m) \cdot_{A\ltimes_{\ell, r} M} (S+\beta)(b+n)\\
 &=&T(a S(b))-Q(T(a)b)-T(a)S(b)+\zeta(\ell(a)\beta(n))-\xi(\ell(T(a))n)-\ell(T(a))\beta(n)\\
 &&+\zeta(m r(S(b)))-\xi(\zeta(m)r(b))-\zeta(m)r(S(b)).
 \end{eqnarray*}}
 \noindent Therefore Eq.\eqref{eq:ck} holds (where $Q$ is replaced by $Q + \xi$, $T$ by $T + \zeta$, $R$ by $P + \alpha$, $S$ by $S + \beta$, $a$ by $a + m$, and $b$ by $b + m$) if and only if Eq.\eqref{eq:ck} (corresponding to $m = n = 0$), Eq.\eqref{eq:cj} (corresponding to $b = m = 0$) and Eq.\eqref{eq:dn}, where $a$ is replaced by $b$, (corresponding to $a = n = 0$) hold. Likewise, other cases hold. Hence Item \ref{it:thm:dn1} holds if and only if Item \ref{it:thm:dn3} holds.
 	
 (\ref{it:thm:dn2}$\Leftrightarrow$\ref{it:thm:dn3}) In Item \ref{it:thm:dn1}, take
 $ M=M^{*}, \ell=r^{*}, r=\ell^{*}, \alpha=\xi^{*}, \xi=\alpha^{*}, \beta=\zeta^{*}, \zeta=\beta^{*}$. Then from the above equivalence between Item \ref{it:thm:dn1} and Item \ref{it:thm:dn3}, we have Item \ref{it:thm:dn2} holds if and only if Items \ref{it:thm:dn4}-\ref{it:thm:dn6} hold and the following equations hold:
 {\small\begin{eqnarray}
 \alpha^{*}(\xi^{*}(m^{*})\ell^{*}(a))=\alpha^{*}(m^{*}\ell^{*}(Q(a)))+\zeta^{*}(m^{*})\ell^{*}(Q(a))=\beta^{*}(m^{*}\ell^{*}(Q(a)))+\xi^{*}(m^{*})\ell^{*}(Q(a)),\label{eq:dn4}\\
 \alpha^{*}(r^{*}(a)\xi^{*}(m^{*}))=\beta^{*}(r^{*}(Q(a))m^{*})+r^{*}(Q(a))\xi^{*}(m^{*})=\alpha^{*}(r^{*}(Q(a))m^{*})+r^{*}(Q(a))\zeta^{*}(m^{*}),\label{eq:dn5}\\
 \beta^{*}(\zeta^{*}(m^{*})\ell^{*}(a))=\alpha^{*}(m^{*}\ell^{*}(T(a)))+\zeta^{*}(m^{*})\ell^{*}(T(a))=\beta^{*}(m^{*}\ell^{*}(T(a)))+\xi^{*}(m^{*})\ell^{*}(T(a)),\label{eq:dn6}\\
 \beta^{*}(r^{*}(a)\zeta^{*}(m^{*}))=\beta^{*}(r^{*}(T(a))m^{*})+r^{*}(T(a))\xi^{*}(m^{*})=\alpha^{*}(r^{*}(T(a))m^{*})+r^{*}(T(a))\zeta^{*}(m^{*}).\label{eq:dn7}
 \end{eqnarray}}
 For all $a\in A$, $n\in M$ and $m^{*}\in M^{*}$, we have
 {\small\begin{eqnarray*}
 &&\hspace{-30mm}\big\langle \alpha^{*}(\xi^{*}(m^{*})\ell^{*}(a))-\alpha^{*}(m^{*}\ell^{*}(Q(a)))-\zeta^{*}(m^{*})\ell^{*}(Q(a)), n\big\rangle \\
 &=&\big\langle m^{*}, \xi (\ell(a)\alpha(n))-\ell(Q(a))\alpha(n)-\zeta(\ell(Q(a))n)\big\rangle.\\
 &&\hspace{-30mm}\big\langle \alpha^{*}(\xi^{*}(m^{*})\ell^{*}(a))-\beta^{*}(m^{*}\ell^{*}(Q(a)))-\xi^{*}(m^{*})\ell^{*}(Q(a)), n\big\rangle \\
 &=&\big\langle m^{*}, \xi (\ell(a)\alpha(n))-\ell(Q(a))\beta(n)-\xi(\ell(Q(a))n)\big\rangle.\\
 &&\hspace{-30mm}\big\langle \alpha^{*}(r^{*}(a)\xi^{*}(m^{*}))-\beta^{*}(r^{*}(Q(a))m^{*})-r^{*}(Q(a))\xi^{*}(m^{*}), n\big\rangle \\
 &=& \big\langle m^{*}, \xi (\alpha (n)r (a))-\beta (n)r (Q(a))-\xi (nr (Q(a)))\big\rangle.\\
 &&\hspace{-30mm}\big\langle \alpha^{*}(r^{*}(a)\xi^{*}(m^{*}))-\alpha^{*}(r^{*}(Q(a))m^{*})-r^{*}(Q(a))\zeta^{*}(m^{*}), n\big\rangle \\
 &=& \big\langle m^{*}, \xi (\alpha (n)r (a))-\alpha (n)r (Q(a))-\zeta (nr (Q(a)))\big\rangle.\\
 &&\hspace{-30mm}\big\langle \beta^{*}(\zeta^{*}(m^{*})\ell^{*}(a))-\alpha^{*}(m^{*}\ell^{*}(T(a)))-\zeta^{*}(m^{*})\ell^{*}(T(a)), n\big\rangle \\
 &=&\big\langle m^{*}, \zeta (\ell(a)\beta(n))-\ell(T(a))\alpha(n)-\zeta(\ell(T(a))n)\big\rangle.\\
 &&\hspace{-30mm}\big\langle \beta^{*}(\zeta^{*}(m^{*})\ell^{*}(a))-\beta^{*}(m^{*}\ell^{*}(T(a)))-\xi^{*}(m^{*})\ell^{*}(T(a)), n\big\rangle \\
 &=&\big\langle m^{*}, \zeta (\ell(a)\beta(n))-\ell(T(a))\beta(n)-\xi(\ell(T(a))n)\big\rangle.\\
 &&\hspace{-30mm}\big\langle \beta^{*}(r^{*}(a)\zeta^{*}(m^{*}))-\beta^{*}(r^{*}(T(a))m^{*})-r^{*}(T(a))\xi^{*}(m^{*}), n\big\rangle \\
 &=& \big\langle m^{*}, \zeta (\beta (n)r (a))-\beta (n)r (T(a))-\xi (nr (T(a)))\big\rangle.\\
 &&\hspace{-30mm}\big\langle \beta^{*}(r^{*}(a)\zeta^{*}(m^{*}))-\alpha^{*}(r^{*}(T(a))m^{*})-r^{*}(T(a))\zeta^{*}(m^{*}), n\big\rangle \\
 &=& \big\langle m^{*}, \zeta (\beta (n)r (a))-\alpha (n)r (T(a))-\zeta (nr (T(a)))\big\rangle.
 \end{eqnarray*}}
 Hence, Eqs.\eqref{eq:dn4}-\eqref{eq:dn7} hold if and only if Eqs.\eqref{eq:dn}-\eqref{eq:dn3} hold.
 \end{proof}

 \begin{thm} \label{thm:do} Let $(A, R, S)$ be a $(\xi, \zeta)$-admissible symmetric Rota-Baxter system with respect to the $A$-bimodule $(M, \ell, r)$, $Q, T : A \rightarrow A, \alpha, \beta, \xi, \zeta : M \rightarrow M$ and $\mathfrak{T} : M \rightarrow A$ linear maps which is identified as an element in $(A \ltimes_{r^{*}, \ell^{*}} M^{*}) \otimes (A \ltimes_{r^{*}, \ell^{*}} M^{*})$.

 \begin{enumerate}[(1)]
   \item \label{it:thm:do1} The element $r = \mathfrak{T} - \sigma(\mathfrak{T})$ is an antisymmetric solution of the $(Q + \alpha^{*}, T+\beta^{*})$-admissible aYBe in the symmetric Rota-Baxter system $(A \ltimes_{r^{*}, \ell^{*}} M^{*}, R+\xi^{*}, S+\zeta^{*})$ if and only if $\mathfrak{T}$ is a weak $\mathcal{O}$-operator associated to $(M, \ell, r)$ and $(\alpha, \beta)$ and $\mathfrak{T}\ci \xi = Q\ci \mathfrak{T}, \mathfrak{T}\ci \zeta = T\ci \mathfrak{T}$.

   \item \label{it:thm:do2} Assume that $(M, \ell, r, \alpha, \beta)$ is a representation of $(A, R, S)$. If $\mathfrak{T}$ is an $\mathcal{O}$-operator associated to $(M, \ell, r, \alpha, \beta)$ and $\mathfrak{T}\ci \xi = Q\ci \mathfrak{T}, \mathfrak{T}\ci \zeta = T\ci \mathfrak{T}$, then $r = \mathfrak{T} - \sigma(\mathfrak{T})$ is an antisymmetric solution of the $(Q + \alpha^{*}, T+\beta^{*})$-admissible aYBe in the symmetric Rota-Baxter system $(A \ltimes_{r^{*}, \ell^{*}} M^{*}, R+\xi^{*}, S+\zeta^{*})$. If, in addition, $(A, R, S)$ is $(Q, T)$-admissible and Eqs.\eqref{eq:dn}-\eqref{eq:dn3} hold, then the symmetric Rota-Baxter system $(A \ltimes_{r^{*}, \ell^{*}} M^{*}, R+\xi^{*}, S+\zeta^{*})$ is ($Q + \alpha^{*}, T+\beta^{*}$)-admissible. In this case, there is a symmetric Rota-Baxter ASI bisystem $(A \ltimes_{r^{*}, \ell^{*}} M^{*}, R+\xi^{*}, S+\zeta^{*}, Q + \alpha^{*}, T+\beta^{*})$ where the linear map $\Delta$ is defined by Eq.\eqref{eq:db} with $r = \mathfrak{T} - \sigma(\mathfrak{T})$.
  \end{enumerate}
  \end{thm}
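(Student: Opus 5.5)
Write $\hat A:=A\ltimes_{r^{*},\ell^{*}}M^{*}$ and identify $\mathfrak{T}\in \mathrm{Hom}(M,A)\cong M^{*}\otimes A\subset \hat A\otimes\hat A$. The plan is to reduce Item (1) to Theorem \ref{thm:dm}, applied to the symmetric Rota-Baxter system $(\hat A, R+\xi^{*}, S+\zeta^{*})$ with the pair $(Q+\alpha^{*}, T+\beta^{*})$. Since $r=\mathfrak{T}-\sigma(\mathfrak{T})$ is antisymmetric by construction, that theorem says $r$ solves the $(Q+\alpha^{*},T+\beta^{*})$-admissible aYBe if and only if two things hold. First, $r^{+}:\hat A^{*}=A^{*}\oplus M\to \hat A$ is a weak $\mathcal O$-operator for $(\hat A^{*},\mathcal R^{*}_{\hat A},\mathcal L^{*}_{\hat A})$, i.e.\ $r$ solves the aYBe in $\hat A$. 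Second,
\[
(R+\xi^{*})\circ r^{+}=r^{+}\circ (Q+\alpha^{*})^{*},\qquad (S+\zeta^{*})\circ r^{+}=r^{+}\circ (T+\beta^{*})^{*}.
\]

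For the aYBe part I will invoke the classical result of \cite{Bai1,BGN}, which underlies Lemma \ref{lem:dl}: $r=\mathfrak{T}-\sigma(\mathfrak{T})$ satisfies the aYBe in $\hat A$ if and only if $\mathfrak{T}(m)\mathfrak{T}(n)=\mathfrak{T}(\ell(\mathfrak{T}(m))n+m\,r(\mathfrak{T}(n)))$. This is exactly the first condition in Definition \ref{de:dk}.

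For the operator identities, I compute $r^{+}$ explicitly. Up to the sign convention forced by antisymmetry, for $a^{*}\in A^{*}$ and $m\in M$ one has $r^{+}(a^{*}+m)=\mathfrak{T}(m)-\mathfrak{T}^{*}(a^{*})$, where $\mathfrak{T}^{*}:A^{*}\to M^{*}$. Also $(Q+\alpha^{*})^{*}=Q^{*}+\alpha$ on $A^{*}\oplus M$, and similarly $(T+\beta^{*})^{*}=T^{*}+\beta$. Substituting, the first identity splits into its $M$-component and its $A^{*}$-component:
\[
R(\mathfrak{T}(m))=\mathfrak{T}(\alpha(m)),\qquad \xi^{*}(\mathfrak{T}^{*}(a^{*}))=\mathfrak{T}^{*}(Q^{*}(a^{*})).
\]
Dualizing, the second of these becomes $\mathfrak{T}\circ\xi=Q\circ\mathfrak{T}$. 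In the same way the $S,T$ identity yields $S\circ\mathfrak{T}=\mathfrak{T}\circ\beta$ and $\mathfrak{T}\circ\zeta=T\circ\mathfrak{T}$. Together these are precisely the remaining conditions of Item (1). Alternatively, one can check $(Q+\alpha^{*})\otimes\id-\id\otimes(R+\xi^{*})$ on $r$ directly, since by Remark \ref{rmk:thm:df} only one side is needed.

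For Item (2), the first assertion is a special case of Item (1). For the second, assume $(M,\ell,r,\alpha,\beta)$ is a representation, $(Q,T)$ is adjoint admissible, $(\xi,\zeta)$ is admissible (this is the standing hypothesis), and Eqs.\eqref{eq:dn}-\eqref{eq:dn3} hold. Then condition (3) of Theorem \ref{thm:dn} is satisfied, so its item (2) holds: $\hat A$ is a symmetric Rota-Baxter system and $(Q+\alpha^{*},T+\beta^{*})$ is adjoint admissible to it. Proposition \ref{pro:cii} then gives the symmetric Rota-Baxter ASI bisystem with $\Delta=\Delta_r$.

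The main obstacle is the bookkeeping in the second step: fixing the identification and sign conventions for $r^{+}$ and $\mathfrak{T}^{*}$, and checking that the $A^{*}$- and $M$-components separate cleanly without cross terms. I expect them to separate, because $R+\xi^{*}$ and $Q^{*}+\alpha$ are both block diagonal.
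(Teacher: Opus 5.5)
Your proposal is correct and is essentially the paper's proof. The aYBe part reduces to Bai's classical result that $\mathfrak{T}-\sigma(\mathfrak{T})$ solves the aYBe in $A\ltimes_{r^{*},\ell^{*}}M^{*}$ if and only if $\mathfrak{T}$ satisfies the $\mathcal{O}$-operator identity. The admissibility equations split into $R\circ\mathfrak{T}=\mathfrak{T}\circ\alpha$, $\mathfrak{T}\circ\xi=Q\circ\mathfrak{T}$ and their $S,T,\beta,\zeta$ analogues, and Item (2) follows from Item (1), Theorem \ref{thm:dn} and Proposition \ref{pro:cii}. The only difference is cosmetic: you check the admissibility conditions on $r^{+}(a^{*}+m)=\mathfrak{T}(m)-\mathfrak{T}^{*}(a^{*})$ through Theorem \ref{thm:dm}, while the paper verifies $((Q+\alpha^{*})\otimes\id)(r)=(\id\otimes(R+\xi^{*}))(r)$ directly in a dual basis, and these are the same identity up to dualization.
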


 \begin{proof} \ref{it:thm:do1} Let $\{e_{1}, e_{2}, ..., e_{n}\}$ be a basis of $M$ and  $\{e_{1}^{*}, e_{2}^{*},..., e_{n}^{*}\}$ be the dual basis. Then
 \begin{eqnarray*}
 &&\hspace{-6mm}\mathfrak{T}=\sum_{i} e_{i}^{*}\otimes \mathfrak{T}(e_{i})\in  M^{*}\otimes A\subset (A\oplus M^{*})\otimes  (A\oplus M^{*}),\\
 &&\hspace{-6mm}r=\mathfrak{T}-\sigma(\mathfrak{T})=\sum_{i} (e_{i}^{*}\otimes \mathfrak{T}(e_{i})-\mathfrak{T}(e_{i})\otimes e_{i}^{*}).
 \end{eqnarray*}
 Note that
 \begin{eqnarray*}
 &&\hspace{-6mm}((Q+\alpha^{*})\otimes \id)(r)=\sum_{i} (\alpha^{*}(e_{i}^{*})\otimes \mathfrak{T}(e_{i})-Q(\mathfrak{T}(e_{i}))\otimes e_{i}^{*}),\\
 &&\hspace{-6mm}(\id\otimes(R+\xi^{*}))(r)=\sum_{i} (e_{i}^{*}\otimes R(\mathfrak{T}(e_{i}))-\mathfrak{T}(e_{i})\otimes \xi^{*}(e_{i}^{*})),\\
 &&\hspace{-6mm}((T+\beta^{*})\otimes \id)(r)=\sum_{i} (\beta^{*}(e_{i}^{*})\otimes \mathfrak{T}(e_{i})-T(\mathfrak{T}(e_{i}))\otimes e_{i}^{*}),\\
 &&\hspace{-6mm}(\id\otimes(S+\zeta^{*}))(r)=\sum_{i} (e_{i}^{*}\otimes S(\mathfrak{T}(e_{i}))-\mathfrak{T}(e_{i})\otimes \zeta^{*}(e_{i}^{*})).
 \end{eqnarray*}
 Further,
 \begin{eqnarray*}
 &&\hspace{-6mm}\sum_{i} \mathfrak{T}(e_{i})\otimes \xi^{*}(e_{i}^{*})=\sum_{i} \mathfrak{T}(\xi(e_{i}))\otimes e_{i}^{*},\quad \sum_{i} \alpha^{*}(e_{i}^{*})\otimes \mathfrak{T}(e_{i})=\sum_{i} e_{i}^{*}\otimes \mathfrak{T}(\alpha(e_{i})),\\
 &&\hspace{-6mm}\sum_{i} \mathfrak{T}(e_{i})\otimes \zeta^{*}(e_{i}^{*})=\sum_{i} \mathfrak{T}(\zeta(e_{i}))\otimes e_{i}^{*},\quad\sum_{i} \beta^{*}(e_{i}^{*})\otimes \mathfrak{T}(e_{i})=\sum_{i} e_{i}^{*}\otimes \mathfrak{T}(\beta(e_{i})).
 \end{eqnarray*}
 Therefore $((Q+\alpha^{*})\otimes \id)(r)=(\id\otimes(R+\xi^{*}))(r)$ if and only if $\mathfrak{T}\circ \xi=Q\circ \mathfrak{T}$ and $R\circ \mathfrak{T}=\mathfrak{T}\circ \alpha$. $((T+\beta^{*})\otimes \id)(r)=(\id\otimes(S+\zeta^{*}))(r)$ if and only if $\mathfrak{T}\circ \zeta=T\circ \mathfrak{T}$ and $S\circ \mathfrak{T}=\mathfrak{T}\circ \beta$. Hence the conclusion follows by \cite[Theorem 2.5.5]{Bai1} or \cite[\bf Claim]{Bai}.
	
 \ref{it:thm:do2} It follows Item \ref{it:thm:do1} and Theorem \ref{thm:dn}.
 \end{proof}

\subsection{Quasitriangular case} In this subsection, we assume that Char$K\neq$ 2. For $r \in A \otimes A$, set $r=\Lambda+\Gamma$ with $\Lambda=\frac{r+\sigma(r)}{2}, \Gamma=\frac{r-\sigma(r)}{2}$.

 If $r$ is a solution of the aYBe in $A$ and the symmetric part $\Lambda$ of $r$ is \textbf{$(\mathcal{L}, \mathcal{R})$-invariant} in the sense of
 \begin{equation}
 (\id\otimes \mathcal{L}(a)-\mathcal{R}(a)\otimes \id)\Lambda=0, \forall a\in A, \label{eq:db5}
 \end{equation}
 then $(A, \cdot, \Delta_{r})$ is an ASI bialgebra, where the linear map $\Delta_{r}$ is defined by Eq.\eqref{eq:db}. We call this bialgebra \textbf{quasitriangular}\cite{Bai1}. In fact, in this case, by Eq.\eqref{eq:db5},
 \begin{eqnarray}\label{eq:dbbbbbb}
 \D_r(a)=\frac{1}{2}(r^{1}\otimes a r^{2}-r^{1} a\otimes r^{2}-r^{2}\otimes a r^{1}+r^{2}a\otimes  r^{1}).
 \end{eqnarray}

 \begin{pro}\label{pro:jk} \label{cor:dg} Let $(A, R, S)$ be a $(Q, T)$-adjoint admissible symmetric Rota-Baxter system. If $\Lambda$ is $(\mathcal{L}, \mathcal{R})$-invariant and
 \begin{eqnarray}
 &(Q\otimes \id - \id\otimes R)\Gamma=0,~~~ (T\otimes \id - \id\otimes S)\Gamma=0.& \label{eq:dg}
 \end{eqnarray}
 hold, then we have Eqs.\eqref{eq:cu}-\eqref{eq:ck8} hold for $\D_r$ given by Eq.\eqref{eq:dbbbbbb}. Moreover, if $r$ is a solution of the aYBe in $A$, then $((A, \cdot \D_r), R, S, Q, T)$ is a symmetric Rota-Baxter ASI bisystem.
 \end{pro}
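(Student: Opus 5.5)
The plan is to reduce everything to the coboundary computations of Theorem \ref{thm:de}, applied to the element $\Gamma$ rather than to $r$. First, I will observe that when $\Lambda$ is $(\mathcal{L},\mathcal{R})$-invariant, Eq.\eqref{eq:db5} gives $r^{1}\otimes a r^{2}-r^{1}a\otimes r^{2}=\Gamma^{1}\otimes a\Gamma^{2}-\Gamma^{1}a\otimes\Gamma^{2}$, because the $\Lambda$-contribution $(\id\otimes\mathcal{L}(a)-\mathcal{R}(a)\otimes\id)\Lambda$ vanishes. So $\Delta_r=\Delta_\Gamma$, which is exactly the content of Eq.\eqref{eq:dbbbbbb}. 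Hence it suffices to verify Eqs.\eqref{eq:cu}--\eqref{eq:ck8} for $\Delta_\Gamma$ with $\Gamma$ antisymmetric.

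Next, I will go through the computations in the proof of Theorem \ref{thm:de}. These use only the definition Eq.\eqref{eq:db} of $\Delta$ and the $(Q,T)$-adjoint admissibility Eqs.\eqref{eq:ck}--\eqref{eq:ck3}; they do not use coassociativity. Read with $r$ replaced by $\Gamma$, they express each of the differences in Eqs.\eqref{eq:cu}, \eqref{eq:cu1}, \eqref{eq:ck5}--\eqref{eq:ck8} as a sum of operators applied to the four tensors
\[
(Q\otimes\id-\id\otimes R)\Gamma,\quad (T\otimes\id-\id\otimes S)\Gamma,\quad (\id\otimes Q-R\otimes\id)\Gamma,\quad (\id\otimes T-S\otimes\id)\Gamma .
\]
The first two vanish by Eq.\eqref{eq:dg}. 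By Remark \ref{rmk:thm:df}, which uses the antisymmetry of $\Gamma$ (apply $\sigma$), the last two are $-\sigma$ of the first two, so they also vanish. Therefore Eqs.\eqref{eq:de}--\eqref{eq:de5de5} hold for $\Gamma$, and the identities Eqs.\eqref{eq:cu}--\eqref{eq:ck8} hold for $\Delta_r=\Delta_\Gamma$.

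Finally, for the ``moreover'' part: if $r$ satisfies the aYBe and $\Lambda$ is invariant, then $(A,\cdot,\Delta_r)$ is a (quasitriangular) ASI bialgebra by the fact recalled from \cite{Bai1} before the proposition. In particular $\Delta_r$ is coassociative, so $(A,\Delta_r)$ is a coalgebra and $(A,Q,T)$ is a symmetric Rota-Baxter cosystem by Eqs.\eqref{eq:cu}--\eqref{eq:cu1}. Together with the adjoint admissibility Eqs.\eqref{eq:ck}--\eqref{eq:ck3} and Eqs.\eqref{eq:ck5}--\eqref{eq:ck8}, Definition \ref{de:cx} then gives the bisystem.

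The main point to check carefully is the reduction in the second step. Theorem \ref{thm:de} is stated under the hypothesis that $(A^{*},\Delta^{*})$ is an algebra, so I must confirm that its displayed manipulations are purely formal and valid for any $\Delta$ of the form \eqref{eq:db}; by inspection they are, as only \eqref{eq:ck}--\eqref{eq:ck3} are invoked. Coassociativity of $\Delta_\Gamma$ itself is not needed at that stage, since it is supplied at the end by the quasitriangular ASI bialgebra structure.
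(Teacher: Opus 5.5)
Your proposal is correct and follows the paper's own argument. You use the invariance of $\Lambda$ to rewrite $\Delta_r$ as the coboundary of $\Gamma$ (Eq.~\eqref{eq:dbbbbbb}), rerun the computations of Theorem~\ref{thm:de} with $\Gamma$ in place of $r$, kill the resulting tensors with Eq.~\eqref{eq:dg} and the antisymmetry of $\Gamma$, and obtain the ``moreover'' part from the quasitriangular ASI bialgebra structure. One small correction to your remark that ``only \eqref{eq:ck}--\eqref{eq:ck3} are invoked'': those computations also use the Rota-Baxter identities \eqref{eq:ea0}--\eqref{eq:ea1}, e.g.\ $R(R(a)r^{2})=R(a)R(r^{2})-R(aS(r^{2}))$. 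These identities are among your hypotheses, so the argument is unaffected.
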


 \begin{proof} By the discussion above, we only need to check that Eqs.\eqref{eq:cu}-\eqref{eq:ck8} hold. For example,
 {\small\begin{eqnarray*}
 &&\hspace{-16mm}Q(a_{(1)}) \otimes Q(a_{(2)})-Q(Q(a)_{(1)}) \otimes Q(a)_{(2)} - Q(a)_{(1)} \otimes T(Q(a)_{(2)}) \\
 \hspace{-3mm}&\stackrel{\eqref{eq:ck}\eqref{eq:ck1}}{=}&\hspace{-3mm} \frac{1}{2}(Q(r^{1})\otimes Q(a r^{2})+Q(r^{1} a) \otimes Q(r^{2})-Q(r^{2})\otimes Q(a r^{1})+Q(r^{2} a) \otimes Q(r^{1})-Q(r^{1})\otimes Q(a)r^{2}\\
 &&-Q(R(r^{1})a)\otimes r^{2}+S(r^{1})Q(a)\otimes r^{2}+Q(r^{2})\otimes Q(a)r^{1}+Q(R(r^{2})a)\otimes r^{1}-S(r^{2})Q(a)\otimes r^{1}\\
 &&+r^{1}\otimes Q(a R(r^{2}))-r^{1}\otimes Q(a) R(r^{2})+r^{1}Q(a)\otimes T(r^{2})-r^{2}\otimes Q(a R(r^{1}))+r^{2}\otimes Q(a) R(r^{1})\\
 &&-r^{2}Q(a)\otimes T(r^{1}))\\
 \hspace{-6mm}&=&\hspace{-6mm}(\id\otimes \mathcal{L}(Q(a))-\id\otimes Q\circ \mathcal{L}(a))(Q\otimes \id -\id\otimes R)\frac{r-\sigma(r)}{2}\\
 &&+(Q\circ \mathcal{R}(a)\otimes \id)(\id\otimes Q-R\otimes \id)\frac{r-\sigma(r)}{2}\\
 \hspace{-6mm}&=&\hspace{-6mm}(\id\otimes \mathcal{L}(Q(a))-\id\otimes Q\circ \mathcal{L}(a))(Q\otimes \id -\id\otimes R)\Gamma+(Q\circ \mathcal{R}(a)\otimes \id)(\id\otimes Q-R\otimes \id)\Gamma
 \end{eqnarray*}
 }
 Then by Eq.\eqref{eq:dg}, the first equal in Eq.\eqref{eq:cu} holds. Similarly for other equalities.
 \end{proof}

\section{Applications}\label{se:appl}
 The symmetric Rota-Baxter ASI bisystem finds applications in a wide range of structures, including Nijenhuis algebras, Rota-Baxter Lie bisystems, Rota-Baxter ASI bialgebras of weight $\lambda$, averaging ASI bialgebras, special apre-perm bialgebras, averaging Lie bialgebras, and Rota-Baxter Lie bialgebras of weight $\lambda$. This diversity underscores the broad applicability of our framework.

 \subsection{To Rota-Baxter Lie systems} In this subsection, we introduce the notion of a Rota-Baxter Lie system. For the concept and terminology of Lie algebras, we refer to  \cite{CP}.

 \begin{defi} \label{de:gh} Let $(g,[,])$ be a Lie algebra. If the linear operators $R, S: g \rightarrow g$ satisfy
 \begin{eqnarray}
 &[R(x), R(y)] = R([R(x), y] + [x, S(y)] ),&\label{eq:gh0}\\
 &[S(x), S(y)] = S([R(x), y] + [x, S(y)] ), & \label{eq:gh1}
 \end{eqnarray}
 for all $x, y \in g$, then $((g, [,]), R, S)$ ($(g, R, S)$ for short) is called a \textbf{Rota-Baxter Lie system}.
 \end{defi}

 \begin{rmk}\label{pro:gi}
 \begin{enumerate}
   \item Let $((g, [,]), R, S)$ be a Rota-Baxter Lie system. Then we have
  \begin{eqnarray*}
 &[R(x), R(y)] = R([R(x), y] + [x, S(y)] )= R([S(x), y] + [x, R(y)] ),&\\
 &[S(x), S(y)] = S([R(x), y] + [x, S(y)] )= S([S(x), y] + [x, R(y)] ). &
 \end{eqnarray*}
   \item In \cite{ZM}, we investigate the bialgebra structures on a class of special Rota-Baxter Lie systems and present some derived results.
 \end{enumerate}
 \end{rmk}

 \begin{pro}\label{pro:gj} Let $((A, \cdot), R, S)$ be a symmetric Rota-Baxter system. Define bilinear map $[,]: A\otimes A \rightarrow A$ by
 \begin{eqnarray}
   [x, y]=x y-y x,\label{eq:gj1}
 \end{eqnarray}
 for all $x, y \in A$, then $((A, [,]), R, S)$ is a Rota-Baxter Lie system.
 \end{pro}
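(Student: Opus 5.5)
We have to show that the commutator $[x,y]=xy-yx$ turns a symmetric Rota-Baxter system into a Rota-Baxter Lie system. It is standard that $(A,[,])$ is a Lie algebra, because $A$ is associative. So the only things left to check are Eqs.~\eqref{eq:gh0} and \eqref{eq:gh1}.

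For \eqref{eq:gh0} I expand both sides using the commutator. The left side is $[R(x),R(y)]=R(x)R(y)-R(y)R(x)$. The argument on the right is
\[
[R(x),y]+[x,S(y)]=\big(R(x)y+xS(y)\big)-\big(yR(x)+S(y)x\big).
\]
The key step is to use both equalities in Eq.~\eqref{eq:ea0}, applied to two different pairs of arguments:
\begin{itemize}
\item to the pair $(x,y)$ in its first form: $R(x)R(y)=R(R(x)y+xS(y))$;
\item to the pair $(y,x)$ in its second form: $R(y)R(x)=R(S(y)x+yR(x))$.
\end{itemize}
Subtracting the second identity from the first and using linearity of $R$ gives
\[
[R(x),R(y)]=R\big(R(x)y+xS(y)-S(y)x-yR(x)\big)=R([R(x),y]+[x,S(y)]),
\]
which is exactly \eqref{eq:gh0}.

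Eq.~\eqref{eq:gh1} follows in the same way from Eq.~\eqref{eq:ea1}. Apply its first form to $(x,y)$, giving $S(x)S(y)=S(R(x)y+xS(y))$. Apply its second form to $(y,x)$, giving $S(y)S(x)=S(S(y)x+yR(x))$. Subtracting yields
\[
[S(x),S(y)]=S([R(x),y]+[x,S(y)]).
\]

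The only delicate point, and the reason the symmetry hypothesis is needed, is the choice of which form of \eqref{eq:ea0} (respectively \eqref{eq:ea1}) to use for the swapped product $R(y)R(x)$ (respectively $S(y)S(x)$). With only an ordinary Rota-Baxter system, $R(y)R(x)=R(R(y)x+yS(x))$. Its argument is not the term $S(y)x+yR(x)$ that the commutator requires, so the cancellation fails. Once the second form is used for the swapped product, the computation is immediate.
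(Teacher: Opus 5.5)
Your proof is correct and is the same as the paper's. The paper also expands the commutators and cancels using $R(x)R(y)=R(R(x)y+xS(y))$ together with $R(y)R(x)=R(S(y)x+yR(x))$, and does the same for $S$. This use of the second form for the swapped product is exactly where the symmetry hypothesis enters.
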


 \begin{proof} Obviously, $(A,[,])$ is a Lie algebra. For all $x, y\in A$, we have
 \begin{eqnarray*}
 &&\hspace{-18mm}[R(x), R(y)] - R([R(x), y] + [x, S(y)] )\\
 &=&R(x)R(y)-R(y)R(x) - R(R(x)y + xS(y)) + R(yR(x) + S(y)x)=0,\\
 &&\hspace{-18mm}[S(x), S(y)] - S([R(x), y] + [x, S(y)] )\\
 &=&S(x)S(y)-S(y)S(x) - S(R(x)y + xS(y)) + S(yR(x) + S(y)x)=0.
 \end{eqnarray*}
 Therefore $((A, [,]), R, S)$ is a Rota-Baxter Lie system.
 \end{proof}

 \begin{ex}\label{ex:ejj} By Example \ref{ex:cee} and Proposition \ref{pro:gj}, $((A, [,]), R, S)$ is a $2$-dimensional Rota-Baxter Lie system with the product $[,]$
 \begin{center}
 \begin{tabular}{r|rr}
 $[,]$ & $e$  & $f$ \\
 \hline
 $e$ & $0$  & $-ke$  \\
 $f$ & $ke$  & $0$  \\
 \end{tabular},
 where $k\neq 0\in \mathfrak{R}$.
 \end{center}\vskip1mm
 and $R, S$ are defined by
 \begin{eqnarray*}
 &R(e)=q_{1}e-q_{1}f,~~~R(f)=q_{2}e-q_{2}f,&\\
 &S(e)=q_{2}e-q_{1}f,~~~S(f)=q_{2}e-q_{1}f~~~(q_{1}\neq 0, q_{2}\neq 0).&
 \end{eqnarray*}
 \end{ex}
 \smallskip

 Dually, we have the notion of a Rota-Baxter Lie cosystem.

 \begin{defi} \label{de:ek} Let $(g, \delta)$ be a Lie coalgebra, write $\delta(x)=x_{[1]}\otimes x_{[2]}$. If linear operators $Q, T: g \rightarrow g$ satisfy
 \begin{eqnarray}
 &Q(x_{[1]})\otimes Q(x_{[2]})=Q(Q(x)_{[1]})\otimes Q(x)_{[1]}+Q(x)_{[1]}\otimes T(Q(x)_{[2]}),&\label{eq:ek0}\\
 &T(x_{[1]})\otimes T(x_{[2]})=Q(T(x)_{[1]})\otimes Q(x)_{[1]}+T(x)_{[1]}\otimes T(T(x)_{[2]}), & \label{eq:ek1}
 \end{eqnarray}
 for all $x \in g$, then $((g, \delta), Q, T)$ is called a \textbf{Rota-Baxter Lie cosystem}.
 \end{defi}

 \begin{rmk}\label{rmk:el} Let $((g, \delta), Q, T)$ be a Rota-Baxter Lie cosystem. Then for all $x\in g$, we have
  \begin{eqnarray*}
 Q(x_{[1]})\otimes Q(x_{[2]})&=&Q(Q(x)_{[1]})\otimes Q(x)_{[2]}+Q(x)_{[1]}\otimes T(Q(x)_{[2]})\\
 &=&T(Q(x)_{[1]})\otimes Q(x)_{[2]}+Q(x)_{[1]}\otimes Q(Q(x)_{[2]}),\\
 T(x_{[1]})\otimes T(x_{[2]})&=&Q(T(x)_{[1]})\otimes Q(x)_{[2]}+T(x)_{[1]}\otimes T(T(x)_{[2]})\\
 &=&T(T(x)_{[1]})\otimes Q(x)_{[2]}+T(x)_{[1]}\otimes Q(T(x)_{[2]}).
 \end{eqnarray*}
 \end{rmk}

 \begin{pro}\label{pro:em} Let $((C, \D), Q, T)$ be a symmetric Rota-Baxter cosystem. Define bilinear map $\d: C\rightarrow C\otimes C$ by
 \begin{eqnarray}
 x_{[1]}\otimes x_{[2]}=x_{(1)}\otimes x_{(2)}-x_{(2)}\otimes x_{(1)}\label{eq:em1}
 \end{eqnarray}
 for all $x\in C$, then $((C, \d), Q, T)$ is a Rota-Baxter Lie cosystem.
 \end{pro}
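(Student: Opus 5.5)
The plan is to dualize Proposition \ref{pro:gj} rather than compute directly on $C$. I will split the claim into two parts: first, that $(C,\delta)$ is a Lie coalgebra; second, that the operator identities \eqref{eq:ek0}--\eqref{eq:ek1} hold.

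\textbf{Lie coalgebra part.} Write $\delta=(\id-\sigma)\Delta$, as in Eq.~\eqref{eq:em1}. Then $\delta$ is antisymmetric, since $\sigma\delta=-\delta$. The co-Jacobi identity is the standard consequence of coassociativity, obtained by dualizing the fact that the commutator of an associative product is a Lie bracket. One can either argue on $C^*$ with the convolution algebra when $C$ is finite dimensional, or check it directly on $C$ using $(\Delta\otimes\id)\Delta=(\id\otimes\Delta)\Delta$ together with the cyclic permutation $\tau$ of $C^{\otimes 3}$.

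\textbf{Operator identities.} The key observation is that the symmetric Rota-Baxter cosystem axioms \eqref{eq:cu}--\eqref{eq:cu1} give two expressions for each of $Q(c_{(1)})\otimes Q(c_{(2)})$ and $T(c_{(1)})\otimes T(c_{(2)})$. Apply $\sigma$ to the second expression in \eqref{eq:cu}:
\[
Q(c_{(2)})\otimes Q(c_{(1)})=Q(c)_{(2)}\otimes T(Q(c)_{(1)})+Q(Q(c)_{(2)})\otimes Q(c)_{(1)}.
\]
Subtracting this from the first expression in \eqref{eq:cu} gives
\[
Q(x_{[1]})\otimes Q(x_{[2]})=Q(Q(x)_{[1]})\otimes Q(x)_{[2]}+Q(x)_{[1]}\otimes T(Q(x)_{[2]}).
\]
To see this, note the $Q(Q(c)_{(1)})\otimes Q(c)_{(2)}$ term pairs with the $-Q(Q(c)_{(2)})\otimes Q(c)_{(1)}$ term into $(Q\otimes\id)\delta(Q(c))$. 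Likewise, $Q(c)_{(1)}\otimes T(Q(c)_{(2)})-Q(c)_{(2)}\otimes T(Q(c)_{(1)})=(\id\otimes T)\delta(Q(c))$. This is exactly the statement intended in \eqref{eq:ek0}, which contains an evident typo: $Q(x)_{[2]}$ in the first term and $T(x)_{[2]}$ in \eqref{eq:ek1}.

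The identity for $T$ is proved the same way from \eqref{eq:cu1}. Take the first expression, apply $\sigma$ to the second, and subtract; the terms again pair into $(Q\otimes\id)\delta(T(c))+(\id\otimes T)\delta(T(c))$.

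\textbf{Main obstacle.} The only delicate point is choosing which of the two expressions to flip, so that the operators land correctly: $Q$ on the first leg and $T$ on the second. This is where the symmetry hypothesis is genuinely used, mirroring Proposition \ref{pro:gj}, where $yR(x)+S(y)x$ was rewritten via the second form of the Rota-Baxter identity. Once the correct pairing is chosen, the rest is bookkeeping with $\sigma$. Remark \ref{rmk:el} then follows by the same argument with the roles of the two expressions exchanged.
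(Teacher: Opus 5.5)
Your proof is correct and is essentially the paper's argument. The paper only says the result follows ``dually to Proposition~\ref{pro:gj}'', and your computation is that dualization written out in Sweedler notation on $C$, which also avoids any finite-dimensionality assumption: you expand $Q(c_{(1)})\otimes Q(c_{(2)})$ with the first form of \eqref{eq:cu} and $Q(c_{(2)})\otimes Q(c_{(1)})$ with the flipped second form, and likewise for $T$ via \eqref{eq:cu1}. Your reading of the typos in \eqref{eq:ek0}--\eqref{eq:ek1} as $(Q\otimes\id)\delta Q+(\id\otimes T)\delta Q$ and $(Q\otimes\id)\delta T+(\id\otimes T)\delta T$ is the intended one.
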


 \begin{proof}
 Dually to Proposition \ref{pro:gj}.
 \end{proof}

 \begin{ex}\label{ex:emm} By Example \ref{ex:cuu} and Proposition \ref{pro:em}, $((A, \delta), Q, T)$ is a $2$-dimensional Rota-Baxter Lie cosystem with the coproduct
 \begin{equation*}
 \delta(e)=0,\ \ \delta(f)=f\otimes e-e\otimes f
 \end{equation*}
 and $Q, T$ are defined by
 \begin{eqnarray*}
 &Q(e)=-q_{2}e+q_{1}f,~~~Q(f)=-q_{2}e+q_{1}f,&\\
 &T(e)=-q_{1}e+q_{1}f,~~~T(f)=-q_{2}e+q_{2}f~~~  (q_{1}\neq 0, q_{2}\neq 0).&
 \end{eqnarray*}
 \end{ex}

 Now, we present the definition of a Rota-Baxter Lie bisystem by combining a Rota-Baxter Lie system and a Rota-Baxter cosystem.

 \begin{defi}\label{de:eo} Let $((g, [,]), R, S)$ be a Rota-Baxter Lie system, $(M, \rho)$ a representation of Lie algebra $(g, [,])$, $\alpha, \beta: M\rightarrow M$ linear maps. If for all $x\in g, m\in M,$
 \begin{eqnarray*}
 &\rho(R(x))\alpha(m)=\alpha(\rho(R(x))m+\rho(x)\beta(m))=\alpha(\rho(S(x))m+\rho(x)\alpha(m)),&\\
 &\rho(S(x))\beta(m)=\beta(\rho(R(x))m+\rho(x)\beta(m))=\beta(\rho(S(x))m+\rho(x)\alpha(m)),&
 \end{eqnarray*}
 then we call $(M, \rho, \alpha, \beta)$ a \textbf{representation} of $((g, [,]), R, S)$.
 \end{defi}

 \begin{defi}\label{de:emmm} A \textbf{Rota-Baxter Lie bisystem} is a quintuple $((g, [,], \delta), R, S, Q, T)$, where $(g, [,], \delta)$ is a Lie bialgebra, $((g, [,]), R, S)$ is a Rota-Baxter Lie system, $((g, \delta), Q, T)$ is a Rota-Baxter Lie cosystem, such that the following equations hold:
 \begin{eqnarray}
 Q([R(x), y])\hspace{-3mm}&=&\hspace{-3mm}[R(x), Q(y)]+T([x, Q(y)])\nonumber\\
 \hspace{-3mm}&=&\hspace{-3mm}[S(x), Q(y)]+Q([x, Q(y)]),\label{eq:emm1}\\
 T([S(x), y])\hspace{-3mm}&=&\hspace{-3mm}[R(x), T(y)]+T([x, T(y)])\nonumber\\
 \hspace{-3mm}&=&\hspace{-3mm}[S(x), T(y)]+Q([x, T(y)]),\label{eq:emm2}\\
 Q(R(x)_{[1]})\otimes R(x)_{[2]}\hspace{-3mm}&=&\hspace{-3mm}R(x)_{[1]}\otimes R(R(x)_{[2]})+T(x_{[1]})\otimes R(x_{[2]})\nonumber\\
 \hspace{-3mm}&=&\hspace{-3mm}S(x)_{[1]}\otimes R(S(x)_{[2]})+ Q(x_{[1]})\otimes R(x_{[2]}),\label{eq:emm3}\\
 T(S(x)_{[1]})\otimes S(x)_{[2]}\hspace{-3mm}&=&\hspace{-3mm}R(x)_{[1]}\otimes S(R(x)_{[2]})+T(x_{[1]})\otimes S(x_{[2]})\nonumber\\
 \hspace{-3mm}&=&\hspace{-3mm}S(x)_{[1]}\otimes S(S(x)_{[2]})+ Q(x_{[1]})\otimes S(x_{[2]}),\label{eq:emm4}
 \end{eqnarray}
 for all $x, y \in g$.
 \end{defi}

 \begin{rmk}\label{rmk:de:emmm} $(g^{*}, Ad_{g}^{*}, Q^{*}, T^{*})$ is a representation of $((g, [,]), R, S)$ if and only if Eqs.(\ref{eq:emm1}) and (\ref{eq:emm2}) hold; $(g, Ad_{g^{*}}^{*}, R, S)$ is a representation of $((g^{*}, \delta^{*}), Q^{*}, T^{*})$ if and only if Eqs.(\ref{eq:emm3}) and (\ref{eq:emm4}) hold.
 \end{rmk}

 Next we give the equivalent characterization of a Rota-Baxter Lie bisystem via a class of matched pairs of Rota-Baxter Lie systems.

 \begin{defi}\label{de:eii} Let $((g, [,]_{g}), R_{g}, S_{g})$ and $((h, [,]_{h}), R_{h}, S_{h})$ be Rota-Baxter Lie systems. A \textbf{matched pair of $((g, [,]_{g}), R_{g}, S_{g})$ and $((h,[,]_{h}), R_{h}, S_{h})$} is a quadruple $(((g, [,]_{g}), R_{g}, S_{g}),$ $ ((h, [,]_{h}),R_{h}, S_{h}), \rho_{g}, \rho_{h})$, where $(g, \rho_{h}, R_{g}, S_{g})$ is a representation of $((h, [,]_{h}), R_{h}, S_{h})$, $(h, \rho_{g},$ $R_{h}, S_{h})$ is a representation of $((g, [,]_{g}), R_{g}, S_{g})$ and $((g, [,]_{g}), (h, [,]_{h}), \rho_{g}, \rho_{h})$ is a matched pair of $(g, [,]_{g})$ and $(h, [,]_{h})$.
 \end{defi}

 \begin{thm}\label{thm:ep} Let $((g, [,]), R, S)$ and $((g^{*}, \delta^{*}), Q^{*}, T^{*})$ be Rota-Baxter Lie systems. Then $((g, [,],$ $\delta), R, S, Q, T)$ is a Rota-Baxter Lie bisystem if and only if $(((g, [,]), R, S)$, $((g^{*}, [,]_{g^{*}}),$ $ Q^{*}, T^{*}), Ad_{g}^{*},Ad_{g^{*}}^{*})$ is a matched pair of $((g, [,]), R, S)$ and $((g^{*}, \delta^{*}),$ $Q^{*}, T^{*})$.
 \end{thm}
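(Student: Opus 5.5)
We follow the same scheme as the proof of Theorem \ref{thm:cz}, replacing the associative ingredients by their Lie analogues. The one external input is the classical fact (Drinfeld; cf.\ \cite{CP}) that, for a Lie algebra $(g,[,])$ and a linear map $\delta: g\rightarrow g\otimes g$ with $(g^{*},\delta^{*})$ a Lie algebra, $(g,[,],\delta)$ is a Lie bialgebra if and only if $((g,[,]),(g^{*},[,]_{g^{*}}),Ad_{g}^{*},Ad_{g^{*}}^{*})$ is a matched pair of Lie algebras. This plays the role that Lemma \ref{lem:cy} played in the associative case.

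For ($\Rightarrow$), assume $((g,[,],\delta),R,S,Q,T)$ is a Rota-Baxter Lie bisystem. Since $(g,[,],\delta)$ is a Lie bialgebra, the classical fact gives the matched pair of Lie algebras. Eqs.\eqref{eq:emm1}--\eqref{eq:emm2} hold by definition, so Remark \ref{rmk:de:emmm} shows that $(g^{*},Ad_{g}^{*},Q^{*},T^{*})$ is a representation of $((g,[,]),R,S)$. Eqs.\eqref{eq:emm3}--\eqref{eq:emm4} also hold, so the same remark shows that $(g,Ad_{g^{*}}^{*},R,S)$ is a representation of $((g^{*},\delta^{*}),Q^{*},T^{*})$. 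These three facts are exactly the three conditions of Definition \ref{de:eii}.

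For ($\Leftarrow$), we read the same implications backwards. The matched pair of Lie algebras gives the Lie bialgebra. The two representation conditions give Eqs.\eqref{eq:emm1}--\eqref{eq:emm4} via Remark \ref{rmk:de:emmm}. The hypotheses that $((g,[,]),R,S)$ is a Rota-Baxter Lie system and $((g^{*},\delta^{*}),Q^{*},T^{*})$ is a Rota-Baxter Lie system supply the remaining two ingredients of Definition \ref{de:emmm}. For the second of these, we dualize: with $g$ finite dimensional, $((g^{*},\delta^{*}),Q^{*},T^{*})$ being a Rota-Baxter Lie system is equivalent to $((g,\delta),Q,T)$ being a Rota-Baxter Lie cosystem. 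This follows by pairing Eqs.\eqref{eq:gh0}--\eqref{eq:gh1} for $Q^{*},T^{*}$ with elements of $g$, in the same way as the corresponding remark for symmetric Rota-Baxter cosystems.

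The main obstacle is Remark \ref{rmk:de:emmm}, which we must actually verify. We pair $\rho(R(x))\alpha(m)-\alpha(\rho(R(x))m+\rho(x)\beta(m))$, with $\rho=Ad_{g}^{*}$, $\alpha=Q^{*}$ and $\beta=T^{*}$, against $y\in g$, and do the same for the second expression. This should recover Eq.\eqref{eq:emm1}, and the $S$-equations should give Eq.\eqref{eq:emm2}, as in Lemma \ref{lem:cj}. One subtlety is that Definition \ref{de:eo} demands the two-sided equalities, while \eqref{eq:emm1} as stated has them in a particular order. We handle this by checking the pairing of each equality separately, watching the sign coming from $Ad^{*}(x)=-ad(x)^{*}$. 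Once this is settled, the analogous pairing with $\delta$ dualized to $[,]_{g^{*}}$ yields \eqref{eq:emm3}--\eqref{eq:emm4}, and the theorem follows.
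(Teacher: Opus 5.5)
Your proposal is correct and follows the paper's proof: in both directions you combine the classical equivalence between Lie bialgebras and matched pairs $((g,[,]),(g^{*},[,]_{g^{*}}),Ad_{g}^{*},Ad_{g^{*}}^{*})$ from \cite{CP} with Remark \ref{rmk:de:emmm}, which turns Eqs.~\eqref{eq:emm1}--\eqref{eq:emm4} into the two representation conditions of Definition \ref{de:eii}. You are slightly more careful than the paper on two points it leaves implicit. First, you verify Remark \ref{rmk:de:emmm} by pairing; the sign convention for $Ad^{*}$ is harmless, since every term involves exactly one coadjoint action. Second, in the ($\Leftarrow$) direction you obtain the cosystem condition on $((g,\delta),Q,T)$ by dualizing the hypothesis on $((g^{*},\delta^{*}),Q^{*},T^{*})$.
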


 \begin{proof} ($\Rightarrow$) If $((g, [,], \delta), R, S, Q, T)$ is a Rota-Baxter Lie bisystem, then by Definition $\ref{de:emmm}$ and Remark \ref{rmk:de:emmm}, we know that $(g^{*}, Ad_{g}^{*}, Q^{*}, T^{*})$ is a representation of $((g, [,]), R, S)$ and $(g, Ad_{g^{*}}^{*}, R, S)$ is a representation of $((g^{*}, \delta^{*}), Q^{*}, T^{*})$. By \cite{CP}, $((g, [,])$, $(g^{*}, \delta^{*}), Ad_{g}^{*}, Ad_{g^{*}}^{*})$ is a matched pair of $(g, [,])$ and $(g^{*}, \delta^{*})$ since $(g, [,], \delta)$ is a Lie bialgebra. Therefore, $(((g, [,]), R, S)$, $((g^{*}, \delta^{*}), Q^{*}, T^{*}), Ad_{g}^{*},  Ad_{g^{*}}^{*})$ is a matched pair of $((g, [,]), R, S)$ and $((g^{*}, \delta^{*}), Q^{*}, T^{*})$.
	
 ($\Leftarrow$) If $(((g, [,]), R, S)$, $((g^{*}, \delta^{*}), Q^{*}, T^{*}), Ad_{g}^{*},  Ad_{g^{*}}^{*})$ is a matched pair of $((g, [,]), R, S)$ and $((g^{*},$ $\delta^{*}), Q^{*}, T^{*})$, then by Definition $\ref{de:eii}$, $((g, [,]), (g^{*}, \delta^{*}), Ad_{g}^{*}, Ad_{g^{*}}^{*})$ is a matched pair of $(g, [,])$ and $(g^{*}, \d^{*})$. By \cite{CP}, $(g, [,], \delta)$ is a Lie bialgebra. By Definition $\ref{de:eii}$, $(g^{*}, Ad_{g}^{*}, Q^{*}, T^{*})$ is a representation of $((g, [,]), R, S)$ and $(g, Ad_{g^{*}}^{*}, R, S)$ is a representation of $((g^{*}, \delta^{*}), Q^{*}, T^{*})$. Then by Remark \ref{rmk:de:emmm}, $((g, [,], \delta), R, S, Q, T)$ is a Rota-Baxter Lie bisystem.
 \end{proof}

 \begin{pro}\label{pro:en} Let $((A, \cdot, \D), R, S, Q, T)$ be a symmetric Rota-Baxter ASI bisystem. Define $[,]$ and $\delta$ by Eq.\eqref{eq:gj1} and Eq.\eqref{eq:em1}, respectively. Then $((A, [,], \delta), R, S, Q, T)$ is a Rota-Baxter Lie bisystem.
 \end{pro}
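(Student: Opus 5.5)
We need to show that $((A,[,],\delta),R,S,Q,T)$ satisfies Definition \ref{de:emmm}, checking its four ingredients in turn.

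First, $(A,[,],\delta)$ is a Lie bialgebra. This is the classical fact (Aguiar/Bai, \cite{Bai1}) that the commutator and cocommutator of an ASI bialgebra form a Lie bialgebra. The cocycle condition $\delta([x,y])=(\mathrm{ad}_x\otimes\id+\id\otimes\mathrm{ad}_x)\delta(y)-(\mathrm{ad}_y\otimes\id+\id\otimes\mathrm{ad}_y)\delta(x)$ follows by expanding both sides and using the two compatibility identities of Definition \ref{de:cv}.

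Second, $((A,[,]),R,S)$ is a Rota-Baxter Lie system by Proposition \ref{pro:gj}. Third, $((A,\delta),Q,T)$ is a Rota-Baxter Lie cosystem by Proposition \ref{pro:em}. In both cases we use that $(A,R,S)$ is a symmetric Rota-Baxter system and $(A,Q,T)$ is a symmetric Rota-Baxter cosystem.

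It remains to verify Eqs.\eqref{eq:emm1}--\eqref{eq:emm4}.

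For \eqref{eq:emm1}, expand $Q([R(x),y])=Q(R(x)y)-Q(yR(x))$.
\begin{itemize}
\item Apply the first equality of \eqref{eq:ck} to $Q(R(x)y)$, giving $Q(xQ(y))+S(x)Q(y)$.
\item Apply the second equality of \eqref{eq:ck1}, with $a=y$, $b=x$, to $Q(yR(x))$, giving $Q(y)R(x)+T(Q(y)x)$.
\end{itemize}
The resulting difference does not immediately match. The trick is to pair the equalities correctly: write $Q(R(x)y)=R(x)Q(y)+T(xQ(y))$ (second part of \eqref{eq:ck}) and $Q(yR(x))=Q(y)R(x)+T(Q(y)x)$ (second part of \eqref{eq:ck1}). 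Subtracting gives $[R(x),Q(y)]+T([x,Q(y)])$.

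Similarly, use the first parts of \eqref{eq:ck} and \eqref{eq:ck1}: $Q(R(x)y)=Q(xQ(y))+S(x)Q(y)$ and $Q(yR(x))=Q(Q(y)x)+Q(y)S(x)$. Subtracting gives $[S(x),Q(y)]+Q([x,Q(y)])$.

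Eq.\eqref{eq:emm2} follows the same way from \eqref{eq:ck2} and \eqref{eq:ck3}, again pairing equalities whose operators match.

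For \eqref{eq:emm3} and \eqref{eq:emm4}, substitute $x_{[1]}\otimes x_{[2]}=x_{(1)}\otimes x_{(2)}-x_{(2)}\otimes x_{(1)}$. Each term then splits into a non-flipped part and a flipped part.
\begin{itemize}
\item The non-flipped part is handled by the first chain in \eqref{eq:ck5}.
\item The flipped part $Q(R(x)_{(2)})\otimes R(x)_{(1)}$ and its companions are the flip $\sigma$ applied to the identities in \eqref{eq:ck6}.
\end{itemize}
Concretely, $\sigma$ applied to \eqref{eq:ck6} gives
$$Q(R(x)_{(2)})\otimes R(x)_{(1)}=S(x)_{(2)}\otimes R(S(x)_{(1)})+Q(x_{(2)})\otimes R(x_{(1)})=R(x)_{(2)}\otimes R(R(x)_{(1)})+T(x_{(2)})\otimes R(x_{(1)}).$$
Subtracting this from the corresponding chain in \eqref{eq:ck5} should yield \eqref{eq:emm3}, matching terms with the same operator pattern. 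Eq.\eqref{eq:emm4} comes likewise from \eqref{eq:ck7} together with the flip of \eqref{eq:ck8}.

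\textbf{Main obstacle.} The delicate step is the bookkeeping in each case: choosing which of the two equalities in each chain to use so that the $R/S$ and $Q/T$ labels line up. In the coalgebraic case the two chains in \eqref{eq:ck5} and the flipped \eqref{eq:ck6} are listed in different orders, so the correct ordering must be matched by hand. I would first write out all candidate pairings explicitly and then select the consistent one. This works because the symmetric system supplies both forms of each identity.
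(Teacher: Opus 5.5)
Your proposal is correct and follows essentially the same route as the paper: cite the known fact that the commutator and cocommutator of an ASI bialgebra form a Lie bialgebra, then verify \eqref{eq:emm1}--\eqref{eq:emm4} by expanding brackets and cobrackets. Your pairings are the right ones: the like halves of \eqref{eq:ck}/\eqref{eq:ck1} and of \eqref{eq:ck2}/\eqref{eq:ck3}, and \eqref{eq:ck5}, \eqref{eq:ck7} against the flips of \eqref{eq:ck6}, \eqref{eq:ck8} with the two halves cross-matched, which is exactly the computation the paper sketches.
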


 \begin{proof} By \cite[Theorem 5.7]{Ma}, we know that $(A, [,], \delta)$ is a Lie bialgebra. For all $a, b \in A$, we have
 \begin{eqnarray*}
 &&\hspace{-16mm}Q([R(a), b])-[R(a), Q(b)]-T([a, Q(b)])\\
 &=&Q(R(a)b)-R(a)Q(b)-T(aQ(b))-Q(bR(a))+Q(b)R(a)+T(Q(b)a)=0,\\
 &&\hspace{-16mm}Q(R(a)_{[1]})\otimes R(a)_{[2]}-R(a)_{[1]}\otimes R(R(a)_{[2]})-T(a_{[1]})\otimes R(a_{[2]})\\
 &=&Q(R(a)_{(1)})\otimes R(a)_{(2)}-Q(R(a)_{(2)})\otimes R(a)_{(1)}-R(a)_{(1)}\otimes R(R(a)_{(2)})\\
 &&+R(a)_{(2)}\otimes R(R(a)_{(1)})-T(a_{(1)})\otimes R(a_{(2)})+T(a_{(2)})\otimes R(a_{(1)})=0.
 \end{eqnarray*}
 The rest of Eqs.\eqref{eq:emm1}-\eqref{eq:emm4} can be checked similarly. Hence, $((A, [,], \delta), R, S, Q, T)$ is a Rota-Baxter Lie bisystem.
 \end{proof}

 \begin{ex}\label{ex:enn} Let $((A,[,]), R, S)$ be a Rota-Baxter Lie system defined by Exmaple \ref{ex:ejj}, $((A, \delta), -S, -R)$ be a Rota-Baxter Lie cosystem defined by Exmaple \ref{ex:emm}, then by Proposition \ref{pro:en}, $((A, [,], \delta), R, S, -S, -R)$ is a Rota-Baxter Lie bisystem.
 \end{ex}

\subsection{To Rota-Baxter ASI bialgebra of weight $\lambda$} Let us recall from \cite{BGM} the notion of a Rota-Baxter ASI bialgebra of weight $\lambda$.

 \begin{defi}\label{de:er} A \textbf{Rota-Baxter ASI bialgebra of weight $\lambda$} is a triple $((A, \cdot, \Delta), R, Q)$ consisting of a vector space $A$ and linear maps
 \begin{equation*}
 \cdot: A \otimes A \rightarrow A, \quad \Delta: A \rightarrow A \otimes A, \quad R, Q: A \rightarrow A
 \end{equation*}
 such that
 \begin{enumerate}[(1)]
 \item \label{it:de:er1}$(A, \cdot, \Delta)$ is an ASI bialgebra,
 \item \label{it:de:er2}$((A, \cdot), R)$ is a Rota-Baxter algebra,
 \item \label{it:de:er3}$((A, \Delta), Q)$ is a Rota-Baxter coalgebra, and
 \item \label{it:de:er4} the following compatibility conditions hold. For all $a, b \in A$,
 \begin{eqnarray}
 &Q(aR(b)) = Q(a)R(b) + Q(Q(a)b) + \lambda Q(a)b,&\label{eq:er1}\\
 &Q(R(a)b) = R(a)Q(b) + Q(aQ(b)) + \lambda aQ(b),&\label{eq:er2}\\
 &(\mathrm{id} \otimes Q)\Delta R = (R \otimes Q)\Delta + (R \otimes \mathrm{id})\Delta R + \lambda (R \otimes \mathrm{id})\Delta,&\label{eq:er3}\\
 &(Q \otimes \mathrm{id})\Delta R = (Q \otimes R)\Delta + (\mathrm{id} \otimes R)\Delta R + \lambda (\mathrm{id} \otimes R)\Delta.&\label{eq:er4}
 \end{eqnarray}
 \end{enumerate}
 \end{defi}

 \begin{thm}\label{thm:es} $(A, R, R+\lambda \id_{A}, Q, Q+\lambda \id_{A})$ is a symmetric Rota-Baxter ASI bisystem if and only if $(A, R, Q)$ is a Rota-Baxter ASI bialgebra of weight $\lambda$.
 \end{thm}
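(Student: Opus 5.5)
We compare the defining conditions of a symmetric Rota-Baxter ASI bisystem (Definition \ref{de:cx}) with those of a Rota-Baxter ASI bialgebra of weight $\lambda$ (Definition \ref{de:er}), one item at a time, with $S=R+\lambda\id_A$ and $T=Q+\lambda\id_A$. The ASI bialgebra condition on $(A,\cdot,\Delta)$ is the same in both definitions. By Lemma \ref{lem:ec}, $(A,R,R+\lambda\id_A)$ is a symmetric Rota-Baxter system if and only if $R$ is a Rota-Baxter operator of weight $\lambda$. By Remark \ref{rmk:gb}(2), $(A,Q,Q+\lambda\id_A)$ is a symmetric Rota-Baxter cosystem if and only if $(A,Q)$ is a Rota-Baxter coalgebra of weight $\lambda$. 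It then remains to show that the compatibility conditions Eqs.\eqref{eq:ck}--\eqref{eq:ck3} and \eqref{eq:ck5}--\eqref{eq:ck8} reduce exactly to Eqs.\eqref{eq:er1}--\eqref{eq:er4}.

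For the algebra-side conditions, substitute $S=R+\lambda\id$ and $T=Q+\lambda\id$ into Eq.\eqref{eq:ck}. The first equality becomes $Q(R(a)b)=Q(aQ(b))+R(a)Q(b)+\lambda aQ(b)$, which is Eq.\eqref{eq:er2}. The second equality becomes $R(a)Q(b)+Q(aQ(b))+\lambda aQ(b)$, so it is the same condition. In the same way, Eq.\eqref{eq:ck1} collapses to Eq.\eqref{eq:er1}. For Eq.\eqref{eq:ck2}, note that $T(S(a)b)=Q(R(a)b)+\lambda Q(ab)+\lambda R(a)b+\lambda^2ab$. Expanding all three expressions and cancelling the terms that appear on every side should leave Eq.\eqref{eq:er2} again. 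Similarly, Eq.\eqref{eq:ck3} should reduce to Eq.\eqref{eq:er1}. Alternatively, and more cleanly, Eqs.\eqref{eq:ck}--\eqref{eq:ck3} say that $(A^*,\mathcal R^*,\mathcal L^*,Q^*,Q^*+\lambda\id)$ is a representation of $(A,R,R+\lambda\id_A)$. By Proposition \ref{pro:cg}(1), this holds if and only if $(A^*,\mathcal R^*,\mathcal L^*,Q^*)$ is a representation of the Rota-Baxter algebra $(A,R)$, and by \cite{BGM} that is exactly Eqs.\eqref{eq:er1}--\eqref{eq:er2}. I would use this representation argument to avoid the expansion.

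The coalgebra-side conditions Eqs.\eqref{eq:ck5}--\eqref{eq:ck8} are handled in the dual way. They express that $(R^*,S^*)$ is adjoint admissible to $(A^*,Q^*,Q^*+\lambda\id)$, that is, that $(A,\mathcal R^*_{A^*},\mathcal L^*_{A^*},R,R+\lambda\id)$ is a representation of the symmetric Rota-Baxter system $(A^*,Q^*,Q^*+\lambda\id)$. By Proposition \ref{pro:cg}(1) applied to the Rota-Baxter algebra $(A^*,Q^*)$ of weight $\lambda$, this is equivalent to $(A,\mathcal R^*_{A^*},\mathcal L^*_{A^*},R)$ being a representation of $(A^*,Q^*)$. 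Rewritten in terms of $\Delta$, this is Eqs.\eqref{eq:er3}--\eqref{eq:er4}, as in \cite{BGM}. As a direct check, substituting into Eq.\eqref{eq:ck6} gives $R(x)_{(1)}\otimes Q(R(x)_{(2)})=R(R(x)_{(1)})\otimes R(x)_{(2)}+R(x_{(1)})\otimes Q(x_{(2)})+\lambda R(x_{(1)})\otimes x_{(2)}$, which is Eq.\eqref{eq:er3}.

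The main obstacle is the bookkeeping that justifies applying Proposition \ref{pro:cg}, which requires $(A^*,Q^*)$ to be a Rota-Baxter algebra. This holds because $(A,Q)$ is a Rota-Baxter coalgebra, which is available in both directions of the equivalence by the cosystem step above. I also need to confirm that the conventions match: the quintuple in \cite{BGM} versus $(A,\mathcal R^*,\mathcal L^*,Q^*,\cdot)$ here, and the roles of $R$ and $Q$ in Eqs.\eqref{eq:er3}--\eqref{eq:er4} versus Eq.\eqref{eq:ck6}. If Proposition \ref{pro:cg} does not apply cleanly on the coalgebra side, I would fall back on the direct substitution of $S=R+\lambda\id$ into Eqs.\eqref{eq:ck5}--\eqref{eq:ck8}. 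In that computation the redundant equalities collapse after cancelling the $\lambda$-terms, using coassociativity and the Rota-Baxter coalgebra identity.
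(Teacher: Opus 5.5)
Your proposal is correct, and its skeleton is the paper's: Lemma~\ref{lem:ec} for the system, Remark~\ref{rmk:gb}(2) for the cosystem, then a check that Eqs.~\eqref{eq:ck}--\eqref{eq:ck3} and \eqref{eq:ck5}--\eqref{eq:ck8} collapse to Eqs.~\eqref{eq:er1}--\eqref{eq:er4} when $S=R+\lambda\id_A$ and $T=Q+\lambda\id_A$. The only difference is in that last step.

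\textbf{The paper's route.} It uses plain substitution: it writes out \eqref{eq:ck} and \eqref{eq:ck1} and calls the rest similar. This is your fallback. Your substitutions for \eqref{eq:ck}, \eqref{eq:ck1}, \eqref{eq:ck2} and \eqref{eq:ck6} are right. The remaining ones also work: \eqref{eq:ck7} gives \eqref{eq:er4} and \eqref{eq:ck8} gives \eqref{eq:er3}. In every case the $\lambda$- and $\lambda^2$-terms cancel by linearity alone, so you do not need coassociativity or the Rota-Baxter coalgebra identity there.

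\textbf{Your preferred route.} You read both blocks as dual-representation conditions. You apply Proposition~\ref{pro:cg}(1) twice: once to $(A,R)$ with $\alpha=Q^{*}$, and once to $(A^{*},Q^{*})$ with $\alpha=R$. You then use the \cite{BGM} dictionary between such representations and Eqs.~\eqref{eq:er1}--\eqref{eq:er4}. This is valid for two reasons.
\begin{itemize}
\item The slots match. In $(A,\mathcal{R}^{*}_{A^{*}},\mathcal{L}^{*}_{A^{*}},R,S)$, the map $R$ pairs with $Q^{*}$, and $S=R+\lambda\id_A$ pairs with $T^{*}=Q^{*}+\lambda\id_{A^{*}}$.
\item The hypotheses hold in both directions of the equivalence, as you note. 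Both $(A,R)$ and $(A^{*},Q^{*})$ are Rota-Baxter algebras of weight $\lambda$.
\end{itemize}

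\textbf{Comparison.} Your route explains why the eight two-sided equations reduce to four, and it handles all of them at once. The paper's direct computation is self-contained. It does not depend on Proposition~\ref{pro:cg}, which is itself proved via semi-direct products and \cite[Prop.~2.10]{BGM}, and it does not need to pass to $A^{*}$ on the coalgebra side.
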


 \begin{proof} By Lemma \ref{lem:ec}, $(A, R, R+\lambda \id_{A})$ is a symmetric Rota-Baxter system if and only if $(A, R)$ is a Rota-Baxter algebra of weight $\lambda$. By Remark \ref{rmk:gb}, $(A, Q, Q+\lambda \id_{A})$ is a symmetric Rota-Baxter cosystem if and only if $(A, Q)$ is a Rota-Baxter coalgebra of weight $\lambda$. Hence the rest we only need to verify that Eqs.\eqref{eq:ck}-\eqref{eq:ck3} and Eqs.\eqref{eq:ck5}-\eqref{eq:ck8} hold for $R, R+\lambda \id_{A}, Q, Q+\lambda \id_{A}$ if and only if Eqs.\eqref{eq:er1}-\eqref{eq:er4} hold. In fact, for all $a, b\in A$, we calculate
 {\small\begin{eqnarray*}
 &&\hspace{-10mm}Q(R(a)b)-Q(a Q(b))-(R+\lambda \id_{A})(a)Q(b)=Q(R(a)b)-Q(a Q(b))-R(a)Q(b)-\lambda a Q(b),\\
 &&\hspace{-10mm}Q(R(a)b)-R(a)Q(b)-(Q+\lambda \id_{A})(a Q(b))=Q(R(a)b)-R(a)Q(b)-Q(a Q(b))-\lambda a Q(b),\\
 &&\hspace{-10mm}Q(a R(b))-Q(Q(a)b)-Q(a)(R+\lambda \id_{A})(b)=Q(a R(b))-Q(Q(a)b)-Q(a)R(b) -\lambda Q(a)b,\\
 &&\hspace{-10mm}Q(a R(b))-Q(a)R(b)-(Q+\lambda \id_{A})(Q(a)b)=Q(a R(b))-Q(a)R(b)-Q(Q(a)b)-\lambda Q(a)b.
 \end{eqnarray*}
 }
 We can draw the conclusion through similar calculations.
 \end{proof}

 \subsection{To averaging ASI bialgebras} Let us recall from \cite{HC} the notion of an averaging ASI bialgebra.

 \begin{defi}\label{de:et} An \textbf{averaging ASI bialgebra} is a vector space $A$ together with linear maps
 \begin{equation*}
 \cdot: A \otimes A \rightarrow A, \quad \Delta: A \rightarrow A \otimes A, \quad R, Q: A \rightarrow A
 \end{equation*}
 such that the following conditions are satisfied.
 \begin{enumerate}[(1)]
 \item \label{it:de:et1}The triple $(A, \cdot, \Delta)$ is an ASI bialgebra.
 \item \label{it:de:et2}The triple $((A, \cdot), R)$ is an averaging algebra. i.e.,
 \begin{equation}
 R(a)R(b) = R(R(a)b)= R(a R(b)),~\forall a, b\in A.\label{eq:et1}
 \end{equation}
 \item \label{it:de:et3} The triple $((A, \Delta), Q)$ is an averaging coalgebra. i.e.,
 \begin{equation}
 (Q \otimes Q)\Delta = (Q \otimes \mathrm{id})\Delta Q= ( \mathrm{id} \otimes Q )\Delta Q.\label{eq:et2}
 \end{equation}
 \item \label{it:de:et4} The following equalities hold. For all $a, b \in A.$
 \begin{eqnarray}
 &R(a)Q(b)=Q(R(a)b)=Q(a Q(b)), &\label{eq:et3}\\
 &Q(a)R(b)=Q(a R(b))=Q(Q(a)b),& \label{eq:et4}\\
 &(Q \otimes R)\Delta = (Q \otimes \mathrm{id})\Delta R = (\mathrm{id} \otimes R)\Delta R,&   \label{eq:et5}\\
 &(R \otimes Q)\Delta = (R \otimes \mathrm{id})\Delta R = (\mathrm{id} \otimes Q)\Delta R.&  \label{eq:et6}
 \end{eqnarray}
 \end{enumerate}
 We denote this bialgebra by $((A, \cdot, \Delta), R, Q)$.
 \end{defi}

 \begin{rmk}\label{rmk:de:et} We note here that when $R=Q$, Eqs.\eqref{eq:et3}-\eqref{eq:et4} are exactly Eqs.\eqref{eq:et1} and Eqs.\eqref{eq:et5}-\eqref{eq:et6} are exactly Eqs.\eqref{eq:et2}.
 That is to say, $((A, \cdot, \Delta), R, R)$ is an averaging ASI bialgebra if and only if Items \ref{it:de:et1}-\ref{it:de:et3} hold.
 \end{rmk}

 \begin{thm}\label{thm:eu} $(A, R, 0, Q, 0)$ is a symmetric Rota-Baxter ASI bisystem if and only if $(A, R, Q)$ is an averaging ASI bialgebra.
 \end{thm}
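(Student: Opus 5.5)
The plan is to argue exactly as in the proof of Theorem \ref{thm:es}: unpack Definition \ref{de:cx} with $S=0$, $T=0$ and match each resulting condition, item by item, with the conditions of Definition \ref{de:et}.

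\emph{Algebra and coalgebra parts.} Setting $S=0$ in Eqs.\eqref{eq:ea0}--\eqref{eq:ea1}, Eq.\eqref{eq:ea1} becomes $0=0$. Eq.\eqref{eq:ea0} becomes $R(a)R(b)=R(R(a)b)=R(aR(b))$, which is exactly Eq.\eqref{eq:et1}. Hence $(A,R,0)$ is a symmetric Rota-Baxter system if and only if $(A,R)$ is an averaging algebra. This is consistent with Remark \ref{rmk:eb}(4).

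Dually, setting $T=0$ in Eqs.\eqref{eq:cu}--\eqref{eq:cu1}, Eq.\eqref{eq:cu1} is trivial. Eq.\eqref{eq:cu} reads
\[
Q(c_{(1)})\otimes Q(c_{(2)})=Q(Q(c)_{(1)})\otimes Q(c)_{(2)}=Q(c)_{(1)}\otimes Q(Q(c)_{(2)}),
\]
which is Eq.\eqref{eq:et2}. The ASI bialgebra condition is common to both definitions.

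\emph{Module-type compatibilities.} Substituting $S=T=0$ into Eqs.\eqref{eq:ck}--\eqref{eq:ck3}, Eqs.\eqref{eq:ck2} and \eqref{eq:ck3} collapse to $0=0$. Eq.\eqref{eq:ck} becomes $Q(R(a)b)=Q(aQ(b))=R(a)Q(b)$, which is Eq.\eqref{eq:et3}. Eq.\eqref{eq:ck1} becomes $Q(aR(b))=Q(Q(a)b)=Q(a)R(b)$, which is Eq.\eqref{eq:et4}.

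\emph{Comodule-type compatibilities.} Substituting $S=T=0$ into Eqs.\eqref{eq:ck5}--\eqref{eq:ck8}, Eqs.\eqref{eq:ck7} and \eqref{eq:ck8} vanish identically. Eq.\eqref{eq:ck5} becomes
\[
Q(R(x)_{(1)})\otimes R(x)_{(2)}=R(x)_{(1)}\otimes R(R(x)_{(2)})=Q(x_{(1)})\otimes R(x_{(2)}),
\]
that is, $(Q\otimes\id)\Delta R=(\id\otimes R)\Delta R=(Q\otimes R)\Delta$, which is Eq.\eqref{eq:et5}. Similarly, Eq.\eqref{eq:ck6} becomes $(\id\otimes Q)\Delta R=(R\otimes\id)\Delta R=(R\otimes Q)\Delta$, which is Eq.\eqref{eq:et6}.

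Collecting these equivalences proves both directions at once.

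\emph{Main obstacle.} The only delicate point is careful bookkeeping: one must check that each of the two-sided chains, after the substitution, yields precisely the two equalities in the corresponding averaging condition, with no extra constraint and none missing. The first equality of Eq.\eqref{eq:ck5} loses its $T$-term and the second loses its $S$-term, so the three surviving expressions are exactly the three in Eq.\eqref{eq:et5}. I would verify this term by term for each of the eight chains.
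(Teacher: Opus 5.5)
Your proposal is correct and follows the paper's proof. The paper likewise uses the facts that $(A,R,0)$ and $(A,Q,0)$ are a symmetric Rota-Baxter system and cosystem exactly when $R$ and $Q$ are averaging, and that Eqs.~\eqref{eq:ck}--\eqref{eq:ck3} and \eqref{eq:ck5}--\eqref{eq:ck8} with $S=T=0$ reduce precisely to Eqs.~\eqref{eq:et3}--\eqref{eq:et6}; your term-by-term substitution, with \eqref{eq:ck2}, \eqref{eq:ck3}, \eqref{eq:ck7}, \eqref{eq:ck8} collapsing to $0=0$, just makes that bookkeeping explicit, and it checks out.
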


 \begin{proof} By Remark \ref{rmk:eb}, $(A, R, 0)$ is a symmetric Rota-Baxter system if and only if $(A, R)$ is an averaging algebra. Dually, $(A, Q, 0)$ is a symmetric Rota-Baxter cosystem if and only if $(A, Q)$ is an averaging coalgebra. And Eqs.\eqref{eq:ck}-\eqref{eq:ck3}, \eqref{eq:ck5}-\eqref{eq:ck8} for $(A, R, 0, Q, 0)$ are exactly Eqs.\eqref{eq:et3}-\eqref{eq:et6}. Thus we finish the proof.
 \end{proof}

 Let $Q=-S, T=-R$ in Definition \ref{de:cx}:

 \begin{cor}\label{cor:cxx} Let $(A, \cdot, \D)$ be an ASI bialgebra, $(A, R, S)$ be a symmetric Rota-Baxter system  and $(A, -S, -R)$ be a symmetric Rota-Baxter cosystem. Then $(A, R, S, -S, -R)$ is a symmetric Rota-Baxter ASI bisystem if and only if for all $a, b\in A$, the following equations hold:
 \begin{eqnarray}
 &R(a)S(b)=R(a S(b))+S(R(a)b),&\label{eq:cxx1}\\
 &S(a)R(b)=S(aR(b))+R(S(a)b),&\label{eq:cxx2}\\
 &R(a_{(1)})\otimes S(a_{(2)})= R(S(a)_{(1)})\otimes S(a)_{(2)}+R(a)_{(1)}\otimes S(R(a)_{(2)}),&\label{eq:cxx3}\\
 &S(a_{(1)})\otimes R(a_{(2)})= S(R(a)_{(1)})\otimes R(a)_{(2)}+S(a)_{(1)}\otimes R(S(a)_{(2)}).&\label{eq:cxx4}
 \end{eqnarray}
 Particularly, for a commutative and cocomutative ASI bialgebra $(A, \cdot, \D)$, then Eq.\eqref{eq:cxx1} is exactly Eq.\eqref{eq:cxx2}, and Eq.\eqref{eq:cxx3} is exactly Eq.\eqref{eq:cxx4}.
 \end{cor}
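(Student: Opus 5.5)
The plan is to specialize Definition \ref{de:cx} to $Q=-S$ and $T=-R$. The hypotheses already give that $(A,\cdot,\D)$ is an ASI bialgebra, that $(A,R,S)$ is a symmetric Rota-Baxter system, and that $(A,-S,-R)$ is a symmetric Rota-Baxter cosystem. So what remains is to show that Eqs.\eqref{eq:ck}--\eqref{eq:ck3} together with Eqs.\eqref{eq:ck5}--\eqref{eq:ck8}, with $Q=-S$ and $T=-R$ substituted, are equivalent to Eqs.\eqref{eq:cxx1}--\eqref{eq:cxx4}.

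\textbf{Algebraic conditions.} Substituting into Eq.\eqref{eq:ck} gives
\[
-S(R(a)b)=S(aS(b))-S(a)S(b)=-R(a)S(b)+R(aS(b)).
\]
The first equality is exactly the symmetric Rota-Baxter identity $S(a)S(b)=S(R(a)b+aS(b))$ from Eq.\eqref{eq:ea1}, so it holds automatically. The second equality is Eq.\eqref{eq:cxx1}. Treating Eq.\eqref{eq:ck1} the same way:
\begin{itemize}
\item one equality reduces to $S(a)S(b)=S(S(a)b+aR(b))$, which holds by Eq.\eqref{eq:ea1};
\item the other gives $S(a)R(b)=S(aR(b))+R(S(a)b)$, which is Eq.\eqref{eq:cxx2}.
\end{itemize}
For Eqs.\eqref{eq:ck2} and \eqref{eq:ck3}, I expect the roles of $R$ and $S$ to be swapped. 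One equality in each reduces to Eq.\eqref{eq:ea0}, and the other reproduces Eq.\eqref{eq:cxx1} or Eq.\eqref{eq:cxx2} again, possibly after renaming variables.

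\textbf{Coalgebraic conditions.} I will do the same with Eqs.\eqref{eq:ck5}--\eqref{eq:ck8}. In each chain, one equality should coincide, up to sign, with one of the cosystem identities \eqref{eq:cu}--\eqref{eq:cu1} for $(A,-S,-R)$, and so hold automatically. The other should give Eq.\eqref{eq:cxx3} or Eq.\eqref{eq:cxx4}. For example, Eq.\eqref{eq:ck6} becomes
\[
-R(a)_{(1)}\otimes S(R(a)_{(2)})=R(S(a)_{(1)})\otimes S(a)_{(2)}-R(a_{(1)})\otimes S(a_{(2)})=\cdots,
\]
and rearranging gives Eq.\eqref{eq:cxx3}. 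Alternatively, I can dualize: Eqs.\eqref{eq:ck5}--\eqref{eq:ck8} say that $(R^*,S^*)$ is adjoint admissible to $(A^*,-S^*,-R^*)$. The algebraic argument above, applied to the symmetric Rota-Baxter system on $A^*$, then yields the coalgebraic statements by transposition.

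\textbf{Main obstacle.} The main difficulty is bookkeeping. I must check carefully that each of the sixteen equalities is either one of the automatic structural identities or one of the four listed conditions, and that no independent extra condition appears. The coalgebraic side is especially delicate because of the Sweedler-notation signs.

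\textbf{Commutative and cocommutative case.} When $A$ is commutative, Eq.\eqref{eq:cxx2} with $a,b$ swapped reads $S(b)R(a)=S(bR(a))+R(S(b)a)$. By commutativity this is $R(a)S(b)=S(R(a)b)+R(aS(b))$, which is Eq.\eqref{eq:cxx1}. Dually, applying the flip $\sigma$ and cocommutativity turns Eq.\eqref{eq:cxx4} into Eq.\eqref{eq:cxx3}.
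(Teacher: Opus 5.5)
Your proposal is correct and matches the paper's argument, which simply specializes Definition \ref{de:cx} to $Q=-S$, $T=-R$. The chains you left as expectations also check out: \eqref{eq:ck2} yields \eqref{eq:cxx2}, \eqref{eq:ck3} yields \eqref{eq:cxx1}, \eqref{eq:ck5} and \eqref{eq:ck8} yield \eqref{eq:cxx4}, \eqref{eq:ck7} yields \eqref{eq:cxx3}, and the other link of each chain is an identity of $(A,R,S)$ or of the cosystem $(A,-S,-R)$; your commutative and cocommutative argument is also right.
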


 \begin{pro}\label{pro:fe} Let $(A, R, S, -S, -R)$ be a symmetric Rota-Baxter ASI bisystem. Then $(A, R-S, R-S)$ is an averaging ASI bialgebra.
 \end{pro}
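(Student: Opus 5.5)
Set $P:=R-S$. By Theorem \ref{thm:eu}, it suffices to show that $(A,P,0,P,0)$ is a symmetric Rota-Baxter ASI bisystem. By Remark \ref{rmk:de:et}, this reduces further to checking that $(A,\cdot,\D)$ is an ASI bialgebra, which is given, that $(A,P)$ is an averaging algebra, and that $(A,P)$ is an averaging coalgebra, i.e. Eqs.\eqref{eq:et1} and \eqref{eq:et2} for $P$. Indeed, when $R=Q=P$ the mixed compatibilities \eqref{eq:et3}--\eqref{eq:et6} coincide with \eqref{eq:et1}--\eqref{eq:et2}.

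For the algebra side, expand $P(a)P(b)=R(a)R(b)-R(a)S(b)-S(a)R(b)+S(a)S(b)$ and rewrite each term using the symmetric Rota-Baxter system identities \eqref{eq:ea0}--\eqref{eq:ea1} together with \eqref{eq:cxx1}--\eqref{eq:cxx2} from Corollary \ref{cor:cxx}, which hold because $(A,R,S,-S,-R)$ is a bisystem. Writing $R(a)R(b)=R(R(a)b)+R(aS(b))$, $S(a)S(b)=S(R(a)b)+S(aS(b))$, and $R(a)S(b)=R(aS(b))+S(R(a)b)$, $S(a)R(b)=S(aR(b))+R(S(a)b)$, the $R(aS(b))$ and $S(R(a)b)$ terms cancel. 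This leaves
$$P(a)P(b)=R(R(a)b)+S(aS(b))-S(aR(b))-R(S(a)b)=R(P(a)b)-S(aP(b)).$$
This is not yet of the form $P(P(a)b)$, so I use the other expansion of each Rota-Baxter identity, $R(a)R(b)=R(S(a)b+aR(b))$ and $S(a)S(b)=S(S(a)b+aR(b))$, and average or combine the two resulting expressions. The goal is to isolate $P(P(a)b)=R(R(a)b)-R(S(a)b)-S(R(a)b)+S(S(a)b)$, and similarly $P(aP(b))$. The cross terms $S(R(a)b)$ and $R(aS(b))$ are controlled by \eqref{eq:cxx1}, and the terms $R(S(a)b)$ and $S(aR(b))$ by \eqref{eq:cxx2}. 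This bookkeeping of which expansion to pair with which mixed identity is the main obstacle. My first attempt is to compute $P(a)P(b)-P(P(a)b)$ directly and show it vanishes via \eqref{eq:cxx1}--\eqref{eq:cxx2} and the symmetric identities. Then I repeat the computation for $P(a)P(b)-P(aP(b))$; by the $R\leftrightarrow S$ symmetry of the hypotheses (Remark \ref{rmk:fc}), this is the mirrored computation.

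The coalgebra side is formally dual. $(A,-S,-R)$ is a symmetric Rota-Baxter cosystem, so \eqref{eq:cu}--\eqref{eq:cu1} hold with $Q=-S$ and $T=-R$, and the compatibilities \eqref{eq:cxx3}--\eqref{eq:cxx4} play the role of \eqref{eq:cxx1}--\eqref{eq:cxx2}. The same cancellation pattern, written with Sweedler notation, gives $(P\otimes P)\D=(P\otimes\id)\D P=(\id\otimes P)\D P$. In the finite-dimensional case, one can instead dualize: pass to $(A^*,-S^*,-R^*)$ and apply the algebra argument.

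Combining the two sides with Theorem \ref{thm:eu} and Remark \ref{rmk:de:et} yields the claim.
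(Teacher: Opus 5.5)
Your reduction through Remark \ref{rmk:de:et} to Eqs.~\eqref{eq:et1}--\eqref{eq:et2} for $P=R-S$ is the paper's route, and so is your list of ingredients: \eqref{eq:ea0}, \eqref{eq:ea1}, \eqref{eq:cxx1}--\eqref{eq:cxx4}, \eqref{eq:cu}, \eqref{eq:cu1}. The detour through Theorem \ref{thm:eu} is harmless but unnecessary.

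\textbf{The missing step.} The proposal stops at the only step that needs an idea. You correctly derive $P(a)P(b)=R(P(a)b)-S(aP(b))$, then call the passage to $P(P(a)b)$ ``the main obstacle'' and leave it open. It closes in one line. Equating the two expressions in \eqref{eq:ea0} gives $R(R(a)b)+R(aS(b))=R(S(a)b)+R(aR(b))$, i.e.\ $R(P(a)b)=R(aP(b))$. Likewise \eqref{eq:ea1} gives $S(P(a)b)=S(aP(b))$. Substituting into your formula yields both averaging identities at once:
\[ P(a)P(b)=R(P(a)b)-S(P(a)b)=P(P(a)b),\qquad P(a)P(b)=R(aP(b))-S(aP(b))=P(aP(b)). \]
The paper does the equivalent bookkeeping by pairing $R(a)R(b)=R(R(a)b+aS(b))$ with $S(a)S(b)=S(S(a)b+aR(b))$. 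Then $P(a)P(b)-P(P(a)b)$ splits into four brackets that collapse to $R(aS(b))-R(aS(b))+S(aR(b))-S(aR(b))=0$.

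\textbf{The symmetry argument fails.} You justify $P(a)P(b)=P(aP(b))$ by the $R\leftrightarrow S$ symmetry of Remark \ref{rmk:fc}. That swap sends $P$ to $-P$, and $P(a)P(b)=P(P(a)b)$ is invariant under $P\mapsto -P$, so the swap never produces the other identity. The mirror you need is left--right: pass to the opposite algebra. There \eqref{eq:cxx1} and \eqref{eq:cxx2} are interchanged, and \eqref{eq:ea0}, \eqref{eq:ea1} are preserved precisely because the system is symmetric. Alternatively, use the one-line argument above, which gives both identities without any symmetry.

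\textbf{Coalgebra side.} The same fix applies. With $Q=-S$ and $T=-R$, equating the two expressions in \eqref{eq:cu1}, and likewise in \eqref{eq:cu}, gives
\[ (P\otimes\id)\Delta R=(\id\otimes P)\Delta R,\qquad (P\otimes\id)\Delta S=(\id\otimes P)\Delta S. \]
Expanding $(P\otimes P)\Delta$ with \eqref{eq:cu}, \eqref{eq:cu1}, \eqref{eq:cxx3}, \eqref{eq:cxx4} then yields \eqref{eq:et2} directly in Sweedler notation, as in the paper. No finite-dimensionality or dualization is needed.
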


 \begin{proof} For all $a, b\in A$, we have
 \begin{eqnarray*}
 &&\hspace{-26mm}(R-S)(a)(R-S)(b)-(R-S)((R-S)(a)b)\\
 &=&\hspace{-8mm}\underbrace{R(a)R(b)-R(R(a)b)}\underbrace{-R(a)S(b)+S(R(a)b)}
 \underbrace{+S(a)S(b)-S(S(a)b)}\\
 &&\hspace{-8mm}\underbrace{-S(a)R(b)+R(S(a)b)}\\
 &\stackrel{\eqref{eq:ea0}\eqref{eq:ea1}\eqref{eq:cxx1}\eqref{eq:cxx2}}{=}&\hspace{-3mm}R(a S(b))-R(a S(b))+S(aR(b))-S(aR(b))\\
 &=&\hspace{-8mm}0,
 \end{eqnarray*}
 This establishes the first equality in Eq.\eqref{eq:et1}, a similar argument applies to the second.

 On the other hand,
 \begin{eqnarray*}
 &&\hspace{-27mm}(R-S)(a_{(1)})\otimes (R-S)(a_{(2)})-(R-S)((R-S)(a)_{(1)})\otimes (R-S)(a)_{(2)}\\
 &=&\hspace{-8mm}\underbrace{R(a_{(1)})\otimes R(a_{(2)})-R(R(a_{(1)}))\otimes R(a)_{(2)}}\underbrace{-R(a_{(1)})\otimes S(a_{(2)})+R(S(a_{(1)}))\otimes S(a)_{(2)}}\\
 &&\hspace{-8mm}\underbrace{+S(a_{(1)})\otimes S(a_{(2)})-S(S(a_{(1)}))\otimes S(a)_{(2)}}\underbrace{-S(a_{(1)})\otimes R(a_{(2)})+S(R(a_{(1)}))\otimes R(a)_{(2)}}\\
 &\stackrel{\eqref{eq:cu}\eqref{eq:cu1}\eqref{eq:cxx3}\eqref{eq:cxx4}}{=}&\hspace{-3mm}R(a)_{(1)}\otimes S(R(a)_{(2)})-R(a)_{(1)}\otimes S(R(a)_{(2)})+S(a)_{(1)}\otimes R(S(a)_{(2)})\\
 &&\hspace{-8mm}-S(a)_{(1)}\otimes R(S(a)_{(2)})\\
 &=&\hspace{-8mm}0,
 \end{eqnarray*}
 Thus, the first equality in Eq.\eqref{eq:et2} holds, and analogously for the second equality. Hence, by Remark \ref{rmk:de:et}, $(A, R-S, R-S)$ is an averaging ASI bialgebra.
 \end{proof}

 Let us recall from \cite{CGM} some related definitions and results about Nijenhuis algebras.

 \begin{defi}\label{de:ew} A \textbf{Nijenhuis algebra} is a pair $((A, \cdot), N)$, where $(A, \cdot)$ is an algebra and $N: A\to A$ is a linear map such that for all $a, b\in A$, the condition below holds:
 \begin{eqnarray}
 N(a)\cdot N(b)+N^2(a\cdot b)=N(N(a)\cdot b+a\cdot N(b)).\label{eq:n}\label{eq:ew1}
 \end{eqnarray}
 \end{defi}

 \begin{rmk}\label{rmk:de:ew}
 Let $(A, \cdot)$ be an algebra. Then $((A, \cdot), \id_A)$ is a Nijenhuis algebra.
 \end{rmk}

 \begin{pro}\label{pro:fg} Let $((A, \cdot), R, S)$ be a symmetric Rota-Baxter system and Eqs.\eqref{eq:cxx1} and \eqref{eq:cxx2} hold, then $((A, \star), R-S)$ and $((A, \star'), R-S)$ are Nijenhuis algebras, where $a\star b= R(a)b+a S(b), a\star' b= S(a)b+a R(b)$.
 \end{pro}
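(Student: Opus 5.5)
Set $N=R-S$. The plan is to verify the Nijenhuis identity \eqref{eq:ew1} for $N$ on $(A,\star)$ by a direct computation that reduces everything to the defining identities \eqref{eq:ea0}, \eqref{eq:ea1} and the hypotheses \eqref{eq:cxx1}, \eqref{eq:cxx2}. By Remark \ref{rmk:ef}, $(A,\star)$ and $(A,\star')$ are associative algebras, so only \eqref{eq:ew1} remains to be checked. The first step is to show, exactly as in the first half of the proof of Proposition \ref{pro:fe} (which used only \eqref{eq:ea0}, \eqref{eq:ea1}, \eqref{eq:cxx1}, \eqref{eq:cxx2}), that
\[
N(a)N(b)=N(N(a)b)=N(aN(b)),\quad \forall\, a,b\in A,
\]
so that $(A,N)$ is an averaging algebra. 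I will also record the intermediate identities obtained there. From \eqref{eq:ea0} and \eqref{eq:cxx1}, \eqref{eq:cxx2} one gets expressions such as $R(a)R(b)=R(a\star b)$, $S(a)S(b)=S(a\star b)$, and hence
\[
N(a)N(b)=R(a)R(b)-R(a)S(b)-S(a)R(b)+S(a)S(b),
\]
with the mixed products given by \eqref{eq:cxx1}, \eqref{eq:cxx2}.

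The second step is to expand both sides of \eqref{eq:ew1} for $\star$:
\[
N(a)\star N(b)+N^2(a\star b)\quad\text{versus}\quad N(N(a)\star b+a\star N(b)).
\]
Write $a\star b=R(a)b+aS(b)$. Then $N(a)\star N(b)=R(N(a))N(b)+N(a)S(N(b))$, and each term of the form $X(u)Y(v)$ with $X,Y\in\{R,S\}$ is rewritten using \eqref{eq:ea0}, \eqref{eq:ea1}, \eqref{eq:cxx1}, \eqref{eq:cxx2} as $X(\cdots)$ or $Y(\cdots)$ of something. The cleanest route is probably to use the averaging identity from Step 1 with $N$ applied to products. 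One has $N(a\star b)=N(R(a)b)+N(aS(b))$, and I will try to show that $N(a\star b)$ and $N(a)\star b$, $a\star N(b)$ combine so that $N^2(a\star b)=N(N(a)\star b+a\star N(b))-N(a)\star N(b)$. For this I will use the averaging identity together with relations such as $N(R(a)b)=R(a)N(b)+\dots$ derived from the three-term identities \eqref{eq:ea0}, \eqref{eq:ea1} and \eqref{eq:cxx1}, \eqref{eq:cxx2}. As a sanity check, the case $S=R+\lambda\id$ should recover the known fact that a constant multiple of the identity is Nijenhuis.

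The case of $(A,\star')$ then follows by symmetry. Interchanging $R$ and $S$ sends $\star$ to $\star'$, swaps \eqref{eq:cxx1} and \eqref{eq:cxx2}, and replaces $N$ by $-N$. The Nijenhuis condition \eqref{eq:ew1} is invariant under $N\mapsto -N$, since it is homogeneous of degree $2$ in $N$.

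The main obstacle is the bookkeeping in Step 2. There are many terms of types $R(R(\cdot))$, $S(R(\cdot))$, and so on, and the symmetric-system identities only control products $X(u)Y(v)$, not compositions $XY$. My first attempt is to expand $N^2(a\star b)$ fully and then apply $N$ to the averaging identities (for instance $N(N(a)b)=N(a)N(b)$) in order to eliminate all composite terms. If this does not close, I will instead check whether the intended statement uses the weight-zero form with $N(a)\star N(b)=N(N(a)\star b+a\star N(b))$ and $N^2(a\star b)$ cancelling, and adjust the argument accordingly.
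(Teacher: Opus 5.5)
Your reduction of the $\star'$ case to the $\star$ case is correct: swapping $R\leftrightarrow S$ exchanges $\star,\star'$ and \eqref{eq:cxx1}, \eqref{eq:cxx2}, and sends $N\mapsto -N$, under which \eqref{eq:ew1} is invariant. Step 1 is also true. But the proposal never proves the Nijenhuis identity itself. Step 2, which is the entire content of the statement, is only a description of what you will try.

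Your fallback would also turn the statement into a false one. $N$ is not a weight-zero Rota-Baxter operator on $(A,\star)$ in general: in your own sanity check $S=R+\lambda\,\mathrm{id}_A$ one has $N=-\lambda\,\mathrm{id}_A$ and $N^2(a\star b)=\lambda^2\, a\star b\neq 0$.

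Moreover, the averaging identity from Step 1 is too coarse to drive the computation. It only relates $N$ to the product $\cdot$, whereas \eqref{eq:ew1} for $\star$ contains $R(N(a))N(b)$ and $N(a)S(N(b))$. These need relations involving $R$ and $S$ individually.

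The missing ingredient is that your guessed relation holds exactly, with no ``$+\dots$'', together with a right-hand analogue:
\[
N(R(u)b)=R(u)N(b),\qquad N(aS(v))=N(a)S(v)\qquad\text{for all } a,b,u,v\in A.
\]
The first is the first equality of \eqref{eq:ea0} minus \eqref{eq:cxx1}. The second is \eqref{eq:cxx1} minus the first equality of \eqref{eq:ea1}.

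Taking $u=a$, $v=b$ gives $N(a\star b)=R(a)N(b)+N(a)S(b)$. Hence $N(a)\star b+a\star N(b)-N(a\star b)=R(N(a))b+aS(N(b))$. Applying $N$ and taking $u=N(a)$, $v=N(b)$ yields $R(N(a))N(b)+N(a)S(N(b))=N(a)\star N(b)$, which is \eqref{eq:ew1}. So the compositions $R\circ N$, $S\circ N$ you were worried about are handled by substitution, not by controlling $R\circ R$, $S\circ R$, etc.

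The paper does the same computation in expanded form. It writes the difference of the two sides as eight bracketed pairs, such as $R(R(a))R(b)-R(R(R(a))b)$ and $R(a)S(R(b))-R(aS(R(b)))$. It evaluates each by \eqref{eq:ea0}, \eqref{eq:ea1} or \eqref{eq:cxx1} with $R(a)$, $S(a)$, $R(b)$, $S(b)$ substituted, and observes that the eight results cancel. Only \eqref{eq:cxx1} is needed for $\star$; \eqref{eq:cxx2} enters only through your symmetry argument for $\star'$.
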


 \begin{proof} By Remark \ref{rmk:ef}, $(A, \star)$ is an algebra. For all $a, b\in A$, we have
 \begin{eqnarray*}
 &&\hspace{-16mm}(R-S)(a)\star(R-S)(b)-(R-S)((R-S)(a)\star b+ a \star (R-S)(b)-(R-S)(a\star b))\\
 &=&\hspace{-3mm}\underbrace{R(R(a))R(b)-R(R(R(a))b)}\underbrace{+R(a)S(R(b))-R(a S(R(b)))}\\
 &&\underbrace{-R(R(a))S(b)+S(R(R(a))b)}\underbrace{-R(a)S(S(b))+R(a S(S(b)))}\\
 &&\underbrace{-R(S(a))R(b)+R(R(S(a))b)}\underbrace{-S(a)S(R(b))+S(a S(R(b)))}\\
 &&\underbrace{+R(S(a))S(b)-S(R(S(a))b)}\underbrace{+S(a)S(S(b))-S(a S(S(b)))}\\
 &\stackrel{\eqref{eq:cxx1}\eqref{eq:ea0}}{=}&\hspace{-3mm}R(R(a)S(b))+S(R(a)R(b))-R(R(a)S(b))-S(R(a)S(b))-R(S(a)S(b))\\
 &&-S(R(a)R(b))+R(S(a)S(b))+S(R(a)S(b))\\
 &=&\hspace{-6mm}0.
 \end{eqnarray*}
 Hence $((A, \star), R-S)$ is a Nijenhuis algebra. Similarly $((A, \star'), R-S)$ is also a Nijenhuis algebra.
 \end{proof}

 \begin{ex}\label{ex:fh} By Examples \ref{ex:fa} and \ref{ex:fb}, $(A \bowtie A^{*}, \mathbb{R}, \mathbb{S})$ is a symmetric Rota-Baxter system and $(-\mathbb{S}, -\mathbb{R})$ is adjoint admissible to $(A \bowtie A^{*}, \mathbb{R}, \mathbb{S})$, then by Proposition \ref{pro:fg}, $((A \bowtie A^{*}, \star_{A \bowtie A^{*}}(=\star'_{A \bowtie A^{*}})), \mathbb{R}-\mathbb{S})$
 is a Nijenhuis algebra, where
 \begin{eqnarray*}
 &(x+a^*) \star_{A \bowtie A^{*}} (y+b^*)
 =x\cdot_{A}y-a^*\cdot_{A^{*}}b^*,&\\ &(\mathbb{R}-\mathbb{S})(x+a^*)=x+a^*.&
 \end{eqnarray*}
 \end{ex}

 \subsection{To special apre-perm bialgebras} Let us recall from \cite{BGLZ} some related definitions and results about special apre-perm bialgebras.

 \begin{defi}\label{de:hf}A \textbf{special apre-perm algebra} is a triple $(A, \triangleright_A, \triangleleft_A)$, such that $A$ is a vector space, $\triangleright_A, \triangleleft_A: A \otimes A \rightarrow A$ are multiplications on $A$ and, for all $x, y, z \in A$, the following conditions hold:
 \begin{enumerate} [(1)]
 \item \label{it:de:hf1} the multiplication $\triangleleft_A$ is commutative.
 \item \label{it:de:hf2} $(A, \circ_A)$ is a perm algebra, where the multiplication $\circ_A: A \otimes A \to A$ is given by
 \begin{equation*}
 x \circ_A y = x \triangleright_A y + x \triangleleft_A y.
 \end{equation*}
 \item \label{it:de:hf3} the following equation holds:
 \begin{equation*}
 (x \circ_A y) \triangleleft_A z = x \circ_A (y \triangleleft_A z) = -x \triangleleft_A (y \triangleleft_A z).
 \end{equation*}
 \end{enumerate}
 \end{defi}

 \begin{defi}\label{de:hg} A \textbf{special apre-perm coalgebra} is a triple $(A, \vartheta, \theta)$, such that $A$ is a vector space and $\vartheta, \theta: A \to A \otimes A$ are co-multiplications satisfying the following equations:
 \begin{eqnarray*}
 &(\eta \otimes \mathrm{id})\eta(x) = (\mathrm{id} \otimes \eta)\eta(x),&\\
 &(\mathrm{id} \otimes \eta)\eta(x) = (\sigma \otimes \mathrm{id})(\eta \otimes \mathrm{id})\eta(x),& \\
 &\theta(x) = \sigma \theta(x),& \\
 &(\eta \otimes \mathrm{id})\theta(x) = (\mathrm{id} \otimes \theta)\eta(x),&\\
 &(\mathrm{id} \otimes \theta)(\eta + \theta)(x) = 0,&
 \end{eqnarray*}
 where $\forall~ x \in A$ and $\eta = \theta + \vartheta$.
 \end{defi}

 \begin{defi}\label{de:hi} Let $(A, \triangleright_A, \triangleleft_A)$ be a special apre-perm algebra and $(A, \vartheta, \theta)$ be a special apre-perm coalgebra. Suppose that for all $x, y \in A$, the following equations hold:
 \begin{eqnarray*}
 &\eta(x \circ_A y) = (\mathcal{L}_{\circ_A}(x) \otimes \mathrm{id})\eta(y) - (\mathrm{id} \otimes \mathcal{R}_{\circ_A}(y))\theta(x),&\\
 &\eta(x \circ_A y) = (\mathrm{id} \otimes \mathcal{R}_{\circ_A}(y))\eta(x) - (\mathcal{L}_{\triangleleft_A}(x) \otimes \mathrm{id})\eta(y),& \\
 &\eta(x \circ_A y) = (\mathrm{id} \otimes \mathcal{L}_{\circ_A}(x))\eta(y) + (\mathcal{L}_{\circ_A}(y) \otimes \mathrm{id})\theta(x),& \\
 &\eta(x \triangleleft_A y) = (\mathrm{id} \otimes \mathcal{L}_{\triangleleft_A}(x))\eta(y) + \sigma(\mathrm{id} \otimes \mathcal{L}_{\triangleleft_A}(y))\eta(x),& \\
 &\eta(x \triangleleft_A y) = \sigma \eta(x \triangleleft_A y),& \\
 &\theta(x \circ_A y) = (\mathrm{id} \otimes \mathcal{L}_{\circ_A}(x))\theta(y) + (\mathcal{L}_{\circ_A}(y) \otimes \mathrm{id})\theta(x),&\\
 &\theta(x \circ_A y) = \theta(y \circ_A x),&
 \end{eqnarray*}
 then $(A, \triangleright_A, \triangleleft_A, \vartheta, \theta)$ is a bialgebra, called a \textbf{special apre-perm bialgebra}.
 \end{defi}

 \begin{lem}\label{cor:fd}{\em \cite[Proposition 4.13]{BGLZ}}
 Let $((A, \cdot, \Delta), R, Q)$ be an averaging commutative and cocommutative ASI bialgebra. Then there is a special apre-perm bialgebra $(A, \triangleright_A, \triangleleft_A, \vartheta, \theta)$, where $\triangleright_A, \triangleleft_A$ are defined by
 \begin{eqnarray}
 x \triangleright_A y = R(x) y + Q(x y), \quad x \triangleleft_A y = -Q(x y), \quad \forall~x, y \in A.\label{eq:aprepermp}
 \end{eqnarray}
 and $\vartheta, \theta$ are defined by
 \begin{eqnarray}
 \vartheta(x) = (Q \otimes \mathrm{id})\Delta(x) + \Delta(R(x)), \quad \theta(x) = -\Delta(R(x)), \quad \forall x \in A. \label{eq:aprepermcp}
 \end{eqnarray}
 \end{lem}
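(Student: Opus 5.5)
This lemma is quoted verbatim from \cite[Proposition 4.13]{BGLZ}, so within the paper it may simply be taken as given. If we wanted to check it independently, the plan would be to split the verification into the algebra side, the coalgebra side, and the bialgebra compatibilities.

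\emph{Algebra side.} Verify that $(A,\triangleright_A,\triangleleft_A)$ is a special apre-perm algebra.
\begin{itemize}
\item Commutativity of $\triangleleft_A$ is immediate from commutativity of $\cdot$.
\item The product $x\circ_A y=x\triangleright_A y+x\triangleleft_A y$ equals $R(x)y$.
\item The perm identities $(x\circ_A y)\circ_A z=x\circ_A(y\circ_A z)=y\circ_A(x\circ_A z)$ become $R(R(x)y)z=R(x)R(y)z$. Both follow from Eq.\eqref{eq:et1} together with commutativity.
\item Condition (3) reads $-Q(R(x)y\,z)=-R(x)Q(yz)=Q(xQ(yz))$. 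This follows from Eq.\eqref{eq:et3}: $Q(R(x)(yz))=R(x)Q(yz)=Q(xQ(yz))$.
\end{itemize}

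\emph{Coalgebra side.} Dualize the same argument.
\begin{itemize}
\item Here $\eta=\vartheta+\theta=(Q\otimes\id)\Delta$.
\item The identities for $\eta$ (coassociativity and the left co-perm property) follow from coassociativity, cocommutativity and Eq.\eqref{eq:et2}.
\item $\theta=-\Delta R$ is symmetric by cocommutativity.
\item The mixed identities $(\eta\otimes\id)\theta=(\id\otimes\theta)\eta$ and $(\id\otimes\theta)(\eta+\theta)=0$ use Eqs.\eqref{eq:et5}--\eqref{eq:et6}.
\end{itemize}

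\emph{Bialgebra compatibilities.} Expand each of the seven equations in Definition \ref{de:hi} with $\circ_A=R(\cdot)\cdot$, $\triangleleft_A=-Q(\cdot\,\cdot)$, $\eta=(Q\otimes\id)\Delta$ and $\theta=-\Delta R$. For instance, $\eta(x\circ_Ay)=(Q\otimes\id)\Delta(R(x)y)$. Expand this with the ASI compatibility $\Delta(ab)=a_{(1)}b\otimes a_{(2)}+b_{(1)}\otimes ab_{(2)}$ (commutative/cocommutative form), then push $Q$ and $R$ through using Eqs.\eqref{eq:et3}--\eqref{eq:et6}.

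The main obstacle is this last step. It requires careful bookkeeping of the second ASI axiom, and in particular it requires repeatedly converting $\Delta R$ into $(R\otimes\id)\Delta R$-type expressions. This is routine but lengthy, and is exactly the computation carried out in \cite{BGLZ}.
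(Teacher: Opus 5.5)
Relying on \cite[Proposition 4.13]{BGLZ} is exactly what the paper does. The paper states this lemma without proof and only quotes that source, so as a proof by citation your proposal matches it. Your optional independent check, however, has two problems.

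The small one is a sign slip. In condition (3) of Definition~\ref{de:hf} the last term is $-x\triangleleft_A(y\triangleleft_A z)=-Q(xQ(yz))$, not $+Q(xQ(yz))$. The identity you then invoke, Eq.~\eqref{eq:et3} with $b=yz$, is the correct one, so the conclusion there is unaffected.

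The serious one is the step you call routine. Against Definition~\ref{de:hi} as printed in this paper, the third compatibility
$\eta(x\circ_A y)=(\id\otimes\mathcal{L}_{\circ_A}(x))\eta(y)+(\mathcal{L}_{\circ_A}(y)\otimes\id)\theta(x)$
does not follow from the hypotheses.

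Take $R=Q=\id_A$. Then all of Eqs.~\eqref{eq:et1}--\eqref{eq:et6} become tautologies, so every commutative cocommutative ASI bialgebra is an averaging ASI bialgebra. In this case $\circ_A=\cdot$, $\eta=\Delta$ and $\theta=-\Delta$. The axiom then reads $\Delta(xy)=y_{(1)}\otimes xy_{(2)}-yx_{(1)}\otimes x_{(2)}$. The ASI axiom, together with commutativity, gives the same expression with $+yx_{(1)}\otimes x_{(2)}$. So the axiom would force $x_{(1)}y\otimes x_{(2)}=0$ for all $x,y$.

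This fails in a concrete example. Let $A=\mathrm{span}\{s,s^2,s^3\}$ with $s^is^j=s^{i+j}$ and $s^4=0$. Set $\Delta(s)=s^2\otimes s^3+s^3\otimes s^2$, $\Delta(s^2)=2s^3\otimes s^3$ and $\Delta(s^3)=0$. This is a commutative, cocommutative ASI bialgebra. Taking $x=y=s$, the left side is $2s^3\otimes s^3$ and the right side is $0$.

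The other six compatibilities, and all the algebra and coalgebra axioms, do go through along the lines you sketch. For the first compatibility, for instance, expand $\Delta(yR(x))$ rather than $\Delta(R(x)y)$ and use Eqs.~\eqref{eq:et4}--\eqref{eq:et5}.

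So your verification cannot be completed against the statement as written, and the lemma rests entirely on the citation. Definition~\ref{de:hi} presumably differs from the corresponding axiom in \cite{BGLZ} in that term, so the lemma is valid only with the axioms as they appear there.
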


 \begin{pro}\label{pro:hj} Let $(A, \cdot, \D)$ be a commutative and cocomutative ASI bialgebra. Then both symmetric Rota-Baxter ASI bisystem $(A, R, S, -S, -R)$ and $(A, R, 0, Q, Q)$ can induce special apre-perm bialgebras.
 \end{pro}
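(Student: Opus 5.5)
For the first system, $(A,R,S,-S,-R)$, I would use Proposition \ref{pro:fe}. It says that $(A,R-S,R-S)$ is an averaging ASI bialgebra. Since $(A,\cdot,\Delta)$ is assumed commutative and cocommutative, Lemma \ref{cor:fd} (\cite[Proposition 4.13]{BGLZ}) applies with $R$ and $Q$ both replaced by $R-S$. It produces a special apre-perm bialgebra with
\[
x\triangleright_A y=(R-S)(x)y+(R-S)(xy),\qquad x\triangleleft_A y=-(R-S)(xy),
\]
and $\vartheta,\theta$ given by Eq.\eqref{eq:aprepermcp} with $R=Q=R-S$. This part is immediate.

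For the second system, $(A,R,0,Q,Q)$, I would first unpack Definition \ref{de:cx} with $S=0$ and $T=Q$. By Remark \ref{rmk:eb}(4), $(A,R,0)$ being a symmetric Rota-Baxter system means exactly that $R$ is an averaging operator. Next I would specialise the compatibility conditions to $S=0$, $T=Q$.
\begin{itemize}
\item Eq.\eqref{eq:ck} becomes $Q(R(a)b)=Q(aQ(b))=R(a)Q(b)+Q(aQ(b))$.
\item Eq.\eqref{eq:ck2} becomes $0=Q(aQ(b))=Q(aQ(b))+R(a)Q(b)$.
\end{itemize}
Together these force
\[
Q(aQ(b))=0,\qquad R(a)Q(b)=0,\qquad Q(R(a)b)=0 .
\]
In the same way, Eqs.\eqref{eq:ck1} and \eqref{eq:ck3} give $Q(Q(a)b)=Q(Q(a)R(b))=0$ type vanishings, namely $Q(aR(b))=Q(Q(a)b)=Q(a)R(b)=0$. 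The coalgebraic conditions Eqs.\eqref{eq:ck5}--\eqref{eq:ck8} dually give $(Q\otimes R)\Delta=(Q\otimes \id)\Delta R=(\id\otimes R)\Delta R=0$ and the mirror identities.

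With this in hand, the most robust route is to pass to the averaging ASI bialgebra $(A,R,0)$. Conditions \eqref{eq:et2}--\eqref{eq:et6} with $Q$ replaced by $0$ hold trivially. Eq.\eqref{eq:et1} is the averaging property of $R$ already obtained, and $(A,\cdot,\Delta)$ is an ASI bialgebra by assumption. Lemma \ref{cor:fd} then gives a special apre-perm bialgebra with $x\triangleright_A y=R(x)y$, $x\triangleleft_A y=0$, $\vartheta(x)=\Delta(R(x))$ and $\theta(x)=-\Delta(R(x))$.

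Beyond this, I would try to get the less degenerate structure from $(A,R,Q)$ itself, so that $Q$ actually enters the formulas. This requires checking that $(A,R,Q)$ is an averaging ASI bialgebra. Eqs.\eqref{eq:et3}--\eqref{eq:et6} follow at once, since both sides vanish by the identities above. The main obstacle is Eq.\eqref{eq:et2}, i.e.\ that $Q$ is an averaging co-operator. The cosystem identity \eqref{eq:cu} with $T=Q$ only says that $Q$ is a weight-zero Rota-Baxter co-operator. So I would try to show that each of $(Q\otimes Q)\Delta$, $(Q\otimes\id)\Delta Q$ and $(\id\otimes Q)\Delta Q$ vanishes. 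I would do this by dualising the vanishing identities $Q(aQ(b))=Q(Q(a)b)=0$ via Eqs.\eqref{eq:ck5}--\eqref{eq:ck8} with $R$ replaced by $Q$, using cocommutativity. If that dualisation fails, the $(A,R,0)$ route above still establishes the claim as stated.
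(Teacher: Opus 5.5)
\textbf{First system.} Your treatment of $(A,R,S,-S,-R)$ is exactly the paper's argument: Proposition \ref{pro:fe}, then Lemma \ref{cor:fd}.

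\textbf{Second system: the intended reading.} The paper's proof works with $(A,R,0,Q,0)$, not $(A,R,0,Q,Q)$. By Theorem \ref{thm:eu}, $(A,R,0,Q,0)$ is a symmetric Rota-Baxter ASI bisystem if and only if $(A,R,Q)$ is an averaging ASI bialgebra. Lemma \ref{cor:fd} then gives the special apre-perm bialgebra \eqref{eq:aprepermp}--\eqref{eq:aprepermcp}, in which $Q$ genuinely enters. The ``$(A,R,0,Q,Q)$'' in the statement is evidently a misprint, and under the intended reading your analysis of the second case reduces to that one line. The degeneracy of what your literal reading produces ($\triangleleft_A=0$ and $\vartheta+\theta=0$) should have signalled that something was off.

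\textbf{Second system: the literal reading.} Your fallback reaches a correct conclusion, but not for the reason you give.
\begin{itemize}
\item Eqs.~\eqref{eq:et5} and \eqref{eq:et6} are not trivial at $Q=0$. Their last members $(\mathrm{id}\otimes R)\Delta R$ and $(R\otimes\mathrm{id})\Delta R$ contain no $Q$, so you need $(\mathrm{id}\otimes R)\Delta R=0=(R\otimes\mathrm{id})\Delta R$.
\item These vanishings do follow from \eqref{eq:ck5} and \eqref{eq:ck6} with $S=0$, $T=Q$, and you even listed them. So this step is repairable.
\end{itemize}

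\textbf{Your list of coalgebraic consequences is overstated.}
\begin{itemize}
\item Every term of \eqref{eq:ck7} and \eqref{eq:ck8} contains $S$, so both are vacuous when $S=0$. This is unlike \eqref{eq:ck2} and \eqref{eq:ck3}, which produced your algebraic vanishings.
\item Hence \eqref{eq:ck5}--\eqref{eq:ck6} give only the two vanishings above, together with $(Q\otimes\mathrm{id})\Delta R=(Q\otimes R)\Delta$ and $(\mathrm{id}\otimes Q)\Delta R=(R\otimes Q)\Delta$.
\item The identity $(Q\otimes R)\Delta=0$ is not available. So your attempted upgrade to $(A,R,Q)$ breaks at \eqref{eq:et5}--\eqref{eq:et6}, not only at \eqref{eq:et2}.
\item ``Dualising'' $Q(aQ(b))=0$ cannot rescue it, because $\cdot$ and $\Delta$ are independent data.
\end{itemize}

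\textbf{The failure of the upgrade is genuine.} Take $A$ to be the coalgebra dual to $K[x]/(x^3)$, with zero multiplication and $R=0$. Let $Q=P^{*}$, where $P(x^k)=x^{k+1}/(k+1)$ is truncated integration. $P$ is a weight-zero Rota-Baxter operator but not an averaging one, since $P(P(1))=x^2/2\neq x^2=P(1)P(1)$. Then $(A,0,0,Q,Q)$ is a commutative, cocommutative symmetric Rota-Baxter ASI bisystem, yet $Q$ is not an averaging co-operator.
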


 \begin{proof} By Theorem \ref{thm:eu}, when $(A, R, 0, Q, 0)$ is a symmetric Rota-Baxter ASI bisystem, $(A, R, Q)$ is an averaging ASI bialgebra. By Lemma \ref{cor:fd}, $(A, R, Q)$ can induce a pecial apre-perm bialgebra, where $\triangleright_A, \triangleleft_A, \vartheta, \theta$ are given by Eqs.\eqref{eq:aprepermp} and \eqref{eq:aprepermcp}.
 Similarly, by Proposition \ref{pro:fe}, when $(A, R, S, -S, -R)$ is a symmetric Rota-Baxter ASI bisystem, $(A, R-S, R-S)$ is an averaging ASI bialgebra. By Lemma \ref{cor:fd}, $(A, R-S, R-S)$ can induce a pecial apre-perm bialgebra, where $\triangleright_A, \triangleleft_A, \theta, \vartheta$ are defined by
 \begin{eqnarray*}
 &x \triangleright_A y = R(x)y-S(x)y+R(xy)-S(xy), \ \ x \triangleleft_A y=-R(xy)+S(xy),&\\
 &\vartheta(x)= R(x_{(1)})\otimes x_{(2)}-S(x_{(1)})\otimes x_{(2)}+ R(x)_{(1)})\otimes R(x)_{(2)}-S(x)_{(1)})\otimes S(x)_{(2)},&\\
 &\theta(x)=-R(x)_{(1)})\otimes R(x)_{(2)}+S(x)_{(1)})\otimes S(x)_{(2)}.&
 \end{eqnarray*}
 These finish the proof.
 \end{proof}

 \subsection{To averaging Lie bialgebras} Let us recall from \cite{BGLZ1} the notion of an averaging Lie bialgebra.

 \begin{defi}\label{de:ev}
 An \textbf{averaging Lie bialgebra} is a vector space $g$ together with linear maps
 \begin{equation*}
 [,]: g \otimes g \rightarrow g, \quad \delta: g \rightarrow g \otimes g, \quad R,Q: g \rightarrow g
 \end{equation*}
 such that the following conditions are satisfied:
 \begin{enumerate}[(1)]
 \item \label{it:de:ev1} the triple $(g, [,], \delta)$ is a Lie bialgebra.
 \item \label{it:de:ev2}$((g, [,]), R, Q)$ is an admissible averaging Lie algebra. i.e.
 \begin{eqnarray*}
 &[R(x), R(y)] = R([R(x), y]),&\label{eq:ev1}\\
 &[R(x),Q(y)]=Q([R(x), y])=Q([x, Q(y)]), \forall x,y \in g &\label{eq:ev2}
 \end{eqnarray*}
 \item \label{it:de:ev3}$((g, \delta), Q, R)$ is an admissible averaging Lie coalgebra. i.e.
 \begin{eqnarray*}
 &(Q \otimes Q)\Delta = (Q \otimes \mathrm{id})\Delta Q, & \label{eq:ev3}\\
 &(Q \otimes R)\Delta = (Q \otimes \mathrm{id})\Delta R = (\mathrm{id} \otimes R)\Delta R.&\label{eq:ev4}
 \end{eqnarray*}
 \end{enumerate}
 We denote it by $((g, [,], \delta), R, Q)$.
 \end{defi}

  \begin{pro}\label{pro:ex}
  Let $((A, \cdot, \D), R, Q)$ be an averaging  ASI bialgebra, then $((A, [,], \delta), R, Q)$ is an averaging Lie bialgebra, where $[,]$ and $\delta$ defined by Eq.\eqref{eq:gj1} and Eq.\eqref{eq:em1}, respectively.
 \end{pro}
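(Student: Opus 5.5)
The plan is to split Definition \ref{de:ev} into its three items and derive each from the averaging ASI bialgebra $((A,\cdot,\D),R,Q)$ through the commutator bracket \eqref{eq:gj1} and the anti-symmetrized cobracket \eqref{eq:em1}.

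\emph{Item (1).} I will simply quote the classical fact already used in the proof of Proposition \ref{pro:en}: by \cite[Theorem 5.7]{Ma}, an ASI bialgebra $(A,\cdot,\D)$ yields a Lie bialgebra $(A,[,],\delta)$. Nothing new is needed here.

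\emph{Item (2).} This is a direct computation with Eqs.\eqref{eq:et1} and \eqref{eq:et3}--\eqref{eq:et4}.
\begin{itemize}
\item For the first identity, expand $[R(x),R(y)]-R([R(x),y])$. The term $R(x)R(y)-R(R(x)y)$ vanishes by the first equality in \eqref{eq:et1}. The term $-R(y)R(x)+R(yR(x))$ vanishes by the second equality in \eqref{eq:et1}, with $a=y$ and $b=x$.
\item For the second identity, write $[R(x),Q(y)]=R(x)Q(y)-Q(y)R(x)$. The first summand equals $Q(R(x)y)$ by \eqref{eq:et3}, and $Q(y)R(x)=Q(yR(x))$ by \eqref{eq:et4}. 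Hence $[R(x),Q(y)]=Q([R(x),y])$.
\item For the remaining equality, rewrite $R(x)Q(y)=Q(xQ(y))$ using \eqref{eq:et3}, and $Q(y)R(x)=Q(Q(y)x)$ using \eqref{eq:et4}. This gives $Q([x,Q(y)])$.
\end{itemize}

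\emph{Item (3).} This is the dual of Item (2), done in Sweedler notation with $\delta(x)=x_{(1)}\otimes x_{(2)}-x_{(2)}\otimes x_{(1)}$.
\begin{itemize}
\item For $(Q\otimes Q)\delta=(Q\otimes\id)\delta Q$, note that $(Q\otimes Q)\D=(Q\otimes\id)\D Q$ holds by \eqref{eq:et2}. Applying the flip $\sigma$ to the other equality of \eqref{eq:et2}, $(Q\otimes Q)\D=(\id\otimes Q)\D Q$, gives $\sigma(Q\otimes Q)\D=(Q\otimes\id)\sigma\D Q$. Subtracting the two yields the claim.
\item For $(Q\otimes R)\delta=(Q\otimes\id)\delta R=(\id\otimes R)\delta R$, I need two facts. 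The first is $(Q\otimes R)\D=(Q\otimes\id)\D R=(\id\otimes R)\D R$, which is \eqref{eq:et5}. The second is the flipped form of \eqref{eq:et6}: $(Q\otimes R)\sigma\D=(Q\otimes\id)\sigma\D R=(\id\otimes R)\sigma\D R$, obtained by applying $\sigma$ to $(R\otimes Q)\D=(R\otimes\id)\D R=(\id\otimes Q)\D R$. Subtracting the second from the first gives the required chain of equalities.
\end{itemize}

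\emph{Main obstacle.} The only delicate point is bookkeeping in Item (3). The paper writes $\Delta$ in Definition \ref{de:ev}, but it must mean $\delta$ there. I also have to make sure each flipped term is matched with the correct equation among \eqref{eq:et2}, \eqref{eq:et5} and \eqref{eq:et6}. The index $(1)\leftrightarrow(2)$ swaps map exactly onto these equations, so I expect no genuine difficulty.
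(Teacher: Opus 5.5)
Your proposal is correct and follows the paper's route. The paper's proof just says ``similar to Proposition~\ref{pro:en}'', meaning: cite \cite[Theorem 5.7]{Ma} for the Lie bialgebra, then verify the operator identities by expanding the commutator and $\delta=(\id-\sigma)\D$. Your term-by-term matching with Eqs.~\eqref{eq:et1}--\eqref{eq:et6} is accurate, including the flips $\sigma(\id\otimes Q)=(Q\otimes\id)\sigma$ and $(Q\otimes R)\sigma=\sigma(R\otimes Q)$, and it supplies exactly the details the paper leaves out.
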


 \begin{proof}
 Similar to Proposition \ref{pro:en}.
 \end{proof}

 \begin{thm}\label{thm:eu-1}
 $((A, [,], \delta), R, 0, Q, 0)$ is a Rota-Baxter Lie bisystem if and only if $((A, [,], \delta),$ $R, Q)$ is an averaging Lie bialgebra.
 \end{thm}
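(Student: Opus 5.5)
Since both structures are built on the same Lie bialgebra $(A,[,],\delta)$, the plan is to compare the remaining axioms one at a time after substituting $S=0$ and $T=0$ into Definitions \ref{de:gh}, \ref{de:ek} and \ref{de:emmm}. I will match each block against Definition \ref{de:ev}, mirroring the proof of Theorem \ref{thm:eu}.

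First, the Lie system part. With $S=0$, Eq.\eqref{eq:gh0} becomes $[R(x),R(y)]=R([R(x),y])$, which is the first condition of an admissible averaging Lie algebra. Eq.\eqref{eq:gh1} becomes $0=0$. The extra equality in Remark \ref{pro:gi}, namely $R([R(x),y])=R([x,R(y)])$, follows from this by antisymmetry of the bracket, so nothing new appears.

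Second, the compatibility conditions. With $S=T=0$, Eq.\eqref{eq:emm1} reads
$$Q([R(x),y])=[R(x),Q(y)]=Q([x,Q(y)]),$$
which is exactly the second condition in Item \ref{it:de:ev2}. Eq.\eqref{eq:emm2} becomes $0=0$. Dually, with $S=T=0$ the cosystem equations \eqref{eq:ek0}, \eqref{eq:ek1} together with Eqs.\eqref{eq:emm3}, \eqref{eq:emm4} reduce as follows:
\begin{itemize}
\item \eqref{eq:ek0} gives $(Q\otimes Q)\delta=(Q\otimes \id)\delta Q$;
\item \eqref{eq:ek1} is trivial;
\item \eqref{eq:emm3} gives $(Q\otimes \id)\delta R=(\id\otimes R)\delta R=(Q\otimes R)\delta$;
\item \eqref{eq:emm4} is trivial.
\end{itemize}
These are precisely the conditions of Item \ref{it:de:ev3}, with $\Delta$ there read as $\delta$. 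In Eq.\eqref{eq:emm3} the middle term $T(x_{[1]})\otimes R(x_{[2]})$ vanishes, and the last expression $S(x)_{[1]}\otimes R(S(x)_{[2]})+Q(x_{[1]})\otimes R(x_{[2]})$ collapses to $(Q\otimes R)\delta(x)$.

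Combining the two steps, each axiom list reduces to the other term by term, which gives both directions of the equivalence at once.

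The main obstacle is bookkeeping rather than substance. I need to check that the second equalities recorded in Remarks \ref{pro:gi} and \ref{rmk:el} impose no condition beyond Definition \ref{de:ev}. For the bracket side this is handled by antisymmetry, as above. For the cobracket side, the analogous extra equality $(Q\otimes Q)\delta=(\id\otimes Q)\delta Q$ follows from $(Q\otimes Q)\delta=(Q\otimes\id)\delta Q$ by applying the flip $\sigma$ and using $\sigma\delta=-\delta$.
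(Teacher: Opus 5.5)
Your proposal is correct and follows the paper's proof. The paper likewise substitutes $S=T=0$ and notes that Eqs.~\eqref{eq:gh0} and \eqref{eq:emm1} become Item~\ref{it:de:ev2}, that Eqs.~\eqref{eq:ek0} and \eqref{eq:emm3} become Item~\ref{it:de:ev3}, and that the remaining equations become trivial. Your check that the second equalities in Remarks~\ref{pro:gi} and \ref{rmk:el} follow automatically from antisymmetry of $[,]$ and $\delta$ is a correct detail that the paper leaves implicit.
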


 \begin{proof} When $((A, [,], \delta), R, 0, Q, 0)$ is a Rota-Baxter Lie bisystem, Definition \ref{de:emmm} implies that Eqs.\eqref{eq:gh0} and \eqref{eq:emm1} are equivalent to Item \ref{it:de:ev2} in Definition \ref{de:ev}, while Eqs.\eqref{eq:ek0} and \eqref{eq:emm3} are equivalent to Item \ref{it:de:ev3}.
 \end{proof}

 By Theorem \ref{thm:eu-1} and Proposition \ref{pro:en}, we have
 \begin{cor}\label{cor:ey} Let $((A, \cdot, \Delta), R, 0, Q, 0)$ be a symmetric Rota-Baxter ASI bisystem. Then $((A, [,], \delta), R, Q)$ is an averaging Lie bialgebra, where $[,]$ and $\delta$ defined by Eq.\eqref{eq:gj1} and Eq.\eqref{eq:em1}, respectively.
 \end{cor}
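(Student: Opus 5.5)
The statement is Corollary \ref{cor:ey}, and the plan is to obtain it by composing two earlier results. First, Proposition \ref{pro:en} turns a symmetric Rota-Baxter ASI bisystem into a Rota-Baxter Lie bisystem. Second, Theorem \ref{thm:eu-1} identifies Rota-Baxter Lie bisystems of the shape $(\,\cdot\,, R, 0, Q, 0)$ with averaging Lie bialgebras. The argument therefore has two steps.

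In the first step, I start from the hypothesis that $((A,\cdot,\Delta),R,0,Q,0)$ is a symmetric Rota-Baxter ASI bisystem. I apply Proposition \ref{pro:en} with $S=0$ and $T=0$, taking $[,]$ from Eq.\eqref{eq:gj1} and $\delta$ from Eq.\eqref{eq:em1}. This gives that $((A,[,],\delta),R,0,Q,0)$ is a Rota-Baxter Lie bisystem. No extra work is needed here, because Proposition \ref{pro:en} places no restriction on $S$ and $T$; the zero maps are admissible choices.

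In the second step, I apply the "only if" direction of Theorem \ref{thm:eu-1} to this Rota-Baxter Lie bisystem. It yields that $((A,[,],\delta),R,Q)$ is an averaging Lie bialgebra, which is exactly the claim.

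An alternative route would use Theorem \ref{thm:eu} to pass to the averaging ASI bialgebra $((A,\cdot,\Delta),R,Q)$, and then Proposition \ref{pro:ex} to reach the averaging Lie bialgebra. Both routes need only existing results.

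The one point I would double-check is the conventions in Definition \ref{de:ev} against Theorem \ref{thm:eu-1}: the setting $S=T=0$ must turn Eqs.\eqref{eq:emm1} and \eqref{eq:emm3} into the "admissible averaging" identities. That matching is the content of Theorem \ref{thm:eu-1} itself, which I take as given, so I expect no genuine obstacle.
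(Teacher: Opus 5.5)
Your proposal is correct and is exactly the paper's argument: the corollary is obtained by specializing Proposition~\ref{pro:en} to $S=T=0$ and then applying the direction ``Rota-Baxter Lie bisystem $\Rightarrow$ averaging Lie bialgebra'' of Theorem~\ref{thm:eu-1}. The alternative route you mention, through Theorem~\ref{thm:eu} and then Proposition~\ref{pro:ex}, is also valid.
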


 \begin{pro}\label{pro:ez}
 Let $((A, [,], \delta),R, S,  -S, -R)$ be a  Rota-Baxter Lie bisystem, then, $((A, [,], \delta),$ $R-S,  R-S)$ is an averaging Lie bialgebra.
 \end{pro}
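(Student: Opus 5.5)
We mirror the argument of Proposition \ref{pro:fe}, now in the Lie setting. The Lie bialgebra condition on $(A,[,],\delta)$ is part of the hypothesis, so we only need the averaging conditions of Definition \ref{de:ev} for $P:=R-S$ in the role of both operators. Specialising the Rota-Baxter Lie bisystem axioms to $Q=-S$, $T=-R$ gives two facts. First, Eq.\eqref{eq:emm1} becomes
\[
-S([R(x),y])=-[R(x),S(y)]-R([x,S(y)])=-[S(x),S(y)]-S([x,S(y)]),
\]
that is, $[R(x),S(y)]=S([R(x),y])+R([x,S(y)])$. Second, Eq.\eqref{eq:emm2} gives the symmetric identity $[S(x),R(y)]=R([S(x),y])+S([x,R(y)])$. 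Eqs.\eqref{eq:emm3}-\eqref{eq:emm4} yield the dual identities for $\delta$, namely the Lie analogues of Eqs.\eqref{eq:cxx3}-\eqref{eq:cxx4}. Together with Eqs.\eqref{eq:gh0}-\eqref{eq:gh1}, Remark \ref{pro:gi}, and their coalgebra counterparts from Remark \ref{rmk:el}, these are all the inputs we need.

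For the algebra side, expand $[P(x),P(y)]-P([P(x),y])$ into four bracketed groups exactly as in the proof of Proposition \ref{pro:fe}:
\[
\bigl([R(x),R(y)]-R([R(x),y])\bigr)-\bigl([R(x),S(y)]-S([R(x),y])\bigr)+\bigl([S(x),S(y)]-S([S(x),y])\bigr)-\bigl([S(x),R(y)]-R([S(x),y])\bigr).
\]
By Eq.\eqref{eq:gh0}, Eq.\eqref{eq:gh1}, Remark \ref{pro:gi} and the two mixed identities, the groups equal $R([x,S(y)])$, $R([x,S(y)])$, $S([x,R(y)])$ and $S([x,R(y)])$ respectively, so the total cancels. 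This gives $[P(x),P(y)]=P([P(x),y])$. The conditions $[P(x),P(y)]=P([x,P(y)])$ and $[P(x),P(y)]=P([P(x),y])=P([x,P(y)])$ follow in the same way from the other halves of Remark \ref{pro:gi}, or from antisymmetry of the bracket.

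For the coalgebra side, dually expand $(P\otimes P)\delta(x)-(P\otimes\id)\delta(P(x))$ and the analogous expression with $\id\otimes P$. Use Eq.\eqref{eq:ek0}, Eq.\eqref{eq:ek1} and Remark \ref{rmk:el} with $Q=-S$, $T=-R$, together with the $\delta$-versions of Eqs.\eqref{eq:cxx3}-\eqref{eq:cxx4} extracted from Eqs.\eqref{eq:emm3}-\eqref{eq:emm4}. The terms $R(x)_{[1]}\otimes S(R(x)_{[2]})$ and $S(x)_{[1]}\otimes R(S(x)_{[2]})$ then cancel pairwise, as in Proposition \ref{pro:fe}. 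Since $Q=R$ in the target, the admissibility conditions of Definition \ref{de:ev} collapse to the averaging identities for $P$ alone, in the same way as Remark \ref{rmk:de:et}. This finishes the verification.

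The main obstacle is bookkeeping on the coalgebra side. The cosystem identities are given with $Q,T$, so with $Q=-S$, $T=-R$ the signs must be tracked carefully. Also, Eqs.\eqref{eq:emm3}-\eqref{eq:emm4} involve $R$ and $S$ applied after $\delta$ rather than the cosystem operators, and one must check that they indeed supply exactly the mixed terms needed. If one of the needed mixed identities, for example the version with $(\id\otimes P)\delta P$, is not directly listed, it should be obtained by applying the flip $\sigma$ and using the antisymmetry of $\delta$, $\sigma\delta=-\delta$, to convert the listed first-leg identities into second-leg ones.
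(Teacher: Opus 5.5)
Your proposal is correct and follows the same route as the paper. The paper simply expands $[(R-S)(a),(R-S)(b)]-(R-S)([(R-S)(a),b])$ and $((R-S)\otimes(R-S))\delta(a)-((R-S)\otimes\id)\delta((R-S)(a))$ and cancels terms using the system and cosystem axioms together with Eqs.\eqref{eq:emm1}--\eqref{eq:emm4} at $Q=-S$, $T=-R$; your grouping makes explicit what the paper leaves implicit, namely that Eqs.\eqref{eq:emm3}--\eqref{eq:emm4} supply exactly the mixed identities $(S\otimes R)\delta=(S\otimes\id)\delta R+(\id\otimes R)\delta S$ and $(R\otimes S)\delta=(R\otimes\id)\delta S+(\id\otimes S)\delta R$.

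One small slip: in your first display the last terms have the wrong sign. Since $T([x,Q(y)])=-R([x,-S(y)])=+R([x,S(y)])$, it should read $-S([R(x),y])=-[R(x),S(y)]+R([x,S(y)])=-[S(x),S(y)]+S([x,S(y)])$. The identity $[R(x),S(y)]=S([R(x),y])+R([x,S(y)])$ that you extract from it is nonetheless correct.
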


 \begin{proof} For all $a, b\in A$, we only need to verify the following equations hold:
 \begin{eqnarray*}
 &&\hspace{-10mm}[(R-S)(a), (R-S)(b)]-(R-S)([(R-S)(a),b])\\
 &=&[R(a),R(b)]-[R(a),S(b)]-[S(a),R(b)]+[S(a),S(b)]-R([R(a),b])+R([S(a),b])\\
 &&+S([R(a),b])-S([S(a),b])\\
 &=&0,\\
 &&\hspace{-10mm}(R-S)(a_{[1]})\otimes (R-S)(a_{[2]})-(R-S)((R-S)(a)_{[1]})\otimes (R-S)(a)_{[2]}\\
 &=&R(a_{[1]})\otimes R(a_{[2]})-R(a_{[1]})\otimes S(a_{[2]})-S(a_{[1]})\otimes R(a_{[2]})+S(a_{[1]})\otimes S(a_{[2]})\\
 &&-R(R(a_{[1]}))\otimes R(a)_{[2]}+S(R(a_{[1]}))\otimes R(a)_{[2]}+R(S(a_{[1]}))\otimes S(a)_{[2]}-S(S(a_{[1]}))\otimes S(a)_{[2]}\\
 &=&0,
 \end{eqnarray*}
 finishing the proof.
 \end{proof}

 By Proposition \ref{pro:ez} and Proposition \ref{pro:en}, we have
 \begin{cor}\label{cor:ha}
 Let $(A, R, S, -S, -R)$ be a symmetric Rota-Baxter ASI bisystem. Then $((A, [,],$ $\delta), R-S, R-S)$ is an averaging Lie bialgebra, where $[,]$ and $\delta$ defined by Eq.\eqref{eq:gj1} and Eq.\eqref{eq:em1}, respectively.
 \end{cor}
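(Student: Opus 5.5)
The statement is Corollary \ref{cor:ha}. I would prove it by chaining two results already established: first pass from the associative setting to the Lie setting, then apply the Lie-level averaging result.

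\emph{Step 1.} Start from the symmetric Rota-Baxter ASI bisystem $(A, R, S, -S, -R)$. Define $[,]$ by Eq.\eqref{eq:gj1} and $\delta$ by Eq.\eqref{eq:em1}. Proposition \ref{pro:en}, applied with $Q=-S$ and $T=-R$, then says that $((A, [,], \delta), R, S, -S, -R)$ is a Rota-Baxter Lie bisystem. Concretely, three things hold:
\begin{enumerate}[(i)]
\item $(A,[,],\delta)$ is a Lie bialgebra (via \cite[Theorem 5.7]{Ma});
\item $((A,[,]),R,S)$ is a Rota-Baxter Lie system (Proposition \ref{pro:gj});
\item $((A,\delta),-S,-R)$ is a Rota-Baxter Lie cosystem (Proposition \ref{pro:em}), and the compatibility conditions Eqs.\eqref{eq:emm1}--\eqref{eq:emm4} hold with $Q=-S$, $T=-R$.
\end{enumerate}

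\emph{Step 2.} Proposition \ref{pro:ez} takes exactly such a Rota-Baxter Lie bisystem as input. It gives that $((A,[,],\delta), R-S, R-S)$ is an averaging Lie bialgebra, which is the claim.

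\emph{Main obstacle.} The only delicate point is that Proposition \ref{pro:ez} is proved rather tersely. To make the corollary airtight, I would recheck its two identities using only the Lie-level data from Step 1.

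For the bracket identity, expand $[(R-S)(a),(R-S)(b)]-(R-S)([(R-S)(a),b])$. Then cancel term by term:
\begin{itemize}
\item $[R(a),R(b)]$ and $[S(a),S(b)]$ cancel using Eqs.\eqref{eq:gh0}, \eqref{eq:gh1} and Remark \ref{pro:gi};
\item the mixed terms $[R(a),S(b)]$ and $[S(a),R(b)]$ cancel by the Lie versions of Eqs.\eqref{eq:cxx1}--\eqref{eq:cxx2}.
\end{itemize}
The mixed identities themselves come from Eqs.\eqref{eq:emm1}--\eqref{eq:emm2} with $Q=-S$, $T=-R$, for example
\[
-S([R(a),b])=-[R(a),S(b)]-R([a,S(b)]).
\]

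The cobracket identity is dual. I would use Eqs.\eqref{eq:ek0}--\eqref{eq:ek1} together with Eqs.\eqref{eq:emm3}--\eqref{eq:emm4} for $Q=-S$, $T=-R$, mirroring the computation in Proposition \ref{pro:fe}. The second equalities in the admissibility conditions of Definition \ref{de:ev}, where $Q=R$ equals $R-S$, follow by the same cancellations with the roles of the two arguments swapped.

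Alternatively, one can bypass the Lie-level check. Proposition \ref{pro:fe} shows directly that $(A,R-S,R-S)$ is an averaging ASI bialgebra, and Proposition \ref{pro:ex} then yields the same conclusion. I would mention this as a consistency check.
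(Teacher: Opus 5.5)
Your proof is correct and follows the paper's argument: the corollary is obtained by composing Proposition~\ref{pro:en}, which passes to the Rota-Baxter Lie bisystem $((A,[,],\delta),R,S,-S,-R)$, with Proposition~\ref{pro:ez}; your alternative chain through Proposition~\ref{pro:fe} and Proposition~\ref{pro:ex} is also valid. One sign slip in your optional recheck: specializing Eq.~\eqref{eq:emm1} to $Q=-S$, $T=-R$ gives $-S([R(a),b])=-[R(a),S(b)]+R([a,S(b)])$, with a plus sign, and this corrected identity is the Lie analogue of Eq.~\eqref{eq:cxx1} that the cancellation actually uses.
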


 \subsection{To Rota-Baxter Lie bialgebras of weight $\lambda$} Let us recall from \cite{BGLM} the notion of a Rota-Baxter Lie bialgebras of weight $\lambda$.

 \begin{defi}\label{de:he} A \textbf{Rota-Baxter Lie bialgebra of weight $\lambda$} is a vector space $\mathfrak{g}$ together with linear maps
 \begin{equation*}
 [,]: \mathfrak{g} \otimes \mathfrak{g} \rightarrow \mathfrak{g}, \quad \delta: \mathfrak{g} \rightarrow \mathfrak{g} \otimes \mathfrak{g}, \quad R, Q: \mathfrak{g} \to \mathfrak{g},
 \end{equation*}
 such that
 \begin{enumerate}[(1)]
 \item \label{it:de:he1} the triple $(\mathfrak{g}, [,], \delta)$ is a Lie bialgebra.
 \item \label{it:de:he2} the pair $((\mathfrak{g}, [,]), R)$ is a Rota-Baxter Lie algebra of weight $\lambda$. i.e.
 \begin{equation*}
 [R(x), R(y)] = R([R(x), y]+[x, R(y)]+ \lambda [x, y]),~~\forall x,y\in \mathfrak{g}.
 \end{equation*}
 \item\label{it:de:he3} the pair $((\mathfrak{g}, \delta), Q)$ is a Rota-Baxter Lie coalgebra of weight $\lambda$. i.e.
 \begin{equation*}
 (Q\otimes Q) \delta (x)= (Q\otimes \id+ \id \otimes Q)\delta (Q(x))+\lambda \delta (Q(x)),~~\forall x\in \mathfrak{g}.
 \end{equation*}
 \item \label{it:de:he4} the following equations hold:
 \begin{eqnarray*}
 &&Q([R(x), y])-[R(x), Q(y)] -Q([x, Q(y)]) -\lambda[x, Q(y)] = 0,\\
 &&(R \otimes Q) \delta + (R \otimes \id -\id \otimes Q)\delta(R(x)) + \lambda(R\otimes \id)\delta(x) = 0.
 \end{eqnarray*}
 \end{enumerate}
 We denote this bialgebra by $((\mathfrak{g}, [,], \delta), R, Q)$.
 \end{defi}

 \begin{pro}\label{pro:hb}
 Let $((A, \cdot, \D), R, Q)$ be a Rota-Baxter ASI bialgebra of weight $\lambda$, then $((A, [,], \delta), R, Q)$ is a Rota-Baxter Lie bialgebra of weight $\lambda$, where $[,]$ and $\delta$ defined by Eq.\eqref{eq:gj1} and Eq.\eqref{eq:em1}, respectively.
 \end{pro}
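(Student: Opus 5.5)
The plan is to verify the four items of Definition \ref{de:he} one at a time, each transferring from the associative data. The same template was used in Propositions \ref{pro:en} and \ref{pro:gj}.

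For Item (1), \cite[Theorem 5.7]{Ma} gives directly that $(A,[,],\delta)$ is a Lie bialgebra whenever $(A,\cdot,\Delta)$ is an ASI bialgebra. This is exactly how it is used in the proof of Proposition \ref{pro:en}.

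For Item (2), I would argue via Lemma \ref{lem:ec}: $(A,R,R+\lambda\id_A)$ is a symmetric Rota-Baxter system. Proposition \ref{pro:gj} then says $((A,[,]),R,R+\lambda\id_A)$ is a Rota-Baxter Lie system. Equation \eqref{eq:gh0} with $S=R+\lambda\id_A$ is literally the weight-$\lambda$ Rota-Baxter Lie identity. Item (3) is the dual statement. By Remark \ref{rmk:gb}, $(A,Q,Q+\lambda\id_A)$ is a symmetric Rota-Baxter cosystem. Proposition \ref{pro:em} then gives a Rota-Baxter Lie cosystem for $\delta$, and \eqref{eq:ek0} specializes to the weight-$\lambda$ Rota-Baxter Lie coalgebra identity. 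Alternatively, I can expand $(Q\otimes Q)\delta(x)$ using \eqref{eq:em1} into the two terms of $(Q\otimes Q)\Delta$ and $\sigma(Q\otimes Q)\Delta$. I then apply the coalgebra Rota-Baxter identity to each term, using that $\sigma$ commutes with $Q\otimes Q$ and interchanges $Q\otimes\id$ with $\id\otimes Q$.

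For Item (4), I would first write
\[
Q([R(x),y])-[R(x),Q(y)]-Q([x,Q(y)])-\lambda[x,Q(y)].
\]
Expanding each bracket as $uv-vu$ splits this into two groups. The group
\[
Q(R(x)y)-R(x)Q(y)-Q(xQ(y))-\lambda xQ(y)
\]
vanishes by \eqref{eq:er2}. The group
\[
-Q(yR(x))+Q(y)R(x)+Q(Q(y)x)+\lambda Q(y)x
\]
vanishes by \eqref{eq:er1} with $a=y$, $b=x$. For the coalgebra compatibility, I substitute $\delta=\Delta-\sigma\Delta$. The $\Delta$-part is exactly \eqref{eq:er3}. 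For the $\sigma\Delta$-part, I use
\[
(R\otimes Q)\sigma=\sigma(Q\otimes R),\qquad (R\otimes\id)\sigma=\sigma(\id\otimes R),\qquad (\id\otimes Q)\sigma=\sigma(Q\otimes\id).
\]
This turns it into $\sigma$ applied to \eqref{eq:er4}, which vanishes.

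The main obstacle is purely bookkeeping in the coalgebra part of Item (4). One has to check that the sign pattern in Definition \ref{de:he}(4) matches \eqref{eq:er3} and \eqref{eq:er4} after the flip, and not some mixture of them. I would carry out this check carefully, term by term, in Sweedler notation, as in the proof of Proposition \ref{pro:en}.
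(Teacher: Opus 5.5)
Your proposal is correct and takes essentially the paper's route: the paper's proof is just ``similar to Proposition~\ref{pro:en}'', i.e.\ cite \cite[Theorem 5.7]{Ma} for the Lie bialgebra and expand $[,]$ and $\delta=\Delta-\sigma\Delta$ to reduce the compatibility conditions to \eqref{eq:er1}--\eqref{eq:er4}, which is what you do (routing Items (2)--(3) through Lemma~\ref{lem:ec}, Remark~\ref{rmk:gb} and Propositions~\ref{pro:gj}, \ref{pro:em} is only a cosmetic variation). The sign check you flag does work out: the $\Delta$-part is exactly \eqref{eq:er3}, and the $\sigma\Delta$-part is $-\sigma$ applied to \eqref{eq:er4}.
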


 \begin{proof} Similar to Proposition \ref{pro:en}.
 \end{proof}

 \begin{thm}\label{thm:bc}
 $((A, [,], \delta), R, R+\lambda id_A, Q, Q+\lambda \id_A)$ is a Rota-Baxter Lie bisystem if and only if $((A, [,], \delta), R, Q)$ is a Rota-Baxter Lie bialgebra of weight $\lambda$.
 \end{thm}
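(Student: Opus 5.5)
The plan is to follow the proof of Theorem \ref{thm:es}, transported to the Lie setting. We compare Definition \ref{de:emmm} with $S=R+\lambda\id_A$, $T=Q+\lambda\id_A$ against Definition \ref{de:he}, one component at a time. The Lie bialgebra condition on $(A,[,],\delta)$ appears in both definitions, so nothing needs to be checked there.

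First, the Rota-Baxter Lie system part. With $S=R+\lambda\id_A$, Eq.\eqref{eq:gh0} becomes
\[
[R(x),R(y)]=R([R(x),y]+[x,R(y)]+\lambda[x,y]),
\]
which is exactly Item \ref{it:de:he2}. Eq.\eqref{eq:gh1} reads
\[
[R(x)+\lambda x,R(y)+\lambda y]=(R+\lambda\id)([R(x),y]+[x,R(y)]+\lambda[x,y]).
\]
Expanding, the terms $\lambda[R(x),y]+\lambda[x,R(y)]+\lambda^2[x,y]$ cancel on both sides, so what remains is again the weight-$\lambda$ identity. Dually, Eqs.\eqref{eq:ek0} and \eqref{eq:ek1} with $T=Q+\lambda\id$ are each equivalent to the Rota-Baxter Lie coalgebra condition in Item \ref{it:de:he3}. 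Here I expand $(Q+\lambda\id)\otimes(Q+\lambda\id)$ and let the $\lambda$-linear and $\lambda^2$ terms cancel in the same way, reading the defining identities in the corrected form of Remark \ref{rmk:el}.

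Next, the compatibility conditions. I substitute into Eqs.\eqref{eq:emm1}--\eqref{eq:emm4}, exactly as in the display in the proof of Theorem \ref{thm:es}. For Eq.\eqref{eq:emm1}, both equalities reduce to
\[
Q([R(x),y])=[R(x),Q(y)]+Q([x,Q(y)])+\lambda[x,Q(y)],
\]
which is the first condition in Item \ref{it:de:he4}. For Eq.\eqref{eq:emm2}, expanding the left side gives $Q([R(x),y])+\lambda Q([x,y])+\lambda[R(x),y]+\lambda^2[x,y]$. The right sides produce matching extra terms such as $\lambda[x,Q(y)]+\lambda[R(x),y]+\lambda^2[x,y]$ and the like. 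After cancellation, Eq.\eqref{eq:emm2} also reduces to the same identity.

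Similarly, Eq.\eqref{eq:emm3} should reduce to the co-compatibility
\[
(R\otimes Q)\delta+(R\otimes\id-\id\otimes Q)\delta R+\lambda(R\otimes\id)\delta=0.
\]
To see this, use the antisymmetry of $\delta$: it lets one rewrite $Q(R(x)_{[1]})\otimes R(x)_{[2]}$ and compare it with $(R\otimes Q)\delta$ after applying the flip $\sigma$. Eq.\eqref{eq:emm4} is then the same identity after the $\lambda$-shift cancels.

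The main obstacle is this coalgebraic bookkeeping. Definition \ref{de:he} states the co-compatibility in a single form, while Eqs.\eqref{eq:emm3}--\eqref{eq:emm4} have legs in the opposite order and contain two equalities each. I would first apply $\sigma$ and use $\sigma\delta=-\delta$ to put everything in the $(R\otimes Q)$ orientation, then expand each term involving $T=Q+\lambda\id$ and $S=R+\lambda\id$. Each equality should collapse to the single identity in Item \ref{it:de:he4}, or to that identity plus terms that vanish by Item \ref{it:de:he3}. Once every condition on each side is matched, both implications of Theorem \ref{thm:bc} follow.
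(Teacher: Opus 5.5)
Your proposal is correct and follows the paper's own route. The paper's proof is just ``similar to Theorem~\ref{thm:es}'': substitute $S=R+\lambda\id_A$ and $T=Q+\lambda\id_A$, let the $\lambda$- and $\lambda^2$-terms cancel, and read off Items~\ref{it:de:he2}, \ref{it:de:he3} and \ref{it:de:he4}.

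Your flip step is genuinely needed, and the paper glosses over it. Eq.~\eqref{eq:emm3} reduces to $(Q\otimes\id-\id\otimes R)\delta R=(Q\otimes R)\delta+\lambda(\id\otimes R)\delta$, and this becomes the stated co-compatibility only after applying $\sigma$ and using $\sigma\delta=-\delta$. Your hedge about ``terms that vanish by Item~\ref{it:de:he3}'' is unnecessary: every equality in \eqref{eq:emm1}--\eqref{eq:emm4} collapses exactly to one of the two identities in Item~\ref{it:de:he4}.
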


 \begin{proof} Similarly to Theorem \ref{thm:es}.
 \end{proof}

 By Theorem \ref{thm:bc} and Proposition \ref{pro:en}, we have

 \begin{cor}\label{cor:hd} Let $((A, \cdot, \D), R, R+\lambda id_A, Q, Q+\lambda id_A)$ be a symmetric Rota-Baxter ASI bisystem, then $((A, [,],$ $\delta), R, Q)$ is a Rota-Baxter Lie bialgebra of weight $\lambda$, where $[,]$ and $\delta$ defined by Eq.\eqref{eq:gj1} and Eq.\eqref{eq:em1}, respectively.
 \end{cor}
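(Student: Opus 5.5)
The corollary follows by composing two earlier results, so I will simply chain them.

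First, since $((A,\cdot,\Delta),R,R+\lambda\id_A,Q,Q+\lambda\id_A)$ is a symmetric Rota-Baxter ASI bisystem, I apply Proposition \ref{pro:en} with $S=R+\lambda\id_A$ and $T=Q+\lambda\id_A$. With $[,]$ and $\delta$ defined by Eq.\eqref{eq:gj1} and Eq.\eqref{eq:em1}, this gives that $((A,[,],\delta),R,R+\lambda\id_A,Q,Q+\lambda\id_A)$ is a Rota-Baxter Lie bisystem. The Lie bialgebra part comes from \cite[Theorem 5.7]{Ma}, which is already used inside the proof of Proposition \ref{pro:en}.

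Second, I invoke Theorem \ref{thm:bc} in the direction ``Rota-Baxter Lie bisystem $\Rightarrow$ Rota-Baxter Lie bialgebra of weight $\lambda$''. This yields that $((A,[,],\delta),R,Q)$ is a Rota-Baxter Lie bialgebra of weight $\lambda$.

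As a cross-check on the route, I could instead first apply Theorem \ref{thm:es} to obtain a Rota-Baxter ASI bialgebra of weight $\lambda$, and then apply Proposition \ref{pro:hb}. This gives the same conclusion.

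The only point needing care is that Theorem \ref{thm:bc} is proved only ``similarly to Theorem \ref{thm:es}''. If I want to verify it, I would substitute $S=R+\lambda\id$ and $T=Q+\lambda\id$ into Eqs.\eqref{eq:gh0}--\eqref{eq:gh1}, \eqref{eq:ek0}--\eqref{eq:ek1} and \eqref{eq:emm1}--\eqref{eq:emm4}. Each resulting identity should expand linearly into the corresponding condition of Definition \ref{de:he}, with the $\lambda$-terms matching the $\lambda[x,y]$, $\lambda\delta(Q(x))$, $\lambda[x,Q(y)]$ and $\lambda(R\otimes\id)\delta(x)$ terms there. I expect this bookkeeping to be routine.
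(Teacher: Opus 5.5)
Your proposal is correct and is the paper's own argument: apply Proposition \ref{pro:en} with $S=R+\lambda\id_A$, $T=Q+\lambda\id_A$ to get a Rota-Baxter Lie bisystem, then use the forward direction of Theorem \ref{thm:bc} to obtain a Rota-Baxter Lie bialgebra of weight $\lambda$. Your alternative route through Theorem \ref{thm:es} and Proposition \ref{pro:hb} also works.
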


 \section{Conclusion} In \cite{Br1}, Brzezi\'{n}ski introduced the covariant bialgebra as a generalization of the infinitesimal bialgebra and established its relation to the Rota-Baxter system.

  \begin{defi}\mlabel{de:1.1} A {\bf covariant bialgebra} is a quadruple $(A,\delta_{1},\delta_{2},\Delta)$ such that
 \begin{enumerate}
   \item $A$ is an algebra,
   \item $(A,\Delta)$ is a coalgebra,
   \item Let $\delta_{i}: A\lr A\otimes A~(i=1,2)$~(write $\d_i(a)=a^i_{<1>)}\otimes a^i_{<2>}$) be two derivations, i.e., $\delta_{i}(a b)=a^{i}_{<1>}\otimes a^{i}_{<2>} b+a b^{i}_{<1>}\otimes b^{i}_{<2>}, i=1,2$. $\Delta$ is a covariant derivation with respect to $(\delta_{1},\delta_{2})$ in the sense of
   \begin{eqnarray*}
    &\Delta(a b)=a^{2}_{<1>}\otimes a^{2}_{<2>} b+a b_{(1)}\otimes b_{(2)}=a_{(1)}\otimes a_{(2)} b+a b^{1}_{<1>}\otimes b^{1}_{<2>}.&
   \end{eqnarray*}
 \end{enumerate}
 \end{defi}

 \begin{defi}\label{de:eh-1} Let $A$ be an algebra. An \textbf{associative Yang-Baxter pair} is a pair $(\mathfrak{r}, \mathfrak{s})$, where $\mathfrak{r}, \mathfrak{s} \in A\otimes A$ such that the following equations hold:
 \begin{eqnarray*}
 \mathfrak{r}_{12}\mathfrak{r}_{23}=\mathfrak{r}_{13}\mathfrak{r}_{12}
 +\mathfrak{s}_{23}\mathfrak{r}_{13},\\
 \mathfrak{s}_{12}\mathfrak{s}_{23}=\mathfrak{s}_{13}\mathfrak{r}_{12}
 +\mathfrak{s}_{23}\mathfrak{s}_{13}.
 \end{eqnarray*}
 \end{defi}

 By \cite[Proposition 3.4]{Br1}, a Rota-Baxter system can be constructed from an associative Yang-Baxter pair $(\mathfrak{r}, \mathfrak{s})$ via Eq.~\eqref{eq:ej1}. Furthermore, \cite[Proposition 3.15 and Corollary 3.17]{Br1} establish that if $(\mathfrak{r}, \mathfrak{s})$ is such a pair on an algebra $A$, and we define the maps $\d_{\mathfrak{r}}, \d_{\mathfrak{s}}, \D_{\mathfrak{r}, \mathfrak{s}}$ by
 \begin{align*}
 \d_{\mathfrak{r}}(a) &= a \mathfrak{r}^1 \otimes \mathfrak{r}^2 - \mathfrak{r}^1 \otimes \mathfrak{r}^2 a, \\
 \d_{\mathfrak{s}}(a) &= a \mathfrak{s}^1 \otimes \mathfrak{s}^2 - \mathfrak{s}^1 \otimes \mathfrak{s}^2 a, \\
 \D_{\mathfrak{r},\mathfrak{s}}(a) &= a \mathfrak{r}^1 \otimes \mathfrak{r}^2 - \mathfrak{s}^1 \otimes \mathfrak{s}^2 a,
 \end{align*}
 then the quadruple $(A, \d_{\mathfrak{r}}, \d_{\mathfrak{s}}, \D_{\mathfrak{r}, \mathfrak{s}})$ forms a covariant bialgebra.

 According to Remark~\ref{rmk:ei}, a pair $(\mathfrak{r}, \mathfrak{s})$ is symmetric if and only if both $(\mathfrak{r}, \mathfrak{s})$ and $(\mathfrak{s}, \mathfrak{r})$ are associative Yang-Baxter pairs. Consequently, a symmetric associative Yang-Baxter pair $(\mathfrak{r}, \mathfrak{s})$ induces two covariant bialgebra structures: $(A, \d_{\mathfrak{r}}, \d_{\mathfrak{s}}, \D_{\mathfrak{r}, \mathfrak{s}})$ and $(A, \d_{\mathfrak{s}}, \d_{\mathfrak{r}}, \D_{\mathfrak{s}, \mathfrak{r}})$. This leads to a natural question: how can the notion of a covariant bialgebra be adapted so that it corresponds uniquely to a suitably modified notion of a symmetric associative Yang-Baxter pair?

\section*{Acknowledgments} 
This work is supported by National Natural Science Foundation of China (Nos. 12471033, 12471130).

\smallskip

\noindent
{\bf Declaration of interests.} The authors have no conflicts of interest to disclose.

\noindent
{\bf Data availability.} Data sharing is not applicable as no new data were created or analyzed.

 \end{document}